\newtheorem{theorem}{Theorem}[section]
\newtheorem{definition}[theorem]{Definition}
\newtheorem{lemma}[theorem]{Lemma}
\newtheorem*{note*}{Note}
\newtheorem*{claim*}{Claim}
\newtheorem*{lemma*}{Lemma}
\newtheorem*{examples*}{example}
\newtheorem*{example*}{example}
\newtheorem*{corollary*}{Corollary}
\newtheorem*{theorem*}{Theorem}
\newtheorem*{Reminder*}{Reminder}
\newtheorem*{justification*}{Justification}
\newtheorem*{Notation Convention*}{Notation Convention}
\newcommand{\la}[1]{\mathrm{#1}}
\newcommand{\ca}[1]{\mathcal{#1}}
\newcommand{\bb}[1]{\mathbb{#1}}
\newcommand{\cat}[1]{\mathsf{#1}}
\newcommand{\lu}[1]{\mathscr{#1}}
\newcommand{\Hom}{\mathrm{Hom}}
\newcommand{\forallw}{\forall_{\omega}}
\newcommand{\dash}{\text{-}}
\date{}
\author{Ali Hamad \footnote{Affiliation: University of Ottawa, Email: ahama099@uottawa.ca}}
\newcommand{\sqoint}{%
  \mathchoice%  % Handles different script sizes
    {\ooalign{$\displaystyle\int$\cr\hfil$\scriptstyle\square$\hfil\cr}}%  Displaystyle
    {\ooalign{$\int$\cr\hfil$\scriptstyle\square$\hfil\cr}}%                Textstyle
    {\ooalign{$\scriptstyle\int$\cr\hfil$\scriptscriptstyle\square$\hfil\cr}}%  Scriptstyle
    {\ooalign{$\scriptscriptstyle\int$\cr\hfil$\scriptscriptstyle\square$\hfil\cr}}%  Scriptscriptstyle
}
\title{Generalised ultracategories and conceptual completeness of geometric logic}
\date{}
\begin{document}

\maketitle

\setcounter{secnumdepth}{2}
\setcounter{tocdepth}{2}

\begin{abstract}
    We introduce the theory of generalised ultracategories, these are relational extensions to ultracategories as defined by Lurie. An essential example of generalised ultracategories are topological spaces (each seen as a poset with the specialisation preorder), and these play a fundamental role in the theory of generalised ultracategories. Another example of these generalised ultracategories is the category of points of a topos. In this paper, we show a conceptual completeness theorem for toposes with enough points, stating that any such topos can be reconstructed from its generalised ultracategory of points. This is done by considering left ultrafunctors from topological spaces to the category of points and paralleling this construction with another known fundamental result in topos theory, namely that any topos with enough points is a colimit of a topological groupoid. 
\end{abstract}

\tableofcontents
\section{Introduction}

As mathematical objects, toposes serve three main purposes: first, they generalise topological spaces, or more precisely they generalise the idea of topological spaces as studied in the theory of pointfree topology (the theory of frames and locales), second, they are ``nice'' universes to do mathematics. Lots of mathematical theory that is usually developed for the topos $\cat{Set}$, can be performed inside an arbitrary topos. The third point of view, is linked to the second, when it comes to interpreting a nice class of infinitary first order theories, called  geometric theories. Grothendieck toposes gain a special place through the idea of classifying topos. For every geometric theory $\bb{T}$, one can construct a topos $\ca{C}_{\bb{T}}$ satisfying the following universal property: for every other topos $E$, we have an equivalence  between $\mathrm{Geom}(E,\ca{C}_{\bb{T}})$ and $\la{Mod}_{\bb{T}}(E)$, and this equivalence is realised by sending a geometric morphism to the image of a ``universal model'' in  $\ca{C}_{\bb{T}}$ by the inverse image part of the geometric morphism to a model in $E$. This aligns in spirit with the definition of geometric logic being the fragment of infinitary first order logic preserved by the inverse image part of geometric morphisms. Following this a Grothendieck topos may be considered a geometric theory up to ``Morita equivalence''. For further discussion, the reader is invited to check  \cite{johnstone2002sketches} or \cite{caramello2018theories}.

A completeness  results exists for geometric logic, they state that if a geometry sequent is true in every model of the geometric theory in every Grothendieck topos, then it's provable by a series of geometric sequent. Although such theorem is important, it presents the challenge of working in toposes other than $\cat{Set}$. Classical completeness result (completeness involving only $\cat{Set}$ models) exists for smaller fragments of geometric logic like regular logic or coherent logic (finitary geometric logic) \cite{johnstone2002sketches}.

Makkai's conceptual completeness is a result in the same spirit of the classical completeness result of coherent logic: Suppose that we have two coherent theories $\bb{T}_1$ and $\bb{T}_2$. In categorical logic, we would like to work with these using the technique of functorial semantics, this can be done constructing pretoposes $\ca{P}(\bb{T}_1)$ and $\ca{P}(\bb{T}_2)$ (these can be regarded as ``completions'' of the syntactic category  of these theories \cite[Chapter 8]{reyesmichael}), characterised by the fact that for any coherent category $E$, we have an equivalence between $\mathrm{Coherent \dash Functors}(\ca{P}(\bb{T}),E)$ and $\mathrm{Mod}_{E}(\bb{T})$. Then conceptual completeness tells us that if the two pretoposes with a map $f$ of pretoposes between them, such that their categories  categories of $\mathsf{Set}$ models  are equivalent via a $-\circ f$ , then the two pretoposes must be equivalent via $f$, and hence the coherent theories must be Morita equivalent. Makkai's conceptual completeness has another aspect, a reconstruction result stating that we may reconstruct the pretopos from its category of models, towards this we need an additional ingredient on the category of models which axiomatises the idea of category having ultraproducts. Towards this Makkai introduced his notion of ultracategories, and ultrafunctors. Makkai's ultracategories serve as first axiomatisation of the categorification of ultraproduct construction, so an ultracategory is a category with ultraproduct functor(s) satisfying coherence, and an ultrafunctor  is a functor that respects this additional structure on a category, and hence Makkai's conceptual completeness can be stated as:

\begin{theorem*}[Makkai]
    Let $T$ be a small pretopos, then the evaluation functor from $T$ to $\mathrm{Ult}(\la{Mod}_{\cat{Set}}(T), \cat{Set})$ is an equivalence of categories.
\end{theorem*}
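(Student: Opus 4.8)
The plan is to run the argument through the topological-groupoid presentation of a topos with enough points, in parallel with the generalised-ultracategory theory developed above, so that the reconstruction statement for $T$ becomes an instance of the reconstruction statement for the coherent topos it generates. Write $\ca{M} := \la{Mod}_{\cat{Set}}(T)$ for the category of models --- pretopos morphisms $T \to \cat{Set}$, i.e.\ the points of the coherent topos $\ca{E} := \la{Sh}(T, J_{\la{coh}})$ --- equipped with the ultrastructure in which ultraproducts are computed pointwise from those of $\cat{Set}$; this is exactly the ``points of a topos'' example. Since those ultraproducts are pointwise and strict, each $\la{ev}_X : M \mapsto M(X)$ is (strictly) an ultrafunctor and each arrow of $T$ induces an ultranatural transformation, so the evaluation functor is well defined; it remains to prove it is (I) faithful, (II) full, and (III) essentially surjective.

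For (I) I would invoke Deligne's theorem that a coherent topos has enough points, so $\{M\}_{M \in \ca{M}}$ is jointly conservative on $T$: if $f \neq g : X \to Y$ their equaliser is a proper subobject of $X$, hence not preserved by some $M$, whence $M(f) \neq M(g)$. For (II), given an ultranatural $\alpha : \la{ev}_X \Rightarrow \la{ev}_Y$, the graph of each component $\alpha_M$ is a subobject $R_M \hookrightarrow M(X) \times M(Y) = M(X \times Y)$ that is total and single-valued, and the family $(R_M)_M$ inherits compatibility with ultraproducts, so forms a sub-ultrafunctor of $\la{ev}_{X \times Y}$. I would reduce (II) to the \textbf{key lemma}: every sub-ultrafunctor of a representable $\la{ev}_Z$ is of the form $\la{ev}_{Z'}$ for a (necessarily unique) subobject $Z' \hookrightarrow Z$ in $T$. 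Granting it, $(R_M)_M = (M(R))_M$ for a unique $R \hookrightarrow X \times Y$, and since ``$R$ is a functional relation'' is a coherent condition it is reflected by the jointly conservative family, so $R$ is the graph of a unique $f : X \to Y$ with $\la{ev}_f = \alpha$. The key lemma will be proved together with (III).

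For (III), the crux, let $F : \ca{M} \to \cat{Set}$ be an arbitrary left ultrafunctor. I would fix a conservative set of points and assemble it, with its logical topology, into a topological space $G_0$ together with a representing surjection $G_0 \twoheadrightarrow \ca{E}$; by Butz--Moerdijk / Awodey--Forssell the associated open, étale-complete topological groupoid $G_1 \rightrightarrows G_0$ (the groupoid presented by the kernel pair of the cover) satisfies $\ca{E} \simeq \la{Sh}(G_\bullet)$, the colimit over $G_\bullet$, i.e.\ the topos of $G$-equivariant sheaves on $G_0$, under which $T$ is identified with the coherent objects of $\ca{E}$. A topological space is a (generalised) ultracategory via ultrafilter convergence, and the chosen points furnish a canonical left ultrafunctor $\iota : G_0 \to \ca{M}$, together with $G_1 \to \ca{M}$ compatible with source and target. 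Restricting along $\iota$, the composite $F \circ \iota : G_0 \to \cat{Set}$ is a sheaf on $G_0$, and the left-ultra coherence of $F$ evaluated along $G_1 \rightrightarrows G_0$ supplies exactly a cocycle/equivariance datum, so $F \circ \iota$ underlies an object $X \in \la{Sh}(G_\bullet) \simeq \ca{E}$; the finite-product and ultraproduct compatibility of $F$ forces $X$ to be coherent, i.e.\ $X \in T$, and by construction $\la{ev}_X$ agrees with $F$ on the conservative family and on the ultrastructure, so $\la{ev}_X \cong F$. The same argument applied to a sub-ultrafunctor of $\la{ev}_Z$ --- now a subsheaf, equivariant, and coherent --- yields the key lemma used in (II).

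The hard part will be (III), and inside it the assertion that the (generalised) ultrastructure on $\ca{M}$, pulled back along $G_0$ and $G_1$, encodes \emph{precisely} a descent datum for $G_\bullet$: enough that $F \circ \iota$ genuinely glues, and no more, so that distinct objects of $\ca{E}$ are not conflated. Equivalently, one must establish the comparison $\la{Ult}(\ca{M}, \cat{Set}) \simeq \la{Sh}(G_\bullet)$ at the level of the relational/generalised formalism, since $G_0$ is in general only a generalised ultracategory; this is where the relational extension of ultracategories developed in this paper carries the load, and it is also what makes the key lemma of step (II) go through.
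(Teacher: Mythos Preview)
The paper does not prove Makkai's theorem; it is quoted in the introduction as historical background, attributed to Makkai. What the paper proves is Theorem~\ref{Main theorem}, from which (as remarked at the very end of \S\ref{third equivalence}) one recovers \emph{Lurie's} version: $\ca{E} \simeq \mathrm{Lult}(M_{\ca{E}},\cat{Set})$ for a coherent topos $\ca{E}$. So there is no in-paper proof to compare against directly; your outline is, in effect, the paper's proof of its main theorem specialised to a coherent topos and compressed (Butz--Moerdijk groupoid, pull back a left ultrafunctor to an equivariant sheaf on $G_0$, read off an object of $\ca{E}$). That part parallels \S\S\ref{second equivalence}--\ref{third equivalence} faithfully.

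There is, however, a genuine gap, and it is precisely the difference between Makkai's and Lurie's statements. Makkai's theorem is about \emph{ultrafunctors} (comparison maps invertible) and its target is the \emph{pretopos} $T$; the paper's machinery --- and your argument as written --- is about \emph{left} ultrafunctors and produces the full \emph{topos} $\ca{E}$. You open step~(III) with ``let $F$ be an arbitrary left ultrafunctor'' and then assert that ``the finite-product and ultraproduct compatibility of $F$ forces $X$ to be coherent''; a left ultrafunctor carries no such two-sided compatibility, so as stated you have proved $\mathrm{Lult}(\ca{M},\cat{Set})\simeq \ca{E}$, not $\mathrm{Ult}(\ca{M},\cat{Set})\simeq T$. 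Your final displayed comparison $\mathrm{Ult}(\ca{M},\cat{Set})\simeq \la{Sh}(G_\bullet)$ has the same slip: the right-hand side is all of $\ca{E}$.

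Even reading ``left ultrafunctor'' as a typo for ``ultrafunctor'', the crucial implication --- that invertibility of the ultrastructure comparison maps forces the associated object $X\in\ca{E}$ to lie in the coherent part $T\subset\ca{E}$ --- is exactly the nontrivial bridge between Lurie's and Makkai's theorems, and nothing in the Butz--Moerdijk/equivariance argument supplies it. The groupoid presentation only sees the left-ultra data and yields an arbitrary object of $\ca{E}$; detecting coherence requires a separate finiteness/compactness argument relating the two-sided ultra condition on $F$ to finite presentability of $X$, which is the substantive content of Makkai's original proof and is neither sketched in your proposal nor available from the paper's toolkit. The same issue undercuts your ``key lemma'' for step~(II): a sub-left-ultrafunctor of $\mathrm{ev}_Z$ corresponds to a subobject in $\ca{E}$, not a priori in $T$.
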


\begin{note*} \normalfont
    \normalfont Small pretoposes are those constructed as pretopos completion of the syntactic category of a coherent theory.
\end{note*}

Lurie's version of conceptual completeness is similar, the difference is that functorial semantics is built using the topos theoretic results which we already introduced. Lurie showed that it's possible to reconstruct a coherent topos (a topos that classifies a coherent first-order theory), from its category of points (models of the coherent theory this topos classifies) with the additional structure that axiomatises ultraproducts. Towards showing his theorem, Lurie reintroduced ultracategories, and introduced a new class of functors between them (left ultrafunctors), and hence he obtained the following theorem:

\begin{theorem*}[Lurie]
    Let $E$ be a coherent topos. Then there is an equivalence of categories between $\mathrm{Lult}(\mathrm{Points}_E,\cat{Set})$ and $E$.
\end{theorem*}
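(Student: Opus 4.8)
Identify each point $p\in\mathrm{Points}_E$ with its inverse image functor $p^{*}\colon E\to\cat{Set}$, and let $\mathrm{ev}\colon E\to\mathrm{Lult}(\mathrm{Points}_E,\cat{Set})$ be the evaluation functor $\mathrm{ev}(X)=(p\mapsto p^{*}X)$. The plan is to show that $\mathrm{ev}$ is (i) well defined, (ii) fully faithful, and (iii) essentially surjective, the last being the substantial point. The two external inputs are Deligne's completeness theorem (a coherent topos has enough points) and the Butz--Moerdijk representation theorem (a topos with enough points is the topos of equivariant sheaves on a topological groupoid $G_{1}\rightrightarrows G_{0}$ with each $G_{i}$ a topological space), the latter being the ``colimit of a topological groupoid'' presentation alluded to in the introduction.

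\textbf{Step 1: $\mathrm{ev}$ lands in left ultrafunctors.} First I would check that for fixed $X\in E$ the assignment $p\mapsto p^{*}X$ underlies a left ultrafunctor. On coherent objects this rests on the \L o\'s-style identification, for an ultrafilter $\mu$ on a set $I$ and points $(p_{i})_{i\in I}$, of the inverse image of the ultraproduct point $\int_{\mu}p_{i}$ with the filtered colimit $\varinjlim_{S\in\mu}\prod_{i\in S}p_{i}^{*}$. Since $p^{*}$ preserves finite limits and all colimits and every object of $E$ is a colimit of coherent ones, for general $X$ one obtains a canonical comparison map $(\int_{\mu}p_{i})^{*}X\to\int_{\mu}(p_{i}^{*}X)$; this serves as the left-ultrastructure map, and the coherence axioms of a left ultrafunctor are inherited from the coherences between iterated ultraproducts. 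Functoriality in $X$ then makes $\mathrm{ev}$ a functor.

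\textbf{Step 2: full faithfulness.} Faithfulness is immediate: $E$ has enough points, so the family $(p^{*})_{p}$ is jointly conservative, a fortiori jointly faithful. For fullness, let $\theta\colon\mathrm{ev}(X)\to\mathrm{ev}(Y)$ be a morphism of left ultrafunctors; I must produce $f\colon X\to Y$ in $E$ with $p^{*}(f)=\theta_{p}$. Using the Butz--Moerdijk groupoid and the canonical map $G_{0}\to\mathrm{Points}_E$, the components $\theta_{p}$ pull back to a natural family over the space $G_{0}$; compatibility of $\theta$ with the ultrastructure is exactly what forces this family to be ``continuous'', i.e.\ to define a morphism of sheaves on $G_{0}$, and compatibility with the two projections $G_{1}\rightrightarrows G_{0}$ makes it equivariant, hence a morphism $f$ in $E$. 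Its image under $\mathrm{ev}$ is $\theta$ by construction, and uniqueness of $f$ follows from faithfulness.

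\textbf{Step 3: essential surjectivity, and the main obstacle.} This is the heart of the matter, and I expect it to be the principal difficulty. The strategy is to ``parallel'' the groupoid presentation of $E$ on the side of points: one shows that the generalised ultracategory $\mathrm{Points}_E$ is the colimit, in generalised ultracategories, of the topological groupoid $G_{\bullet}$ (each $G_{n}$ a topological space, hence a generalised ultracategory in the fundamental way), and that $\mathrm{Lult}(-,\cat{Set})$ carries this colimit to the corresponding limit of categories. Combined with the basic computation that for a topological space $X$ one has $\mathrm{Lult}(X,\cat{Set})\simeq\mathrm{Sh}(X)$ --- the sheaf condition being precisely what the left-ultrafunctor axioms encode once the ultrastructure of $X$ records convergence of ultrafilters, which is why topological spaces are singled out among generalised ultracategories --- this gives
\[
\mathrm{Lult}(\mathrm{Points}_E,\cat{Set})\;\simeq\;\varprojlim_{n}\,\mathrm{Lult}(G_{n},\cat{Set})\;\simeq\;\varprojlim_{n}\,\mathrm{Sh}(G_{n})\;\simeq\;E ,
\]
with $G_{\bullet}$ the nerve of the groupoid and the last equivalence the Butz--Moerdijk descent presentation; one then checks that the composite equivalence is inverse to $\mathrm{ev}$ by tracking stalks, a left ultrafunctor $F$ corresponding to the equivariant sheaf whose stalk over a point of $G_{0}$ lying above $p$ is $F(p)$, naturally and ultrastructure-compatibly in $p$. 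The hard part is exactly this translation: proving that taking points converts the topos-theoretic descent datum into a colimit diagram of generalised ultracategories, and that $\mathrm{Lult}(-,\cat{Set})$ is continuous enough to send it back to a limit; handling non-coherent objects of $E$ and the size issues inherent in the groupoid presentation are the attendant technical nuisances. It is precisely the relational (``generalised'') ultrastructure on $\mathrm{Points}_E$ and on topological spaces, rather than Lurie's strict one, that makes this translation go through, which motivates introducing the relational extension in the first place.
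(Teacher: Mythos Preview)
Your proposal has the right raw ingredients --- Deligne/Butz--Moerdijk and the identification $\mathrm{Lult}(X,\cat{Set})\simeq Sh(X)$ for a topological space $X$ --- but the organizational strategy diverges from the paper's, and Step~3 contains a genuine gap.

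The paper does not attempt to show directly that $\mathrm{Points}_E$ is the colimit of the Butz--Moerdijk groupoid $G_\bullet$ \emph{in the $2$-category of generalised ultracategories}, nor that $\mathrm{Lult}(-,\cat{Set})$ carries such a colimit to a limit. Those two assertions are exactly where your chain $\mathrm{Lult}(\mathrm{Points}_E,\cat{Set})\simeq\varprojlim_n\mathrm{Lult}(G_n,\cat{Set})$ would need justification, and neither is obvious: the $2$-category of generalised ultracategories is not known to have the relevant colimits, and even if it did, identifying $\mathrm{Points}_E$ as one is a substantial claim you have not argued for. The paper's machinery is designed precisely to circumvent this.

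What the paper actually does is prove the two-variable statement $\mathrm{Lult}(M_E,M_{E'})\simeq\mathrm{Geom}(E,E')$ for arbitrary toposes with enough points, and then specialise $E'$ to the object classifier $\mathcal{S}[\mathbb{O}]$. The proof runs through three equivalences mediated by the lax slice categories $\cat{Top}//M_E$ and $\cat{Top}//E$, viewed as discrete $2$-fibrations over $\cat{Top}$: first $\mathrm{Lult}(M_E,M_{E'})\simeq\mathrm{ClovenCart}(\cat{Top}//M_E,\cat{Top}//M_{E'})$ (using ad hoc spaces $I_\mu$ that realise generalised morphisms as left ultrafunctors from topological spaces), then $\cat{Top}//M_E\simeq\cat{Top}//E$ fibrewise (this is where $\mathrm{Lult}(X,\cat{Set})\simeq Sh(X)$ enters, via an exponential-swap argument), and finally $\mathrm{ClovenCart}(\cat{Top}//E,\cat{Top}//E')\simeq\mathrm{Geom}(E,E')$ (this is where Butz--Moerdijk enters, but to show that $E$ is the $2$-colimit of the forgetful functor $\cat{Top}/E\to\cat{Topos}$, not to compute a colimit of ultracategories). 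So the colimit computation happens entirely on the topos side, and the link to ultracategories is made fibrewise over $\cat{Top}$ rather than by a single global colimit in generalised ultracategories. Your Step~2 also differs: the paper never isolates full-faithfulness of $\mathrm{ev}$ as a separate step, and the Sierpi\'nski-space trick it uses to handle non-invertible $2$-cells in the third equivalence has no analogue in your outline.
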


In a paper \cite{hamad2025ultracategories}, we have shown that ultracategories are normal colax-algebras for a pseudo-monad $T$ on the category of categories, this monad was introduced independently by Rosoloni and Garner in an unpublished to this date work (for slides of a presentation by Rosolini, see \cite{ultracompletion}).

The goal of this paper is to extend Lurie's result to a larger class of toposes, namely toposes with enough points, these are toposes that have a small subcategory of $\cat{Set}$-models, which we can check completeness against. We want to use the ultraproduct construction to develop a similar result to Lurie's for such toposes. The problem is that unlike coherent toposes, the category of models does not possess a canonical notion of ultraproducts, and hence we need a generalisation.

In this paper, we introduce the notion of generalised ultracategories. These should be regarded as relational version of Lurie's ultracategories. To be more precise, these are generalisations of $T$-pseudo algebras. Introducing them serves many purposes:

First, it allows the treatment of a larger class of mathematical objects. A result by Lurie states an equivalence between ultracategories with no non-identity morphisms (which he calls ultrasets) and compact Hausdorff spaces, this equivalence is established by noticing that in this case, the ultracategory axioms axiomatise convergence of ultrafilters. Building on this idea, we defined ultrapreorders \ref{ultapreorder}, and we have used the generalised ultrastructure to encode ultrafilter convergence, and this allows a fully faithful embedding of topological spaces inside the $2$-category of generalised ultracategories.

Second, this is the natural setting to show conceptual completeness for toposes with enough points. The category of points of any topos possesses a natural generalised ultrastructure, and hence we may state our main theorem \ref{Main theorem}:

\begin{theorem*}
       Let $E$ and $E^{'}$ be two toposes with enough points, and let $M_E$ and $M_{E^{'}}$ denote their respective generalised  ultracategories of points. Then there is an equivalence of categories between $\mathrm{Lult}(M_{E}, M_{E^{'}})$ and $\mathrm{Geom}(E,E^{'})$.
\end{theorem*}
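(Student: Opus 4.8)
The plan is to take as the comparison functor the operation of post-composing points with a geometric morphism, and to prove that it is an equivalence by descent along the presentation of a topos with enough points as a colimit of a topological groupoid. First I would construct $\Phi\colon\mathrm{Geom}(E,E^{'})\to\mathrm{Lult}(M_E,M_{E^{'}})$: a geometric morphism $f\colon E\to E^{'}$ sends a point $p\colon\cat{Set}\to E$ to the point $f\circ p\colon\cat{Set}\to E^{'}$, which on underlying categories is a functor $M_E\to M_{E^{'}}$, and the content is that this functor carries the canonical generalised ultrastructure of $M_E$ to that of $M_{E^{'}}$ --- i.e. is a left ultrafunctor --- and that a geometric transformation $f\Rightarrow g$ induces a left ultranatural transformation. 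Since the generalised ultrastructure on the points of a topos is built purely out of the topos, both are naturality verifications; together they make $E\mapsto M_E$ a $2$-functor and $\Phi$ a functor, and the theorem is the assertion that $\Phi$ is an equivalence when $E$ and $E^{'}$ have enough points.

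Next I would reduce to the case $E=\cat{Sh}(X)$ with $X$ a topological space. Because $E$ has enough points, the Butz--Moerdijk presentation gives a topological groupoid $G_\bullet=(G_1\rightrightarrows G_0)$ with $E\simeq\cat{Sh}(G_\bullet)$, exhibiting $E$ as the colimit, in the $2$-category of toposes, of the diagram $\cdots\ \cat{Sh}(G_1)\rightrightarrows\cat{Sh}(G_0)$ of its nerve. Applying $\mathrm{Geom}(-,E^{'})$ turns this colimit into a limit, so that $\mathrm{Geom}(E,E^{'})$ is the category of objects of $\mathrm{Geom}(\cat{Sh}(G_0),E^{'})$ equipped with descent data along $G_1\rightrightarrows G_0$. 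On the other side I would show that the points functor takes this particular colimit to a colimit in the $2$-category of generalised ultracategories: $M_E$ is the colimit of $\cdots\ M_{\cat{Sh}(G_1)}\rightrightarrows M_{\cat{Sh}(G_0)}$ --- the spaces $G_i$, seen in generalised ultracategories via the fully faithful embedding of $\cat{Top}$ --- its generalised ultrastructure being assembled from ultrafilter convergence in the $G_i$ together with the groupoid action; and that $\mathrm{Lult}(-,M_{E^{'}})$ sends it to the matching limit. Granting this, $\Phi$ for $E$ is the limit of the $\Phi$ for the $\cat{Sh}(G_i)$, so it suffices to prove the theorem when $E=\cat{Sh}(X)$.

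For the base case one must show $\mathrm{Geom}(\cat{Sh}(X),E^{'})\simeq\mathrm{Lult}(M_{\cat{Sh}(X)},M_{E^{'}})$, naturally in $X$; here $M_{\cat{Sh}(X)}$ is the generalised ultracategory attached (through the embedding of $\cat{Top}$) to the sober reflection of $X$, so that its structure is simply ultrafilter convergence and left ultrafunctors out of it are completely explicit. From a geometric morphism $\cat{Sh}(X)\to E^{'}$ one reads off the induced functor $M_{\cat{Sh}(X)}\to M_{E^{'}}$ on points, which preserves ultrafilter limits because the inverse image functor is continuous and left exact and is therefore a left ultrafunctor; this is $\Phi$ in this case. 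Conversely, from a left ultrafunctor $G\colon M_{\cat{Sh}(X)}\to M_{E^{'}}$ one reconstructs an inverse image functor $E^{'}\to\cat{Sh}(X)$ by $A\mapsto\bigl(x\mapsto G(x)^{*}(A)\bigr)$: the generalised ultra-coherences of $G$ are exactly what force this $X$-indexed family of sets to be continuous --- hence a sheaf --- and the resulting functor to be flat and cover-preserving, so that it is a genuine inverse image functor. That $E^{'}$ has enough points is then what makes this reconstruction a quasi-inverse to the functor just described, on morphisms as well as objects. This is the $X$-parametrised form of the reconstruction underlying Lurie's theorem: for $E^{'}=\cat{Set}[\mathbb{O}]$ it specialises to $\mathrm{Lult}(M_{\cat{Sh}(X)},\cat{Set})\simeq\cat{Sh}(X)$, and the same specialisation in the full theorem gives $E\simeq\mathrm{Lult}(M_E,\cat{Set})$, extending Lurie's reconstruction from coherent toposes to all toposes with enough points; it is here that the description of generalised ultracategories as generalised colax algebras for the pseudomonad $T$ of \cite{hamad2025ultracategories} would organise the coherence bookkeeping.

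The main obstacle I expect to be concentrated in the last two steps. In the reduction step one must understand colimits in the $2$-category of generalised ultracategories well enough to verify that the points functor carries the topological-groupoid presentation of $E$ to such a colimit --- $\mathrm{Points}$ is not cocontinuous in general, so this genuinely uses the open-surjection form of the groupoid --- and that $\mathrm{Lult}(-,M_{E^{'}})$ converts it back into a limit. In the base case the delicate point is the completeness of the left-ultrafunctor axioms: that the coherence data on $G\colon M_{\cat{Sh}(X)}\to M_{E^{'}}$ amount to exactly continuity of $x\mapsto G(x)^{*}(A)$ together with flatness and cover-preservation of the reconstructed functor, with nothing missing and nothing superfluous. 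Establishing this matching of structures, rather than the surrounding formal descent, is where the real work lies.
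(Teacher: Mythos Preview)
Your base case---that $\mathrm{Lult}(X,M_{E'})\simeq\mathrm{Geom}(Sh(X),E')$ for a topological space $X$---is correct and is exactly the paper's ``second equivalence'' $\cat{Top}//M_{E'}\simeq\cat{Top}//E'$, proved fibrewise via $\mathrm{Lult}(X,\cat{Set})\simeq Sh(X)$ followed by an exponential swap through $\ca{J}$-continuous lex functors. But the reduction step is where your plan diverges from the paper and where the genuine gap lies.

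You propose to show that $M_E$ is the colimit, in generalised ultracategories, of the Butz--Moerdijk diagram $G_1\rightrightarrows G_0$. That statement---even restricted to testing against targets of the form $M_{E'}$---is essentially the theorem itself, and you offer no mechanism for proving it. The paper does \emph{not} attempt this. Instead it factors through discrete $2$-fibrations over $\cat{Top}$: it proves $\mathrm{Lult}(M_E,M_{E'})\simeq\mathrm{ClovenCart}(\cat{Top}//M_E,\cat{Top}//M_{E'})$ directly, and the decisive device is the construction of probe spaces $I_\mu=I\sqcup\{p_\mu\}$ (and the two-level $I_{\mu,(\lambda_i)}$) topologised so that a left ultrafunctor $I_\mu\to M_E$ is \emph{precisely} a generalised morphism in $\mathrm{Hom}(A,\int_I M_i\,d\mu)$ (resp.\ a composable pair). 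These probes are what let one recover a left ultrafunctor $M_E\to M_{E'}$ from a cloven Cartesian functor on the slices; they are the ingredient your plan lacks, and without something playing their role there is no way to pass from ``compatible family over all $X\to M_E$'' back to a single left ultrafunctor out of $M_E$. Separately, on the topos side the paper does not get by with a single Butz--Moerdijk groupoid either: to show $E\simeq\varinjlim(\cat{Top}/E\to\cat{Topos})$ it builds a filtered poset $\ca{P}$ of such groupoids (varying the jointly surjective set of points and the enumeration cardinal) and proves the resulting diagram of representables has terminal colimit pointwise; a single groupoid would not yield the filteredness that makes this argument go through.
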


By replacing the topos $E^{'}$ by the classifying topos of the theory of objects $\ca{S}[\bb{O}]$, one can deduce a result similar to Lurie's, namely that any topos with enough points $E$ is equivalent to left ultrafunctors from its category of points $M_E$ to $\cat{Set}$. Moreover this provides an alternative proof of Lurie's result in the case where the topos is coherent.

\begin{note*} \normalfont A similar result to ours was independently obtained in \cite{virtual}. The author introduced the notion of virtual ultracategories. At this point, we don't know for sure if the two notions of virtual and generalised  ultracategories are the same, although we expect them to be.
    
\end{note*}

\subsection{Structure of paper and methodology}

In \ref{axioms-first section}, we introduce an axiomatic treatment of generalised ultracategories. These are categories for which ultraproduct may not be defined, but nonetheless we have ``representables'' at the ultraproduct, more formally for every object $A$ and every family of objects $(M_i)_{i \in I}$ and every ultrafilter $\mu$ on $I$ we have a generalised $\Hom$-set:

\[\Hom \left( A,\int_I M_i d\mu \right)\]

This is very similar in spirit to the axiomatic treatment of Lurie but the reader may notice that the generalised ultrastructure and its axioms that we defined, may be regarded also as a relational version of the $T$-colax algebra structure that Lurie ultracategories possess \cite{hamad2025ultracategories}. The axioms are self-contained and other than knowing basic constructions used in \cite{lurie2018ultracategories} and \cite{hamad2025ultracategories}, the reader is not required to know details of the construction of the pseudo-monad $T$ or the equivalence above, to understand and start working with generalised ultracategories (although we have used these to show some categorical results regarding generalised ultracategories, the reader is welcome to accept them by faith if they do not want to read in detail the paper \cite{hamad2025ultracategories}).

In sections \ref{1-morphism} and \ref{2-morphisms}, we introduce the necessary definitions needed to define a $2$-category of generalised ultracategories, moreover, we show that these are indeed functors, and natural transformations respectively, but we have left these to appendices \hyperref[not very crucial proof]{Appendix B} and \hyperref[Equaivalence of different notions of left ultrafunctors]{Appendix C}.

In section \ref{examples of generalised}, we started introducing the first examples of generalised ultracategories. These are ultrapreorders which are generalised ultracategories for which every generalised $\la{Hom}$-set is singleton or empty, and all ``change of base'' maps are isomorphisms. These turn out to be equivalent to the $2$-category of topological space where $2$-morphisms express specialisation preorder between continuous functions, and the generalised $\la{Hom}$-sets encode ultrafilter convergence, this is an adaptation of Barr's result stating that topological spaces are relational algebras for the ultrafilter monad \cite{barr2006relational}, to the setting of generalised ultracategories. The next and most important example of generalised ultracategories is that of points of toposes. The idea we used is simple but effective, which is the following theorem \ref{not very crucial theorem}:

\begin{theorem*}
Suppose $\ca{A}$ is an ultracategory, which is  a full subcategory of an ultracategory which is a pseudo-algebra for the pseudo-monad $T$, then there is a canonical generalised ultrastructure on $\ca{A}$.
 \end{theorem*}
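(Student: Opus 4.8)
The plan is to equip $\ca A$ with the generalised ultrastructure in which the formal ultraproducts are taken to be the genuine ones that the pseudo-algebra structure already provides, and then to check that the axioms of a generalised ultracategory — which form a relational, profunctorial analogue of the colax/pseudo $T$-algebra axioms, cf. \cite{hamad2025ultracategories} — collapse onto the coherences of the pseudo-algebra. Write $\go a\colon T\ca A\to\ca A$ for the structure functor, $\eta$ for the unit and $m$ for the multiplication of $T$ (I write $m$, not $\mu$, to keep $\mu,\nu,\dots$ free for ultrafilters), and let $\epsilon\colon \go a\circ\eta_{\ca A}\Rightarrow\mathrm{id}_{\ca A}$ and $\theta\colon \go a\circ T\go a\Rightarrow\go a\circ m_{\ca A}$ be the invertible unit and associativity $2$-cells, subject to the triangle and pentagon coherence laws of a pseudo-$T$-algebra. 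Given a set $I$, an ultrafilter $\mu$ on $I$ and a family $(M_i)_{i\in I}$ of objects of $\ca A$, the pair $(\mu,(M_i)_{i\in I})$ is an object of $T\ca A$, so I may form the genuine ultraproduct $\int_I M_i\, d\mu := \go a(\mu,(M_i)_{i\in I})\in\ca A$ and set
\[\Hom\!\left(A,\int_I M_i\, d\mu\right)\ :=\ \Hom_{\ca A}\!\left(A,\ \go a\bigl(\mu,(M_i)_{i\in I}\bigr)\right).\]
Equivalently, the candidate generalised ultrastructure may be packaged as the profunctor $\ca A^{\mathrm{op}}\times T\ca A\to\cat{Set}$, $(A,x)\mapsto\ca A(A,\go a(x))$, attached to the structure functor.

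The remaining data of a generalised ultracategory I would read off from the pseudo-algebra as follows. Pre-composition of a generalised element $A\to\int_I M_i\, d\mu$ by a morphism $A'\to A$ is ordinary composition in $\ca A$; the action of a morphism of families $(M_i)_{i\in I}\to(N_i)_{i\in I}$ on generalised elements is post-composition with $\go a$ applied to the corresponding morphism of $T\ca A$, using functoriality of $\go a$; the normalisation identifying the ultraproduct along a principal ultrafilter with the indicated object comes from a component of $\epsilon$ together with functoriality of $\go a$; and the associativity (Fubini) structure — which composes a generalised element $A\to\int_J N_j\, d\nu$ with a family of generalised elements $N_j\to\int_{I_j} M_{ij}\, d\mu_j$ into one $A\to\int_{\coprod_j I_j} M_{ij}\, d\kappa$ for $\kappa=m_{\ca A}(\nu,(\mu_j)_{j\in J})$ — comes from a component of $\theta$, the pseudofunctoriality constraints of $T$, and the representability results of \cite{hamad2025ultracategories} ensuring the relevant profunctor composites are again representable. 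Naturality of all of this, and invertibility of whichever of these maps the axioms require to be invertible, follows at once from invertibility of $\epsilon$, $\theta$ and of the coherence isomorphisms of $T$.

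It then remains to verify the generalised-ultracategory axioms for this data. Each axiom is a commuting diagram built from the maps just listed, so after unfolding it becomes a diagram in $\go a$, $\epsilon$, $\theta$, $\eta_{\ca A}$, $m_{\ca A}$ and the pseudofunctoriality constraints of $T$: the unit axioms correspond to the unit (triangle) coherence of the pseudo-algebra — from which the principal-ultrafilter normalisation is read off directly — and the associativity axiom corresponds to the associativity (pentagon) coherence for $\theta$, transported along the representability identifications above. I expect the real work, and the main obstacle, to be bookkeeping rather than anything conceptual: one must keep careful track of the reindexing maps between the index sets $I$, $J$, $\coprod_j I_j$, $\dots$ that are encoded in the multiplication of $T$, and one must dispatch the mixed and degenerate cases of the axioms (for instance a principal ultrafilter occurring in one slot of an iterated ultraproduct) by combining $\epsilon$ with $\theta$ rather than by a separate argument; since the axioms are phrased at the level of elements, this is genuinely laborious even though each single identity is forced. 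Finally I would record that the construction is functorial: a (co)lax or pseudo morphism of $T$-algebras induces, by whiskering the associated profunctors along the underlying functor and composing with the comparison $2$-cell, a morphism of the associated generalised ultracategories, which is what makes this assignment compatible with the $2$-categorical machinery used later in the paper.
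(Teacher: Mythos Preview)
Your approach is essentially the same as the paper's: define the generalised $\mathrm{Hom}$ as the genuine $\mathrm{Hom}$ in $\ca A$ at the actual ultraproduct, and derive $\beta$, $\kappa$ from the pseudo-algebra coherences. The paper simply translates your $\go a$, $\epsilon$, $\theta$ back into Lurie's concrete vocabulary: $\kappa$ is the representable at $\epsilon_{*,*}^{-1}$, $\Xi_{f,\mu}$ is the representable at the ultraproduct diagonal $\Delta_{\mu,f}$, and $\beta$ is built from an auxiliary map $\Omega:\int_I\Hom(A_i,B_i)\,d\mu\to\Hom(\int_I A_i\,d\mu,\int_I B_i\,d\mu)$ followed by ordinary composition and the inverse colax associator $a^{-1}$ (your $\theta$). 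The axiom-by-axiom check you describe as ``laborious bookkeeping'' is exactly what the paper carries out explicitly in Appendix~B, and your prediction that the composition axiom reduces to the pentagon for $\theta$ matches the large diagram there, which is dispatched via naturality of $a$ and results from \cite{hamad2025ultracategories}.

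One point is underspecified in your sketch: the change-of-base maps $\Xi_{f,\mu}$ for a function $f:J\to I$ are a separate piece of data in the axiomatisation, with their own axioms (functoriality, the iso-on-injections condition, and the two compatibility squares with $\beta$). You fold these into ``the action of a morphism of families'' but write that action only for families over a fixed index set $I$; the reindexing maps live instead in the morphisms of $T\ca A$ that change the underlying set, and the paper treats them concretely as post-composition with $\Delta_{\mu,f}$. Axioms $1$--$3$ then follow from standard properties of $\Delta$, and axiom $6$ from naturality of the colax associator --- facts imported from \cite{hamad2025ultracategories} rather than from the bare pentagon. Once you make this explicit your outline coincides with the paper's argument.
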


We have a similar theorem that establishes, for ultracategories which are $T$-pseudo algebras, an equivalence between left ultrafunctors between them seen as usual or generalised ultracategories. Together, this shows a fully faithful embedding of the category of $T$-pseudo algebras inside the category of generalised ultracategories, this was done in \hyperref[not very crucial proof]{Appendix B} and \hyperref[Equaivalence of different notions of left ultrafunctors]{Appendix C} .

Via this theorem, we may define the generalised ultrastructure, of the category of points, by taking the category of points as full subcategory of the category $\mathrm{Fun}(C,\mathsf{Set})$, where $C$ is any site of definition of this topos.

It turns out that topological spaces (viewed as ultrapreorders) play a fundamental role in the theory of generalised ultracategories, we want to understand how these topological spaces ``cover'' generalised ultracategories. The tool that we use, would be considering the lax and pseudo slice categories $\cat{Top}/C$ and $\cat{Top}//C$, for a generalised ultracategory $C$, this serves multiple purposes: first we would like to see how can we reconstruct $C$ from the category $\cat{Top}//C$. Second, we would like to compare $\cat{Top}//C$ and $\cat{Top}//C^{'}$ for two different generalised ultracategories $C$ and $C^{'}$. It turns out that the right setting is to consider $\cat{Top}//E$ as categories $2$-fibred over the $2$-category of topological spaces. This setting is also convenient to consider for toposes, suppose that we have a topos $E$ it is also possible to consider the category $\cat{Top}//E$, by the inclusion of (Sober) topological spaces inside toposes. And hence, we would like also to compare $\cat{Top}//E$ and $\cat{Top}//M_E$, where $M_E$ is the category of points of $E$. But before this, we wanted to introduce the theory of $2$-fibred categories as done by \cite{buckley2014fibred}, and we do this in \ref{2-Fibrations}.

After that we start showing our main theorem, which states that there is a fully faithful embedding of the $2$-category of toposes with enough points, inside the $2$-category of generalised ultracategory via the pseudofunctor that sends a topos with enough point to its generalised ultracategory of points.

The proof is a corollary of the following three theorems:

Starting with the first theorem \ref{first equivalence theorem}
\begin{theorem*}
    Let $E$ and $E^{'}$ be toposes, and let $M_E$ and $M_{E^{'}}$ be their respective category of points, then there is an equivalence of categories between $\la{Lult}(M_E, M_{E^{'}})$ and $\la{ClovenCart}(\cat{Top}//M_E,\cat{Top}//M_{E^{'}})$.
\end{theorem*}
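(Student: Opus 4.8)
The plan is to realise the desired equivalence concretely, as the post-composition functor, and then to identify each of its two sides with one and the same bilimit of categories of left ultrafunctors out of topological spaces. Write
\[
\Phi\colon\la{Lult}(M_E,M_{E'})\longrightarrow\la{ClovenCart}(\cat{Top}//M_E,\cat{Top}//M_{E'})
\]
for the functor which sends a left ultrafunctor $F\colon M_E\to M_{E'}$ to the functor $(X,f)\mapsto(X,F\circ f)$, $(h,\alpha)\mapsto(h,F\alpha)$, and a left ultratransformation to its whiskering by $F$. First I would dispatch the routine verifications that $\Phi$ is well defined: $F\circ f$ is again a left ultrafunctor; $\Phi(F)$ lies over $\cat{Top}$; it carries cartesian $1$-cells (those whose comparison $2$-cell is invertible) to cartesian $1$-cells; and it respects the chosen cleavages. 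All of this uses nothing beyond functoriality of horizontal and vertical composition in the $2$-category $\cat{GenUlt}$ of generalised ultracategories. What has to be proved is that $\Phi$ is an equivalence of categories.

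The next step is to pass to the $2$-fibred picture developed in Section~\ref{2-Fibrations} after \cite{buckley2014fibred}. There $\cat{Top}//M_E\to\cat{Top}$ is exhibited as the $2$-fibration whose total $2$-category is the Grothendieck construction $\int\la{Lult}(-,M_E)$ of the pseudofunctor $\lu{F}_{M_E}\colon X\mapsto\la{Lult}(X,M_E)$, and the (bi)functoriality of the Grothendieck construction gives an equivalence $\la{ClovenCart}(\cat{Top}//M_E,\cat{Top}//M_{E'})\simeq\mathrm{PsNat}(\lu{F}_{M_E},\lu{F}_{M_{E'}})$ between cloven cartesian functors and pseudonatural transformations of the classifying pseudofunctors, under which $\Phi$ becomes the covariant Yoneda action $F\mapsto(F\circ-)$. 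Now I would feed in the $2$-categorical co-Yoneda lemma, which writes any $\cat{Cat}$-valued pseudofunctor as the bicolimit of representables indexed by its category of elements; for $\lu{F}_{M_E}$ this index $2$-category is exactly $\cat{Top}//M_E$ itself. Combining this with the facts that mapping out of a bicolimit is a bilimit and that $\mathrm{PsNat}(\la{Lult}(-,X),-)$ is evaluation at $X$ ($2$-Yoneda), one obtains
\[
\la{ClovenCart}(\cat{Top}//M_E,\cat{Top}//M_{E'})\;\simeq\;\mathrm{PsNat}(\lu{F}_{M_E},\lu{F}_{M_{E'}})\;\simeq\;\lim_{(X,f)\in\cat{Top}//M_E}\la{Lult}(X,M_{E'}),
\]
the last category being the bilimit over $\cat{Top}//M_E$ of the categories $\la{Lult}(X,M_{E'})$; these equivalences are natural, and size issues are absorbed by replacing $\cat{Top}$ with a small full subcategory adapted to $E$ and $E'$.

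It remains to match the left-hand side with the same bilimit, compatibly with post-composition. Because $\la{Lult}(-,M_{E'})=\cat{GenUlt}(-,M_{E'})$ sends bicolimits in its first variable to bilimits, this is equivalent to the assertion that the tautological cocone exhibits $M_E$ as the bicolimit in $\cat{GenUlt}$ of the forgetful diagram $\cat{Top}//M_E\to\cat{Top}\hookrightarrow\cat{GenUlt}$ — in other words, that the restricted $2$-Yoneda embedding of $\cat{GenUlt}$ into $\cat{Cat}$-valued pseudofunctors on $\cat{Top}$ is fully faithful at $M_E$. This is precisely the point at which I would use that $M_E$ is a category of points: by the embedding of Section~\ref{examples of generalised} of topological spaces into $\cat{GenUlt}$ as ultrapreorders, the whole generalised ultrastructure of $M_E$ — ordinary morphisms, change-of-base isomorphisms, and the generalised $\Hom$-sets at ultraproducts — is detected by and reconstructed from probes by topological spaces, ultraproducts of $\cat{Set}$-valued points being governed by ultrafilter convergence; this is the $\cat{GenUlt}$-analogue of the classical presentation of a topos (with enough points) as a colimit of a topological groupoid, transported along $E\mapsto M_E$. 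Granting this, a term-by-term comparison of the two descriptions shows that the resulting equivalence $\la{Lult}(M_E,M_{E'})\simeq\la{ClovenCart}(\cat{Top}//M_E,\cat{Top}//M_{E'})$ is $\Phi$, completing the argument.

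The step I expect to be the genuine obstacle is this last one: setting up and proving the bicolimit presentation of $M_E$ by the topological spaces over it — with the $2$-dimensional bookkeeping (the correct variance of $\lu{F}_{M_E}$, the precise lax/pseudo nature of the slice $\cat{Top}//M_E$, and the coherence of the co-Yoneda computation) done carefully — and verifying that Buckley's correspondence between cloven cartesian functors and pseudonatural transformations applies to the $2$-fibrations at hand. By contrast, the well-definedness of $\Phi$, the Grothendieck construction, and the two Yoneda lemmas should all be routine once the framework of Section~\ref{2-Fibrations} is in place.
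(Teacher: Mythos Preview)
Your reduction is correct in outline but takes a genuinely different route from the paper, and the place where you defer the work is exactly where the paper invests all of its effort.

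The paper does not pass through Buckley's correspondence or a co-Yoneda presentation at all. Instead it works directly with the post-composition functor and proves essential surjectivity by hand: given a cloven Cartesian functor $\lu{F}$, it reconstructs a left ultrafunctor $f\colon M_E\to M_{E'}$ by probing with very specific topological spaces. For objects one uses the one-point space; for a generalised morphism in $\Hom(A,\int_I M_i\,d\mu)$ one builds the space $I_\mu = I\sqcup\{p_\mu\}$ topologised so that the only nontrivial convergent ultrafilter is $\mu$ converging to $p_\mu$, and observes that left ultrafunctors $I_\mu\to M_E$ are exactly such generalised morphisms. To verify the composition axiom one builds a larger space $I_{\mu,(\lambda_i)}$ whose left ultrafunctors package exactly the data to be composed; the change-of-base axiom is handled by the obvious continuous map $J_\mu\to I_{g\mu}$. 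All the axioms of a left ultrafunctor then follow from $\lu{F}$ being cloven Cartesian over these explicit diagrams of spaces. The paper even remarks that this technique amounts to showing that topological spaces are dense in $\cat{GenUlt}$ --- which is precisely your bicolimit assertion.

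So your abstract strategy and the paper's concrete one meet at the same point: the statement that $M_E$ is the bicolimit of the forgetful $\cat{Top}//M_E\to\cat{GenUlt}$ (equivalently, that $\cat{Top}\hookrightarrow\cat{GenUlt}$ is dense at $M_E$). You correctly flag this as the obstacle but do not prove it; the paper's contribution is to supply the corepresenting spaces $I_\mu$, $I_{\mu,(\lambda_i)}$ that make it go through. Your framework is cleaner and explains \emph{why} the result should hold, and it would let one state the theorem for any generalised ultracategory in place of $M_{E'}$; the paper's approach is more elementary (no bicolimit machinery, no appeal to Buckley beyond the definitions) and actually discharges the content. If you want to complete your route, you will end up constructing essentially the same probe spaces.
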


Here $\la{ClovenCart}(\cat{Top}//M_E,\cat{Top}//M_{E^{'}})$ are the functors between these lax-slice categories that respect the fibration aspect over the category of topological spaces. The proof relies on constructing nice enough topological space that can realise generalised  morphisms in generalised ultracategories, as ultrafunctors from these topological spaces. We did not explicitly state this fact, but this technique can be used to show that the category of topological spaces is dense inside the $2$-category of generalised ultracategory.

The second theorem we use to show is the following theorem \ref{second equivalence theorem}:

\begin{theorem*}
    Let $E$ be a topos with enough points, then there is an equivalence of discrete $2$-opfibration over $\cat{Top}$ between $\cat{Top}//M_E$ and $\cat{Top}//E$ .
\end{theorem*}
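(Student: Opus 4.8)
\emph{Proof idea.} The plan is to exhibit an explicit morphism over $\cat{Top}$ --- the ``stalks'' pseudofunctor $\cat{Top}//E\to\cat{Top}//M_E$ --- and to show it is an equivalence on every fibre; by the $2$-categorical Grothendieck correspondence recalled in \ref{2-Fibrations}, a morphism of discrete $2$-opfibrations over $\cat{Top}$ that is a fibrewise equivalence is an equivalence of $2$-opfibrations, so this suffices. The fibre of $\cat{Top}//E$ over a space $X$ consists of the geometric morphisms $\mathrm{Sh}(X)\to E$, and the fibre of $\cat{Top}//M_E$ over $X$ consists of the left ultrafunctors $X\to M_E$, where $X$ is regarded as an ultrapreorder via the embedding $\cat{Top}\hookrightarrow\{\text{generalised ultracategories}\}$ of \ref{examples of generalised}; reindexing along a continuous $h\colon X'\to X$ is precomposition with $\mathrm{Sh}(h)\colon\mathrm{Sh}(X')\to\mathrm{Sh}(X)$, respectively with the induced left ultrafunctor $X'\to X$. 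Thus the theorem reduces to: the stalks construction gives a natural equivalence between the (groupoid of) geometric morphisms $\mathrm{Sh}(X)\to E$ and the (groupoid of) left ultrafunctors $X\to M_E$.

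I would define the stalks functor as follows. Fix a site $(C,J)$ with $E\simeq\mathrm{Sh}(C,J)$, so that $M_E$ is the full subcategory of $\mathrm{Fun}(C,\cat{Set})$ on the flat $J$-continuous functors, with the generalised ultrastructure induced from the pointwise ultraproducts of $\mathrm{Fun}(C,\cat{Set})$. Given $f\colon\mathrm{Sh}(X)\to E$, let $\phi_f(x)$ be the point of $E$ obtained by composing $f$ with the stalk $x\colon\cat{Set}\to\mathrm{Sh}(X)$ and restricting along the canonical functor $C\to E$. For the left ultrastructure: if $(x_i)_{i\in I}$ converges to $x$ along $\mu$, i.e.\ $\Hom_X\big(x,\int_I x_i\,d\mu\big)\neq\varnothing$, then for $c\in C$ and a germ $s\in (f^{*}c)_x$ represented by a section over an open $U\ni x$ one has $\{\,i:x_i\in U\,\}\in\mu$, so $s$ determines a well-defined class in $\int_I (f^{*}c)_{x_i}\,d\mu$; this is natural in $c$ and yields an element of $\Hom_{M_E}\big(\phi_f(x),\int_I\phi_f(x_i)\,d\mu\big)$. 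Naturality in the family, compatibility with the morphisms of $X$, with principal ultrafilters and with iterated ultrafilters are then checked directly against the axioms of \ref{axioms-first section}, so $\phi_f$ is a left ultrafunctor; $2$-cells are transported stalkwise.

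The heart of the proof is that this functor is a fibrewise equivalence, which I would prove by constructing the inverse. Given a left ultrafunctor $\phi\colon X\to M_E\subseteq\mathrm{Fun}(C,\cat{Set})$, I would reconstruct a flat $J$-continuous functor $F\colon C\to\mathrm{Sh}(X)$, hence by Diaconescu's theorem a geometric morphism $\mathrm{Sh}(X)\to\mathrm{Sh}(C,J)=E$. For $c\in C$ set $\widetilde{F(c)}=\coprod_{x\in X}\phi(x)(c)$ with the evident projection to $X$, and declare a family $(s_j)_{j\in J}$ with $s_j\in\phi(x_j)(c)$ to converge to $s\in\phi(x)(c)$ along an ultrafilter $\nu$ exactly when $(x_j)\to x$ along $\nu$ in $X$ and the left ultrastructure map of $\phi$ at $\nu$ carries $s$ to the class $[(s_j)]\in\int_J\phi(x_j)(c)\,d\nu$. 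The coherence axioms satisfied by a generalised left ultrafunctor translate precisely into the axioms that an ultrafilter convergence relation must obey, so by the relational (Barr-style) presentation of topological spaces used in \ref{examples of generalised} and \cite{barr2006relational}, $\widetilde{F(c)}$ is a topological space, the projection is a local homeomorphism with fibre $\phi(x)(c)$ over $x$, and we let $F(c)$ be the corresponding sheaf. Naturality in $c$ makes $F$ a functor; $F$ is flat and $J$-continuous because each stalk $x^{*}F=\phi(x)$ is, stalks being jointly conservative and preserving finite limits and colimits on $\mathrm{Sh}(X)$. One then checks that $f\mapsto\phi_f$ and $\phi\mapsto F$ are mutually quasi-inverse --- a sheaf is recovered from its \'etale space, an \'etale space from its fibres together with their convergence data, while $\phi$ is recovered on objects as the stalks of $F$ and in its ultrastructure as the convergence in $\widetilde{F(-)}$ --- and that everything is pseudonatural in $X$, since reindexing along $h\colon X'\to X$ both pulls back \'etale spaces and composes left ultrafunctors with $X'\to X$ compatibly.

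The main obstacle is exactly this inverse construction: showing that the coherence data carried by a left ultrafunctor into $M_E$ is \emph{precisely} the data of an $X$-indexed family of sheaves of flat $J$-continuous $C$-functors --- equivalently, that the left ultrastructure maps upgrade the pointwise stalk data to genuine \'etale (local-homeomorphism) data, with no loss or gain of information --- which requires a careful dictionary between the left ultrafunctor axioms of \ref{axioms-first section} and Barr's relational description of spaces. This is also the place where the hypothesis that $E$ has enough points is genuinely used: it is what guarantees that the site-defined generalised ultrastructure on $M_E$ retains enough of the topos $E$ for the reconstruction of $F$ (and hence of $f$) to be faithful.
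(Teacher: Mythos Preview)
Your approach is correct and is essentially the same as the paper's, just unwound more concretely. The paper organises the fibrewise equivalence as a chain
\[
\la{Lult}(X,M_E)\;\simeq\;J\text{-cts lex}\bigl(C,\la{Lult}(X,\cat{Set})\bigr)\;\simeq\;J\text{-cts lex}(C,Sh(X))\;\simeq\;\la{Geom}(Sh(X),E),
\]
using first an ``exponential swap'' (ultraproducts in $\mathrm{Fun}(C,\cat{Set})$ are pointwise, so a left ultrafunctor $X\to\mathrm{Fun}(C,\cat{Set})$ is the same data as a functor $C\to\la{Lult}(X,\cat{Set})$), then the already--established equivalence $\la{Lult}(X,\cat{Set})\simeq Sh(X)$ via \'etale spaces, and finally Diaconescu. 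Your stalks/\'etale-space description is exactly this chain read elementwise: the \'etale space you build from $\phi$ at each $c\in C$ is precisely the image of $\phi(-)(c)$ under $\la{Lult}(X,\cat{Set})\simeq Sh(X)$, and your forward ``stalks'' functor is the composite of the other direction. The paper's packaging is slightly more modular --- it separates the swap from the \'etale construction --- but there is no substantive difference.

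One correction: you misplace the role of the hypothesis that $E$ has enough points. In this theorem it is not used; the proof only needs that $Sh(X)$ has enough points (which is automatic for every space $X$) in order to test $J$-continuity and epimorphisms stalkwise when showing that the reconstructed $F\colon C\to Sh(X)$ is flat and $J$-continuous. The ``enough points'' assumption on $E$ is genuinely needed later, in the equivalence $\la{ClovenCart}(\cat{Top}//E,\cat{Top}//E')\simeq\la{Geom}(E,E')$, where the Butz--Moerdijk presentation of $E$ as a colimit of a topological groupoid is invoked.
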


This may be the easiest to show part of the equivalence. We show the fact that for any topological space $X$, there is an equivalence of categories between $Sh(X)$ and $\mathrm{Lult}(X,\cat{Set})$. This can be done by constructing an étale bundles whose fibres are the images of the points of $X$, by the left ultrafunctor. Then the entire argument can be done by exponentiation swap, the detailed argument is in \ref{second equivalence}.

The third theorem in our proof is the theorem \ref{third equivalence theorem}:

\begin{theorem*}
    Let $E$ and $E^{'}$ be two toposes with enough points, then there is an equivalence of categories between $\mathrm{Geom}(E,E^{'})$ and $\mathrm{ClovenCart}(\cat{Top}//E, \cat{Top}//E^{'})$.
\end{theorem*}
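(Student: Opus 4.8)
The plan is to realise both sides of the claimed equivalence as ``functors of points tested against topological spaces'', and then to exploit the fact that a topos with enough points is reconstructed, functorially, from this data. First I would write down the comparison functor directly: a geometric morphism $\phi \colon E \to E'$ induces, by postcomposition, a functor $\phi_{*} \colon \cat{Top}//E \to \cat{Top}//E'$ sending $(X, f \colon \mathrm{Sh}(X) \to E)$ to $(X, \phi \circ f)$ and a morphism $(g,\alpha)$ to $(g, \phi \ast \alpha)$. Since $\phi_{*}$ is the identity on the base $\cat{Top}$, respects the chosen cleavage, and since whiskering with a fixed geometric morphism preserves invertibility of structure $2$-cells, $\phi_{*}$ is a cloven cartesian functor over $\cat{Top}$; moreover a $2$-cell $\phi \Rightarrow \psi$ whiskers to a natural transformation $\phi_{*} \Rightarrow \psi_{*}$. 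This yields a functor
\[ \Phi \colon \mathrm{Geom}(E,E') \longrightarrow \mathrm{ClovenCart}(\cat{Top}//E, \cat{Top}//E'), \]
and the content of the theorem is that $\Phi$ is an equivalence.

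To analyse $\Phi$ I would pass through the Grothendieck correspondence for $2$-fibrations of \cite{buckley2014fibred}: the $2$-(op)fibration $\cat{Top}//E \to \cat{Top}$ is classified by the assignment $\mathcal{E} \colon X \mapsto \mathrm{Geom}(\mathrm{Sh}(X), E)$ on $\cat{Top}$, cloven cartesian functors over $\cat{Top}$ correspond to pseudonatural transformations, and under this dictionary $\Phi$ becomes postcomposition,
\[ \mathrm{ClovenCart}(\cat{Top}//E, \cat{Top}//E') \;\simeq\; \mathrm{PsNat}(\mathcal{E},\mathcal{E}'), \qquad \phi \longmapsto (\phi \circ -). \]
Thus the theorem is equivalent to the statement that the ``nerve against topological spaces'' $2$-functor $E \mapsto \mathrm{Geom}(\mathrm{Sh}(-), E)$ is $2$-fully faithful on toposes with enough points.

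This is where the hypothesis enters. Because $E$ has enough points, the Butz--Moerdijk representation theorem (equivalently, Joyal--Tierney descent along a spatial cover) provides a topological groupoid $X_{1} \rightrightarrows X_{0}$ and a cocone exhibiting $E$ as the bicolimit, in the $2$-category of toposes, of the associated simplicial topos $\mathrm{Sh}(X_{\bullet})$; equivalently the canonical comparison $E \to \mathrm{Desc}(\mathrm{Sh}(X_{\bullet}))$ is an equivalence. Applying $\mathrm{Geom}(-, E')$, which turns bicolimits into bilimits, gives
\[ \mathrm{Geom}(E, E') \;\simeq\; \mathrm{bilim}_{[n] \in \Delta}\, \mathrm{Geom}(\mathrm{Sh}(X_{n}), E') \;=\; \mathrm{bilim}_{[n]\in\Delta}\, \mathcal{E}'(X_{n}), \]
i.e. the category of descent data for $E'$ along the groupoid. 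On the other side, evaluation at the cocone legs $\sigma_{n} \in \mathcal{E}(X_{n})$ defines a functor $\mathrm{PsNat}(\mathcal{E},\mathcal{E}') \to \mathrm{bilim}_{[n]\in\Delta}\mathcal{E}'(X_{n})$, and I would show this is an equivalence by proving that $\mathcal{E}$, as a $\cat{Top}$-indexed category, is \emph{freely generated under descent} by $\sigma_{0} \in \mathcal{E}(X_{0})$: for every space $Y$ and every $f \in \mathcal{E}(Y)=\mathrm{Geom}(\mathrm{Sh}(Y),E)$ there is a spatial cover $Y' \twoheadrightarrow Y$ through which $f$ lifts to $X_{0}$, compatibly over $Y'\times_{Y}Y'$ with $X_{1}$, so that a pseudonatural transformation out of $\mathcal{E}$ is determined, and freely determined, by its value on $\sigma_{0}$ together with the induced descent datum. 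Comparing the two displays then identifies $\mathrm{Geom}(E,E')$ with $\mathrm{PsNat}(\mathcal{E},\mathcal{E}')$ compatibly with $\Phi$, giving the theorem.

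The main obstacle I anticipate is exactly this last step --- producing the spatial covers $Y' \to Y$ over $X_{0}$ and tracking them coherently as $Y$ varies --- since for an arbitrary topos the pullback $\mathrm{Sh}(Y)\times_{E}\mathrm{Sh}(X_{0})$ need not be the topos of sheaves on a space, so ``enough points'' has to be used not merely for $E$ but uniformly over all parametrising spaces $Y$. I expect this to rely on the finer properties of the Butz--Moerdijk (or Joyal--Tierney) presentation, in particular on choosing the cover $\mathrm{Sh}(X_{0})\to E$ to be an open surjection and on stability of open surjections under pullback, which should let one reduce the generation statement to spaces and sites on which it can be verified explicitly. Once that is in place, the remaining verifications --- functoriality of $\Phi$ on $2$-cells, compatibility of the identifications with cleavages, and coherence of the descent isomorphisms --- are routine bookkeeping.
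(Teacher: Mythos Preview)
Your overall strategy --- reduce to showing that the ``$\cat{Top}$-nerve'' $X \mapsto \mathrm{Geom}(\mathrm{Sh}(X),E)$ is $2$-fully faithful on toposes with enough points, and feed in Butz--Moerdijk --- is in the same spirit as the paper, but the technical core is organised quite differently, and the obstacle you flag is exactly the point where the two diverge.

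The paper does \emph{not} attempt to show that every $f:\mathrm{Sh}(Y)\to E$ factors, after a spatial cover of $Y$, through a fixed Butz--Moerdijk $X_0$. Instead it proves the stronger intermediate statement that $E$ is the $2$-colimit of the forgetful functor $\ca{F}:\cat{Top}/E\to\cat{Topos}$. Every object of $\cat{Top}/E$ is then tautologically part of the colimiting diagram, so no spatial-cover lifting problem ever arises. To compute that colimit the paper uses not one groupoid but a \emph{filtered} poset $\ca{P}$ of Butz--Moerdijk groupoids $G(I,P)$ (varying both the set $P$ of points and the enumerating cardinal $I$), left-Kan-extends $\ca{F}$ along Yoneda to a colimit-preserving $\tilde{\ca{F}}$, and reduces to showing that the presheaf $p:(I,P)\mapsto \mathrm{Hom}_{\cat{Top}/E}(-,G(I,P))$ has colimit the terminal presheaf. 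This amounts to filteredness of the Grothendieck construction $\int p_X$ for each $X\to E$; the hard step is the ``coequalising'' condition, handled by an explicit continuity lemma showing that any two lifts $h_1,h_2:X\to G_0$ over $E$ are connected by a continuous $\tilde h:X\to G_1$, built from the enumeration structure and the étale topology of $f^*(A^n)$. The freedom to enlarge $I$ and $P$ with $X$ is what makes this work and is precisely what your single-groupoid descent picture lacks; your proposed fix via stability of open surjections does not resolve it, since the pullback $\mathrm{Sh}(Y)\times_E \mathrm{Sh}(X_0)$ need not be spatial even when $\mathrm{Sh}(X_0)\to E$ is an open surjection.

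There is also a second ingredient you have not accounted for. Your identification $\mathrm{ClovenCart}(\cat{Top}//E,\cat{Top}//E')\simeq \mathrm{PsNat}(\mathcal{E},\mathcal{E}')$ and the colimit argument only see the \emph{pseudo}-slice $\cat{Top}/E$, i.e.\ invertible $2$-cells. The theorem is about the \emph{lax} slice $\cat{Top}//E$, and the paper bridges the gap with a Sierpi\'nski-space trick: a pair $f,g:\mathrm{Sh}(X)\to E$ together with a not-necessarily-invertible $2$-cell $\alpha:f\Rightarrow g$ is the same as a single geometric morphism $\tilde\alpha:\mathrm{Sh}(S\times X)\to E$ (with $S$ the Sierpi\'nski space), so clovenness over the $2$-cells of $\cat{Top}$ lets one reduce the lax statement to the pseudo one already controlled by the colimit. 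Without something playing this role your argument, even if the descent step went through, would only yield the equivalence for $\cat{Top}/E$ and hence only recover invertible $2$-cells in $\mathrm{Geom}(E,E')$.
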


In order to show this theorem we use a representability theorem by Moerdijk and Butz stating that every topos with enough points is a colimit of topological groupoids \cite{butz1997representation}. We use this to show an important intermediate result stating that for any topos the $2$-colimit of the ``forgetful'' functor from $\cat{Top}//E$ to $\cat{Topos}$ is $E$. The entire proof is done in subsection \ref{third equivalence}

\section{The axioms} \label{axioms-first section}
In this paper we refer to ultracategories as defined in \cite{lurie2018ultracategories}. In \cite{hamad2025ultracategories} we have shown that ultracategories are equivalent to normal colax algebras for pseudo monad $T$ on the category of categories. An important construction of that paper is what we have called colax associators which we denote by $a_{(S,\eta,(T_s)_{s \in S}, (M_{(s,t)})_{(s,t) \in \coprod_{s \in S} T_s}),(\nu_s)_{s \in S} )}$:
\[
\begin{tikzcd}
{\int_{\coprod_{s \in S}T_s}M_{(s,t)}d \int_S \iota_S \gamma_S d \mu } \arrow[rr, "{\Delta_{\iota \gamma_{\bullet},\mu}}"] &  & {\int_S\int_{\coprod_{s \in S}T_s} M_{(s,t)} d \iota_s \gamma_s d\mu} \arrow[rr, "{\int_T \Delta_{\mu,\iota_s}}"] &  & \int_S\int_{T_s} M_{(s,t)} d \gamma_s d\mu
\end{tikzcd}\]

Here, the first $\Delta$ is the so called categorical Fubini transform in \cite{lurie2018ultracategories}, and the second $\Delta$ is the so called ultraproduct diagonal map in \cite{lurie2018ultracategories}.

We will be referring to this morphism as simply $a$ when no confusion is to be made.

A generalised ultracategory $\ca{C}$ consists of the following data:
\begin{itemize}
    \item A ``set'' of objects $\mathrm{Obj}(\ca{C})$.
    \item For every object $A$ and every family of objects $(M_i)_{i \in I}$ and every ultrafilter $\mu$ on $I$ we have a set which we call $\la{Hom}(A, \int_I M_i d \mu)$, this set can be regarded as a generalised $\la{Hom}$ sets.
    
    \item Given sets $I$, an ultrafilter $\mu$ on $I$, a family of sets $(X_i)_{i \in I}$ and for each $X_i$ a family of objects $(N_{((x,i)})_{x \in X_{i} }$ and finally a family of objects $(M_i)_{i \in I}$, then we have a function: $$\beta_{A, (M_i) , (N_{(x,i)})_{x \in X_i}}: \la{Hom}\left(A , \int_I M_i d  \mu\right) \times \int_{I} \la{Hom}\left((M_{i}, \int_{X_i}N_{(x,i)}d \lambda_i\right) d \mu \longrightarrow  \la{Hom}\left(A,\int_{\coprod_{I}X_i}N_{(x,i)}d\int_I \iota_i \lambda_i d\mu\right)$$ Here $\iota_i$ is the inclusion map of $X_i$ inside $\coprod_{i \in I} X_i$.

    \item Given a set $J$ and an ultrafilter $\mu$ on $J$ and a map of sets $f$ from $J$ to $I$, then we have a map of sets $\Xi_{f,\mu}$ from $\la{Hom}(A,\int_I M_id f\mu)$ to $\la{Hom}(A,\int_JM_{f(i)}d\mu)$ (think of this as the representable at the ultraproduct diagonal map in case of usual ultracategories).

    \item For every object $A$ a distinguished element $\kappa_A \in \la{Hom}(A, \int_{*} A d *)$ (which is equivalent to having a morphism from $\bf{1}$ to $\la{Hom}(A, \int_{*} A d *)$). Here $*$ simultaneously denotes the one-point set and at the same time the unique ultrafilter on it.
\end{itemize}

We require this data to satisfy the following axioms:

\begin{enumerate}

%\item for every set $I$  and every object $A$, Let $\alpha_I$ be the natural isomorphism between $I$ and $\coprod_{I}  *$ and $\gamma_I$ be the natural isomorphism between $I$ and $\coprod_{*}I$ Then taking the map $\beta \la{Hom}(A,\int_I M_i d\mu) \times \int_{*} \la{Hom}(M_i, M_i)  $ to $\la{Hom}(A, \int_{\coprod_{*}I}$We get that $\beta(_,) \la{Hom}(A,\int_I M_I d\mu)$ we have an isomorphism $\alpha_I$ between $\mathrm{Hom}(A,\int_I M_id\mu)$ and $\mathrm{Hom}(A,\int_{\coprod_I *} M_id\mu)$ , an isomorphism $\gamma_I$ between  $\mathrm{Hom}(A,\int_I M_id\mu) \times \int_* \la{Hom}() d* (M_i,$ and $\mathrm{Hom}(A,\int_{\coprod_{*}I} M_id\mu)$.

% \item Suppose that we have a family $(J_i)_{i \in I}$ of sets indexed  by $I$ and an ultrafilter $\mu$ on $I$, and for each  $J_i$ we have a set $K_{(i,j)}$  each indexed by $J_i$ and an ultrafilter $\nu_i$ and  and for each $K_{(i,j)}$ we have a family of objects Then for any object $M_{(i,j,k)}$  and an ultrafilter $\omega_{(i,j)}$, then we have an isomorphism $\chi_{}$ between $\la{Hom}(A , \int_{\coprod_{(i,j) \in \coprod_{i \in I}J_i}K_{(i,j)}}M_{(i,j,k)}d \int_{I} \int_J \int_K \omega_{(i,j)} d\nu_i d\mu)$ and $\la{Hom}(A , \int_{\coprod_{i \in I}\coprod_{j \in J_i}K_{(i,j)}}M_{(i,j,k)})d \int_{I} \int_J \int_K \omega_{(i,j)} d\nu_i d\mu)$.

\item We require that $\Xi_{\la{id},\mu}= \la{id}$.

\item We require that $\Xi_{f,\mu}$ to be an isomorphism when $f$ is injective.

\item We require $\Xi$ to be functorial which means that $\Xi_{f \circ g}=\Xi_{g}  \circ \Xi_{f}$.

\item  Left unit axiom, which can be expressed by the fact that the following composition is the inverse of the $\Xi_{f_{I},\mu}$, where $f_I$ is the natural isomorphism between $I$ and $\coprod_{*}I$:

\adjustbox{max width=\textwidth}{
% https://q.uiver.app/#q=WzAsNixbNCwyXSxbMSwwLCJcXGludF97Kn1cXG1hdGhybXtIb219KEEsXFxpbnRfSSBNX2kgZFxcbXUpZCoiXSxbMCwwLCJcXG1hdGhybXtIb219KEEsXFxpbnRfSSBNX2kgZFxcbXUpIl0sWzIsMCwiXFxtYXRoYmZ7MX0gXFx0aW1lcyBcXGludF97Kn1cXG1hdGhybXtIb219KEEsXFxpbnRfaSBNX2kgZFxcbXUgKWQqIl0sWzQsMCwiXFxtYXRocm17SG9tfShBLCBcXGludF9cXGFzdCBBIGQgXFxhc3QgKSBcXHRpbWVzIFxcaW50X3tcXGFzdH1cXG1hdGhybXtIb219KEEsXFxpbnRfSSBNX2kgZFxcbXUgKWQqIl0sWzcsMCwiXFxtYXRocm17SG9tfShBLFxcaW50X3tcXGNvcHJvZF8qSX0gTV9pIGRcXG11ICkiXSxbMiwxLCJcXGVwc2lsb25eey0xfV97KiwqfSJdLFsxLDMsIlxcc2ltZXEiXSxbMyw0LCJcXGthcHBhIFxcdGltZXMgXFxtYXRocm17aWR9Il0sWzQsNSwiXFxiZXRhIl1d
\begin{tikzcd}
	{\mathrm{Hom}(A,\int_I M_i d\mu)} & {\int_{*}\mathrm{Hom}(A,\int_I M_i d\mu)d*} & {\mathbf{1} \times \int_{*}\mathrm{Hom}(A,\int_i M_i d\mu )d*} && {\mathrm{Hom}(A, \int_\ast A d \ast ) \times \int_{\ast}\mathrm{Hom}(A,\int_I M_i d\mu )d*} &&& {\mathrm{Hom}(A,\int_{\coprod_*I} M_i d\mu )} \\
	\\
	&&&& {}
	\arrow["{\epsilon^{-1}_{*,*}}", from=1-1, to=1-2]
	\arrow["\simeq", from=1-2, to=1-3]
	\arrow["{\kappa \times \mathrm{id}}", from=1-3, to=1-5]
	\arrow["\beta", from=1-5, to=1-8]
\end{tikzcd}
}
\item  Right unit axiom which can be expressed as the fact that the following composition is the inverse of $\Xi_{g_{I},\mu}$ where $g_{I}$ is the natural isomorphism between $I$ and $\coprod_{I}*$.

\adjustbox{max width=\textwidth}{
% https://q.uiver.app/#q=WzAsNSxbMywwLCJcXG1hdGhybXtIb219KEEsXFxpbnRfSSBNX2kgZFxcbXUpXFx0aW1lcyBcXGludF97SX1cXG1hdGhiZnsxfWRcXG11Il0sWzUsMCwiXFxtYXRocm17SG9tfShBLFxcaW50X0kgTV9pIGRcXG11IClcXHRpbWVzIFxcaW50X0lcXG1hdGhybXtIb219KE1fe2l9ICwgXFxpbnRfeyp9TV9pZCopZFxcbXUiXSxbMiwwLCJcXG1hdGhybXtIb219KEEsXFxpbnRfSSBNX2kgZFxcbXUpIFxcdGltZXMgMSJdLFswLDAsIlxcbWF0aHJte0hvbX0oQSxcXGludF9JIE1faSBkXFxtdSkiXSxbNywwLCJcXG1hdGhybXtIb219KEEsXFxpbnRfe1xcY29wcm9kX3tJfSp9TV9pZFxcbXUpIl0sWzIsMCwiXFxtYXRocm17aWR9IFxcdGltZXMgXFxkZWx0YV97XFxtdX0iXSxbMywyLCJcXHNpbWVxIl0sWzAsMSwiXFxtYXRocm17aWR9IFxcdGltZXMgXFxpbnRfSSBcXGthcHBhIGRcXG11Il0sWzEsNCwiXFxiZXRhIl1d
\begin{tikzcd}
	{\mathrm{Hom}(A,\int_I M_i d\mu)} && {\mathrm{Hom}(A,\int_I M_i d\mu) \times 1} & {\mathrm{Hom}(A,\int_I M_i d\mu)\times \int_{I}\mathbf{1}d\mu} && {\mathrm{Hom}(A,\int_I M_i d\mu )\times \int_I\mathrm{Hom}(M_{i} , \int_{*}M_id*)d\mu} && {\mathrm{Hom}(A,\int_{\coprod_{I}*}M_id\mu)}
	\arrow["{\mathrm{id} \times \delta_{\mu}}", from=1-3, to=1-4]
	\arrow["\simeq", from=1-1, to=1-3]
	\arrow["{\mathrm{id} \times \int_I \kappa d\mu}", from=1-4, to=1-6]
	\arrow["\beta", from=1-6, to=1-8]
\end{tikzcd}
}
Here $\delta_{\mu}$ is (in the case of sets) the isomorphism between $\mathbf{1}$ and $\int_{I}\mathbf{1}d\mu$.

%\item 

%\adjustbox{max width = \textwidth}{

% https://q.uiver.app/#q=WzAsNixbOCwwLCJcXG1hdGhybXtIb219KEEsIFxcaW50X0kgTV97aX0gZFxcbXUpIFxcdGltZXMgXFxpbnRfSlxcbWF0aHJte0hvbX0oTV97ZihqKX0sIFxcaW50XypNX3tmKGopfSBkKilkXFxtdSJdLFsxMSwwLCJcXG1hdGhybXtIb219KEEsIFxcaW50X3tcXGNvcHJvZF9KICp9IE1fe2Yoail9IGRcXG11KSJdLFsxNCwwLCJcXG1hdGhybXtIb219KEEsICBcXGludF9KTV97ZihqKX0gZFxcbXUpIl0sWzUsMCwiXFxtYXRocm17SG9tfShBLCBcXGludF9JIE1faSBkZlxcbXUpIFxcdGltZXMgXFxpbnRfSiBcXG1hdGhiZnsxfWRcXG11Il0sWzIsMCwiXFxtYXRocm17SG9tfShBLCBcXGludF9JIE1faSBkZlxcbXUpIFxcdGltZXMgXFxtYXRoYmZ7MX0iXSxbMCwwLCJcXG1hdGhybXtIb219KEEsXFxpbnRfSSBNX2kgZCBmIFxcbXUpIl0sWzAsMSwiXFxiZXRhIl0sWzEsMiwiXFxnYW1tYV97QSwoTV9pKV97aSBcXGluIEl9LFxcbXV9XnstMX0iXSxbMywwLCJcXG1hdGhybXtpZH0gXFx0aW1lcyBcXGludF9JIFxca2FwcGEgZFxcbXUiXSxbNCwzLCJcXG1hdGhybXtpZH0gXFx0aW1lcyBcXGRlbHRhX3tcXG11fSJdLFs1LDQsIlxcc2ltZXEgIl1d
%\begin{tikzcd}
	%{\mathrm{Hom}(A,\int_I M_i d f \mu)} && {\mathrm{Hom}(A, \int_I M_i df\mu) \times \mathbf{1}} &&& {\mathrm{Hom}(A, \int_I M_i df\mu) \times \int_J \mathbf{1}d\mu} &&& {\mathrm{Hom}(A, \int_I M_{i} d\mu) \times \int_J\mathrm{Hom}(M_{f(j)}, \int_*M_{f(j)} d*)d\mu} &&& {\mathrm{Hom}(A, \int_{\coprod_J *} M_{f(j)} d\mu)} &&& {\mathrm{Hom}(A,  \int_JM_{f(j)} d\mu)}
	%\arrow["\beta", from=1-9, to=1-12]f
	%\arrow["{\gamma_{A,(M_i)_{i \in I},\mu}^{-1}}", from=1-12, to=1-15]
	%\arrow["{\mathrm{id} \times \int_I \kappa d\mu}", from=1-6, to=1-9]
	%\arrow["{\mathrm{id} \times \delta_{\mu}}", from=1-3, to=1-6]
	%\arrow["{\simeq }", from=1-1, to=1-3]
%\end{tikzcd}
%}

\item We want a compatibility axioms between $\beta$ and $\Xi$ which can be stated as the commutativity of the following diagrams:

\adjustbox{max width =\textwidth}{
% https://q.uiver.app/#q=WzAsNixbMCwwLCJcXEhvbShBLFxcaW50X0lNX2lkXFxtdSkgXFx0aW1lcyBcXGludF9JIFxcSG9tKE1fe2l9LFxcaW50X3tYX2l9Tl97KGkseCl9ZGZfaVxcbGFtYmRhX2kpIGRcXG11Il0sWzQsMCwiXFxIb20oQSxcXGludF97XFxjb3Byb2Rfe0l9IFhfaX1OX3soaSx4KX1kXFxpbnRcXGlvdGFfaWZfaVxcbGFtYmRhX2lkXFxtdSkiXSxbMCwzLCJcXEhvbShBLFxcaW50X0lNX2kgZFxcbXUpIFxcdGltZXMgXFxpbnRfSVxcSG9tKE1fe2l9LFxcaW50X3tLX2l9Tl97KGksZl9pKHgpKX1kXFxsYW1iZGFfaSlkXFxtdSJdLFs0LDMsIlxcSG9tKEEsXFxpbnRfe1xcY29wcm9kX3tJfSBLX2l9Tl97KGksZl9pKHgpKX1kXFxpbnRcXGlvdGFfaVxcbGFtYmRhX2lkXFxtdSkiXSxbMCwxXSxbMSwxXSxbMCwxLCJcXGJldGEiXSxbMiwzLCJcXGJldGEiLDJdLFsxLDMsIlxcWGlfe1xcYmFye2Z9fSJdLFswLDIsIlxcbGF7aWR9IFxcdGltZXMgXFxpbnRcXFhpX3tmX2ksXFxsYW1iZGFfaX0iLDJdXQ==
\begin{tikzcd}
	{\Hom(A,\int_IM_id\mu) \times \int_I \Hom(M_{i},\int_{X_i}N_{(i,x)}df_i\lambda_i) d\mu} &&&& {\Hom(A,\int_{\coprod_{I} X_i}N_{(i,x)}d\int\iota_if_i\lambda_id\mu)} \\
	{} & {} \\
	\\
	{\Hom(A,\int_IM_i d\mu) \times \int_I\Hom(M_{i},\int_{K_i}N_{(i,f_i(x))}d\lambda_i)d\mu} &&&& {\Hom(A,\int_{\coprod_{I} K_i}N_{(i,f_i(x))}d\int\iota_i\lambda_id\mu)}
	\arrow["\beta", from=1-1, to=1-5]
	\arrow["{\la{id} \times \int\Xi_{f_i,\lambda_i}}"', from=1-1, to=4-1]
	\arrow["{\Xi_{\bar{f}}}", from=1-5, to=4-5]
	\arrow["\beta"', from=4-1, to=4-5]
\end{tikzcd}
}

Here $\bar{f}$ is the map defined by  $\bar{f}_{K_i}=f_{i}$

\adjustbox{max width =\textwidth}{
% https://q.uiver.app/#q=WzAsNSxbMCwwLCJcXEhvbShBLCBcXGludF9JTV9pIGRmXFxtdSkgXFx0aW1lcyBcXGludF9JIFxcSG9tKE1fe2l9LFxcaW50X3tYX2l9Tl97KGksayl9ZFxcbGFtYmRhX3tpfSlkZlxcbXUiXSxbMCwzLCJcXEhvbShBLCBcXGludF9JTV9pIGRmXFxtdSkgXFx0aW1lcyBcXGludF9KIFxcSG9tKE1fe2Yoail9LFxcaW50X3tYX3tmKGopfX1OX3soZihqKSxrKX1kXFxsYW1iZGFfe2Yoail9ZFxcbXUiXSxbMywzLCJcXEhvbShBLCBcXGludF9KTV9pIGRcXG11KSBcXHRpbWVzIFxcaW50X0ogXFxIb20oTV97ZihqKX0sXFxpbnRfe1hfe2Yoail9fU5feyhmKGopLGspfWRcXGxhbWJkYV97ZihqKX1kXFxtdSJdLFszLDAsIlxcSG9tKEEsXFxpbnRfe1xcY29wcm9kX3tJfVhfaX1OX3soaSx4KX1kXFxpbnRfe0l9XFxpb3RhX2lcXGxhbWJkYV9pZGZcXG11KSJdLFs2LDMsIlxcSG9tKEEsXFxpbnRfe1xcY29wcm9kX3tKfVhfe2Yoail9fU5feyhmKGopLHgpfWRcXGludF97Sn1cXGlvdGFfalxcbGFtYmRhX2pkXFxtdSkiXSxbMCwxLCJcXGxhe2lkfSBcXHRpbWVzIFxcRGVsdGFfe2YsXFxtdX0iXSxbMSwyLCJcXFhpX3tmfSBcXHRpbWVzIFxcbGF7aWR9Il0sWzAsMywiXFxiZXRhIiwyXSxbMyw0LCJcXFhpX3tcXHRpbGRle2Z9fSIsMl0sWzIsNCwiXFxiZXRhIl1d
\begin{tikzcd}
	{\Hom(A, \int_IM_i df\mu) \times \int_I \Hom(M_{i},\int_{X_i}N_{(i,k)}d\lambda_{i})df\mu} &&& {\Hom(A,\int_{\coprod_{I}X_i}N_{(i,x)}d\int_{I}\iota_i\lambda_idf\mu)} \\
	\\
	\\
	{\Hom(A, \int_IM_i df\mu) \times \int_J \Hom(M_{f(j)},\int_{X_{f(j)}}N_{(f(j),k)}d\lambda_{f(j)}d\mu} &&& {\Hom(A, \int_JM_i d\mu) \times \int_J \Hom(M_{f(j)},\int_{X_{f(j)}}N_{(f(j),k)}d\lambda_{f(j)}d\mu} &&& {\Hom(A,\int_{\coprod_{J}X_{f(j)}}N_{(f(j),x)}d\int_{J}\iota_j\lambda_jd\mu)}
	\arrow["\beta"', from=1-1, to=1-4]
	\arrow["{\la{id} \times \Delta_{f,\mu}}", from=1-1, to=4-1]
	\arrow["{\Xi_{\tilde{f}}}"', from=1-4, to=4-7]
	\arrow["{\Xi_{f} \times \la{id}}", from=4-1, to=4-4]
	\arrow["\beta", from=4-4, to=4-7]
\end{tikzcd}
}
Here $\tilde{f}$ is the map defined by $\tilde{f}(i,a) = (f(i),a) $

Finally, we require a composition axiom, which can be stated as follows:

\item composition axiom: Suppose that we have a set $K$ and an ultrafilter $\mu$ on $K$ and a family of sets $(Z_k)$, and a map $g$ from $K$ to $J$, we also have a family of sets $(X_j)_{j \in J}$, and  also we have $(h_k) \in \int_{K}\la{Hom}(Z_k, X_{g(k)} )d \mu$(this is the Hom of the category of sets not the ``Hom'' coming from our definition), moreover we have an ultrafilter $\lambda_k$ on each $Z_k$, moreover for every $(z,k) \in \coprod Z_k$ we have family of set $T_{k,z}$ and an ultrafilter $\omega_{k,z}$ on each such set, and for every $(t,z,k) \in \coprod T_{z,k} $ we have an object of $\ca{C}$ $L_{k,z,t}$, additionally we have for each $(x,j) \in \coprod X_{j} $ a an object $N_{j,x}$, finally we have a map of sets $f$ from $J$ to $I$ and a family of object $(M_i)_{i \in I}$ and an object $A$.

Let $\theta$ be the natural isomorphism between $\coprod_{K}\coprod_{Z_k}T_{k,z}$ and $\coprod_{\coprod_{K}Z_k}T_{(k,z)}$.

Then the axiom can be expressed as the commutativity of the following diagram:

\adjustbox{max width = \textwidth}{
\begin{tikzcd}
	& {} \\
	\\
	& {} \\
	{\mathrm{Hom}(A,\int_IM_id\mu) \times \int_{I}\mathrm{Hom}(M_{i},\int_{X_{i}}N_{i,x}d\lambda_{i}) \times\int_{X_i}\mathrm{Hom}(N_{i,x}, \int_{T_{i,x}}L_{i,x,t}d\omega_{i,x}) d\mu} & {} & {\mathrm{Hom}(A,\int_IM_id\mu)\times \int_I\mathrm{Hom}(M_{i}, \int_{\coprod T_{i,x}}L_{i,x,t}d \int_{X_i}\iota_x \omega_{i,x}d\lambda_i)d\mu } \\
	\\
	\\
	{\mathrm{Hom}(A,\int_IM_id\mu) \times \int_I\mathrm{Hom}(M_{i},\int_{X_i}N_{i,x}d\lambda_{i}) \times\int_I \int_{X_i}\mathrm{Hom}(N_{i,x}, \int_{T_{i,x}}L_{i,x,t}d\omega_{i,x})d\lambda_{i} d\mu} \\
	&& {\mathrm{Hom}(A, \int_{\coprod_{I}\coprod_{X_i}T_{i,x}}L_{i,x,t}d\int_{}w_{i,x}d_{}\int_I\iota_k\lambda_kd\mu)} \\
	\\
	{ \mathrm{Hom}(A,\int_{\coprod_{i}X_{i}}N_{i,x} d\int_{I}\iota_i\lambda_id\mu)\times \int_I \int_{X_i}\mathrm{Hom}(N_{i,x}, \int_{T_{i,x}}L_{i,x,t}d\omega_{i,x})d\lambda_i d\mu} \\
	\\
	{\mathrm{Hom}(A,\int_{\coprod_{I}X_{i}}N_{i,x}d\int_{I}\iota_i\lambda_id\mu)\times\int_{\coprod_{I}X_i}\mathrm{Hom}(N_{i,x},\int_{T_{i,x}}L_{i,x,t}d\omega_{i,x})d\int_{I}\iota_i\lambda_id\mu} & {} & {\mathrm{Hom}(A, \int_{\coprod_{\coprod_{I}X_i}T_{(ix)}}L_{i,x,t}d\int_{}w_{i,x}d_{}\int_I\iota_k\lambda_kd\mu)}
	\arrow["{\mathrm{id} \times \int\beta}"', from=4-1, to=4-3]
	\arrow["\beta"', from=4-3, to=8-3]
	\arrow["{\mathrm{commutativity \ of \  products \  with \ directed \  colimits \ in \ the \ category \ \mathsf{Set}}}"', from=7-1, to=4-1]
	\arrow["{\beta \times \mathrm{id}}", from=7-1, to=10-1]
	\arrow["{\Xi_{\theta}}"', from=8-3, to=12-3]
	\arrow["{\mathrm{id} \times  a^{-1}}", from=10-1, to=12-1]
	\arrow["\beta", from=12-1, to=12-3]
\end{tikzcd}
}

$a$ here denotes the colax associator, which is invertible in $\cat{Set}$ ($\cat{Set}$ is a pseudo-algebra for the pseudo-monad that we introduced in \cite{hamad2025ultracategories}).
\end{enumerate}

\begin{note*}
    \normalfont 
    
    \begin{comment}{If the reader wants to identify a set $I$ with $\coprod){*}I$ and $\coprod_{I}*$, and the set $\coprod_{\copord_{z \in Z}T_z}I_{(z,t)}$ and $\coprod_{z\in Z} \coprod_{t \in T_z}I_{(z,t)}$ (this with some work can be made formal). 
\end{comment}
    
    The  composition and unit axioms can be written in classic category composition style axioms:

    Let us first denote by $ (g_i) \circ_{\mu,(\lambda_{i})} f$ the composition $\beta(f,(g_i))$ 

    \begin{itemize}
        \item  The unit axioms can be written as $ g \circ_{*,\mu} \kappa = \tilde{g}$ ( Here $\tilde{g}$ is the image of $g$ by the change of base from $I$ to $\coprod_{*}I$) and $(\kappa)_{i \in I} \circ_{\mu,*} g = \bar{g}$ ( $\bar{g}$ is the image of $g$ by the change of base between $I$ and $\coprod_{I}*)$.
        \item The composition axioms can be written as $(f_{(i,t)}) \circ_{\lambda_i,(\nu_{(i,t)})} h_{i}) \circ_{\mu, (\lambda_i)} g = \underline{f_{(i,t)} \circ_{\lambda_i,(\nu_{(i,t)})} ((h_i) \circ_{\mu,(\lambda_i)} g)} $, here $\underline{---}$ is the change of base map from  $\coprod_{\coprod_{i \in I}T_i}Z_{(i,t)}$ to $\coprod_{i\in I} \coprod_{t \in T_i}Z_{i,t)}$.
    \end{itemize}
\end{note*}

\paragraph{Enriched generalised ultracategories} 

More generally, we can define an enriched version of generalised ultracategories. As we have noticed from the definition the category which we enrich over should be an ultracategory, moreover it's colax algebra structure associator should be invertible. One should notice that the category of locally small ultracategories (or those which are pseudo-algebras for $T$) has products (product as categories + coordinate-wise ultraproduct). This motivates the following definition:

\begin{definition}[Monoidal ultracategory]
    A monoidal ultracategory is a pseudo-monoid in the category of locally small pseudo-algebras for the pseudomonad $T$ (which we introduced in \cite{hamad2025ultracategories}), with left ultrafunctors, and natural transformations of left ultrafunctors.
\end{definition}

This builds on the definition of monoidal categories as pseudo-monoids in the category of categories, with products as monoidal functor. Unpacking this definition, this means that we have an ultracategory for which the colax associator is invertible, which is at the same time a monoidal category, for which both the monoidal structure bifunctor and the unit functor (from the terminal ultracategory) are left ultrafunctors, and for which all this structure satisfy nice coherence.

Many classical examples of monoidal categories such as abelian groups have this nice property (with the usual ultrastructure of abelian groups). We leave to the reader checking that this setting is convenient to enrich over. The reader should notice that one arrow we have, in the $\cat{Set}$ case is called ``commutativity of products with directed colimits in the category  $\cat{Set}$''. In a general enriched setting this arrow no longer is invertible.

In this paper, we restrict our attention to  $\cat{Set}$-enriched generalised ultracategories, and we are no longer going to refer to the general enriched version. 

\subsection{The underlying category of a generalised ultracategory}

We are going to denote by $\la{Hom}$ the formal Hom in the generalised ultracategory and by $\cat{Hom}$ (using sans serif font) what would be the $\la{Hom}$-set in the underlying category.

We define $\cat{Hom}(A,B)$ by $\la{Hom}(A, \int_* B d*) $ and we define composition as follows :

\adjustbox{max width = \textwidth}{

% https://q.uiver.app/#q=WzAsNCxbMCwwLCJcXEhvbShBLFxcaW50XyogQiBkKikgXFx0aW1lcyBcXEhvbShCLFxcaW50XypDZCopIl0sWzMsMCwiXFxIb20oQSxcXGludF8qIEIgZCopIFxcdGltZXMgXFxpbnRfKlxcSG9tKEIsXFxpbnRfKkNkKikiXSxbNiwwLCJcXEhvbShBLFxcaW50X3tcXGNvcHJvZF8qKn1DZCopIl0sWzEwLDAsIlxcSG9tKEEsXFxpbnRfeyp9Q2QqKSJdLFswLDEsIlxcbWF0aHJte2lkfSBcXHRpbWVzIFxcZXBzaWxvbl57LTF9Il0sWzEsMiwiXFxiZXRhIl0sWzIsMywiXFxYaV97Zl8qfSJdXQ==
\begin{tikzcd}
	{\Hom(A,\int_* B d*) \times \Hom(B,\int_*Cd*)} &&& {\Hom(A,\int_* B d*) \times \int_*\Hom(B,\int_*Cd*)} &&& {\Hom(A,\int_{\coprod_**}Cd*)} &&&& {\Hom(A,\int_{*}Cd*)}
	\arrow["{\mathrm{id} \times \epsilon^{-1}}", from=1-1, to=1-4]
	\arrow["\beta", from=1-4, to=1-7]
	\arrow["{\Xi_{f_*}}", from=1-7, to=1-11]
\end{tikzcd}
}
And we define units to be the elements $\kappa$ already defined.

Now, we need to verify that this construction really defines a category, We leave this to the appendix \hyperref[underlying]{Appendix A}.

\section{1-morphisms} \label{1-morphism}
Now, we define morphisms (left ultrafunctors) between generalised ultracategories  as follows: suppose $\ca{C}$ and $\ca{C}^{'}$ are generalised ultracategories then the data of morphism between them is:

\begin{itemize}

\item A function $F$ between $\la{Obj}(\ca{C})$ and $\la{Obj}(\ca{C}^{'})$.

\item for every object $A$ and every family of objects  $(M_i)$ we have a morphism $\zeta$ in $\cat{Set}$ between $\la{Hom}(A,\int_IM_i d\mu)$ and $\la{Hom}(F(A),\int_I F(M_i) d\mu)$  such that we have the  following compatibility axioms:

\end{itemize}
Suppose that we are in the same setting as of the definition of the maps $\beta$ and $\kappa$, then the following diagram commutes:

\begin{enumerate}

    \item 
\[
\adjustbox{max width = \textwidth}{
% https://q.uiver.app/#q=WzAsNCxbMCwwLCJcXG1hdGhybXtIb219KEEgLCBcXGludF9JIE1faSBkIFxcbXUpIFxcdGltZXMgXFxpbnRfe0l9IFxcbWF0aHJte0hvbX0oKE1fe2l9LCBcXGludF97WF9qfU5feyhpLHgpfWQgXFxsYW1iZGFfaSkgZCBcXG11ICAiXSxbMCwzLCJcXG1hdGhybXtIb219KEYoQSkgLCBcXGludF9JIEYoTV9pKSBkICBcXG11KSBcXHRpbWVzIFxcaW50X3tJfSBcXG1hdGhybXtIb219KChGKE1fe2l9KSwgXFxpbnRfe1hfaX1GKE5feyhpLHgpfSlkIFxcbGFtYmRhX2kpIGQgXFxtdSAgIl0sWzQsMywiXFxtYXRocm17SG9tfShGKEEpLFxcaW50X3tcXGNvcHJvZF97SX1YX2l9RihOX3soaSx4KX0pZFxcaW50X0kgXFxpb3RhX2kgXFxsYW1iZGFfaSBkXFxtdSApIl0sWzQsMCwiXFxtYXRocm17SG9tfShBLFxcaW50X3tcXGNvcHJvZF97SX1YX2l9Tl97KGkseCl9ZFxcaW50X0kgXFxpb3RhX2kgXFxsYW1iZGFfaSBkXFxtdSApIl0sWzAsMSwiXFx6ZXRhX3t9IFxcdGltZXMgXFxpbnRcXHpldGEiXSxbMSwyLCJcXGJldGEiXSxbMCwzLCJcXGJldGEiLDJdLFszLDIsIlxcemV0YSIsMl1d
\begin{tikzcd}
	{\mathrm{Hom}(A , \int_I M_i d \mu) \times \int_{I} \mathrm{Hom}((M_{i}, \int_{X_j}N_{(i,x)}d \lambda_i) d \mu  } &&&& {\mathrm{Hom}(A,\int_{\coprod_{I}X_i}N_{(i,x)}d\int_I \iota_i \lambda_i d\mu )} \\
	\\
	\\
	{\mathrm{Hom}(F(A) , \int_I F(M_i) d  \mu) \times \int_{I} \mathrm{Hom}((F(M_{i}), \int_{X_i}F(N_{(i,x)})d \lambda_i) d \mu  } &&&& {\mathrm{Hom}(F(A),\int_{\coprod_{I}X_i}F(N_{(i,x)})d\int_I \iota_i \lambda_i d\mu )}
	\arrow["\beta"', from=1-1, to=1-5]
	\arrow["{\zeta_{} \times \int\zeta}", from=1-1, to=4-1]
	\arrow["\zeta"', from=1-5, to=4-5]
	\arrow["\beta", from=4-1, to=4-5]
\end{tikzcd}
}
\]

\item
% https://q.uiver.app/#q=WzAsNCxbMCwwLCJcXGxhe0hvbX0oQSxcXGludF9JIE1faSBkZlxcbXUpIl0sWzAsMywiXFxsYXtIb219KEYoQSksXFxpbnRfSSBGKE1faSkgZGZcXG11KSJdLFs0LDAsIlxcbGF7SG9tfShBLFxcaW50X0ogTV9qIGRcXG11KSJdLFs0LDMsIlxcbGF7SG9tfShGKEEpLFxcaW50X0ogRihNX2opIGRcXG11KSJdLFswLDEsIlxcemV0YSJdLFswLDIsIlxcWGlfe2YsXFxtdX0iLDJdLFsxLDMsIlxcWGlfe2YsXFxtdX0iXSxbMiwzLCJcXHpldGEiLDJdXQ==
\[\begin{tikzcd}
	{\la{Hom}(A,\int_I M_i df\mu)} &&&& {\la{Hom}(A,\int_J M_j d\mu)} \\
	\\
	\\
	{\la{Hom}(F(A),\int_I F(M_i) df\mu)} &&&& {\la{Hom}(F(A),\int_J F(M_j) d\mu)}
	\arrow["{\Xi_{f,\mu}}"', from=1-1, to=1-5]
	\arrow["\zeta", from=1-1, to=4-1]
	\arrow["\zeta"', from=1-5, to=4-5]
	\arrow["{\Xi_{f,\mu}}", from=4-1, to=4-5]
\end{tikzcd}\]

Also it is required to satisfy the following unit axiom :

\item

\[
% https://q.uiver.app/#q=WzAsNSxbMCwwXSxbMSwyXSxbMSwwLCIxIl0sWzAsMSwiXFxtYXRocm17SG9tfShBLFxcaW50XypBIFxcIGQqKSJdLFsyLDEsIlxcbWF0aHJte0hvbX0oRihBKSxcXGludF8qRihBKSBcXCBkKikiXSxbMiwzLCJcXGthcHBhXjFfQSJdLFsyLDQsIlxca2FwcGFeMl97RihBKX0iLDJdLFszLDQsIlxcemV0YSJdXQ==
\begin{tikzcd}
	{} & \mathbf{1} \\
	{\mathrm{Hom}(A,\int_*A \ d*)} && {\mathrm{Hom}(F(A),\int_*F(A) \ d*)} \\
	& {}
	\arrow["{\kappa^1_A}", from=1-2, to=2-1]
	\arrow["{\kappa^2_{F(A)}}"', from=1-2, to=2-3]
	\arrow["\zeta", from=2-1, to=2-3]
\end{tikzcd}
\]
\end{enumerate}

Now we should show that every generalised left ultrafunctor is in fact a functor between the underlying categories of the generalised ultracategories, to do that we show that the outer most diagram commutes :

\adjustbox{max width =\textwidth}{
% https://q.uiver.app/#q=WzAsOCxbMCwwLCJcXEhvbShBLFxcaW50XyogQiBkKikgXFx0aW1lcyBcXEhvbShCLFxcaW50XypDZCopIl0sWzMsMCwiXFxIb20oQSxcXGludF8qIEIgZCopIFxcdGltZXMgXFxpbnRfKlxcSG9tKEIsXFxpbnRfKkNkKilkKiJdLFs2LDAsIlxcSG9tKEEsXFxpbnRfe1xcY29wcm9kXyoqfUNkKikiXSxbMTAsMCwiXFxIb20oQSxcXGludF97Kn1DZCopIl0sWzAsNCwiXFxIb20oRihBKSxcXGludF8qIEYoQikgZCopIFxcdGltZXMgXFxIb20oRihCKSxcXGludF8qRihDKWQqKSJdLFszLDQsIlxcSG9tKEYoQSksXFxpbnRfKiBGKEIpIGQqKSBcXHRpbWVzIFxcaW50XypcXEhvbShGKEIpLFxcaW50XypGKEMpZCopIl0sWzYsNCwiXFxIb20oRihBKSxcXGludF97XFxjb3Byb2RfKip9RihDKWQqKSJdLFsxMCw0LCJcXEhvbShGKEEpLFxcaW50X3sqfUYoQylkKikiXSxbMCwxLCJcXG1hdGhybXtpZH0gXFx0aW1lcyBcXGVwc2lsb25eey0xfSJdLFsxLDIsIlxcYmV0YSJdLFsyLDMsIlxcWGlfe2ZfKn0iXSxbMCw0LCJcXHpldGEgXFx0aW1lcyBcXHpldGEiXSxbNSw2LCJcXGJldGEiLDJdLFs2LDcsIlxcWGlfe2YqfSIsMl0sWzQsNSwiXFxtYXRocm17aWR9IFxcdGltZXMgXFxlcHNpbG9uXnstMX0iLDJdLFszLDcsIlxcemV0YSJdLFsyLDYsIlxcemV0YSIsMV0sWzEsNSwiXFx6ZXRhIFxcdGltZXMgXFxpbnRfKiBcXHpldGEgZCoiLDFdLFs5LDEyLCIyIiwwLHsic2hvcnRlbiI6eyJzb3VyY2UiOjIwLCJ0YXJnZXQiOjIwfSwic3R5bGUiOnsiYm9keSI6eyJuYW1lIjoibm9uZSJ9LCJoZWFkIjp7Im5hbWUiOiJub25lIn19fV0sWzEwLDEzLCIzIiwwLHsic2hvcnRlbiI6eyJzb3VyY2UiOjIwLCJ0YXJnZXQiOjIwfSwic3R5bGUiOnsiYm9keSI6eyJuYW1lIjoibm9uZSJ9LCJoZWFkIjp7Im5hbWUiOiJub25lIn19fV0sWzgsMTQsIjEiLDAseyJzaG9ydGVuIjp7InNvdXJjZSI6MjAsInRhcmdldCI6MjB9LCJzdHlsZSI6eyJib2R5Ijp7Im5hbWUiOiJub25lIn0sImhlYWQiOnsibmFtZSI6Im5vbmUifX19XV0=
\begin{tikzcd}
	{\Hom(A,\int_* B d*) \times \Hom(B,\int_*Cd*)} &&& {\Hom(A,\int_* B d*) \times \int_*\Hom(B,\int_*Cd*)d*} &&& {\Hom(A,\int_{\coprod_**}Cd*)} &&&& {\Hom(A,\int_{*}Cd*)} \\
	\\
	\\
	\\
	{\Hom(F(A),\int_* F(B) d*) \times \Hom(F(B),\int_*F(C)d*)} &&& {\Hom(F(A),\int_* F(B) d*) \times \int_*\Hom(F(B),\int_*F(C)d*)} &&& {\Hom(F(A),\int_{\coprod_**}F(C)d*)} &&&& {\Hom(F(A),\int_{*}F(C)d*)}
	\arrow[""{name=0, anchor=center, inner sep=0}, "{\mathrm{id} \times \epsilon^{-1}}", from=1-1, to=1-4]
	\arrow["{\zeta \times \zeta}", from=1-1, to=5-1]
	\arrow[""{name=1, anchor=center, inner sep=0}, "\beta", from=1-4, to=1-7]
	\arrow["{\zeta \times \int_* \zeta d*}"{description}, from=1-4, to=5-4]
	\arrow[""{name=2, anchor=center, inner sep=0}, "{\Xi_{f_*}}", from=1-7, to=1-11]
	\arrow["\zeta"{description}, from=1-7, to=5-7]
	\arrow["\zeta", from=1-11, to=5-11]
	\arrow[""{name=3, anchor=center, inner sep=0}, "{\mathrm{id} \times \epsilon^{-1}}"', from=5-1, to=5-4]
	\arrow[""{name=4, anchor=center, inner sep=0}, "\beta"', from=5-4, to=5-7]
	\arrow[""{name=5, anchor=center, inner sep=0}, "{\Xi_{f*}}"', from=5-7, to=5-11]
	\arrow["1", draw=none, from=0, to=3]
	\arrow["2", draw=none, from=1, to=4]
	\arrow["3", draw=none, from=2, to=5]
\end{tikzcd}
}

Square $1$ is commutative by naturality of $\la{id} \times \epsilon^{-1}$, while squares $2$ and $3$ are parts of the definition.

\paragraph{Composition of left ultrafunctors} Suppose that $F_1$ is a left ultrafunctor from $\ca{A}$ to $\ca{B}$ with the family of maps $(\zeta_1)$ and $F_2$ a left ultrafunctor with family $(\zeta_2)$, then we define the composition $F_2 \circ F_1$ by the family $(\zeta_2 \circ \zeta_1)$. Clearly, this composition is compatible with the composition of underlying functors.

\begin{note*} \normalfont
If there is no risk of confusion, we will be denoting $\zeta(a)$ by $F(a)$ (using the same symbol for the function on objects and on generalised morphisms).
\end{note*}

\section{2-morphisms} \label{2-morphisms}
We define a $2$-morphism $\alpha$ to be two compatible families of morphisms $\alpha^1_{B,(M_i)}$ from $\la{Hom}(B , \int_I F(M_i) d\mu)$ to $\la{Hom}(B, \int_I G(M_i) d\mu)$ and a family $\alpha^2_{A, (M^{'}_i)_{i \in I} }$ from $\la{Hom}(G(A), \int_I M^{'}_i d\mu)$ to  $\la{Hom}(F(A), \int_I M^{'}_i d\mu)$

That satisfies the following axioms which can be expressed by the commutativity of these diagrams:

\begin{enumerate}
    \item 

% https://q.uiver.app/#q=WzAsNCxbMCwwLCJcXGxhe0hvbX0oRyhBKSxcXGludF9JIE1faSBkZlxcbXUpICJdLFswLDQsIlxcbGF7SG9tfShGKEEpLFxcaW50X0kgTV9pIGRmXFxtdSkgIl0sWzUsNCwiXFxsYXtIb219KEYoQSksXFxpbnRfSiBNX3tmKGopfSBkXFxtdSkiXSxbNSwwLCJcXGxhe0hvbX0oRyhBKSwgXFxpbnRfSiBNX3tmKGopfSBkXFxtdSJdLFswLDEsIlxcYWxwaGFeMl97QSwgKE1faSl9ICJdLFsxLDIsIlxcWGlfe2Z9Il0sWzAsMywiXFxYaV97Zn0iLDJdLFszLDIsIlxcYWxwaGFeMl97QSwoTV97KGYoail9KX0iLDJdXQ==
\[\begin{tikzcd}
	{\la{Hom}(G(A),\int_I M_i df\mu) } &&&&& {\la{Hom}(G(A), \int_J M_{f(j)} d\mu} \\
	\\
	\\
	\\
	{\la{Hom}(F(A),\int_I M_i df\mu) } &&&&& {\la{Hom}(F(A),\int_J M_{f(j)} d\mu)}
	\arrow["{\Xi_{f}}"', from=1-1, to=1-6]
	\arrow["{\alpha^2_{A, (M_i)} }", from=1-1, to=5-1]
	\arrow["{\alpha^2_{A,(M_{(f(j)})}}"', from=1-6, to=5-6]
	\arrow["{\Xi_{f}}", from=5-1, to=5-6]
\end{tikzcd}\]

\item
\[
% https://q.uiver.app/#q=WzAsNCxbMCwwLCJcXGxhe0hvbX0oQSxcXGludF9JIEYoTV9pKSBkZlxcbXUpICJdLFswLDQsIlxcbGF7SG9tfShBLFxcaW50X0kgRyhNX2kpIGRmXFxtdSkgIl0sWzUsNCwiXFxsYXtIb219KEEsXFxpbnRfSiBHKE1fe2Yoail9KSBkXFxtdSkiXSxbNSwwLCJcXGxhe0hvbX0oQSwgXFxpbnRfSiBGKE1fe2Yoail9KSBkXFxtdSkiXSxbMCwxLCJcXGFscGhhXjFfe0EsIChNX2kpfSAiXSxbMSwyLCJcXFhpX3tmfSJdLFswLDMsIlxcWGlfe2Z9IiwyXSxbMywyLCJcXGFscGhhXjFfe0EsKE1feyhmKGopfSl9IiwyXV0=
\begin{tikzcd}
	{\la{Hom}(A,\int_I F(M_i) df\mu) } &&&&& {\la{Hom}(A, \int_J F(M_{f(j)}) d\mu)} \\
	\\
	\\
	\\
	{\la{Hom}(A,\int_I G(M_i) df\mu) } &&&&& {\la{Hom}(A,\int_J G(M_{f(j)}) d\mu)}
	\arrow["{\Xi_{f}}"', from=1-1, to=1-6]
	\arrow["{\alpha^1_{A, (M_i)} }", from=1-1, to=5-1]
	\arrow["{\alpha^1_{A,(M_{(f(j)})}}"', from=1-6, to=5-6]
	\arrow["{\Xi_{f}}", from=5-1, to=5-6]
\end{tikzcd}
\]
\item

\[
\adjustbox{max width =\textwidth}{
% https://q.uiver.app/#q=WzAsNCxbMCwwLCJcXGxhe0hvbX0oRyhBKSxcXGludF9JIE1faSBkXFxtdSkgXFx0aW1lcyBcXGludF9JIFxcbGF7SG9tfShNX2ksIFxcaW50X3tYX2l9Tl97KGkseCl9ZCBcXGxhbWJkYV9pKSBkXFxtdSJdLFswLDQsIlxcbGF7SG9tfShGKEEpLFxcaW50X0kgTV9pIGRcXG11KSBcXHRpbWVzIFxcaW50X0kgXFxsYXtIb219KE1faSwgXFxpbnRfe1hfaX1OX3soaSx4KX1kIFxcbGFtYmRhX2kpIGRcXG11Il0sWzUsNCwiXFxsYXtIb219KEYoQSksXFxpbnRfe1xcY29wcm9kX3tJfVhfaX1OX3soaSx4KX1kIFxcaW50X3tJfVxcaW90YV9pIFxcbGFtYmRhX2kgZFxcbXUiXSxbNSwwLCJcXGxhe0hvbX0oRyhBKSxcXGludF97XFxjb3Byb2Rfe0l9WF9pfU5feyhpLHgpfWQgXFxpbnRfe0l9XFxpb3RhX2kgXFxsYW1iZGFfaSBkXFxtdSJdLFswLDEsIlxcYWxwaGFeMl97QSwgKE1faSl9IFxcdGltZXMgXFxsYXtpZH0iXSxbMSwyLCJcXGJldGEiXSxbMCwzLCJcXGJldGEiLDJdLFszLDIsIlxcYWxwaGFeMl97QSwoTl97KGkseCl9KX0iLDJdXQ==
\begin{tikzcd}
	{\la{Hom}(G(A),\int_I M_i d\mu) \times \int_I \la{Hom}(M_i, \int_{X_i}N_{(i,x)}d \lambda_i) d\mu} &&&&& {\la{Hom}(G(A),\int_{\coprod_{I}X_i}N_{(i,x)}d \int_{I}\iota_i \lambda_i d\mu} \\
	\\
	\\
	\\
	{\la{Hom}(F(A),\int_I M_i d\mu) \times \int_I \la{Hom}(M_i, \int_{X_i}N_{(i,x)}d \lambda_i) d\mu} &&&&& {\la{Hom}(F(A),\int_{\coprod_{I}X_i}N_{(i,x)}d \int_{I}\iota_i \lambda_i d\mu}
	\arrow["\beta"', from=1-1, to=1-6]
	\arrow["{\alpha^2_{A, (M_i)} \times \la{id}}", from=1-1, to=5-1]
	\arrow["{\alpha^2_{A,(N_{(i,x)})}}"', from=1-6, to=5-6]
	\arrow["\beta", from=5-1, to=5-6]
\end{tikzcd}
}
\]
\item
% https://q.uiver.app/#q=WzAsNCxbMCwwLCJcXGxhe0hvbX0oQSxcXGludF9JIEJfaSBkXFxtdSlcXHRpbWVzIFxcaW50X0lcXGxhe0hvbX0oQl9pLCBcXGludF97WF9pfSBGKE5feyhpLHgpfSkgZFxcbGFtYmRhX2kpZFxcbXUiXSxbMCwzLCJcXGxhe0hvbX0oQSxcXGludF9JIEJfaSBkXFxtdSlcXHRpbWVzIFxcaW50X0lcXGxhe0hvbX0oQl9pLCBcXGludF97WF9pfSBHKE5feyhpLHgpfSkgZFxcbGFtYmRhX2kpZFxcbXUiXSxbNCwwLCJcXGxhe0hvbX0oQSxcXGludF97XFxjb3Byb2Rfe0l9WF9pfUYoTl97KGkseCl9KWRcXGludF9JIFxcaW90YV9pIFxcbGFtYmRhX2kgZFxcbXUpIl0sWzQsMywiXFxsYXtIb219KEEsXFxpbnRfe1xcY29wcm9kX3tJfVhfaX1HKE5feyhpLHgpfSlkXFxpbnRfSSBcXGlvdGFfaSBcXGxhbWJkYV9pIGRcXG11KSJdLFswLDEsIlxcbGF7aWR9IFxcdGltZXMgXFxpbnRfSSBcXGFscGhhXjFfe0JfaSwoTl97KGkseCkpfX1kXFxtdSJdLFswLDIsIlxcYmV0YSIsMl0sWzIsMywiXFxhbHBoYV4xX3tBLChGKE5feyhpLHgpfSl9IiwyXSxbMSwzLCJcXGJldGEiXV0=
\[\begin{tikzcd}
	{\la{Hom}(A,\int_I B_i d\mu)\times \int_I\la{Hom}(B_i, \int_{X_i} F(N_{(i,x)}) d\lambda_i)d\mu} &&&& {\la{Hom}(A,\int_{\coprod_{I}X_i}F(N_{(i,x)})d\int_I \iota_i \lambda_i d\mu)} \\
	\\
	\\
	{\la{Hom}(A,\int_I B_i d\mu)\times \int_I\la{Hom}(B_i, \int_{X_i} G(N_{(i,x)}) d\lambda_i)d\mu} &&&& {\la{Hom}(A,\int_{\coprod_{I}X_i}G(N_{(i,x)})d\int_I \iota_i \lambda_i d\mu)}
	\arrow["\beta"', from=1-1, to=1-5]
	\arrow["{\la{id} \times \int_I \alpha^1_{B_i,(N_{(i,x))}}d\mu}", from=1-1, to=4-1]
	\arrow["{\alpha^1_{A,(F(N_{(i,x)})}}"', from=1-5, to=4-5]
	\arrow["\beta", from=4-1, to=4-5]
\end{tikzcd}\]

\item
% https://q.uiver.app/#q=WzAsNCxbMiwyLCJcXGxhe0hvbX0oRihBKSxcXGludF9JRyhNX2kpIGRcXG11KSJdLFswLDAsIlxcbGF7SG9tfShBLFxcaW50X0lNX2kgZFxcbXUpIl0sWzAsMiwiXFxsYXtIb219KEcoQSksIFxcaW50X0lHKE1faSlkXFxtdSkiXSxbMiwwLCJcXGxhe0hvbX0oRihBKSxcXGludF9JRihNX2kpIGRcXG11KSJdLFsxLDIsIlxcemV0YV57J30iXSxbMSwzLCJcXHpldGEiXSxbMiwwLCJcXGFscGhhXjJfe0EsKEcoTV9pKSl9Il0sWzMsMCwiXFxhbHBoYV4xX3tGKEEpLChNX2kpfSIsMV1d
\[\begin{tikzcd}
	{\la{Hom}(A,\int_IM_i d\mu)} && {\la{Hom}(F(A),\int_IF(M_i) d\mu)} \\
	\\
	{\la{Hom}(G(A), \int_IG(M_i)d\mu)} && {\la{Hom}(F(A),\int_IG(M_i) d\mu)}
	\arrow["\zeta", from=1-1, to=1-3]
	\arrow["{\zeta^{'}}", from=1-1, to=3-1]
	\arrow["{\alpha^1_{F(A),(M_i)}}"{description}, from=1-3, to=3-3]
	\arrow["{\alpha^2_{A,(G(M_i))}}", from=3-1, to=3-3]
\end{tikzcd}\]

\end{enumerate}

\begin{note*} \normalfont

This is not the most optimal way to define $2$-morphisms ``data-wise``. A much more concise way would be to to define  $2$-morphisms  to be a family of  maps from $1$ to $\cat{Hom}(F(A), G(A))= \la{Hom}(A, \int_* G(A) d*)$, or equivalently  a family of maps from $\Hom(B,\int_* F(A) d* )$ to $\Hom(B,\int_*G(A)d*)$, but this would lead to longer axioms.
\end{note*}

 These $2$-morphisms are in fact natural transformation between the underlying functors, to do so suppose that we have a such $2$-morphism $\alpha= (\alpha_1,\alpha_2)$, then we define the components of this natural transformation to be the images of the identity element of the underlying category $\kappa \in \mathrm{Hom}(A,\int_*Ad*)$ by the composition $\alpha^2 \circ \zeta^{'} = \alpha^1 \circ \zeta $, in other words $\alpha^2(\kappa_{G(A)})=\alpha_1(\kappa_{F(A)})$. We leave to the reader filling the details of the proof (basically using either axioms $3$ or $4$, which with a little work, can be regarded as representable versions of the naturality square).

\subsection{Vertical and Horizontal composition of natural transformations of left ultrafunctors} 

\paragraph{Vertical composition} Suppose that we are in the following setting:

% https://q.uiver.app/#q=WzAsMixbMCwwLCJcXGNhe0F9Il0sWzUsMCwiXFxjYXtCfSJdLFswLDEsIkZfMSIsMCx7ImN1cnZlIjotNX1dLFswLDEsIkZfMiIsMSx7Im9mZnNldCI6LTF9XSxbMCwxLCJGXzMiLDIseyJjdXJ2ZSI6NX1dLFsyLDMsIlxcYWxwaGE9KFxcYWxwaGFeMSxcXGFscGhhXjIpIiwxLHsic2hvcnRlbiI6eyJzb3VyY2UiOjIwLCJ0YXJnZXQiOjIwfX1dLFszLDQsIlxcYWxwaGFeeyd9PSh7XFxhbHBoYV57J319XjEsIHtcXGFscGhhXnsnfX1eMikiLDEseyJzaG9ydGVuIjp7InNvdXJjZSI6MjAsInRhcmdldCI6MjB9fV1d
\[\begin{tikzcd}
	{\ca{A}} &&&&& {\ca{B}}
	\arrow[""{name=0, anchor=center, inner sep=0}, "{F_1}", curve={height=-30pt}, from=1-1, to=1-6]
	\arrow[""{name=1, anchor=center, inner sep=0}, "{F_2}"{description}, shift left, from=1-1, to=1-6]
	\arrow[""{name=2, anchor=center, inner sep=0}, "{F_3}"', curve={height=30pt}, from=1-1, to=1-6]
	\arrow["{\alpha=(\alpha^1,\alpha^2)}"{description}, between={0.2}{0.8}, Rightarrow, from=0, to=1]
	\arrow["{\alpha^{'}=({\alpha^{'}}^1, {\alpha^{'}}^2)}"{description}, between={0.2}{0.8}, Rightarrow, from=1, to=2]
\end{tikzcd}\]
We can define $\alpha^{'} * \alpha= ({\alpha^{'}}^1 * \alpha^1, {\alpha}^2 * {\alpha^{'}}^2)$.

\paragraph{Horizontal composition}

\[
% https://q.uiver.app/#q=WzAsMyxbMCwwLCJcXGNhe0F9Il0sWzMsMCwiXFxjYXtCfSJdLFs2LDAsIlxcY2F7Q30iXSxbMCwxLCJGXzEiLDAseyJjdXJ2ZSI6LTJ9XSxbMSwyLCJGXzIiLDAseyJjdXJ2ZSI6LTJ9XSxbMCwxLCJHXzEiLDIseyJjdXJ2ZSI6Mn1dLFsxLDIsIkdfMiIsMix7ImN1cnZlIjoyfV0sWzMsNSwiKFxcYWxwaGFfMSxcXGFscGhhXzIpIiwwLHsic2hvcnRlbiI6eyJzb3VyY2UiOjIwLCJ0YXJnZXQiOjIwfX1dLFs0LDYsIih7XFxhbHBoYV57J319XnsxfSx7XFxhbHBoYV57J319XnsyfSkiLDAseyJzaG9ydGVuIjp7InNvdXJjZSI6MjAsInRhcmdldCI6MjB9fV1d
\begin{tikzcd}
	{\ca{A}} &&& {\ca{B}} &&& {\ca{C}}
	\arrow[""{name=0, anchor=center, inner sep=0}, "{F_1}", curve={height=-12pt}, from=1-1, to=1-4]
	\arrow[""{name=1, anchor=center, inner sep=0}, "{G_1}"', curve={height=12pt}, from=1-1, to=1-4]
	\arrow[""{name=2, anchor=center, inner sep=0}, "{F_2}", curve={height=-12pt}, from=1-4, to=1-7]
	\arrow[""{name=3, anchor=center, inner sep=0}, "{G_2}"', curve={height=12pt}, from=1-4, to=1-7]
	\arrow["{(\alpha_1,\alpha_2)}", shorten <=3pt, shorten >=3pt, Rightarrow, from=0, to=1]
	\arrow["{({\alpha^{'}}^{1},{\alpha^{'}}^{2})}", shorten <=3pt, shorten >=3pt, Rightarrow, from=2, to=3]
\end{tikzcd}
\]
We define $(\alpha^{'} \circ \alpha)^2_{A,(M_i)} = \zeta_2(\alpha^2(M_i)) \circ {\alpha^2}^{'}_{(F_2(M_i))}$ and $(\alpha^{'} \circ \alpha)^1= {\alpha^{1}}^{'}_{(G_2(M_i))} \circ \zeta_2(\alpha^1_{M_i})$.

It's clear that this data makes generalised ultracategories a $2$-category with a forgetful functor to the category of categories, sending a generalised ultracategory to its underlying category.

\section{Examples of generalised ultracategories} \label{examples of generalised}

\subsection{Topological spaces}
\begin{theorem}
Let $X$ be a topological space,  then we define a generalised ultrastructure on it as follows, we define $\la{Hom}(A, \int_i M_i d\mu) = \{*\}$ a singleton iff the pushforward of the ultrafilter $\mu$ by the map $i \mapsto M_i$ converges to $x$, and otherwise we define $\la{Hom}(A, \int_i M_i d\mu) = \varnothing$ . This will make the topological space a generalised ultracategory, with underlying category the topological space viewed as a preorder category with the specialisation preorder, which we explain in the next definition.
\end{theorem}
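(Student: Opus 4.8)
The plan is to use the fact that, with this definition, every generalised $\la{Hom}$-set of $X$ is either empty or a one-point set. This collapses the verification dramatically: to check the axioms of a generalised ultracategory it suffices to check that $\kappa$, $\Xi$ and $\beta$ are \emph{well defined}, meaning that whenever their (componentwise) domains are nonempty so are their codomains. Indeed, once this is known, every leg of every axiom-diagram is a genuine function between sets that are singletons or empty --- and along each leg every intermediate object is nonempty as soon as the common source is, by the iterated use of well-definedness --- so any commutative-diagram axiom, and any ``is the inverse of'' axiom, holds automatically: a diagram in $\cat{Set}$ all of whose vertices are singletons commutes, and if the common source is empty the statement is vacuous. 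In particular $\Xi_{\la{id}}=\la{id}$, functoriality of $\Xi$, all the unit and $\beta$--$\Xi$ compatibility axioms, and the composition axiom come for free, and axiom (2) will drop out of the stronger claim, proved below, that \emph{every} $\Xi_{f,\mu}$ is an isomorphism.

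Next I would set up the dictionary with Barr's description of a topological space as a relational algebra for the ultrafilter monad \cite{barr2006relational}. Write $U$ for the ultrafilter monad on $\cat{Set}$, with unit $e$ and multiplication $m$, and let $q_X \subseteq U(\la{Obj}(X)) \times \la{Obj}(X)$ be the convergence relation of $X$; the definition makes $\la{Hom}(A,\int_I M_i\, d\mu)$ a singleton precisely when $(i\mapsto M_i)_*\mu \mathrel{q_X} A$. The distinguished element $\kappa_A$ exists because $(\ast\mapsto A)_*(\ast)$ is the principal ultrafilter at $A$, which converges to $A$ --- this is exactly the reflexivity axiom $1\le q_X\cdot e$ of a relational algebra. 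For $\Xi_{f,\mu}\colon \la{Hom}(A,\int_I M_i\, df\mu)\to\la{Hom}(A,\int_J M_{f(j)}\, d\mu)$ the point is simply that $(i\mapsto M_i)_*(f_*\mu)$ and $(j\mapsto M_{f(j)})_*\mu$ are the \emph{same} ultrafilter on $\la{Obj}(X)$, by functoriality of pushforward; so domain and codomain are simultaneously empty or singleton, which makes $\Xi_{f,\mu}$ well defined and invertible and disposes of axioms (1)--(3) at once.

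The substance of the proof is the well-definedness of $\beta$. Unwinding the $\cat{Set}$-valued integral, the middle factor $\int_I \la{Hom}(M_i,\int_{X_i}N_{(i,x)}\, d\lambda_i)\, d\mu$ is the ultraproduct of the $\la{Hom}$-sets, hence is nonempty iff $(x\mapsto N_{(i,x)})_*\lambda_i\mathrel{q_X}M_i$ for $\mu$-almost every $i$. Thus what must be shown is the implication: if $(i\mapsto M_i)_*\mu\mathrel{q_X}A$ and $(x\mapsto N_{(i,x)})_*\lambda_i\mathrel{q_X}M_i$ for $\mu$-almost every $i$, then $\big((i,x)\mapsto N_{(i,x)}\big)_*\big(\int_I \iota_i\lambda_i\, d\mu\big)\mathrel{q_X}A$. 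I would prove this by first identifying the ultrafilter $\big((i,x)\mapsto N_{(i,x)}\big)_*\big(\int_I\iota_i\lambda_i\, d\mu\big)$ with $m_{\la{Obj}(X)}$ applied to the pushforward of $\mu$ along the map $I\to U(\la{Obj}(X))$, $i\mapsto (x\mapsto N_{(i,x)})_*\lambda_i$ --- this uses the defining property of Lurie's construction $\int_I\iota_i\lambda_i\, d\mu$ as the ``sum'' of the $\lambda_i$ along $\mu$ \cite{lurie2018ultracategories} together with naturality of $m$ --- and then invoking the transitivity axiom $q_X\cdot Uq_X\le q_X\cdot m$ of a relational algebra, fed the $\mu$-generic family of convergences $(x\mapsto N_{(i,x)})_*\lambda_i\mathrel{q_X}M_i$ lying over $(i\mapsto M_i)_*\mu\mathrel{q_X}A$. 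The delicate point --- and the step I expect to be the main obstacle --- is precisely this identification: matching Lurie's iterated-integral construction in $\cat{Set}$ with the monad multiplication of $U$ through the canonical projection $\coprod_I X_i\to I$, and handling the ``$\mu$-almost every'' quantifier correctly (restricting attention to the $\mu$-large set of indices on which the inner convergence holds, and extending arbitrarily off it). Everything else is bookkeeping against the definitions in \cite{lurie2018ultracategories}.

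Finally I would read off the underlying category. By definition $\cat{Hom}(A,B)=\la{Hom}(A,\int_\ast B\, d\ast)$, which is a singleton iff the principal ultrafilter at $B$ converges to $A$, i.e.\ iff every open neighbourhood of $A$ contains $B$; this is exactly the relation $A\le B$ of the specialisation preorder. Its unit is $\kappa$ and its composition is forced, being the unique function into a singleton, and it coincides with the composition of the underlying category defined in Appendix A (in the degenerate case $I=X_i=\ast$ of $\beta$ and $\Xi$); hence the underlying category of the generalised ultracategory $X$ is its specialisation preorder, as asserted, the preorder itself being recalled in the definition immediately following.
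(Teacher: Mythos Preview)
Your proposal is correct and follows essentially the same strategy as the paper: both observe that, since every generalised $\la{Hom}$-set is a singleton or empty, all diagram axioms hold automatically once $\kappa$, $\Xi$, and $\beta$ are shown to be well defined, and both reduce this well-definedness to the reflexivity and transitivity of ultrafilter convergence characterising topological spaces. The only cosmetic difference is that the paper phrases the latter via Wyler's axioms UQ1 and UQ4 \cite{wyler1996convergence}, whereas you invoke the equivalent Barr relational-algebra formulation.
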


\begin{definition}
 Let $X$ be a topological space, and let $\ca{N}(x)$ denote the set of all open neighbourhoods of $x$, then we define a preorder relation on $X$, by $x \leq y$ iff $\ca{N}(x) \subseteq \ca{N}(y)$ (in ultrafilter language iff the principal ultrafilter at $y$ converges to $x$), and we call this the specialisation preorder on $X$. similarly we can define the specialisation preorder on $\la{Hom}_{\cat{Top}}(X,Y)$, by setting $f \leq g$, iff $\forall x \in X \ f(x) \leq g(x)$.
\end{definition}

\begin{note*} \normalfont
    If the space is $T_0$ then the underlying  category is a poset, and if it is $T_1$ then the category is discrete.
\end{note*}

    We did not specify the maps $\beta$ and $\kappa$ and $\Xi$. We define the maps $\Xi$ to be isomorphisms. Now since all generalised $\la{Hom}$ sets are singleton or empty. The existence of the maps $\beta$ and $\kappa$  is equivalent to specifying $\mathbf{logical}$ statements on the ultrafilter convergence relation on $X$:

    \begin{itemize}
        \item[R] We  have defined all the maps $\Xi$ to be isomorphisms, and hence their existence with this additional property is equivalent to the fact that we can restrict our attention to ultrafilters on $X$.
    
        \item[UQ1'] Every principal ultrafilter $\delta_x$ converges to $x$ (follows from $\kappa$ the unique element in $\Hom(x , \int x d*)$ + change of base).
        \item[UQ4'] 
\
Suppose that we have an ultrafilter $\mu$ on a set $I$, a map of sets $m$ from $I$ to $x$ such that $m\mu$ converges to $x$, and family of ultrafilters $(\nu_i)_{i \in I}$ on sets $X_i$, and map of sets $t_i$ from $X_i$ to $X$ such that there exist some subset $I^{'}$ of $I$ that belongs to $\mu$, and such that for every $i \in I^{'}$, $t_i\nu_i$ converges to $m(i)$, then if we denote $t$ the map from $\coprod_{I}X_i$ to $X$, whose restriction to each $X_i$ is $t_i$, then  $t \int_I \iota_i \lambda_i f\mu$ converges to $x$.
      
    \end{itemize}

        \begin{claim*} \normalfont Given a set $X$, there is a bijection between topologies on $X$, and ultrarelations on $X$ (subsets of $\beta X  \times X$)  satisfying $UQ4^{'}$ and $UQ1^{'}$.
            
        \end{claim*}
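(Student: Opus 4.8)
The plan is to exhibit the bijection explicitly in both directions and check that the two constructions are mutually inverse. Given a topology $\tau$ on $X$, define the ultrarelation $R_\tau \subseteq \beta X \times X$ by declaring $(\mu, x) \in R_\tau$ iff $\mu$ converges to $x$ in $\tau$, i.e. iff every open neighbourhood of $x$ belongs to $\mu$. Conversely, given an ultrarelation $R$ satisfying $UQ1'$ and $UQ4'$, define a closure operator (equivalently, specify the closed sets): for $S \subseteq X$, put $x \in \overline{S}$ iff there exists an ultrafilter $\mu$ with $S \in \mu$ and $(\mu, x) \in R$. The topology $\tau_R$ is then the one whose closed sets are the fixed points of this operator. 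The first main step is to verify that $S \mapsto \overline{S}$ is a genuine Kuratowski closure operator: $\overline{\varnothing} = \varnothing$ and $S \subseteq \overline{S}$ are immediate (the latter using $UQ1'$ applied to the principal ultrafilter $\delta_x$ for $x \in S$), monotonicity is clear, and the two nontrivial clauses are idempotence $\overline{\overline{S}} \subseteq \overline{S}$ and finite additivity $\overline{S \cup T} \subseteq \overline{S} \cup \overline{T}$. I expect additivity to follow from the fact that an ultrafilter containing $S \cup T$ must contain $S$ or $T$, and idempotence to be exactly where $UQ4'$ is used: if $x \in \overline{\overline{S}}$, witnessed by $\mu$ on the set $\overline{S}$ (or rather on an index set mapping into $\overline{S}$), and each point of $\overline{S}$ is itself a limit of an ultrafilter concentrated on $S$, then $UQ4'$ lets us integrate these ultrafilters against $\mu$ to produce a single ultrafilter concentrated on $S$ converging to $x$.

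The second main step is to check the two round trips. For $\tau \mapsto R_\tau \mapsto \tau_{R_\tau}$: one shows that the closed sets of $\tau_{R_\tau}$ are exactly the $\tau$-closed sets. One inclusion is that $\tau$-closed sets are fixed by the closure operator built from $R_\tau$, which holds because a convergent ultrafilter containing $S$ forces its limit into every closed set containing $S$ — that is, $\overline{S}^{R_\tau} \subseteq \overline{S}^{\tau}$; the reverse inclusion $\overline{S}^\tau \subseteq \overline{S}^{R_\tau}$ is the standard fact that a point in the topological closure of $S$ admits an ultrafilter containing $S$ and converging to it (take an ultrafilter refining the filter generated by $\{U \cap S : U \in \ca{N}(x)\}$, which is a proper filter precisely because $x \in \overline{S}^\tau$). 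For $R \mapsto \tau_R \mapsto R_{\tau_R}$: one must show $(\mu, x) \in R$ iff $\mu$ converges to $x$ in $\tau_R$. The forward direction amounts to: if $(\mu,x) \in R$ and $C$ is $\tau_R$-closed with $C \notin \mu$, derive a contradiction — equivalently, show $(\mu,x)\in R$ forces $x$ into $\overline{A}$ for every $A \in \mu$, which is immediate from the definition of the closure operator. The reverse direction — $\tau_R$-convergence implies $(\mu,x) \in R$ — is the delicate one: from $\mu \to x$ in $\tau_R$ we know $x \in \overline{A}$ for all $A \in \mu$, so for each $A$ there is some $\nu_A$ with $A \in \nu_A$ and $(\nu_A, x) \in R$, and one must assemble these (again via $UQ4'$, applied to the ultrafilter $\mu$ itself indexing the family $(\nu_A)$ suitably, using that the $\nu_A$ are "compatible" with $\mu$) to conclude $(\mu, x) \in R$; here one also needs the axiom $R$ (restriction to ultrafilters on $X$ itself) to make sense of the integral landing back in $\beta X$.

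The main obstacle I anticipate is the reverse direction of the second round trip, $R = R_{\tau_R}$, specifically proving that topological convergence against $\tau_R$ already implies $R$-relatedness. The subtlety is that $\mu \to x$ in $\tau_R$ only tells us $x$ is in the $R$-closure of every member of $\mu$, which a priori gives a whole family of witnessing ultrafilters with no coherence; the work is to organize this family so that $UQ4'$ applies and recovers $\mu$ on the nose rather than merely some ultrafilter refining $\mu$. I would handle this by taking the index set to be (a cofinal system in) $\mu$, choosing $\nu_A$ for each $A \in \mu$, and then observing that $UQ4'$ with the diagonal-type integral, together with the fact that ultrafilters on $X$ are determined by which subsets they contain, pins down the resulting integrated ultrafilter to be $\mu$ — modulo checking the hypothesis of $UQ4'$ that "for $\mu$-almost all $A$, $\nu_A$ converges to the relevant point," which here is arranged by fiat in the choice of witnesses. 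The remaining verifications (that $R_\tau$ genuinely satisfies $UQ1'$ and $UQ4'$, so the forward map lands in the right set) are routine: $UQ1'$ is the convergence of principal ultrafilters, and $UQ4'$ is the classical statement that iterated limits of ultrafilters converge, which is a standard property of the ultrafilter monad / Barr's theorem.
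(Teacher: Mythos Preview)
Your proposal is correct in outline and, once the vague step is filled in, yields a valid direct proof. It is, however, a genuinely different route from the paper's. The paper does not construct the bijection from scratch: it invokes Wyler's theorem (the bijection between topologies and ultrarelations satisfying $UQ1$ and $UQ4$) as a black box, and then observes that $UQ1/UQ4$ are equivalent to $UQ1'/UQ4'$. Your approach instead reproves Wyler's theorem in the primed formulation, via the Kuratowski closure operator and the two round trips. What your approach buys is self-containment and transparency about where each axiom is used; what the paper's approach buys is brevity and a clean separation between the known classical result and the (easy) translation $UQ1/UQ4 \Leftrightarrow UQ1'/UQ4'$.

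The one place your sketch is genuinely incomplete is the hard direction of $R = R_{\tau_R}$, and you flag it yourself. Two concrete ingredients are missing. First, the map $m$ in your application of $UQ4'$ should be the \emph{constant} map $m \equiv x$ on the index set: then $m_*\Theta = \delta_x$ for any ultrafilter $\Theta$ on the index set, and $UQ1'$ gives $(\delta_x, x) \in R$, so the outer hypothesis of $UQ4'$ is met; moreover the inner hypothesis ``$\nu_A \to_R m(A)$'' becomes exactly ``$(\nu_A, x) \in R$'', which is how the $\nu_A$ were chosen. Second, you need to specify the ultrafilter $\Theta$ on the index set $\mu$ so that $\int_\mu \nu_A \, d\Theta = \mu$. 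The correct choice is any ultrafilter extending the filter of tails $\{\,\{A \in \mu : A \subseteq B\} : B \in \mu\,\}$ (a proper filter since $\mu$ is closed under finite intersections): then for $B \in \mu$ one has $\{A : B \in \nu_A\} \supseteq \{A : A \subseteq B\} \in \Theta$ (using $A \in \nu_A$), and the reverse inclusion follows by applying the same argument to $X \setminus B$ and using that $\nu_A$ is an ultrafilter. With these two specifications your argument goes through cleanly.
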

        A corollary of this claim is that since $X$ is a topological space, then the maps $\kappa$ and $\beta$ are completely determined if we want the generalised $\la{Hom}$-sets to encode ultrafilter convergence.

        The proof of the claim  follows from a result by Wyler namely the following\cite{wyler1996convergence}:
        \begin{theorem} \label{topology theorem}
            let $X$ be a set then there is a  bijection between topologies on $X$, and ultrarelations on $X$ (subsets of $\beta X \times X$) satisfying the following properties:

            \begin{itemize}
                \item[UQ1] $\forall x \ (\delta_x,x) \in \ca{R}$

                \item[UQ4] Let $t$ be a map of sets from $I$ to $\beta X$, such that $(t(i), x_i) \in \ca{R} $, and suppose that $\mu$ is an ultrafilter on $I$ such that $(t\mu,y) \in \ca{R})$, then $(\int_I t(i)d\mu,y) \in \ca{R}$.
            \end{itemize}
\end{theorem}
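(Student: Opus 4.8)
The plan is to exhibit the two maps of the claimed bijection explicitly and to check that they are mutually inverse, with essentially all the content concentrated in one direction. Given a topology $\tau$ on $X$, I send it to its convergence ultrarelation $\ca{R}_\tau$, defined by $(\mu,x)\in\ca{R}_\tau$ iff $\ca{N}(x)\subseteq\mu$ (every open neighbourhood of $x$ lies in $\mu$). Conversely, given an ultrarelation $\ca{R}$ satisfying UQ1 and UQ4, I send it to the topology $\tau_{\ca{R}}$ whose closure operator is
\[
\overline{A} = \{x \in X : \exists\, \nu \in \beta X \text{ with } A \in \nu \text{ and } (\nu, x) \in \ca{R}\}.
\]
First I would check that $\ca{R}_\tau$ satisfies the axioms: UQ1 is the trivial fact that $\delta_x$ converges to $x$, and UQ4 is the classical iterated-limit computation, since if $t(i)$ converges to $x_i$ for all $i$ and the pushforward of $\mu$ along $i\mapsto x_i$ converges to $y$, then for each open $U\ni y$ the set $\{i: x_i\in U\}$ lies in $\mu$ and sits inside $\{i: U\in t(i)\}$, so $U\in\int_I t(i)\,d\mu$.

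Next I would verify that $\overline{(-)}$ is a Kuratowski closure operator: $\overline{\varnothing}=\varnothing$ since no ultrafilter contains $\varnothing$; monotonicity and $A\subseteq\overline{A}$ follow from UQ1 (take $\nu=\delta_x$); finite additivity $\overline{A\cup B}=\overline{A}\cup\overline{B}$ uses primeness of ultrafilters, as $A\cup B\in\nu$ forces $A\in\nu$ or $B\in\nu$; and idempotence $\overline{\overline{A}}\subseteq\overline{A}$ is the first genuine use of UQ4. Indeed, if $x\in\overline{\overline{A}}$ is witnessed by $\mu$ with $\overline{A}\in\mu$, one chooses for each $y\in\overline{A}$ an ultrafilter $\nu_y$ with $A\in\nu_y$ and $(\nu_y,y)\in\ca{R}$, restricts $\mu$ to $\overline{A}$, and applies UQ4 along the inclusion $\overline{A}\hookrightarrow X$ to obtain $(\int\nu_y\,d\mu,x)\in\ca{R}$ with $A\in\int\nu_y\,d\mu$, so $x\in\overline{A}$.

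The composite topology $\to$ relation $\to$ topology is the identity because the two closure operators coincide: $x$ lies in the topological closure of $A$ iff every neighbourhood of $x$ meets $A$ iff the filter generated by $\ca{N}(x)$ and $A$ is proper, which by the ultrafilter lemma is equivalent to the existence of some $\nu$ with $A\in\nu$ converging to $x$. This direction is routine, and together with the remaining round-trip it forces the two assignments to be mutually inverse, hence a bijection.

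The main obstacle is the other round-trip, relation $\to$ topology $\to$ relation, that is $\ca{R}_{\tau_{\ca{R}}}=\ca{R}$. The inclusion $\ca{R}\subseteq\ca{R}_{\tau_{\ca{R}}}$ is easy: if $(\nu,x)\in\ca{R}$ and $U$ is open with $x\in U$, then $X\setminus U$ is closed, so $X\setminus U\notin\nu$ (otherwise $x\in\overline{X\setminus U}=X\setminus U$), giving $U\in\nu$. For the reverse inclusion I would prove the key structural fact that for each $x$ the fibre $\ca{R}^{-1}(x)=\{\nu:(\nu,x)\in\ca{R}\}$ is closed in the Stone topology of $\beta X$; this is where UQ4 does the real work. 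Setting $I=\ca{R}^{-1}(x)$, the tautological family $t=\iota\colon I\hookrightarrow\beta X$ satisfies $(t(\rho),x)\in\ca{R}$, and for any ultrafilter $\mathfrak{M}$ on $I$ the constant map $I\to\{x\}$ pushes $\mathfrak{M}$ to $\delta_x$, so UQ1 together with UQ4 force $\int_I\rho\,d\mathfrak{M}\in\ca{R}^{-1}(x)$. Identifying $\int_I\rho\,d\mathfrak{M}$ with $m_X(\iota_*\mathfrak{M})$, where $m_X\colon\beta\beta X\to\beta X$ is the multiplication of the ultrafilter monad, and using that $m_X$ is exactly the limit map of the compact Hausdorff space $\beta X$, the collection of all such $\int_I\rho\,d\mathfrak{M}$ is precisely the Stone closure of $\ca{R}^{-1}(x)$; hence $\ca{R}^{-1}(x)$ is Stone-closed. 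Granting this, the reverse inclusion follows by contraposition: if $(\mu,x)\notin\ca{R}$, Stone-closedness yields some $A\in\mu$ with $A\notin\rho$ for every $\rho\in\ca{R}^{-1}(x)$, whence $x\notin\overline{A}$ and $U=X\setminus\overline{A}$ is an open neighbourhood of $x$ with $U\subseteq X\setminus A\notin\mu$, contradicting $\mu\to_{\tau_{\ca{R}}}x$. The delicate points throughout are the correct reading of UQ4 (the hypothesis ``$t\mu$'' meaning convergence of the pushforward of $\mu$ along $i\mapsto x_i$, as in UQ4$'$) and the identification of the ultrafilter integral with the monad multiplication, that is with convergence in the Stone space; once these are secured the computation is the standard Barr relational-algebra argument \cite{barr2006relational}, and I would cite Wyler \cite{wyler1996convergence} for the classical statement itself.
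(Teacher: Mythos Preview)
The paper does not actually prove this theorem; it simply attributes the result to Wyler and cites \cite{wyler1996convergence}, using it as a black box to identify ultrapreorders with topological spaces. Your argument is correct and is precisely the standard Barr--Wyler proof that the citation points to: define convergence from a topology, define a Kuratowski closure from a relation, and show the two round-trips are identities, with UQ4 doing the work both for idempotence of closure and (via Stone-closedness of the fibres $\ca{R}^{-1}(x)$) for the harder inclusion $\ca{R}_{\tau_{\ca{R}}}\subseteq\ca{R}$. You have supplied exactly the proof the paper elected to omit, including the correct reading of the ambiguous ``$t\mu$'' in the hypothesis of UQ4 as the pushforward along $i\mapsto x_i$.
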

        It is not hard to show that the conditions $UQ1$ and $UQ4$ are equivalent to $UQ1^{'}$ and $UQ4^{'}$. And hence giving a topology on $X$ is equivalent ot giving a subset of $\beta X \times X$ satisfying $UQ1^{'}$ and $UQ4^{'}$, or alternatively giving a generalised ultrastructure on a category with underlying set $X$, such that each genralised $\la{Hom}$-set contains at most one element, and such that change of base maps are isomorphisms.

        A preorder relation on a set is a relation (subset of $X \times X$), which is transitive and reflexive. Categorically , a preorder category is a category where there is at most one one morphism between two objects, a classic result in point set topology establishes an equivalence between finite topological spaces and finite preorders. Generalising this to subsets of $\beta X \times X$ motivates the following definition:
        
        \begin{definition}We define an ultrapreorder to be a small generalised ultracategory such that $\la{Hom}(A, \int_I M_i d \mu)$ contains at most one element, and such that all change of base maps $\Xi$ are isomorphisms. \label{ultapreorder}
        \end{definition}
      
    Thus a conclusion of this entire discussion:

        \begin{theorem} Let $X$ be a set, then there is a bijective correspondence between topologies on $X$, and ultrapreorder structures on the same set. \end{theorem}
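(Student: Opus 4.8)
The plan is to package everything that has already been assembled in this section into a single clean bijection, so the proof is essentially a matter of citing the preceding results rather than proving anything new. First I would recall that, by definition \ref{ultapreorder}, an ultrapreorder structure on a fixed underlying set $X$ is precisely a generalised ultracategory structure whose objects are the elements of $X$, whose every generalised $\la{Hom}$-set has at most one element, and whose change-of-base maps $\Xi$ are all isomorphisms. The first step is therefore to observe that such a structure is completely determined by the \emph{subset} of $\beta X \times X$ consisting of those pairs $(\mu, x)$ for which $\la{Hom}(x, \int_I M_i\, d\mu)$ is nonempty for some (equivalently, any) presentation of $\mu$ as a pushforward: once $\Xi$ is required to be an isomorphism, axiom R lets us restrict attention to ultrafilters directly on $X$, so no extra data beyond this subset survives; the maps $\beta$ and $\kappa$ carry no information because their target $\la{Hom}$-sets are subsingletons, and their existence is exactly a pair of closure conditions on the subset.

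Second, I would identify those closure conditions. Unwinding the requirement that $\kappa_x \in \la{Hom}(x, \int_\ast x\, d\ast)$ exists, together with the change-of-base maps, gives exactly condition $UQ1'$; unwinding the requirement that the composition maps $\beta$ exist and land in nonempty $\la{Hom}$-sets whenever their inputs are nonempty gives exactly condition $UQ4'$. Conversely, a subset of $\beta X \times X$ satisfying $UQ1'$ and $UQ4'$ lets one \emph{define} $\la{Hom}(x, \int_I M_i\, d\mu)$ to be a chosen singleton when $(m_\ast\mu, x)$ lies in the relation and $\varnothing$ otherwise, and to define $\beta$, $\kappa$, $\Xi$ as the unique possible maps; the generalised ultracategory axioms then hold automatically because every diagram of subsingletons whose relevant arrows are defined commutes. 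This establishes a bijection between ultrapreorder structures on $X$ and subsets of $\beta X \times X$ satisfying $UQ1'$ and $UQ4'$, which is exactly the content of the Claim above.

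Third, I would invoke Wyler's theorem \ref{topology theorem}, which gives a bijection between topologies on $X$ and subsets of $\beta X \times X$ satisfying $UQ1$ and $UQ4$, together with the already-noted observation that $UQ1,UQ4$ are equivalent to $UQ1',UQ4'$. Composing this bijection with the one from the previous step yields the desired bijective correspondence between topologies on $X$ and ultrapreorder structures on $X$. I would also note, for the record, that under this correspondence the underlying preorder of the ultrapreorder is the specialisation preorder of the topology and that the first theorem of this subsection guarantees the forward direction (every topological space \emph{is} such an ultrapreorder).

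The only genuine obstacle is the bookkeeping in the second step: one must check carefully that the full list of generalised ultracategory axioms (the unit axioms, the $\beta$--$\Xi$ compatibilities, and especially the composition axiom involving the colax associator $a$) impose \emph{nothing beyond} $UQ1'$ and $UQ4'$ once all $\la{Hom}$-sets are subsingletons and all $\Xi$ are isos. This is where the argument could become tedious, but it is not deep: every such axiom asserts the commutativity of a diagram in $\cat{Set}$ all of whose objects are subsingletons, so commutativity is automatic provided one verifies that ``source nonempty $\Rightarrow$ target nonempty'' is already forced by $UQ1'$ and $UQ4'$ — and the composition axiom's nonemptiness content is precisely an iterated instance of $UQ4'$, which is why $a$ being invertible in $\cat{Set}$ is exactly what makes the reduction go through. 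I would present this verification compactly, referring back to the itemised discussion of R, $UQ1'$, $UQ4'$ already given rather than re-deriving each diagram.
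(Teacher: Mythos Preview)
Your proposal is correct and follows essentially the same approach as the paper: both reduce an ultrapreorder structure on $X$ to a subset of $\beta X \times X$ satisfying $UQ1'$ and $UQ4'$, then invoke Wyler's theorem \ref{topology theorem} together with the equivalence $UQ1,UQ4 \Leftrightarrow UQ1',UQ4'$ to obtain the bijection with topologies. Your final paragraph on the bookkeeping (that the remaining axioms impose nothing beyond $UQ1'$ and $UQ4'$ because all diagrams live in subsingletons) is a welcome clarification that the paper leaves largely implicit.
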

 
But can we elevate the theorem above to an equivalence of $2$-categories? The answer is yes, as we can easily see that a leftultrafunctor between ultrapreorders is a continuous function, between underlying topologies, and a natural transformation of leftultrafunctors is just a natural transformation between continuous functions regarded as functors when we regard topological spaces as peorders categories with specialisation preorder.

More formally:

\begin{theorem}Let $\cat{Top}$ denote the category whose objects are topological spaces, $1$-morphisms are continuous functions, and $2$-morphisms are $f \leq g$ (which means that $\forall x  \ f(x) \leq g(x)$ in the specialisation preorder) then the construction above provides an equivalence of categories between $\mathrm{Ultrapreorders}$ and $\cat{Top}$.

\end{theorem}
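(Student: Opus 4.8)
The plan is to build a $2$-functor $\Phi\colon \mathrm{Ultrapreorders} \to \cat{Top}$ and a $2$-functor $\Psi\colon \cat{Top} \to \mathrm{Ultrapreorders}$, and then to show they are mutually inverse (up to the obvious isomorphisms). The functor $\Psi$ is already essentially described: it sends a space $X$ to the generalised ultracategory of the first Theorem of this section, and — since a left ultrafunctor between ultrapreorders is determined by its action on objects (all generalised $\la{Hom}$-sets being subsingletons) — one must check that for a continuous map $f\colon X \to Y$ the family of maps $\zeta$ exists, i.e. that $f$ preserves ultrafilter convergence, which is exactly continuity via Wyler's Theorem \ref{topology theorem}. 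For $2$-cells, one checks that $f \leq g$ in the specialisation preorder gives a well-defined $2$-morphism $(\alpha^1,\alpha^2)$ between the induced left ultrafunctors; again this is forced because the target $\la{Hom}$-sets are subsingletons, so the only content is that the relevant sets are nonempty, and this follows from the compatibility of $\leq$ with convergence. The functor $\Phi$ goes the other way: an ultrapreorder $\ca{C}$ has an underlying set $\la{Obj}(\ca{C})$, and by the Claim/Theorem \ref{topology theorem} the data of $\kappa$ and $\beta$ (with $\Xi$ iso) is precisely an ultrarelation satisfying $UQ1'$ and $UQ4'$, hence a topology; send a left ultrafunctor to its underlying function and a $2$-cell to the specialisation inequality it induces on underlying functions (the component at $A$ being the image of $\kappa_A$).

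The key steps, in order, are: (1) verify $\Psi$ is well-defined on $1$-cells, i.e. continuity $\Leftrightarrow$ the existence of $\zeta$ commuting with $\beta$, $\Xi$, $\kappa$ — here the axioms for $\zeta$ degenerate to ``if the source $\la{Hom}$-set is nonempty then so is the target'', which is preservation of convergence; (2) verify $\Psi$ is well-defined on $2$-cells, reducing $f \le g$ to the statement that $\delta_{g(x)}$ converges to $f(x)$, which we must show is equivalent to $\ca{N}(f(x)) \subseteq \ca{N}(g(x))$; (3) verify $\Phi$ is well-defined, which is the content of the Claim plus the observation that a left ultrafunctor's underlying function preserves convergence (read off from its compatibility with $\kappa$ and $\beta$) and hence is continuous, and that its $2$-cells give specialisation inequalities; (4) check $\Phi\Psi \cong \mathrm{id}_{\cat{Top}}$ and $\Psi\Phi \cong \mathrm{id}_{\mathrm{Ultrapreorders}}$ — the object-level bijection is the previous Theorem, and on morphisms both round-trips are the identity because morphisms on each side are determined by their underlying set-functions; (5) check these assignments are $2$-functorial (respect identities, and vertical/horizontal composition), which is routine since everything is determined on underlying functions.

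The main obstacle is step (1): one must genuinely unwind the three commuting squares in the definition of a left ultrafunctor in the subsingleton setting and confirm that they collectively say no more and no less than ``$f$ maps convergent ultrafilters to convergent ultrafilters'', i.e. that no extra coherence is hiding in the $\beta$-compatibility square (with its $\coprod_I X_i$ and iterated integrals) beyond what $UQ4'$ already guarantees. Concretely, the $\beta$-square says: if $f(A) \leadsto \int_I M_i\,d\mu$ converges and each $f$-image $f(M_i) \leadsto \int_{X_i} N_{(i,x)}\,d\lambda_i$ converges, then $f(A) \leadsto \int_{\coprod_I X_i} f(N_{(i,x)})\,d\!\int_I \iota_i\lambda_i\,d\mu$ converges — and one checks this is precisely an instance of $UQ4'$ applied to the pushforward structure, so the square commutes automatically (both legs land in a subsingleton that we've shown nonempty). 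Once this is pinned down, steps (2)–(5) are bookkeeping, and the specialisation-preorder computations in (2) are exactly the parenthetical remarks already made in the Definition (``in ultrafilter language iff the principal ultrafilter at $y$ converges to $x$'').
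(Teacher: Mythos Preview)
Your proposal is correct and follows essentially the same approach as the paper: the paper treats this theorem as immediate from the preceding bijection on objects, remarking only that ``a left ultrafunctor between ultrapreorders is a continuous function'' and that ``a natural transformation of left ultrafunctors is just a natural transformation between continuous functions regarded as functors'' via the specialisation preorder, without spelling out the verification. You have simply made explicit the unwinding that the paper leaves to the reader --- in particular your step~(1), identifying the $\beta$-compatibility square as an instance of $UQ4'$ in the subsingleton setting, and your check that $f\le g$ yields both $\alpha^1$ and $\alpha^2$, are exactly the content the paper gestures at.
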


\begin{note*} \normalfont
    We should notice that two ultrapreorders being equivalent does not mean that they  are homeomorphic as topological spaces, but rather their $T_0$ (Kolmogorov) quotients are homeomorphic. This is not an obstruction, since sheaves do not distinguish between such spaces.
\end{note*}

Now we turn to our next important results:
\begin{theorem}
    Let $X$ be a topological space then there exists an equivalence of categories between left ultrafunctors from $X$ to $\cat{Set}$, and étale spaces  over $X$.
\end{theorem}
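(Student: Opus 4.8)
The plan is to construct mutually quasi-inverse functors, and, to lighten the bookkeeping, to route through the classical equivalence between étale spaces over $X$ and sheaves on $X$; thus it suffices to produce an equivalence $\mathrm{Lult}(X,\cat{Set})\simeq Sh(X)$. Since $X$ is an ultrapreorder, for a point $x$, a family $M\colon I\to X$ and an ultrafilter $\mu$ on $I$, the set $\la{Hom}(x,\int_I M_i\,d\mu)$ is a singleton exactly when $M_*\mu$ converges to $x$ and is empty otherwise; and $\cat{Set}$, being a pseudo-algebra, carries its canonical generalised ultrastructure with $\la{Hom}(S,\int_I T_i\,d\mu)=\Hom_{\cat{Set}}(S,\prod_I T_i/\mu)$. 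Define $\Phi\colon Sh(X)\to\mathrm{Lult}(X,\cat{Set})$ by sending a sheaf $\ca{F}$ to the left ultrafunctor with value the stalk $\ca{F}_x$ at $x$ and whose map $\zeta_{x,\mu,M}$ carries the germ at $x$ of a section $s$ over an open $U\ni x$ to the class in $\prod_I\ca{F}_{M_i}/\mu$ of the partial function $i\mapsto(\text{germ of }s\text{ at }M_i)$, which is defined on the $\mu$-large set $\{i:M_i\in U\}$; one checks this is independent of the choice of $(U,s)$ and that the $\Xi$-, $\beta$- and $\kappa$-axioms for a left ultrafunctor reduce to elementary identities for ultraproducts of sets. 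Define $\Psi\colon\mathrm{Lult}(X,\cat{Set})\to Sh(X)$ by sending $F$ to the presheaf $P_F$ of \emph{compatible families}: for $U$ open, $P_F(U)$ is the set of functions $\sigma$ with $\sigma(y)\in F(y)$ for all $y\in U$ such that, for every $y\in U$ and every $(I,\mu,M)$ with $M_*\mu\to y$ and $\{i:M_i\in U\}\in\mu$, one has $\zeta^F_{y,\mu,M}(\sigma(y))=[\,i\mapsto\sigma(M_i)\,]$ in $\prod_I F(M_i)/\mu$; with restriction of functions this is a presheaf, and it is a sheaf because the condition at $y$ only involves ultrafilters whose image eventually lies in any prescribed open neighbourhood of $y$.

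The whole argument then rests on one lemma: if $\sigma,\sigma'\in P_F(U)$ agree at a point $y\in U$, then $\{z\in U:\sigma(z)=\sigma'(z)\}$ is open. If not, some ultrafilter $\nu$ converging to $y$ contains $A=\{z\in U:\sigma(z)\neq\sigma'(z)\}$; restricting $\nu$ to $A$ and feeding the inclusion $A\hookrightarrow X$ into the compatibility conditions for $\sigma$ and for $\sigma'$ (legitimate because, by axioms $1$--$3$ together with the convergence axioms $UQ1'$, $UQ4'$ on $X$, all change-of-base maps $\Xi$ are isomorphisms and one may restrict to ultrafilters on $X$) forces $\sigma(z)=\sigma'(z)$ for $\nu$-almost all $z\in A$, contradicting the definition of $A$. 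Granting the lemma: for an étale space $p\colon E\to X$ with sheaf of sections $\Gamma_E$, the left ultrafunctor $\Phi(\Gamma_E)$ has stalks $E_y$, and a family $\sigma$ is compatible for it precisely when it is a continuous section of $p$ (germs of a continuous section manifestly satisfy the condition; conversely the lemma shows a compatible family agrees locally with continuous sections), so $\Psi\Phi(\Gamma_E)\cong\Gamma_E$ and the composite $\mathrm{Et}(X)\to\mathrm{Lult}(X,\cat{Set})\to Sh(X)\simeq\mathrm{Et}(X)$ is the identity. On morphisms there is nothing deep: a $2$-morphism of left ultrafunctors $X\to\cat{Set}$ restricts to a fibrewise family $\alpha_x\colon F(x)\to G(x)$ commuting with every $\zeta$ (axioms $3$, $4$ for $2$-morphisms), which is exactly a map of the associated étale spaces over $X$, and this assignment is bijective on hom-sets.

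It remains to see that the other composite is an equivalence, i.e. that for every $F$ the evaluation map $\Phi\Psi(F)\to F$, with component $(P_F)_x=\mathrm{colim}_{U\ni x}P_F(U)\to F(x)$, $[\sigma]\mapsto\sigma(x)$, is an isomorphism of left ultrafunctors. Injectivity of each component is the lemma (two germs of compatible families agreeing at $x$ agree near $x$), and the comparison of $\zeta$'s is immediate from the definition of $P_F$, so everything reduces to \textbf{surjectivity}: each $e\in F(x)$ must be $\sigma(x)$ for some $\sigma\in P_F(U)$ with $x\in U$ open. I would prove this by a maximality argument on the poset of pairs $(W,\sigma_W)$ with $W$ open, $x\in W$, $\sigma_W\in P_F(W)$, $\sigma_W(x)=e$, ordered by extension: chains have upper bounds by the sheaf property of $P_F$, and the content is (a) that this poset is nonempty and (b) that a maximal element $(U,\sigma)$ has $U$ a neighbourhood of $x$ — for if $y$ lay in the closure of $W$ but not in $W$, an ultrafilter $\nu\to y$ with $W\in\nu$ together with the composition axiom $\beta$ and the associator identity would pin down a value $\sigma(y)\in F(y)$ extending $\sigma$, and the convergence axiom $UQ4'$ on $X$ would show the extended family is still compatible, contradicting maximality. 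This surjectivity step — producing and coherently choosing the missing stalk values from the abstract ultrafunctor structure — is where essentially all the force of the generalised ultracategory axioms is spent, and I expect it to be the main obstacle; everything else is routine manipulation of ultraproducts of sets. With both composites identified, $\Phi$ and $\Psi$ are mutually quasi-inverse, and composing with the classical equivalence $Sh(X)\simeq\mathrm{Et}(X)$ yields the claimed equivalence between $\mathrm{Lult}(X,\cat{Set})$ and the category of étale spaces over $X$.
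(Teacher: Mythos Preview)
Your approach via a sheaf of compatible families is genuinely different from the paper's, which constructs the étale space directly: given $F$, it topologises $E=\coprod_{x} F(x)$ by declaring an ultrafilter $\mu$ on $E$ to converge to $a\in F(x)$ iff $\pi_*\mu\to x$ in $X$ and, writing $\sigma_{\pi_*\mu}(a)=(b_y)_y$, the set $\{b_y\}$ belongs to $\mu$. It then checks that $\pi$ is open and that the fibrewise diagonal is open, so $E$ is étale over $X$ with fibre $F(x)$ by construction; no stalk-surjectivity question ever arises. Your key lemma (agreement of two compatible families at a point propagates to an open set) is correct and is precisely the sheaf-side translation of the diagonal being open.

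The gap is exactly where you suspect: surjectivity of $(P_F)_x\to F(x)$. As written, your Zorn argument is circular --- you take $W$ open with $x\in W$, so condition (b) is vacuous and the entire content is (a), nonemptiness, which is the very surjectivity you are trying to prove. More seriously, the closure-extension step cannot work even if you relax $W$ to be arbitrary: the structure map $\zeta_{y,\nu}\colon F(y)\to\prod_i F(M_i)/\nu$ goes the \emph{wrong direction} to ``pin down'' a value $\sigma(y)$ from the class $[\sigma(M_i)]$ in the target. Nothing in the left-ultrafunctor axioms makes $\zeta_{y,\nu}$ injective or surjective, so there is no mechanism to manufacture an element of $F(y)$ from data in the ultraproduct, and the composition axiom $\beta$ only lets you compose such maps, not invert them. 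The paper's openness-of-$\pi$ computation (which invokes \cite[Lemma~3.4]{Bundles}) is where this work is actually done; any repair of your argument will have to reproduce it, showing directly that for each $e\in F(x)$ the ``values determined by $\sigma_\mu(e)$'' assemble into a compatible family on some open neighbourhood of $x$.
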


\begin{note*} \normalfont
    Recall that étale spaces over $X$, are equivalent to sheaves on $X$.
\end{note*}

\begin{note*} \normalfont
    It goes without saying that $\cat{Set}$ is trivially a generalised ultracategory with generalised $\la{Hom}$-sets being actual $\la{Hom}$-sets. For a rigorous construction of the ultrastructure, see the next subsection.
\end{note*}

This is a generalisation of the theorem stated by \cite{lurie2018ultracategories} regarding usual ultracategories, which says that sheaves over $X$ where $X$ is a compact Hausdorff space, and left ultrafunctors from $X$ to $\cat{Set}$ are equivalent. 

\begin{proof} The proof stems from the fact that the alternative proof that we gave of the statement above in \cite{Bundles} (the fact that $\mathrm{Lult(X,\cat{Set}})$ is equivalent to étale bundles over $X$, if $X$ is compact Hausdorff), would still work if the base space is no longer compact Hausdorff, if we generalise the definition of left ultrafunctors.

Suppose that we have a left ultrafunctor $\ca{F}$ from $X$ to $\cat{Set}$, then we equip the space $E= \coprod_{x \in X}  \ca{F}(x)$, with the topology such that an ultrafilter $\mu$ on $E$ converges to $a$ iff $\pi\mu$ converges to $\pi(a)$, and if $\sigma_{\mu}(a)= (b_x)_{x \in X}$, then $\{ b_x: x \in X \} \in \mu$ (obviously, this does not depend on the representative of the element in the $\int_{X}\ca{F}(x)d\mu$). Here $\sigma_{\mu} \in \la{Hom}(\ca{F}(\pi(a)),\int_{X} \ca{F}(x)d \mu)$ is the image of the unique element in $\la{Hom}(\pi a , \int_{X} x d\mu)$ by $\ca{F}$, this is a topology by theorem \ref{topology theorem}.

Now we show that this space is étale over $X$. This is equivalent to saying that $\pi$ is open and that $\Delta: E \rightarrow E \times_{X} E$ is open (which is equivalent to saying that the diagonal $\Delta \subseteq E \times_{X} E$ is open.

To show that $\pi$ is open. Let $V$ be an open set of $E$ and let $\mu$ be an ultrafilter on $X$ such that $\mu$ converges to some $x \in \pi(V)$. We want to show that $\pi(V) \in \mu$. Since $x \in \pi(V)$ we can find some $f \in V$ such that $\pi(f)=x$. Now let $\sigma_{\mu}(f)= (a_x)_{x \in X}$ it follows from \cite[Lemma 3.4]{Bundles} (in that paper we developed the theory of bundles of metric spaces, so to apply this lemma in the simple case of sets, we equip them with discrete metric structure) that there exists some $N \in \mu$ such that $\{ a_x : x \in N \} \subseteq V$, this should imply that $N \subseteq  \pi(V)$, and this implies that $\pi(V) \in \mu$  which implies that $\pi(V)$ is open.

Next thing, we need to show that the diagonal of $E \times_{X} E$ is open. Towards this, suppose that $\mu$ is an ultrafilter on $E \times_{X} E$ such that $\mu$ converges to $(f,f)$  such that $f \in E_x$. suppose that $\sigma_{\pi \pi_1  \mu}(f)= \sigma_{\pi \pi_2 \mu}(f)= (a_x)_{x \in X}$. Hence $\{a_x \} \in \pi_1\mu$ and $\{a_x\} \in \pi_2 \mu$ This implies that $\{(a_x,a_x)\}\in \mu$ and hence the diagonal $\Delta \in \mu$. So $\Delta$ is open so the space $E$ is étale over $X$. 

Now suppose that we have an étale space $E$ over $X$. then we can simply define a left ultrafunctor from $X$ to $\cat{Set}$ as follows: we send $x$ to $E_x$. Now if we have an ultrafilter $\mu$ on $I$ and a function $f$ from $I$ to $X$ such that $f\mu$ converges to $x$ then we define $\sigma_{\mu}$ from $E_x$ to $\int_{I} E_{f(i)} d \mu$ by sending $f \in E_x$ to $(a_{f(i)})_{i \in f^{-1}(U)}$ where $x \mapsto a_x$ is local homeomorphism defined from a neighbourhood $U$ of $x$ (the reader should note that when passing to the ultraproduct the value of $\sigma_{\mu}$ is independent of $U$).

We leave to the reader showing that these two processes are functorial and are  inverses of each other (up to equivalence).

\end{proof}

\subsection{Points of a Topos}

\label{points}
Let $E$ be a topos. We claim that the category of points of this topos has an ultrastructure constructed as follows: let $(C,\ca{J})$ be a site of definition for this topos, then the category of points of this topos is equivalent to  the category of $J$-continuous flat functor from $C$ to the category of sets, then it is possible to define the pointwise ultraproduct of these points seen as functors from $C$ to $\cat{Set}$ (notice that this pointwise ultraproduct is not necessarily a point), and hence we may define $\la{Hom}(F, \int_I G_i d\mu) =\la{Hom}_{Fun(C,\mathsf{Set})}(F, \int_I G_i d \mu)$ , here $\int_I G_i d \mu$ is the pointwise ultraproduct of the family of functors $(G_i)$.

We claim that this gives the category of points of a topos the structure of a generalised ultracategory. To do this we show a more general result:

\begin{theorem}
    Suppose $\ca{A}$ is an ultracategory, which is a pseudo-algebra for the pseudo-monad $T$ (that means that the colax associator as defined in \cite{hamad2025ultracategories} is invertible), and let $\ca{B}$ be any full subcategory of $\ca{A}$, then there is a canonical generalised ultrastructure on $\ca{A}$ that we describe in the following passage: 

    \label{not very crucial theorem}
\end{theorem}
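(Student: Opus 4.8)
The plan is to equip a full subcategory $\ca{B}$ of a pseudo-$T$-algebra $\ca{A}$ with all the data of a generalised ultracategory by \emph{pulling back} the pseudo-algebra structure of $\ca{A}$ along the inclusion, and then to verify the seven axioms by reducing each of them to a coherence identity that already holds in $\ca{A}$ as a pseudo-algebra. Concretely, for $A \in \ca{B}$, a family $(M_i)_{i\in I}$ in $\ca{B}$ and an ultrafilter $\mu$ on $I$, I would \emph{define} $\la{Hom}_{\ca{B}}(A, \int_I M_i \, d\mu) := \Hom_{\ca{A}}\!\big(A, \int_I M_i \, d\mu\big)$, where $\int_I M_i\, d\mu$ is the genuine ultraproduct computed in $\ca{A}$ (this object need not lie in $\ca{B}$, which is exactly why the generalised, ``representable-only'' formalism is needed). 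The distinguished element $\kappa_A$ is the image under the algebra unit of $\mathrm{id}_A$; the change-of-base maps $\Xi_{f,\mu}$ are defined as precomposition with the ultraproduct diagonal map $\Delta_{f,\mu}\colon \int_J M_{f(j)}\, d\mu \to \int_I M_i\, df\mu$ of \cite{lurie2018ultracategories}; and the map $\beta$ is built from the colax structure map of the $T$-algebra together with composition in $\ca{A}$ — that is, given $g\in\Hom(A,\int_I M_i\, df\mu)$ and a $\mu$-indexed family of maps $M_i \to \int_{X_i} N_{(x,i)}\, d\lambda_i$, one first assembles the family into a single map $\int_I M_i\, d\mu \to \int_I \int_{X_i} N_{(x,i)}\, d\lambda_i\, d\mu$, then applies the (inverse of the) Fubini/diagonal composite $\Delta_{\iota\gamma_\bullet,\mu}$ and $\int_T\Delta$, i.e. the colax associator $a$, landing in $\Hom(A, \int_{\coprod_I X_i} N_{(x,i)}\, d\int_I\iota_i\lambda_i\, d\mu)$, and finally precomposes with $g$.

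Next I would check the axioms in roughly the listed order. Axioms (1)--(3) (functoriality and injectivity-invertibility of $\Xi$) follow immediately from the corresponding properties of the ultraproduct diagonal maps in $\ca{A}$: $\Delta_{\mathrm{id},\mu}$ is the identity, $\Delta_{f,\mu}$ is an isomorphism when $f$ is injective (Lurie), and $\Delta$ is functorial in the indexing map. The left and right unit axioms (4)--(5) amount to the unit coherence of the $T$-pseudo-algebra $\ca{A}$ (normality of the unit constraint), transported through the definitions of $\beta$ and $\kappa$. The two compatibility axioms (6) between $\beta$ and $\Xi$ are naturality statements for the colax structure map with respect to reindexing, which again hold in $\ca{A}$. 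The composition axiom (7) is where the real content sits: it is precisely the associativity/pentagon coherence for the colax structure of $\ca{A}$, and here the hypothesis that $\ca{A}$ is a \emph{pseudo}-algebra (so the colax associator $a$ is invertible) is essential — the axiom is phrased using $a^{-1}$, so without invertibility the statement would not even typecheck. I would prove (7) by writing both legs of the hexagonal diagram as composites of ultraproduct diagonal maps and Fubini transforms and invoking the single associativity coherence of \cite{hamad2025ultracategories}, possibly after pasting in the naturality squares for $\Delta$.

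The main obstacle I expect is bookkeeping rather than conceptual: the composition axiom diagram involves iterated coproducts $\coprod_K\coprod_{Z_k}T_{k,z}$ versus $\coprod_{\coprod_K Z_k}T_{(k,z)}$, the reindexing isomorphism $\taurus$, and several applications of ``commutativity of products with directed colimits in $\cat{Set}$'', and matching these up cleanly against the form of the colax-associator coherence proved in \cite{hamad2025ultracategories} will require care. A secondary point to get right is that the inclusion $\ca{B}\hookrightarrow\ca{A}$ being \emph{full} is used to ensure that the underlying category of the generalised ultracategory on $\ca{B}$ (whose hom-sets are $\la{Hom}(A,\int_* B\, d*) = \Hom_{\ca{A}}(A, B)$ since $\int_* B\, d* \cong B$ in a pseudo-algebra) really is $\ca{B}$ itself with its original morphisms and composition; this should be a short check using the unit coherence already invoked for axioms (4)--(5). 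Once these are in place, the ``passage'' describing the structure explicitly is just the unwinding of the definitions given above.
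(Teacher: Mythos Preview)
Your proposal is correct and follows essentially the same route as the paper: you define the generalised $\la{Hom}$-sets as the ambient $\Hom_{\ca{A}}$, build $\beta$ from the map $\Omega$ (your ``assembling'' step) followed by composition and $a^{-1}$, take $\Xi$ to be the representable at $\Delta_{f,\mu}$ and $\kappa$ to be the unit isomorphism, and then reduce axioms (1)--(3) to properties of $\Delta$, (4)--(5) to the unit coherence, (6) to naturality of the colax associator, and (7) to the associativity coherence for $a$ --- exactly what the paper does in Appendix~B, where the last step is unpacked as the identity $\Delta^{-1}\circ a^{-1}\circ\int_I a^{-1}\,d\mu = a^{-1}\circ a^{-1}$ and broken into the diagram chase you anticipate. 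Your caveat about bookkeeping is well placed: that chase is where all the work lives.
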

We define $\la{Hom}(A, \int_I M_i d\mu)$ to be $\la{Hom}_{B}(A,\int_I M_i d\mu)$.

In order to define the  maps $\beta$, we define another class of maps $\Omega_{(A_i), (B_i)}: \int_I \la{Hom}(A_i, B_i)d\mu \rightarrow \la{Hom} ( \int_I A_i d \mu , \int_I B_i d\mu)$ as follows. We know that $\int_I \mathrm{Hom}(A_i, B_i) d\mu =\varinjlim_{U \in \mu}  \prod_{i \in U} \mathrm{Hom}(A_i, B_i)$. Now take $U \in \mu$, looking at the diagram below:

% https://q.uiver.app/#q=WzAsNCxbMCwwLCJcXHByb2Rfe2kgXFxpbiBVIH1cXG1hdGhybXtIb219KEFfaSwgQl9pKSAiXSxbMywwLCIgXFxtYXRocm17SG9tfShcXGludF9VQV9pIGRcXG11LCBcXGludF9VQl9pIGRcXG11KSJdLFs2LDAsIiBcXG1hdGhybXtIb219KFxcaW50X0lBX2lkXFxtdSwgXFxpbnRfSUJfaSBkXFxtdSkiXSxbMCwzLCJcXGludF9JIFxcbWF0aHJte0hvbX0oQV9pLCBCX2kpIGRcXG11Il0sWzAsMSwiXFxpbnRfVSJdLFsxLDIsIlxcc2ltZXEiXSxbMCwzLCJjX1UiXSxbMywyLCJcXE9tZWdhX3t9IiwwLHsic3R5bGUiOnsiYm9keSI6eyJuYW1lIjoiZGFzaGVkIn19fV1d
\[\begin{tikzcd}
	{\prod_{i \in U }\mathrm{Hom}(A_i, B_i) } &&& { \mathrm{Hom}(\int_UA_i d\mu, \int_UB_i d\mu)} &&& { \mathrm{Hom}(\int_IA_id\mu, \int_IB_i d\mu)} \\
	\\
	\\
	{\int_I \mathrm{Hom}(A_i, B_i) d\mu}
	\arrow["{\int_U}", from=1-1, to=1-4]
	\arrow["{c_U}", from=1-1, to=4-1]
	\arrow["\simeq", from=1-4, to=1-7]
	\arrow["{\Omega_{}}", dashed, from=4-1, to=1-7]
\end{tikzcd}\]

\noindent $\Omega$ is the unique map that exists by the universal property of the directed colimit $\varinjlim_{U \in \mu} \prod_{i \in U} \mathrm{Hom}(A_i, B_i)$. Here we called $\mu$, by abuse of language, the ultrafilter on $I$ and the same ultrafilter restricted to $U$.

\begin{note*} \normalfont
  It is perhaps easier to describe $\Omega$ in non-categorical language, this map is the map that sends $(g_i) \in \int_I\la{Hom}(A_i,B_i)d\mu$ to $\int_I g_i d\mu \in \la{Hom}(\int_I A_i d\mu, \int_I B_i d\mu)$.
\end{note*}

Now we define the family of maps $\beta$ as follows:

\[
\adjustbox{max width =\textwidth}{
% https://q.uiver.app/#q=WzAsNCxbMCwwLCJcXG1hdGhybXtIb219KEEsIFxcaW50X0kgQl9pIGRcXG11KSBcXHRpbWVzIFxcaW50X0kgXFxIb20oQl9pLCBcXGludF97WF9pfU5feyhpLHgpfWRcXGxhbWJkYV9pKSBkXFxtdSJdLFszLDAsIlxcbWF0aHJte0hvbX0oQSwgXFxpbnRfSSBCX2kgZFxcbXUpIFxcdGltZXMgIFxcSG9tKFxcaW50X0lCX2lkXFxtdSwgXFxpbnRfSVxcaW50X3tYX2l9Tl97KGkseCl9ZFxcbGFtYmRhX2lkXFxtdSkgIl0sWzcsMCwiXFxtYXRocm17SG9tfShBLFxcaW50X0lcXGludF97WF9pfU5feyhpLHgpfWRcXGxhbWJkYV9pIGRcXG11KSAiXSxbMTIsMCwiXFxtYXRocm17SG9tfShBLCBcXGludF97XFxjb3Byb2Rfe0l9WF9pfSBOX3soaSx4KX0gZFxcaW50X0kgXFxpb3RhX2kgXFxsYW1iZGFfaSBkXFxtdSkiXSxbMCwxLCJcXG1hdGhybXtpZH1cXHRpbWVzXFxPbWVnYSJdLFsxLDIsIlxcY2lyYyJdLFsyLDMsImFeey0xfSAiXV0=
\begin{tikzcd}
	{\mathrm{Hom}(A, \int_I B_i d\mu) \times \int_I \Hom(B_i, \int_{X_i}N_{(i,x)}d\lambda_i) d\mu} &&& {\mathrm{Hom}(A, \int_I B_i d\mu) \times  \Hom(\int_IB_id\mu, \int_I\int_{X_i}N_{(i,x)}d\lambda_id\mu) } &&&& {\mathrm{Hom}(A,\int_I\int_{X_i}N_{(i,x)}d\lambda_i d\mu) } &&&&& {\mathrm{Hom}(A, \int_{\coprod_{I}X_i} N_{(i,x)} d\int_I \iota_i \lambda_i d\mu)}
	\arrow["{\mathrm{id}\times\Omega}", from=1-1, to=1-4]
	\arrow["\circ", from=1-4, to=1-8]
	\arrow["{a^{-1} }", from=1-8, to=1-13]
\end{tikzcd}
}
\]
Now we define the maps $\Xi$ to be just the representables at the ultraproduct diagonal maps $\Delta_{\mu,f}$, and the maps $\kappa$ are defined to be  the map that sends $* \in \{*\}$ to $\epsilon_{*,*}^{-1}$ .

We leave the proof that this supplies the data of a generalised ultracategory to the appendix for two reasons, we don't believe that the proof adds any mathematical value to this paper, rather it may distract the reader in long diagrams proofs. The second reason is that, in the case which interests us the most which is the category of point of a topos, the generalised  ultrastructure can be verified without the use of the proof of the theorem, and it turns to be rather trivial verification (since the $\cat{Set}$ case is trivial and we're computing the ``ultraproduct'' of points pointwise).

Note that the underlying category structure associated to the generalised ultrastructure of an ultracategory, and the original category structure of such categories are equivalent (since $\la{Hom}(A,B) \simeq \la{Hom}(A, \int B d*)$) and we are not going to distinguish those two in the future, unless it's required to do so).

\begin{theorem}
    Suppose that $\ca{B}$ and $\ca{B}^{'}$ are generalised ultracategories, which are actual ultracategories such that their generalised ultrastructure comes from \ref{not very crucial theorem}, then there is an isomorphism of categories between the category of left ultrafunctors from $\ca{B}$ to $\ca{B}^{'}$ as regular ultracategories , and the category of left ultrafunctors from $\ca{B}$ to $\ca{B}^{'}$ as generalised ultracategories. 
\end{theorem}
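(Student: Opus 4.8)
The plan is to write the isomorphism down explicitly; the one thing that needs to be observed is that an ordinary left‑ultrafunctor comparison map is recovered from the generalised datum $\zeta$ by evaluating it at an identity morphism. Throughout, $\ca{B}$ and $\ca{B}'$ are ultracategories carrying the generalised ultrastructure of Theorem~\ref{not very crucial theorem}, so that $\la{Hom}(A,\int_I M_i\,d\mu)$ is the ordinary hom‑set $\ca{B}(A,\int_I M_i\,d\mu)$ with the ultraproduct formed in $\ca{B}$, and likewise for $\ca{B}'$. Given an ordinary left ultrafunctor $(F,(\sigma_\mu))$ — a functor $F$ together with its comparison maps $\sigma_\mu\colon F(\int_I M_i\,d\mu)\to\int_I F(M_i)\,d\mu$ — I put $\zeta(g):=\sigma_\mu\circ F(g)$ for $g\in\la{Hom}(A,\int_I M_i\,d\mu)$. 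Given a generalised left ultrafunctor $(F,\zeta)$, with underlying functor also written $F$ (Section~\ref{1-morphism}), I put $\sigma_\mu:=\zeta(\mathrm{id}_{\int_I M_i\,d\mu})\in\ca{B}'(F(\int_I M_i\,d\mu),\int_I F(M_i)\,d\mu)$. The passage $\sigma\mapsto\zeta\mapsto\sigma$ is the identity, since $\zeta(\mathrm{id})=\sigma\circ F(\mathrm{id})=\sigma$. For $\zeta\mapsto\sigma\mapsto\zeta$ I must show $\zeta(g)=\zeta(\mathrm{id}_{\int_I M_i\,d\mu})\circ F(g)$; this follows by writing $g=\mathrm{id}_{\int_I M_i\,d\mu}\circ g$ (a left‑unit law, immediate in the derived generalised ultracategory from the left‑unit axiom and the explicit descriptions of $\beta$ and $\kappa$) and applying the compatibility of $\zeta$ with $\beta$ and $\Xi$, which is exactly compatibility with precomposition by ordinary morphisms. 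Thus the two assignments are mutually inverse and leave the underlying functor unchanged.

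Next I would check well‑definedness. For the forward assignment one substitutes the explicit description of $\beta,\Xi,\kappa$ in the derived generalised ultracategory (for $\beta$: apply $\Omega$, then compose in $\ca{B}'$, then $a^{-1}$; for $\Xi_{f,\mu}$: the representable at the ultraproduct diagonal $\Delta_{\mu,f}$; for $\kappa$: $\epsilon^{-1}_{*,*}$) into the three axioms for a generalised left ultrafunctor applied to $\zeta=\sigma_\bullet\circ F(-)$. After this substitution the $\Xi$‑axiom becomes the compatibility of $\sigma$ with the ultraproduct diagonal, the unit axiom becomes the left‑ultrafunctor unit axiom $\sigma_*=\mathrm{id}$, and — after cancelling a common factor using the naturality of $\sigma$ — the $\beta$‑axiom reduces to the coherence identity expressing the comparison $\sigma$ at an iterated ultraproduct $\int_{\coprod_I X_i}N\,d\int_I\iota_i\lambda_i\,d\mu$ in terms of the comparisons at the inner and outer ultraproducts; unwinding the colax associator $a$ into the categorical Fubini transform and the ultraproduct diagonal, this is exactly the pair of coherence diagrams in the definition of an ordinary left ultrafunctor. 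Running the same chain of identities backwards shows that the comparison maps $\sigma_\mu=\zeta(\mathrm{id})$ extracted from a generalised $(F,\zeta)$ are natural (taking the inner families in the $\beta$‑axiom to be singletons encodes postcomposition, hence naturality in the $M_i$) and satisfy Lurie's coherence and unit axioms.

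It remains to promote the bijection to a strict isomorphism of categories. Composition of generalised left ultrafunctors is $\zeta_2\circ\zeta_1$ at the level of the structure maps, and if $\zeta_i=\sigma^i_\bullet\circ F_i(-)$ then $\zeta_2\circ\zeta_1=\big(\sigma^2_\bullet\circ F_2(\sigma^1_\bullet)\big)\circ(F_2F_1)(-)$, whose first factor is the comparison map of the composite ordinary left ultrafunctor; identities match trivially. For $2$‑morphisms I would use the concise description from the Note in Section~\ref{2-morphisms}: a generalised natural transformation is the same datum as a family of components $\alpha_A\colon F(A)\to G(A)$; axioms (3)--(4) say precisely that $(\alpha_A)$ is a natural transformation of the underlying functors, and axioms (1)--(2) together with their $\beta$‑variants translate, via $\sigma_\mu=\zeta(\mathrm{id})$, into the compatibility of $(\alpha_A)$ with the comparison maps — that is, into Lurie's notion of a transformation of left ultrafunctors. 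Vertical and horizontal composition agree on both sides, since in each case they are (whiskered) composition of the underlying natural transformations. Hence $\mathrm{Lult}(\ca{B},\ca{B}')$ formed with the ordinary and with the generalised ultrastructures are isomorphic as categories.

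The main obstacle is the $\beta$‑axiom in the well‑definedness step: the generalised $\beta$ of the derived ultracategory is a threefold composite involving $\Omega$, composition in $\ca{B}'$, and the colax associator $a^{-1}$, so translating its compatibility with $\zeta=\sigma_\bullet\circ F(-)$ into a statement about $\sigma$ alone and recognising it as the conjunction of the coherence diagrams for an ordinary left ultrafunctor is a lengthy diagram chase. It is, however, the same bookkeeping already performed in \cite{hamad2025ultracategories} to identify ultrafunctor structure with colax‑morphism structure, now transported through the relational reformulation; since moreover in the case of interest (categories of points, with ultraproducts computed pointwise in a functor category) all the maps in sight are the evident ones and the check is transparent, I would defer the full verification to the appendix.
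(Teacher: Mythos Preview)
Your proposal is correct and follows essentially the same strategy as the paper's Appendix~C: define $\zeta(g)=\sigma_\mu\circ F(g)$ in one direction and $\sigma_\mu=\zeta(\mathrm{id}_{\int_I M_i\,d\mu})$ in the other, then verify the axioms and the two round-trips. The one place where the paper is more careful than you is the identity $\zeta(g)=\zeta(\mathrm{id})\circ F(g)$: your appeal to ``$g=\mathrm{id}\circ g$ and compatibility with $\beta,\Xi$'' hides the fact that the underlying functor $F$ of a generalised left ultrafunctor lives between the \emph{underlying} categories (with hom-sets $\la{Hom}(A,\int_* B\,d*)$), so one must transport through the isomorphisms $\epsilon_{*,*}$ before the $\beta$-axiom applies; the paper makes this explicit with a small diagram chase involving $\epsilon$ and the colax associator, but the content is the same.
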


\begin{note*}
    \normalfont This theorem justifies calling morphisms between generalised ultracategories ``left ultrafunctors''.
\end{note*}

    \begin{note*}
        \normalfont

        This theorem provides a fully faithful embedding of the category of $T$-pseudo-algebras with lax morphism of pseudo-algebras and adequate $2$-morphisms, inside the category of generalised ultracategories.
    \end{note*}

    The proof can be found in appendix \hyperref[Equaivalence of different notions of left ultrafunctors]{Appendix C}.

    Now back to the case which interests us, which is the points of a topos $E$. We have shown that such category has a generalised ultrastructure given by $\la{Hom}(A,\int_I M_i d \mu)$ inside the category $\mathrm{Fun}(C,\mathsf{Set})$, where $(C,\ca{J})$ is a site of definition. We claim that up to isomorphism this construction is independent of the choice of site of definition i.e. we want to show the following lemma:

    \begin{lemma}Let $C_1$ and $C_2$ be two sites of definition of $E$, and let $x$ be a point of $E$ and let $(M_i)_{i \in I}$ be a family of points of $E$, we may regard those as either functors from $C_1$ or $C_2$ to $\cat{Set}$ and we may hence construct the functor $\int_I M_i d\mu$ (which is not necessarily a point of $E$), and then we may construct the  $\Hom$-set $\mathrm{Hom}(x,\int_I M_id\mu)$, now we claim that this construction is independent (up to iso) of whether we consider $x$ and $M_i$ as a functors from $C_1$ to $\cat{Set}$ or from $C_2$ to $\cat{Set}$.

\end{lemma}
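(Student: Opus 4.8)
The plan is to reduce everything to the comparison of two sites via a common refinement, and then to observe that the key construction — the pointwise ultraproduct of points, together with the $\Hom$-set into it — only depends on the topos $E$ through the evaluation data, which is preserved under the comparison morphism. First I would recall that any two sites of definition $C_1$ and $C_2$ of $E$ can be compared: there is a standard fact (see \cite{johnstone2002sketches} or \cite{caramello2018theories}) that the topos $E$ admits a site $(C_3,\ca{J}_3)$ (for instance taken inside $E$ itself, e.g. a generating full subcategory closed under finite limits, or a common refinement) together with morphisms of sites $C_1 \to C_3 \leftarrow C_2$ inducing, after sheafification, the identity geometric morphism on $E$. It therefore suffices to treat the case of a single morphism of sites $u \colon C \to D$ which induces an equivalence $Sh(D,\ca{K}) \simeq Sh(C,\ca{J}) = E$; by symmetry and composition, comparing $C_1$ and $C_2$ follows.

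So fix such a $u \colon C \to D$. A point $x$ of $E$ is a $\ca{J}$-continuous flat functor $C \to \cat{Set}$, equivalently a $\ca{K}$-continuous flat functor $D \to \cat{Set}$, and the bijection between the two descriptions is precisely precomposition with $u$: if $\tilde{x} \colon D \to \cat{Set}$ is the presentation over $D$, then the presentation over $C$ is $\tilde{x} \circ u$ (this is exactly what it means for $u$ to induce the identity on $E$, unwound through the equivalence $\mathrm{Points}(E) \simeq \la{Flat}_{\ca{J}}(C,\cat{Set}) \simeq \la{Flat}_{\ca{K}}(D,\cat{Set})$). The central observation is then that precomposition with $u$ commutes with pointwise ultraproducts: for a family $(M_i)_{i\in I}$ of points with presentations $\tilde M_i \colon D \to \cat{Set}$, and an ultrafilter $\mu$ on $I$, the pointwise ultraproduct functor $\int_I (\tilde M_i \circ u)\, d\mu \colon C \to \cat{Set}$ is naturally isomorphic to $\bigl(\int_I \tilde M_i\, d\mu\bigr)\circ u$, simply because the ultraproduct is computed object-by-object in $\cat{Set}$ and $(-)\circ u$ evaluates at objects $u(c)$. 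Crucially, even though $\int_I \tilde M_i\, d\mu$ need not be a point of $E$, it is still an honest functor $D \to \cat{Set}$, so this identity makes sense on the nose in the functor categories.

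It remains to compare the $\Hom$-sets. We must show
\[
\la{Hom}_{\mathrm{Fun}(C,\cat{Set})}\!\left(\tilde x\circ u,\ \textstyle\int_I \tilde M_i\, d\mu \circ u\right)\ \cong\ \la{Hom}_{\mathrm{Fun}(D,\cat{Set})}\!\left(\tilde x,\ \textstyle\int_I \tilde M_i\, d\mu\right),
\]
naturally enough that it is compatible with the structure maps $\beta$, $\Xi$, $\kappa$. The left-hand side is natural transformations between the $C$-restrictions; I would argue this is computed by the universal property of the comparison. Since $\tilde x$ is a sheaf (flat continuous functors are the points, and as a presheaf $\tilde x$ is a sheaf for the canonical topology relative to which $D$ generates, or more directly: both $\tilde x$ and $\int_I \tilde M_i\,d\mu$ lie in the image of, or are detected by, the generating subcategory), restriction along $u$ is fully faithful on the relevant subcategories — this is where one uses that $u$ is a morphism of sites inducing an equivalence of toposes, so that the induced functor on sheaves (and more generally the comparison on the relevant presheaf/sheaf data) is fully faithful. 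Concretely: the codomain $\int_I \tilde M_i\,d\mu$, while possibly not a sheaf, is a filtered colimit of (finite products of) representables-composed-with-points, and $\tilde x$ commutes with the colimit on the nose in a way that lets one rewrite both Hom-sets as the same filtered colimit $\varinjlim_{U\in\mu}\prod_{i\in U} \la{Hom}(\tilde x, \tilde M_i)$ of Hom-sets between actual points — and Hom-sets between points do not depend on the site, being just the Hom-sets in $\mathrm{Points}(E)$. Unwinding $\beta$, $\Xi$, $\kappa$ through this rewriting shows they match, because each of them is built (via $\Omega$ and the associator $a$, as in Theorem \ref{not very crucial theorem}) out of the ultraproduct functor on $\cat{Set}$ and the colimit presentation, both of which are manifestly site-independent.

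The main obstacle, and the step deserving genuine care, is the full faithfulness / colimit-interchange claim for the non-point object $\int_I \tilde M_i\,d\mu$: one cannot simply invoke "sheafification is an equivalence" since the codomain is not a sheaf. The clean fix is to express $\Hom(\tilde x, \int_I \tilde M_i\,d\mu)$ as $\varinjlim_{U\in\mu}\prod_{i\in U}\Hom(\tilde x, \tilde M_i)$ — which holds because $\tilde x$, being flat, in particular preserves the filtered colimits and finite limits defining the pointwise ultraproduct when mapped out of (more precisely, one uses that filtered colimits in $\mathrm{Fun}(C,\cat{Set})$ are computed pointwise and commute with the finite data involved) — and then note that the right-hand side is visibly a Hom-computation purely in $\mathrm{Points}(E)$, hence site-independent. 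I would carry out this colimit identification explicitly for both sites and check the identification intertwines the two chains of structure maps.
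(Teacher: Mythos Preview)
Your reduction to a single comparison morphism of sites $u\colon C\to D$ inducing an equivalence, and the observation that precomposition with $u$ commutes on the nose with pointwise ultraproducts, are both correct and match the paper's setup. The fatal gap is the ``clean fix'' at the end: the isomorphism
\[
\la{Hom}_{\mathrm{Fun}(D,\cat{Set})}\bigl(\tilde x,\ \textstyle\int_I \tilde M_i\,d\mu\bigr)\ \cong\ \varinjlim_{U\in\mu}\prod_{i\in U}\la{Hom}(\tilde x,\tilde M_i)
\]
is false in general. It would require $\tilde x$ to be a compact object of $\mathrm{Fun}(D,\cat{Set})$, and points of a topos are almost never compact in the ambient presheaf category. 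A concrete counterexample: take $E=Sh(\mathbb{R})$, so points are real numbers and the generalised $\la{Hom}$ encodes ultrafilter convergence. With $x=0$, $M_i=1/i$ for $i\in\mathbb{N}$, and $\mu$ nonprincipal, the left side is a singleton (the sequence converges to $0$), while the right side is $\int_I\la{Hom}(0,1/i)\,d\mu=\int_I\varnothing\,d\mu=\varnothing$ since $\mathbb{R}$ is $T_1$. More conceptually, your identity would force the generalised ultrastructure on points to be the trivial one $\la{Hom}(A,\int_I M_i\,d\mu)=\int_I\la{Hom}(A,M_i)\,d\mu$, which would make the whole notion of generalised ultracategory vacuous. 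Flatness of $\tilde x$ says nothing about $\la{Hom}(\tilde x,-)$ preserving filtered colimits; you have confused preservation of colimits by $\tilde x$ with preservation by the covariant $\la{Hom}$ out of it.

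The paper's argument uses a completely different mechanism for full faithfulness of restriction along $u$. It does not try to simplify the codomain $\int_I M_i\,d\mu$; instead it exploits a property of the \emph{domain} $x$: being $\ca{J}$-continuous and flat, $x$ sends covering sieves to colimit cocones in $\cat{Set}$. Since every object $c$ of the larger site is covered by objects in the image of the smaller one (Lemma~\ref{maybe useful lemma}), the component $\alpha_c$ of any transformation $\alpha\colon x\Rightarrow\int_I M_i\,d\mu$ is uniquely determined by (and can be reconstructed from) the components $\alpha_{c_j}$ at the covering objects, via the universal property of the colimit $x(c)=\varinjlim x(c_j)$. This works regardless of whether the codomain is a point, a sheaf, or an arbitrary functor, which is exactly what is needed here.
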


\begin{proof}

We start by showing some obvious but useful lemmas:

A morphism of sites from $(\ca{C},J_1)$ to $(\ca{D},J_2)$ is a functor $F$ , such that precomposing with $F$ is the direct image part of a geometric morphism from $Sh(\ca{D})$ to $Sh(\ca{C})$.

\begin{lemma} \label{maybe useful lemma}Suppose that $(C_1,\ca{J}_1) \xrightarrow{\iota} (C_2,\ca{J}_2) $ is morphism of sites of definition of $E$, such that $Sh(C_1,\ca{J}_1) \simeq Sh(C_2,\ca{J}_2)$ via the geometric morphism induced by  $\iota$ , then any object in $C_2$ can be covered by objects in the image of $C_1$ by $\iota$.
\end{lemma}

\begin{proof}
 Let us have a look at the following diagram:
% https://q.uiver.app/#q=WzAsNCxbMCwwLCJDXzEiXSxbMywwLCJDXzIiXSxbMCwyLCJTaChDXzEpIl0sWzMsMiwiU2goQ18yKSJdLFswLDEsIlxcaW90YSJdLFswLDIsImFfMVxcY2lyYyB5XzEiLDJdLFsxLDMsImFfMiBcXGNpcmMgeV8yIl0sWzMsMiwiLVxcY2lyY1xcaW90YSIsMl0sWzIsMywiTCIsMCx7ImN1cnZlIjo1fV1d
\[\begin{tikzcd}
	{C_1} &&& {C_2} \\
	\\
	{Sh(C_1)} &&& {Sh(C_2)}
	\arrow["\iota", from=1-1, to=1-4]
	\arrow["{a_1\circ y_1}"', from=1-1, to=3-1]
	\arrow["{a_2 \circ y_2}", from=1-4, to=3-4]
	\arrow["L", curve={height=30pt}, from=3-1, to=3-4]
	\arrow["{-\circ\iota}"', from=3-4, to=3-1]
\end{tikzcd}
\]
Here $y$ is Yoneda, and $a$ is the associated sheaf functor (sheafification). From this diagram we get that any $A \in C_2$, we can write $ a_2 \circ y_2(A) =\varinjlim a_2 \circ y_2 \iota (A_i) $, where each $A_i \in C_1$ (using the fact that every sheaf is a colimit of sheafification of representables), hence the family  $(a_2 \circ  y_2( A_i))$  is jointly epimorphic, hence $(\iota(A_i))$ is covering.
\end{proof}

\begin{lemma}

Suppose that $(C_1,J_1) \xrightarrow{\iota} (C_2,J_2)$ be a morphism of sites, then the equivalence between the points of topos described as $J_1$-continuous flat functors from $C_1$ to set, and the points of topos described as $J_2$-continuous flat functors, is precomposing with $\iota$. \label{maybe useful lemma 2}
    
\end{lemma}

\begin{proof}

The proof follows from the following facts: the inverse image part of any point is the left Kan extension along the composition sheafification and Yoneda. If we call such composition $\iota_1$ and $\iota_2$, the fact that we have a morphism of sites means that $\iota_1 \simeq \iota_2 \circ \iota$. Moreover, the fact that we have a morphism of sites implies that the precomposition with $\iota$ of any $J_2$-continuous flat functor is $J_1$-continuous flat.  Now let us have a look at following diagram: 

\[
\adjustbox{max width = \textwidth}{
% https://q.uiver.app/#q=WzAsMyxbMCwwLCJKXzJcXG1hdGhybXstY29udGludW91cyBcXCBmbGF0IFxcIGZ1bmN0b3JzIH0iXSxbNCwwLCJKXzJcXG1hdGhybXstY29udGludW91cyBcXCBmbGF0IFxcIGZ1bmN0b3JzIH0iXSxbOSwwLCJcXG1hdGhybXtHZW9tfShcXG1hdGhzZntTZXR9LEUpIl0sWzAsMSwiXFxtYXRocm17TGFufV97XFxpb3RhfSJdLFsxLDIsIlxcbWF0aHJte0xhbn1fe1xcaW90YTJ9Il0sWzAsMiwiXFxtYXRocm17TGFufV97XFxpb3RhXzF9IiwyLHsib2Zmc2V0IjozLCJjdXJ2ZSI6NH1dXQ==
\begin{tikzcd}
	{J_1 \dash \mathrm{continuous \ flat \ functors \ to \  } \cat{Set}} &&&& {J_2 \dash \mathrm{continuous \ flat \ functors \ to \ } \cat{Set}} &&&&& {\mathrm{Geom}(\mathsf{Set},E)}
	\arrow["{\mathrm{Lan}_{\iota}}", from=1-1, to=1-5]
	\arrow["{\mathrm{Lan}_{\iota_1}}"', shift right=3, curve={height=24pt}, from=1-1, to=1-10]
	\arrow["{\mathrm{Lan}_{\iota2}}", from=1-5, to=1-10]
\end{tikzcd}
}
\]

This diagram commutes up to isomorphism from the fact that left Kan extension is adjoint to precomposing and from the fact that $\iota_1 \simeq \iota_2 \circ \iota$, hence since both $\mathrm{Lan}_{\iota_1}$ and $\mathrm{Lan}_{\iota_2}$ are equivalences then $\mathrm{Lan}_{\iota}$ is equivalence, hence its right adjoint which is precomposing with $\iota$ is an equivalence.
\end{proof}
Now, let $C_1$ and $C_2$ be two sites of definition. without loss of generality, we can replace $C_2$ by the site which is the full subcategory of $E$ generated by $a_1\circ y_1(C_1) \bigcup a_2 \circ y_2(C_2) $ equipped with the restriction of the canonical topology, this will induce a geometric morphism $\iota$ (not necessarily fully faithful unless $C_1$ is subcanonical) from $(C_1, \ca{J}_1)$ to $(C_2, \ca{J}_2)$. The fact that this map of sites induces an equivalence of toposes follows from the comparison lemma \cite{johnstone2002sketches}[theorem 2.2.3] and the fact that $\iota^{'}$ is  from $(C_1,J_1)$ to $((a\circ y)(C_1)$ equipped with the canonical topology.

Now let us regard the family of points $(M_i)$ and any point $x$  as functors from $C_1$ to $\cat{Set}$, we want to show that  $\Hom(x,\int_I M_i d\mu) \simeq \Hom(\iota(x), \int_I \iota(M_i) d \mu)$ and that this isomorphism is natural both in $x$ and the family $(M_i)_{i \in I}$.

Suppose that we have a natural transformation $\alpha_1$ from $\iota(x)$ to the functor $\int_{I}\iota(M_i) d\mu$ viewed as functors from $C_2$ to $\mathsf{Set}$. We wish to show that $\alpha$ is completely determined by the natural transformation $\alpha \circ \iota$, we will be calling such natural transformation $\alpha|_{C_1}$ (although we are not claiming that $\iota$ is fully faithful).

Towards this, suppose that $\alpha_2$ is another natural transformation such that for $\alpha_2|_{C_1} =\alpha_1|_{C_1} $. Let $c$ be an element of $C_2$, let us cover it  by a family $\iota(c_j)$ of elements of $\iota(C_1)$, that means $x(\iota(c_j)$) with the corresponding maps $x(e_j)$ is a colimit diagram \cite[Chap VII, lemma 3]{maclane2012sheaves}, same thing for each $M_i$. Now let us have a look at the following diagram:

% https://q.uiver.app/#q=WzAsNCxbMywwLCJcXGludF97SX1NX2koXFxpb3RhKGNfaikpZFxcbXUiXSxbMywzLCJcXGludF97SX1NX2koYylkXFxtdSJdLFswLDMsIngoYykiXSxbMCwwLCJ4KFxcaW90YShjX2opKSJdLFsyLDEsIntcXGFscGhhXzF9X3tjfSJdLFsyLDEsIntcXGFscGhhXzJ9X3tjfSIsMCx7ImN1cnZlIjotMn1dLFszLDIsIngoZV9qKSIsMV0sWzAsMSwiXFxpbnRfe0l9IHgoZV9qXntpfSlkXFxtdSIsMV0sWzMsMCwie1xcYWxwaGFfMX1fe2Nfan09e1xcYWxwaGFfMn1fe2Nfan0iXV0=
\[\begin{tikzcd}
	{x(\iota(c_j))} &&& {\int_{I}M_i(\iota(c_j))d\mu} \\
	\\
	\\
	{x(c)} &&& {\int_{I}M_i(c)d\mu}
	\arrow["{{\alpha_1}_{c_j}={\alpha_2}_{c_j}}", from=1-1, to=1-4]
	\arrow["{x(e_j)}"{description}, from=1-1, to=4-1]
	\arrow["{\int_{I} x(e_j^{i})d\mu}"{description}, from=1-4, to=4-4]
	\arrow["{{\alpha_1}_{c}}", from=4-1, to=4-4]
	\arrow["{{\alpha_2}_{c}}", curve={height=-12pt}, from=4-1, to=4-4]
\end{tikzcd}\]

Since $x(c)$ is a colimit, then by the universal property of $x(c)$, ${\alpha_2}_{C} = {\alpha_1}_{C}$.

Now we use the same diagram to show that every natural transformation between $x|_{C_1}$ and $\int_I M_i|_{C_1} d\mu$ can be extended to  natural transformation from $x$ to  $\int_I M_i d\mu$. Towards that, suppose we have such natural transformation $\alpha$ from $x|_{C_1}$ to $\int_I M_i|_{C_1} d\mu$. Take any $c \in C_2$, and cover it by elements of the form $\iota(c_j)$, then define $\alpha_c$, to be the unique map from $x(c)$ to $\int_I M_i(c) d\mu$ that exists, by the fact that $x(c)$ is a colimit.
\end{proof}

Before continuing, we should note that this construction would allow us to recover the generalised ultrastructure of a Sober topological space. Let $X$ be a sober topological space, and let $\ca{O}(X)$ be its category of open sets, then the category of points of $X$ is simply the category of points ($\ca{J}$-continuous flat functors from $\ca{O}(X)$ to $\cat{Set}$).

Now, let us explain why they should have the same ultrastructure. Let us regard the points of a sober spaces as left exact $J$-continuous functors from $\ca{O}(X)$ to $\cat{Set}$, that means that up to isomorphism, each  point $x \in X$ is the same thing as the functor $U \mapsto *$ iff $U \ni x$. This implies that for any ultrafilter $\mu$ on $X$, the pointwise ultraproduct $\int_X y d\mu$ is nothing but the ultrafilter $\mu$ itself regarded as a functor sending $ U \mapsto *$ iff $U \in \mu$. Now we can see that $\la{Hom}_{\la{Fun}(\ca{O}(X), \cat{Set})}(x,\int_X y d\mu)$ is non-empty iff the ultrafilter converges to $x$. And hence the two ultrastructures should agree (even if the topological space is not sober the argument should also work, the only difference is that the elements $x \in X$ are now only a full subcategory of the category of the points of the topos $Sh(X)$).

\section{2-Fibrations over the category $\cat{Top}$} \label{2-Fibrations}

We use the theory of fibred $2$-categories as introduced in \cite{buckley2014fibred}: Let $E \xrightarrow{P} B$, be a $2$-functor between $2$-categories $E$ and $B$ (in our study we shall focus on the case where $B$ is the category $\cat{Top}$  the $2$-category of topological spaces), we make the following definitions:

 We say that a $1$-morphism $f: x \rightarrow y$ is Cartesian if it satisfies the following two conditions:

  For any  $u$ such that $Pf \circ u =Pg$, then there exists a unique $ \hat{u}$ such that $P \hat{u} =u$ and such that $g =f \circ \hat{u}$, diagrammatically this can be expressed as:

% https://q.uiver.app/#q=WzAsOSxbMCwwLCJ6Il0sWzUsMCwiUHoiXSxbMCwyLCJ4Il0sWzMsMiwieSJdLFs1LDIsIlB4Il0sWzgsMiwiUHkiXSxbMywxXSxbMiwxXSxbNCwxXSxbMCwyLCJ7XFxleGlzdHMgeyF9ICBcXGhhdHt1fX0iXSxbMCwzLCJnIl0sWzEsNCwidSJdLFsxLDUsIlBnIl0sWzIsMywiZiJdLFs0LDUsIlBmIl0sWzYsOCwiUCIsMCx7InN0eWxlIjp7InRhaWwiOnsibmFtZSI6Im1hcHMgdG8ifX19XV0=
\[\begin{tikzcd}
	z &&&&& Pz \\
	&& {} & {} & {} \\
	x &&& y && Px &&& Py
	\arrow["{{\exists {!}  \hat{u}}}", from=1-1, to=3-1]
	\arrow["g", from=1-1, to=3-4]
	\arrow["u", from=1-6, to=3-6]
	\arrow["Pg", from=1-6, to=3-9]
	\arrow["P", maps to, from=2-4, to=2-5]
	\arrow["f", from=3-1, to=3-4]
	\arrow["Pf", from=3-6, to=3-9]
\end{tikzcd}\]

Suppose that we have a $2$-cell  $\alpha$ between $u$ and $v$, and a $2$-cell $\beta$ between $h$ and $g$ such that $P\beta= Pf \circ \alpha$, then there exists a unique $2$-cell $\hat{\alpha}$ such that $\beta= f \circ \hat{\alpha}$, diagrammatically this can be expressed as:

% https://q.uiver.app/#q=WzAsOCxbMCwwLCJ6Il0sWzUsMCwiUHoiXSxbMCwyLCJ4Il0sWzMsMiwieSJdLFs1LDIsIlB4Il0sWzgsMiwiUHkiXSxbMywxXSxbNCwxXSxbMCwyLCJ7XFx3aWRlaGF0e3Z9fSIsMSx7ImN1cnZlIjoyfV0sWzAsMiwie1xcd2lkZWhhdHt1fX0iLDEseyJjdXJ2ZSI6LTJ9XSxbMCwzLCJoIiwwLHsiY3VydmUiOi0zfV0sWzAsMywiZyIsMix7ImN1cnZlIjoyfV0sWzEsNCwidiIsMSx7ImN1cnZlIjoyfV0sWzEsNCwidSIsMSx7ImN1cnZlIjotMn1dLFsxLDUsIlBnIiwyLHsiY3VydmUiOjJ9XSxbMSw1LCJQaCIsMCx7ImN1cnZlIjotMn1dLFsyLDMsImYiLDIseyJsYWJlbF9wb3NpdGlvbiI6NDB9XSxbNCw1LCJQZiIsMl0sWzYsNywiUCIsMCx7InN0eWxlIjp7InRhaWwiOnsibmFtZSI6Im1hcHMgdG8ifX19XSxbOSw4LCJ7XFxleGlzdHMgeyF9IFxcIFxcaGF0e1xcYWxwaGF9fSIsMix7InNob3J0ZW4iOnsic291cmNlIjoyMCwidGFyZ2V0IjoyMH19XSxbMTAsMTEsIlxcYmV0YSIsMix7InNob3J0ZW4iOnsic291cmNlIjoyMCwidGFyZ2V0IjoyMH19XSxbMTMsMTIsIlxcYWxwaGEiLDIseyJzaG9ydGVuIjp7InNvdXJjZSI6MjAsInRhcmdldCI6MjB9fV0sWzE1LDE0LCJ7UFxcYmV0YX0iLDIseyJzaG9ydGVuIjp7InNvdXJjZSI6MTUsInRhcmdldCI6MTV9fV1d
\[\begin{tikzcd}
	z &&&&& Pz \\
	&&& {} & {} \\
	x &&& y && Px &&& Py
	\arrow[""{name=0, anchor=center, inner sep=0}, "{{\widehat{v}}}"{description}, curve={height=12pt}, from=1-1, to=3-1]
	\arrow[""{name=1, anchor=center, inner sep=0}, "{{\widehat{u}}}"{description}, curve={height=-12pt}, from=1-1, to=3-1]
	\arrow[""{name=2, anchor=center, inner sep=0}, "h", curve={height=-18pt}, from=1-1, to=3-4]
	\arrow[""{name=3, anchor=center, inner sep=0}, "g"', curve={height=12pt}, from=1-1, to=3-4]
	\arrow[""{name=4, anchor=center, inner sep=0}, "v"{description}, curve={height=12pt}, from=1-6, to=3-6]
	\arrow[""{name=5, anchor=center, inner sep=0}, "u"{description}, curve={height=-12pt}, from=1-6, to=3-6]
	\arrow[""{name=6, anchor=center, inner sep=0}, "Pg"', curve={height=12pt}, from=1-6, to=3-9]
	\arrow[""{name=7, anchor=center, inner sep=0}, "Ph", curve={height=-12pt}, from=1-6, to=3-9]
	\arrow["P", maps to, from=2-4, to=2-5]
	\arrow["f"'{pos=0.4}, from=3-1, to=3-4]
	\arrow["Pf"', from=3-6, to=3-9]
	\arrow["{{\exists {!} \ \hat{\alpha}}}"', between={0.2}{0.8}, Rightarrow, from=1, to=0]
	\arrow["\beta"', between={0.2}{0.8}, Rightarrow, from=2, to=3]
	\arrow["\alpha"', between={0.2}{0.8}, Rightarrow, from=5, to=4]
	\arrow["{{P\beta}}"', between={0.15}{0.85}, Rightarrow, from=7, to=6]
\end{tikzcd}\]

 We are going to say that a  $2$-cell $\alpha$ between $f,g: x \rightarrow y$ is Cartesian if it is Cartesian when regarded as $1$-cell for the $1$-functor $P_{xy}$ between $E(x,y)$ and $B(Px,Py)$.

 Now we say  that $E \xrightarrow{P} B$ is a $2$-fibration if it satisfies the following conditions:

 \begin{enumerate}
     \item For every $e \in E$ and every $f:a \rightarrow Pe$ in $B$, and for every $x$ such that $Px =b$ there exists a Cartesian $1$-cell $\bar{f}:a \rightarrow e$ such that  $ P \bar{f} =f$.
     \item For any $g \in E$, and every $\alpha: f \longrightarrow Pg$, there exists a Cartesian $2-$cell $\sigma: \sigma_f \longrightarrow g$, such that $P \sigma= \alpha$
     \item The  horizontal composition of  Cartesian $2$-cells  is Cartesian
 \end{enumerate}

 We say that a $2$-Fibration is cloven, if it is equipped with a choice of Cartesian lift of $1$ and $2$-morphisms. Suppose that we have $E \xrightarrow{p} B$ and $E^{'} \xrightarrow{P} B$ two categories fibred over $B$, we define a fibred (also called Cartesian in \cite{buckley2014fibred}) functor to be a $2$-functor $\ca{L}$ such that $p^{'} \ca{L}=p$ and such that it maps Cartesian $1$ and $2$-cells to Cartesian $1$ and $2$-cells, we say that such functor is cloven (or split as in \cite{buckley2014fibred}), if moreover the fibration is equipped with cleavages and this functor preserves these cleavages. We define a vertical or Cartesian natural transformation between such to be a $2$-natural transformation (we are working in the context of strict $2$-categories)  between two Cartesian functors $\ca{L}$ and $\ca{L^{'}}$ if  it satisfies $ P^{'} \alpha =P$ (here $P$ and $P^{'}$ are to be understood as the identity natural transformations on their corresponding functors). We are going to denote by $\mathrm{ClovenCart}(E,E^{'})$ the category whose objects are cloven (split) fibred (Cartesian) strict $2$-functors between $E$ and $E'$, and whose  morphisms are natural transformations between these functors which are vertical (or Cartesian). We call a $2$-Fibration with cleavage a split fibration if moreover, the choices of Cartesian lifts are closed under all forms of compositions,  the exact definition can be found in \cite{buckley2014fibred}.

 \begin{theorem*}[Buckley]
     There is an equivalence of $3$-categories between the category of split  $2$-Fibrations over $B$, with cloven Cartesian functors  and Cartesian (vertical) natural transformation and vertical (Cartesian) modifications, and the category of contravariant $2-$functors from $B$ to the category $2$-$\cat{Cat}$ of strict $2$-categories with $2$-functors,$2$-natural transformations, modifications and perturbations. 
 \end{theorem*}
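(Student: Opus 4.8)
The plan is to exhibit the asserted correspondence as a three-dimensional Grothendieck construction, generalising the classical equivalence between split fibrations over a $1$-category and strict functors into $\cat{Cat}$. I would first construct the ``unstraightening'' trifunctor $\mathrm{Gr}$ sending a contravariant strict $2$-functor $F \colon B^{op} \to 2\text{-}\cat{Cat}$ to a $2$-category $\mathrm{Gr}(F)$ equipped with a projection to $B$: objects are pairs $(b,x)$ with $b \in B$ and $x$ an object of $F(b)$; a $1$-cell $(b,x) \to (b',x')$ is a pair $(f,\phi)$ with $f \colon b \to b'$ in $B$ and $\phi \colon x \to F(f)(x')$ a $1$-cell of $F(b)$; and a $2$-cell $(f,\phi) \Rightarrow (g,\psi)$ is a pair $(\alpha,\Theta)$ with $\alpha \colon f \Rightarrow g$ in $B$ and $\Theta$ a $2$-cell of $F(b)$ relating $\phi$ and $\psi$ through the reindexing $2$-cell $F(\alpha)$ (the precise direction dictated by the variance convention). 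Vertical and horizontal composition are defined using the strict functoriality of $F$ on $1$- and $2$-cells, and this strictness is exactly what makes $\mathrm{Gr}(F)$ a \emph{strict} $2$-category and the projection $P \colon \mathrm{Gr}(F) \to B$ a strict $2$-functor forgetting the fibre data.

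Next I would verify that $\mathrm{Gr}(F)$ is a split $2$-fibration in the sense of the definitions recalled above. For a $1$-cell $f \colon a \to b$ in $B$ and an object $(b,x)$ the Cartesian lift is $(f, \mathrm{id}_{F(f)(x)}) \colon (a, F(f)(x)) \to (b,x)$; checking its $1$- and $2$-dimensional universal properties amounts to unwinding the definitions of $1$- and $2$-cells in $\mathrm{Gr}(F)$ and using genuine functoriality of $F(f)$. For the Cartesian lift of a $2$-cell one uses that each $F(\alpha)$ is a strict $2$-natural transformation, and condition $(3)$ (horizontal composites of Cartesian $2$-cells are Cartesian) follows from the middle-four interchange law in $2\text{-}\cat{Cat}$ together with $2$-naturality of the reindexing data. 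The chosen lifts are closed under composition precisely because $F$ is strict, so $\mathrm{Gr}(F)$ is split.

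In the other direction I would define the ``straightening'' assignment $\mathrm{St}$: given a split $2$-fibration $P \colon E \to B$ with cleavage, set $\mathrm{St}(E)(b)$ to be the strict fibre $2$-category $E_b$ (cells of $E$ lying over identities), let $\mathrm{St}(E)(f) \colon E_b \to E_a$ be the reindexing $2$-functor obtained from Cartesian lifts of $f$ via their universal properties, and let $\mathrm{St}(E)(\alpha)$ be the $2$-natural transformation built from Cartesian lifts of $2$-cells; splitness guarantees this is a genuine strict $2$-functor. One then checks that $\mathrm{Gr}$ and $\mathrm{St}$ are mutually quasi-inverse: $\mathrm{St}(\mathrm{Gr}(F)) \cong F$ is essentially an identity using the explicit lifts above, while $\mathrm{Gr}(\mathrm{St}(E)) \simeq E$ requires producing for each object $e$ a comparison $1$-cell to $(Pe, e|_{Pe})$, invertible because every $1$-cell of $E$ factors essentially uniquely as a vertical $1$-cell followed by a Cartesian one. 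Finally one matches the higher structure: a cloven Cartesian $2$-functor over $B$ corresponds to a strict $2$-natural transformation of associated $2$-functors (its component at $b$ being the restriction to fibres), a Cartesian ($P$-vertical) natural transformation corresponds to a modification, and a Cartesian modification to a perturbation; each is a direct, if lengthy, translation of the defining conditions, and one must check these translations are themselves functorial at every level so that the whole package assembles into an equivalence of $3$-categories rather than a mere levelwise bijection.

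The main obstacle is bookkeeping rather than conceptual depth: one must track the reindexing $2$-cells $F(\alpha)$ and their coherence systematically throughout, and in particular establishing condition $(3)$ and the $2$-dimensional universal property of Cartesian $1$-cells in $\mathrm{Gr}(F)$ is where the interchange law and strict $2$-naturality must be deployed with care. The strictness hypotheses (split fibration, strict $2$-functors) are precisely what keep the argument tractable, since they eliminate the associativity and unit coherence isomorphisms that would otherwise have to be carried along and checked against the pentagon- and triangle-type axioms at the $2$- and $3$-dimensional levels.
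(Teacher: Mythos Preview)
The paper does not prove this theorem; it is stated with attribution to Buckley and cited from \cite{buckley2014fibred} as a known result that the paper merely uses. Your sketch is essentially the strategy Buckley employs in that reference (the $2$-categorical Grothendieck construction, with straightening/unstraightening and the matching of higher morphisms), so there is nothing to compare against within this paper itself.
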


 \begin{note*} \normalfont
     The reader may notice that we defined $\mathrm{ClovCart}(E,E^{'})$  as a $1$-category although it is possible to define it as a $2$-category with $2$-morphisms being modifications, the reason is that the fibrations we care about in this paper turn out to be contravariant $2$-functors from the $2$-category of topological space to the $2$-category of Categories, so in particular we are interested in case where every fibre is a $1$-category rather than a $2$-category. So, these are in  fact discrete $2$-fibrations as in the sense of \cite{lambert2020discrete}).
     \end{note*}

     \begin{theorem}
         let $F: \cat{Top} \rightarrow \ca{B}$ be a pseudofunctor and $E \in \ca{B}$, then one may consider the lax, and the pseudo\text{-}slice categories $\cat{Top}//E$ and $\cat{Top}/ E$ respectively, then both these categories are discrete $2$ \text{-}fibred over $\cat{Top}^{co}$, with the fibration functor being the forgetful functor. 
     \end{theorem}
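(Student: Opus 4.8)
The plan is to exhibit the forgetful $2$-functor $P\colon \cat{Top}//E \to \cat{Top}^{co}$ (and likewise $P\colon\cat{Top}/E\to\cat{Top}^{co}$) together with explicit choices of Cartesian lifts, and then to check Buckley's three axioms for a $2$-fibration plus the extra uniqueness that makes the fibration \emph{discrete}. First I would spell out the data: an object of $\cat{Top}//E$ is a pair $(X,a)$ with $X$ a space and $a\colon F(X)\to E$ a $1$-morphism of $\ca{B}$; a $1$-morphism $(X,a)\to(Y,b)$ is a pair $(f,\phi)$ with $f\colon X\to Y$ continuous and $\phi\colon a\Rightarrow b\circ F(f)$ a $2$-cell of $\ca{B}$ (invertible in the pseudo-slice); and a $2$-cell $(f,\phi)\Rightarrow(g,\gamma)$ is a $2$-cell $\sigma\colon f\Rightarrow g$ of $\cat{Top}$ satisfying $\gamma=(b\cdot F(\sigma))\circ\phi$. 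The functor $P$ forgets the $\ca{B}$-data. The crucial structural remark, which is where the ``$co$'' comes from, is that this coherence identity \emph{determines} $\gamma$ from $\sigma$ and $\phi$; hence $2$-cells of the slice lift with their \emph{source} $1$-cell prescribed rather than their target, i.e. $\cat{Top}//E$ is a $2$-opfibration over $\cat{Top}$, equivalently a $2$-fibration over $\cat{Top}^{co}$.

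Next I would produce the Cartesian $1$-cells. Given $(Y,b)$ and $f\colon X\to Y$, take the chosen lift to be $f^{\ast}(Y,b):=\bigl(X,\,b\circ F(f)\bigr)\xrightarrow{(f,\,\mathrm{id})}(Y,b)$, the structure $2$-cell being an identity; when $F$ is merely a pseudofunctor this is the place where its comparison isomorphisms $F(g\circ f)\cong F(g)\circ F(f)$, $F(\mathrm{id})\cong\mathrm{id}$ must be inserted in place of ``identity''. Since this cell is invertible, the same formula works for $\cat{Top}/E$. Cartesianness amounts to the unique-factorisation property quoted above: any $1$-cell $(Z,c)\to(Y,b)$ whose underlying map factors through $f$ factors uniquely through $(f,\mathrm{id})$ over that factorisation, and similarly for $2$-cells. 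If $F$ were strict this is immediate, as whiskering/precomposition with $(f,\mathrm{id})$ is then a bijection on the relevant hom-sets; in the pseudo case it is the same computation with the coherence cells of $F$ (and the associator/unitors of $\ca{B}$) carried along.

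Then I would dispatch the Cartesian $2$-cells. The key point is that for fixed objects $x,y$ the component functor $P_{x,y}\colon(\cat{Top}//E)(x,y)\to\cat{Top}(Px,Py)^{\mathrm{op}}$ is a \emph{discrete} fibration of ordinary categories: a $2$-cell $\sigma\colon f\Rightarrow g$ downstairs lifts to a unique $2$-cell out of a prescribed $(f,\phi)$, namely the one with target $\bigl(g,\,(b\cdot F(\sigma))\circ\phi\bigr)$, and every morphism of the total category of a discrete fibration is Cartesian. This yields Buckley's axiom 2 (Cartesian $2$-cell lifts with prescribed source), makes axiom 3 vacuous (all $2$-cells are Cartesian, so horizontal composites of them are too), and shows each fibre $P^{-1}(X)$ is an ordinary $1$-category; combined with Buckley's correspondence and the discreteness just noted, $P$ is classified by a pseudofunctor $\cat{Top}^{co}\to\cat{Cat}$, i.e. a discrete $2$-fibration in the sense of \cite{lambert2020discrete}. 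The pseudo-slice $\cat{Top}/E$ is handled the same way, with the one caveat that the target $1$-cell produced by a $2$-cell lift carries the structure cell $(b\cdot F(\sigma))\circ\phi$; its invertibility is automatic for the chosen Cartesian $1$-cell lifts and, for the $2$-cell lifts, holds once one restricts the base $2$-cells to isomorphisms (which $F$ preserves), so the pseudo-slice case needs only this mild bookkeeping.

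The main obstacle I anticipate is organisational rather than conceptual: threading the coherence isomorphisms of the pseudofunctor $F$ through every Cartesianness verification so that the composites ``$b\circ F(f)$'' and their iterations are well defined up to the chosen comparisons and the unique-factorisation equations hold strictly. The only genuinely conceptual point to state with care is the variance — that one lands in $\cat{Top}^{co}$ and not in $\cat{Top}$ or $\cat{Top}^{\mathrm{op}}$ — and, as noted, this reduces entirely to the observation that the slice's $2$-cell coherence identity reads off the target of a $2$-cell from its source.
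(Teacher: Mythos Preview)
Your proposal is correct and in fact supplies considerably more detail than the paper does: the paper states this theorem without proof, offering only a brief note identifying the fibre over $X$ and remarking that the statement amounts to the forgetful functor being a discrete $2$-opfibration over $\cat{Top}$. Your unpacking of the Cartesian $1$-cell lifts $(X,b\circ F(f))\xrightarrow{(f,\mathrm{id})}(Y,b)$, the discreteness of the hom-wise fibration $P_{x,y}$, and the variance argument for why one lands over $\cat{Top}^{co}$ is exactly the verification one would carry out, and matches the spirit of the paper's usage.

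One point worth sharpening: you correctly flag that in the pseudo-slice the lifted target $(g,(b\cdot F(\sigma))\circ\phi)$ need not have invertible structure cell unless $b\cdot F(\sigma)$ is invertible, and you propose restricting to invertible base $2$-cells. But the theorem as stated asserts a fibration over all of $\cat{Top}^{co}$, not its $2$-core. In the applications the paper actually pursues (the proof of the main theorem in \S\ref{The proof of conceptual completeness}) only the lax slice $\cat{Top}//E$ is used for precisely this reason, as the paper itself remarks; the pseudo-slice appears only in the colimit argument of \S\ref{third equivalence}, where the relevant $2$-cells are already isomorphisms. So your caveat is apt, and the claim for $\cat{Top}/E$ should be read with that restriction in mind rather than as holding over arbitrary $2$-cells of $\cat{Top}$.
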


Let us remind the reader about the definition of the pseudo and lax slice categories. let $F$ be a functor from $\cat{Top}$ to $\ca{A}$. In the case of $\cat{Top}//E$ ($\cat{Top}/E$), the objects are pairs $(X,g)$ where $X$ is a topological space, and $g$  is a map in $\ca{A}$ from $F(X)$  to $E$, morphism between $(X,g_1)$ and $(Y,g_2)$ is a pair $(h, \alpha)$, where $h$ is a continuous function from $X$ to $Y$ and $\alpha$ is a natural transformation (natural isomorphism in the case of $\cat{Top}/E$) from $g_1$ to $g_2 \circ F(h)$.

\begin{note*}

\normalfont

The theorem above means that the functor $F^{co}: \cat{Top}^{co} \rightarrow \ca{B}^{co}$ is a discrete $2$-fibration. We may call the functor $F: \cat{Top} \rightarrow \ca{B}$ a discrete opfibration (or discrete cofibration, whichever terminology the reader wants to use, but in this case the word cofibration has nothing to do with its use in homotopy theory).

\end{note*}

There are two important cases of the construction above  which are of interest:

\begin{itemize}
    \item Let  $E$ be a topos, and consider the pseudofunctor $Sh:\ \cat{Top} \rightarrow \cat{Topos}$, that sends a topological space to its sheaf topos, more details regarding this functor can be found in \cite{maclane2012sheaves}. The objects of the lax slice category are pairs $X$, where $X$ is a topological space, and $g$ a geometric morphism from $Sh(X)$  to $E$ (we will often be writing this as $X \xrightarrow{g} E$ by abuse of notation), morphism between $(X,g_1)$ and $(Y,g_2)$ is a pair $(h, \alpha)$, where $h$ is a continuous function from $X$ to $Y$ and $\alpha$ is a natural transformation from $g_1$ to $g_2 \circ h$. The category $\cat{Top}/E$ is defined with the same object, but we require $\alpha$ to be a natural iso in the definition. 

    \item The second construction is similar. Let $E$ be a topos and let $M_E$ be its category of points, then by identification of ultrapreorders and topological spaces. We may hence consider, the pseudo (lax)-slice category whose objects are left ultrafunctors from $X$ to $M_E$, and a morphism between $X \xrightarrow{f} M_E$ and $Y \xrightarrow{g} M_e$ is a continuous function $h$ from $X$ to $Y$, and a natural isomorphism (transformation) $\alpha$ from $g \circ h $ to $f$. We will be denoting these categories respectively  $\cat{Top}/M_{E}$ and $\cat{Top}//M_{E}$.
\end{itemize}

    \begin{note*} \normalfont We can show that $\cat{Top}/E$ (or $\cat{Top}//E$) are strict $2$-categories, using the fact that the inclusion of topological spaces inside toposes is a pseudo-functor, hence the isomorphisms between $f_* \circ g_*$ (or $g^* \circ f^*$) for example and $(f \circ g)_*$ (or $(f \circ g)^*$) satisfy nice coherence conditions.
\end{note*} 

\begin{note*} 

\normalfont
    It is also possible to define  colax comma categories ($\cat{Top}///M_E$ or $\cat{Top}///E$). Although such category appeared in Lurie \cite{lurie2018ultracategories} (namely the category $\mathrm{Comp}_M$), such category does not play a role in our discussion, although we expect that the proof we give, may be reformulated in terms of the colax slice, instead of the lax slice which we used.
\end{note*}

\section{Conceptual Completeness of Geometric Logic} \label{The proof of conceptual completeness}

Let $E$ and $E^{'}$ be  Grothendieck toposes, let $M_{E}$ and $M_{E^{'}}$ be their categories of points (which we identify with $\ca{J}$-continuous flat functors from $C$, where $C$ is a site of definition, to $\cat{Set}$). Let $\mathrm{Lult}(\ca{A},\ca{B})$ denote the category of left ultrafunctors between generalised ultracategories $\ca{A}$ and $\ca{B}$.

Our main theorem is hence the following statement:

\begin{theorem} 
\label{Main theorem}
    Let $E$ and $E^{'}$ be two toposes with enough points, then there is an equivalence of categories between $\mathrm{Lult}(M_{E}, M_{E^{'}})$ and $\mathrm{Geom}(E,E^{'})$.
\end{theorem}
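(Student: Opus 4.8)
The plan is to deduce Theorem~\ref{Main theorem} as a formal consequence of the three equivalences announced in the introduction: Theorem~\ref{first equivalence theorem}, Theorem~\ref{second equivalence theorem}, and Theorem~\ref{third equivalence theorem}. These assemble into a chain of equivalences of categories
\[
\mathrm{Lult}(M_E, M_{E^{'}}) \;\simeq\; \mathrm{ClovenCart}(\cat{Top}//M_E,\ \cat{Top}//M_{E^{'}}) \;\simeq\; \mathrm{ClovenCart}(\cat{Top}//E,\ \cat{Top}//E^{'}) \;\simeq\; \mathrm{Geom}(E,E^{'}),
\]
where the first equivalence is Theorem~\ref{first equivalence theorem} (valid for arbitrary toposes), the last is Theorem~\ref{third equivalence theorem} (which needs enough points), and the middle one is obtained by transporting along the equivalences of discrete $2$-opfibrations $\cat{Top}//M_E \simeq \cat{Top}//E$ and $\cat{Top}//M_{E^{'}} \simeq \cat{Top}//E^{'}$ supplied by Theorem~\ref{second equivalence theorem} (again using enough points). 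So the first step is simply to record that an equivalence $\Phi \colon \ca{F} \to \ca{F}'$ of discrete $2$-opfibrations over $\cat{Top}$ induces, by pre- and post-composition, an equivalence $\mathrm{ClovenCart}(\ca{F}, \ca{G}) \simeq \mathrm{ClovenCart}(\ca{F}', \ca{G}')$ whenever $\ca{G} \simeq \ca{G}'$ as well; this is a general $2$-categorical fact about the functor $2$-category construction applied inside the $3$-category of $2$-fibrations over $\cat{Top}$, and follows from Buckley's equivalence with contravariant $2$-functors into $2\dash\cat{Cat}$.

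Next I would be careful about the compatibility of these equivalences, since the final statement asserts a single equivalence and not merely an abstract isomorphism of hom-categories. Concretely, I would check that the composite functor $\mathrm{Lult}(M_E, M_{E^{'}}) \to \mathrm{Geom}(E,E^{'})$ can be described directly: a left ultrafunctor $M_E \to M_{E^{'}}$ is sent to the cloven Cartesian functor $\cat{Top}//E \to \cat{Top}//E^{'}$ obtained by transport, whose value on an object $X \xrightarrow{g} E$ is controlled fibrewise, and taking the $2$-colimit of the forgetful functor to $\cat{Topos}$ (the intermediate result mentioned in the discussion of Theorem~\ref{third equivalence theorem}, that $\mathrm{colim}(\cat{Top}//E \to \cat{Topos}) \simeq E$) recovers a geometric morphism $E \to E^{'}$. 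Writing this out confirms that the composite is the expected ``evaluation on points'' functor, so the theorem is genuinely a reconstruction statement and not just a cardinality count.

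Finally, I would derive the stated corollaries. Taking $E^{'} = \ca{S}[\bb{O}]$, the classifying topos of the theory of objects, gives $\mathrm{Geom}(E, \ca{S}[\bb{O}]) \simeq E$ on the one hand (universal property of the classifying topos) and $\mathrm{Lult}(M_E, \cat{Set})$ on the other (since $M_{\ca{S}[\bb{O}]} \simeq \cat{Set}$ with its canonical generalised ultrastructure, the points of $\ca{S}[\bb{O}]$ being just sets), recovering the Lurie-style form; and when $E$ is coherent one checks the generalised ultrastructure on $M_E$ restricts to the honest ultrastructure, via Theorem~\ref{not very crucial theorem} and the subsequent theorem identifying the two notions of left ultrafunctor, which reproduces Lurie's theorem.

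\textbf{Main obstacle.} The real content is entirely in the three input theorems, so at the level of this final assembly the main subtlety is bookkeeping: making sure that the ``cloven'' and ``Cartesian'' conditions are preserved under every transport, that the fibrations are genuinely discrete (so that $\mathrm{ClovenCart}$ is a $1$-category and no coherence at the level of modifications intervenes), and that the equivalences glue naturally in both $E$ and $E^{'}$ rather than only objectwise. I expect the delicate point to be Theorem~\ref{third equivalence theorem}'s reliance on the Butz--Moerdijk representation of a topos with enough points as a $2$-colimit of topological groupoids, and in particular the identification of $\cat{Top}//E$'s forgetful $2$-colimit with $E$ itself --- everything downstream is a formal consequence once that is in hand.
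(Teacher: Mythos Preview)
Your proposal is correct and follows exactly the same route as the paper: the Main Theorem is obtained as the composite of the three equivalences in Theorems~\ref{first equivalence theorem}, \ref{second equivalence theorem}, and \ref{third equivalence theorem}, with the middle step coming from transporting $\mathrm{ClovenCart}$ along the fibred equivalences $\cat{Top}//M_E \simeq \cat{Top}//E$. Your additional remarks on where enough points is actually used, on tracing the composite back to the ``compose with $f$ on points'' functor, and on the $\ca{S}[\bb{O}]$ and coherent corollaries also match the paper's concluding notes.
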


We have already seen that $\cat{Top}/E$ and $\cat{Top}//E$ (by the inclusion of topological spaces inside the category of toposes) and $\cat{Top}/M_{E}$ as well as $\cat{Top}//M_{E}$ (by the inclusion of ultrapreorders inside generalised ultracategories) are both discrete $2$-opfibrations over the $2$-category of topological spaces.

Our proof is divided into showing three equivalences:
\begin{itemize}
    \item $\mathrm{Lult}(M_{E}, M_{E^{'}})\simeq \mathrm{ClovenCart}(\cat{Top}//M_E, \cat{Top}//M_{E^{'}})$. 

\item   $\cat{Top}//M_{E} \simeq \cat{Top}//E$ as $2$-categories $2$-fibred over $\cat{Top}$.

    \item $\mathrm{ClovenCart}(\cat{Top}//E, \cat{Top}//E^{'}) \simeq \mathrm{Geom}(E,E^{'}).$
\end{itemize}

\begin{note*} \normalfont
    We work with the lax slice to get the correct $2$-morphisms in $\cat{Topos}$. If we had worked  with the pseudo-slice categories $\cat{Top}/E$ and $\cat{Top}/M_E$, we would get only invertible natural transformations between geometric morphisms.
\end{note*}

\subsection{Equivalence between $\la{Lult}(M_E,M_{E^{'}}) \simeq \la{ClovenCart}(\cat{Top}//M_{E}, \cat{Top}//M_{E^{'}})$} \label{first equivalence}
We claim the following:

\begin{theorem}

Let $E$ and $E^{'}$ be toposes, and let $M_E$ and $M_{E^{'}}$ be their categories of points, respectively, then there exists an equivalence of categories between the category of left ultrafunctors from $M_E$ to $M_{E^{'}}$ and $\la{ClovenCart}(\cat{Top}//{M_{E}}, \cat{Top}//{M_{E^{'}}})$. \label{first equivalence theorem}

\end{theorem}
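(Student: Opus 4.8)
The plan is to push everything through Buckley's classification of discrete $2$-fibrations. Both $\cat{Top}//M_E$ and $\cat{Top}//M_{E^{'}}$ are \emph{split} discrete $2$-opfibrations over $\cat{Top}$ (splitness because reindexing a left ultrafunctor $X\to M_E$ along a continuous map $h\colon X\to Y$ is strict precomposition $-\circ h$, and composition of continuous maps is strictly associative); by Buckley's theorem they are classified by the $2$-functors $\Phi_E,\Phi_{E^{'}}$ sending $X\mapsto \mathrm{Lult}(X,M_E)$ and $X\mapsto\mathrm{Lult}(X,M_{E^{'}})$ (up to the conventional $\mathrm{op}$ on fibres). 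A cloven Cartesian functor over $\cat{Top}$ must carry a reindexing $1$-cell $(h,\mathrm{id})$ to a reindexing $1$-cell, so it is exactly a family $\Psi_X\colon \mathrm{Lult}(X,M_E)\to\mathrm{Lult}(X,M_{E^{'}})$ with $\Psi_X(G\circ h)=\Psi_Y(G)\circ h$, extended compatibly to lax morphisms and $2$-cells; that is, $\mathrm{ClovenCart}(\cat{Top}//M_E,\cat{Top}//M_{E^{'}})$ is the category of $2$-natural transformations $\Phi_E\Rightarrow\Phi_{E^{'}}$ with modifications. Thus the theorem reduces to: postcomposition is an equivalence $\mathrm{Lult}(M_E,M_{E^{'}})\xrightarrow{\ \simeq\ }\mathrm{Nat}(\Phi_E,\Phi_{E^{'}})$. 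The forward functor sends a left ultrafunctor $F\colon M_E\to M_{E^{'}}$ to $\Psi^F_X:=F\circ(-)$; naturality in $X$ and functoriality in $F$ are immediate.

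The content is to invert it, and the key tool is a family of probing spaces. For a set $I$ with an ultrafilter $\mu$, let $I_\mu$ be the space with underlying set $I\sqcup\{\infty\}$ in which each point of $I$ is open and the neighbourhood filter of $\infty$ is generated by the sets $\{\infty\}\cup U$ for $U\in\mu$. The only non-principal ultrafilter on $I_\mu$ admitting a limit is the image of $\mu$, which converges to $\infty$; hence (a routine check using the unit axiom and the change-of-base maps $\Xi$) a left ultrafunctor $G\colon I_\mu\to\mathcal{C}$ into any generalised ultracategory is, up to unique compatible structure, the data of an object $A=G(\infty)$, a family $M_i=G(i)$, and a single element $f\in\mathrm{Hom}_{\mathcal{C}}(A,\int_I M_i\,d\mu)$ — all other structure maps of $G$ are forced, and since $I_\mu$ carries no non-trivial composable pair of generalised morphisms, every $f$ yields a valid left ultrafunctor. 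Together with the analogous trivial computation $\mathrm{Lult}(\ast,\mathcal{C})\simeq$ the underlying category of $\mathcal{C}$, and the point-inclusions $\infty,i\colon\ast\to I_\mu$ realising evaluation at $\infty$ and at $i$, this exhibits any generalised ultracategory — its objects, its generalised Hom-sets, and, via slightly larger iterated probing spaces, the operations $\beta$, $\Xi$, $\kappa$ — as a ``colimit-like'' diagram of probing spaces. In particular $\cat{Top}$ is dense in the $2$-category of generalised ultracategories, and a left ultrafunctor $X\to M_E$ is completely determined by its precompositions with the continuous maps $\ast\to X$ and $I_\mu\to X$ (the latter existing precisely when the relevant pushforward ultrafilter converges, which recovers the defining condition for a generalised morphism).

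Given $\Psi\in\mathrm{Nat}(\Phi_E,\Phi_{E^{'}})$, define $F:=\Psi_\ast$ on objects and underlying morphisms, and for each $f\in\mathrm{Hom}_{M_E}(A,\int_I M_i\,d\mu)$ let $\zeta(f)\in\mathrm{Hom}_{M_{E^{'}}}(F A,\int_I F M_i\,d\mu)$ be the element corresponding to $\Psi_{I_\mu}(G_f)$, where $G_f\colon I_\mu\to M_E$ is the left ultrafunctor attached to $f$; naturality of $\Psi$ along the point-inclusions forces $\Psi_{I_\mu}(G_f)$ to have underlying values $F A$, $F M_i$, so $\zeta$ is well posed. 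The left ultrafunctor axioms for $(F,\zeta)$ — compatibility with $\beta$, with $\Xi$, and the unit axiom — are then read off from naturality of $\Psi$ with respect to suitable continuous maps between probing spaces: the $\Xi$-axiom from the maps $J_\nu\to I_\mu$ induced by maps of index sets, the $\beta$-axiom from a two-stage probing space realising a composable pair $(f,(g_i))$ together with the maps relating it to the one-stage spaces, and the unit axiom from $\ast$. Conversely, $F\circ(-)\cong\Psi$, since by the density statement both sides are determined by their values on the probing spaces, where they agree by construction, and $F\mapsto\Psi^F\mapsto F$ is the identity. Running the same probing argument one categorical level down matches Cartesian (vertical) natural transformations $\Psi\Rightarrow\Psi'$ with natural transformations of left ultrafunctors $F\Rightarrow F'$ (the pair $(\alpha^1,\alpha^2)$ being recovered from the components at $\ast$ and the compatibility at the $I_\mu$), so the correspondence is an equivalence of categories.

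The main obstacle is the verification, in the third paragraph, that the left ultrafunctor axioms for the reconstructed $(F,\zeta)$ are exactly what naturality of $\Psi$ delivers: this requires, for each axiom, pinning down the correct iterated probing space together with the handful of continuous maps into and out of it that encode that axiom, and checking that left ultrafunctors out of these iterated spaces are ``freely generated'' by the generalised morphisms they realise — so that the axioms are simultaneously satisfiable and faithfully tracked. This is the same combinatorial core as the denseness of $\cat{Top}$ in generalised ultracategories, and it is where essentially all of the bookkeeping lies; the Buckley reduction and the formal manipulation of the probing data are comparatively routine.
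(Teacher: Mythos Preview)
Your proposal is correct and takes essentially the same approach as the paper: both construct the inverse to postcomposition using the probing spaces $I_\mu$ (the paper writes $p_\mu$ for your $\infty$) and an iterated two-stage space (the paper's $I_{\mu,(\lambda_i)}$) to recover the $\beta$-axiom, with the $\Xi$-axiom coming from the continuous maps $J_\mu\to I_{g\mu}$. The only cosmetic difference is that you first pass through Buckley's classification to rephrase cloven Cartesian functors as $2$-natural transformations $\Phi_E\Rightarrow\Phi_{E'}$, whereas the paper works directly with the cloven Cartesian functor $\mathscr{F}$ and repeatedly invokes ``$\mathscr{F}$ is cloven'' in place of your ``$\Psi$ is natural''; the combinatorial core, including your acknowledged bookkeeping about iterated probing spaces being freely generated, is identical.
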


We start by considering the obvious functor from $\la{Lult}(M_E,M_{E^{'}})$ to the category $\la{ClovenCart}(\cat{Top}//{M_{E}}, \cat{Top}//{M_{E^{'}}})$, defined by sending a left ultrafunctor to the Cartesian morphism defined by composing with this functor, we start by showing that this construction is essentially surjective. 

%{\color{red}$\bullet$ need to say something about full faithfulness}

Suppose that we have a discrete topological space $I$, then we are going to denote by $(M_i)_{i \in I}$ the left ultrafunctor from $I$ to an ultracategory determined by $(M_i)_{i \in I}$, we can easily see that there is a bijection that can be promoted to an equivalence of categories between $\la{Lult}(I,M_E)$ and the category $(M_E)^I$.

Let $I$ be a set, $\mu$ be an ultrafilter on $I$. We define the following topological space $I_{\mu}$ as follows, as a set $I_{\mu} =I \bigsqcup \{p_\mu\}$ (so we add an additional point to $I$). We are going to give this space a topology using \ref{topology theorem}, as follows: Notice that the set of ultrafilters on the new space is the set of ultrafilters on $I$ (with the bijection that sends an ultrafilter on $I$ to its pushforward by the inclusion map of $I$ in $I_{\mu}$) plus the principal ultrafilter on $p_{\mu}$, we define a convergence relation such that the only ultrafilters that converge are the principal ultrafilters and the pushforward of $\mu$ by the inclusion map of $I$ in $I_{\mu}$, and we require this ultrafilter to converge to $\mu$. The proof that this construction satisfies \ref{topology theorem} is left to the reader.

We can describe this topology using open sets also: these are either subsets of the form  \newline  $\{A: \ p_{\mu} \in A \ \la{and} \ A-\{p_{\mu}\} \in \mu \}$, or $\{ A : A \subseteq I\}$, so in particular the subspace topology of $I$ is discrete.

The space obtained is Hausdorff if the ultrafilter is non-principal. If the ultrafilter is principal then it's no longer even $T_1$ (its specialisation order will have an arrow from $p_{\delta_{i_0}}$ to $i_0$) but it's still going to be sober anyway. These new topological spaces have a nice property, a function $g$ from $I_{\mu}$ to $X$ is continuous iff the pushforward of $\mu^{'}$ (which is the pushforward of $\mu$ by the inclusion of $I$ in $I_{\mu}$) converges to $g(p_{\mu})$.

% Now suppose that we have an isomorphism $(g_i) \in \int_I \la{Hom}(M_i , N_i) $ and an isomorphism $h \in \la{Hom}(A,B)$, then  

Now suppose that we have a cloven Cartesian functor $\lu{F}$ in $\la{Cart}(\cat{Top}//{M_{E}}, \cat{Top}//{M_{E^{'}}})$, we wish to construct a left ultrafunctor functor $f$ from $M_{E}$ to $M_{E^{'}}$, such that $\lu{F} \simeq f \circ -$. For every object $a \in M_{E}$ we  take the left ultrafunctor from the one-point set to $M_{E}$ defined by sending this object to $a$, its image by $\lu{F}$ is a left ultrafunctor from the one-point category $M_{E^{'}}$ which can be identified with an element $b \in M_{E^{'}}$, hence we define $f(a)=b$.

Now on ``generalised'' morphisms, let us do the  following construction: Suppose that we have $a \in \la{Hom}(A, \int_I M_i d\mu)$, then we can regard $a$ as a left ultrafunctor from $I_{\mu}$ to $M_{E}$ and then we can define $f(a)$ to be simply $\lu{F}(a)$ (regarded as a left ultrafunctor), we want to make sure that such definition respects the source and the generalised target. But this can be deduced from the following diagram: 

\[
% https://q.uiver.app/#q=WzAsMyxbMCwwLCJJIl0sWzAsMiwiSV97XFxtdX0iXSxbMiwyLCJNX3tFXnsnfX0iXSxbMCwxLCJcXGlvdGEiLDJdLFsxLDIsImYoYSkiXSxbMCwyLCJmKE1faSkiXV0=
\begin{tikzcd}
	I \\
	\\
	{I_{\mu}} && {M_{E^{'}}}
	\arrow["\iota"', from=1-1, to=3-1]
	\arrow["{f(M_i)}", from=1-1, to=3-3]
	\arrow["{f(a)}", from=3-1, to=3-3]
\end{tikzcd}
\]

\[
% https://q.uiver.app/#q=WzAsMyxbMCwwLCIqIl0sWzAsMiwiSV97XFxtdX0iXSxbMiwyLCJNX3tFXnsnfX0iXSxbMCwxLCJcXG11IiwyXSxbMSwyLCJmKGEpIl0sWzAsMiwiZihBKSJdXQ==
\begin{tikzcd}
	{*} \\
	\\
	{I_{\mu}} && {M_{E^{'}}}
	\arrow["\mu"', from=1-1, to=3-1]
	\arrow["{f(A)}", from=1-1, to=3-3]
	\arrow["{f(a)}", from=3-1, to=3-3]
\end{tikzcd}
\]

Let us start by showing that $f$ is a left ultrafunctor:

For the unit axiom, we wish to show that the following diagram is commutative:
% https://q.uiver.app/#q=WzAsMyxbMiwwLCIxIl0sWzAsMiwiXFxtYXRocm17SG9tfShBLCBcXGludF9JIE1faSBkXFxtdSkiXSxbMywyLCJcXG1hdGhybXtIb219KGYoQSksXFxpbnRfSSBmKE1faSkgZFxcbXUpIl0sWzAsMSwiXFxrYXBwYV9BIl0sWzAsMiwiXFxrYXBwYV97ZihBKX0iLDJdLFsxLDIsImYiXV0=
\[\begin{tikzcd}
	&& 1 \\
	\\
	{\mathrm{Hom}(A, \int_I M_i d\mu)} &&& {\mathrm{Hom}(f(A),\int_I f(M_i) d\mu)}
	\arrow["{\kappa_A}", from=1-3, to=3-1]
	\arrow["{\kappa_{f(A)}}"', from=1-3, to=3-4]
	\arrow["f", from=3-1, to=3-4]
\end{tikzcd}
\]

To do that notice that the left ultrafunctor from the Sierpinski space to the category $M_E$ completely determined by $\kappa$, is the only left ultrafunctor that factors through the one-point space i.e:

 % https://q.uiver.app/#q=WzAsMyxbMCwwLCJcXGxvbmdyaWdodGFycm93Il0sWzAsMiwiXFxidWxsZXQiXSxbMywyLCJNX0UiXSxbMCwxXSxbMSwyLCJBIiwyXSxbMCwyLCJcXGthcHBhX0EiLDJdXQ==
\[\begin{tikzcd}
	\longrightarrow \\
	\\
	* &&& {M_E}
	\arrow[from=1-1, to=3-1]
	\arrow["{\kappa_A}"', from=1-1, to=3-4]
	\arrow["A"', from=3-1, to=3-4]
\end{tikzcd}\]

hence its image by $\lu{F}$ also factors through the one-point set:

% https://q.uiver.app/#q=WzAsMyxbMCwwLCJcXGxvbmdyaWdodGFycm93Il0sWzAsMiwiKiJdLFszLDIsIk1fRSJdLFswLDFdLFsxLDIsImYoQSkiLDJdLFswLDIsIlxcbHV7Rn0oXFxrYXBwYV9BKT0gZihcXGthcHBhX0EpIiwxXV0=
\[\begin{tikzcd}
	\longrightarrow \\
	\\
	{*} &&& {M_E}
	\arrow[from=1-1, to=3-1]
	\arrow["{\lu{F}(\kappa_A)= f(\kappa_A)}"{description}, from=1-1, to=3-4]
	\arrow["{f(A)}"', from=3-1, to=3-4]
\end{tikzcd}\]
Hence, by uniqueness $f(\kappa_A)= \kappa_A$.

Now for the composition axiom, suppose that we have a point $A$, a family of points $(M_i)_{i \in I}$, an ultrafilter $\mu$ on $I$ for each  $i \in I$ we have a set $X_i$ and an ultrafilter $\lambda_i$ on $X_i$, and finally for each $X_i$  a family of point $(B_{(i,x)})$. 

Let us define the following topological space $I_{\mu, (\lambda_i)}$: As a set this is $\{p_{\mu}\} \bigsqcup \coprod_{i \in I} \{p_{\lambda_i}\} \bigsqcup \coprod_{i \in I}X_i$. As usual, the set of ultrafilters on the underlying set of the space is $\delta_{p_\mu} \bigsqcup \beta I \bigsqcup \beta(\coprod_{I} X_i)$ . Let us denote by $\mu'$ and $\lambda_i'$ the pushforward of $\mu$, and $\lambda_i$ by the inclusion map of $I$  and $X_i$ respectively in $I_{\mu}$.

Now, we define a topology on this space by allowing the ultrafilter $\lambda_i^{'}$  to converge to $p_{\lambda_i}$ and by allowing the ultrafilter $\mu^{'}$ to converge to $p_{\mu}$ , also, if $\mu$ is principal, say $\mu = \delta_i$, we want $\lambda_i^{'}$ not only to converge to $p_{\lambda_i}$, but also to $p_{\mu}$. Also, we require the ultrafilter $\int_{I} \lambda^{'}_i d \mu$ to converge to $p_\mu$ (and of course principal ultrafilters converge (at least) to their respective points), it would be useful to draw the specialisation order of this space:

% https://q.uiver.app/#q=WzAsOCxbMCwyLCJcXG11Il0sWzQsMiwiXFxsYW1iZGFfe2l9Il0sWzQsMSwiXFxsYW1iZGFfaiJdLFs0LDMsIlxcbGFtYmRhX2siXSxbNCwwLCJcXHZkb3RzIl0sWzQsNCwiXFx2ZG90cyJdLFs4LDEsInkgXFxpbiBYX2oiXSxbOCwyLCJ4IFxcaW4gWF9JIl0sWzAsMSwiXFxsYXt3aGVufSBcXCAgXFxtdSBcXCBcXGxheyAgaXMgXFwgIHByaW5jaXBhbCBcXCBhdH0gXFwgaSIsMCx7InN0eWxlIjp7ImJvZHkiOnsibmFtZSI6ImRhc2hlZCJ9fX1dLFsxLDcsIlxcbGF7d2hlbn0gXFwgIFxcbGFtYmRhX2ogXFwgIFxcbGF7aXMgXFwgIHByaW5jaXBhbCBcXCBhdH0geCIsMCx7InN0eWxlIjp7ImJvZHkiOnsibmFtZSI6ImRhc2hlZCJ9fX1dLFsyLDYsIlxcbGF7d2hlbn0gXFwgIFxcbGFtYmRhX2ogXFwgIFxcbGF7aXMgXFwgIHByaW5jaXBhbCBcXCBhdH0gXFwgeSIsMCx7InN0eWxlIjp7ImJvZHkiOnsibmFtZSI6ImRhc2hlZCJ9fX1dXQ==
\[\begin{tikzcd}
	&&&& \vdots \\
	&&&& {\lambda_j} &&&& {y \in X_j} \\
	\mu &&&& {\lambda_{i}} &&&& {x \in X_I} \\
	&&&& {\lambda_k} \\
	&&&& \vdots
	\arrow["{\la{when} \  \lambda_j \  \la{is \  principal \ at} \ y}", dashed, from=2-5, to=2-9]
	\arrow["{\la{when} \  \mu \ \la{  is \  principal \ at} \ i}", dashed, from=3-1, to=3-5]
	\arrow["{\la{when} \  \lambda_j \  \la{is \  principal \ at} \  x}", dashed, from=3-5, to=3-9]
\end{tikzcd}\]

To show that this is a topological space, we have of course to use \cite{wyler1996convergence} (by checking lots of cases). Also, we can check that this topological space is sober.

Now we should notice that the data of a left ultrafunctor from this space to an ultracategory is exactly the data of things to be composed i.e.  an ultrafunctor from $I_{\mu, (\lambda_i)_{i \in I}}$ is given by an element $A$ which is the image of the ultrafilter $\mu$, a family of objects $(M_i)$ which are the images of the family of ultrafilters $(\lambda_i)_{i \in I}$, and finally for each $i \in I$ a family of objects $(M_{(i,x)})_{x \in X_i} $, plus a morphism $a \in \la{Hom}(A, \int_I M_i d \mu)$, plus a family of morphisms $(g_i) \in \la{Hom}(M_i, B_{(i,x)})$.

\begin{comment}The reader should notice that we are technically imitating the way we can recover composition in a usual category from its nerve.
\end{comment}

Now notice that there exists a continuous map from the space $I_{\int_I \iota_i \lambda_i d\mu}$ to this topological space defined by sending the point $\int_I \iota_i \lambda_i d\mu$ to the point $\mu$ and defined by sending every other element to itself. This map is clearly continuous; now let us have a look at the following diagram:

\[
 % https://q.uiver.app/#q=WzAsMyxbMCwwLCJJX3tcXGludF97SX1cXGxhbWJkYV9pZCBcXG11fSJdLFswLDIsIklfe1xcbXUsIChcXGxhbWJkYV9pKV97aSBcXGluIEl9fSJdLFs0LDIsIk1fRSJdLFsxLDIsIihhICAsKGdfaSkpIl0sWzAsMiwiKChnX2kpXFxjaXJjX3tcXG11fWEiXSxbMCwxLCJcXGlvdGEiXV0=
\begin{tikzcd}
	{I_{\int_{I}\lambda_id \mu}} \\
	\\
	{I_{\mu, (\lambda_i)_{i \in I}}} &&&& {M_E}
	\arrow["\iota", from=1-1, to=3-1]
	\arrow["{(g_i)\circ_{\mu, (\lambda_i)}a}", from=1-1, to=3-5]
	\arrow["{(a  ,(g_i))}", from=3-1, to=3-5]
\end{tikzcd}
\]

Here we used the symbol $\circ_{\mu, (\lambda_i)}$ to denote the generalised composition.

Then our goal reduces to showing that the following diagram is commutative: 

\[
% https://q.uiver.app/#q=WzAsMyxbMCwwLCJJX3tcXGludF97SX1cXGxhbWJkYV9pZCBcXG11fSJdLFswLDIsIklfe1xcbXUsIChcXGxhbWJkYV9pKV97aSBcXGluIEl9fSJdLFs0LDIsIk1fRSJdLFsxLDIsImYoYSAsKGdfaSkpIl0sWzAsMiwiZigoZ19pKVxcY2lyY197XFxtdX1hKSJdLFswLDEsIlxcaW90YSJdXQ==
\begin{tikzcd}
	{I_{\int_{I}\lambda_id \mu}} \\
	\\
	{I_{\mu, (\lambda_i)_{i \in I}}} &&&& {M_{E^{'}}}
	\arrow["\iota", from=1-1, to=3-1]
	\arrow["{f((g_i)\circ_{\mu, (\lambda_i)}a)}", from=1-1, to=3-5]
	\arrow["{(f(a) ,(f(g_i))}", from=3-1, to=3-5]
\end{tikzcd}
\]

But this follows from the fact that $\lu{F}$ is a cloven fibred functor. Note of course that we need to make sure that $(f(a), (f(g_i))) =\lu{F}(a,(g_i))$, but this can be done by taking inclusion of $I_{\mu}$ and each $I_{\lambda_i}$ inside $I_{\mu,(\lambda_i)}$ and then using the fact that we have a cloven fibred functor. Thus this shows the composition axiom, namely that $f(g_i) \circ_{\mu, (\lambda_i)} f(a) = f((g_i) \circ_{\mu, (\lambda_i)} a) $.

Now we need to check the axiom concerning the change of base maps $\Xi$. towards this, let us suppose that we have a map of sets $g$ from $J$ to $I$ and an ultrafilter on $I$. Notice that such map induces a continuous function from $J_{\mu}$ to $I_{g\mu }$ by sending each $j$ to $g(j)$ and sending $\mu$ to $g \mu$, let us call such map $g_\mu$. 

So we get the following commutative diagram:

% https://q.uiver.app/#q=WzAsMyxbMCwwLCJJX3tcXG11fSJdLFs0LDAsIkpfe2cgXFxtdX0iXSxbMiw0LCJNX3tFfSJdLFswLDEsImdfe1xcbXV9Il0sWzEsMiwiYSJdLFswLDIsIlxcWGkoYSkiLDJdXQ==
\[\begin{tikzcd}
	{I_{\mu}} &&&& {J_{g \mu}} \\
	\\
	\\
	\\
	&& {M_{E}}
	\arrow["{g_{\mu}}", from=1-1, to=1-5]
	\arrow["{\Xi(a)}"', from=1-1, to=5-3]
	\arrow["a", from=1-5, to=5-3]
\end{tikzcd}\]

Now again, diagrammatically, the fact that  $f$ respects $X_i$ reduces to the commutativity of the following diagram:

% https://q.uiver.app/#q=WzAsMyxbMCwwLCJJX3tcXG11fSJdLFs0LDAsIkpfe2cgXFxtdX0iXSxbMiw0LCJNX3tFXnsnfX0iXSxbMCwxLCJnX3tcXG11fSJdLFsxLDIsImYoYSkiXSxbMCwyLCJmKFxcWGkoYSkpIiwyXV0=
\[\begin{tikzcd}
	{I_{\mu}} &&&& {J_{g \mu}} \\
	\\
	\\
	\\
	&& {M_{E^{'}}}
	\arrow["{g_{\mu}}", from=1-1, to=1-5]
	\arrow["{f(\Xi(a))}"', from=1-1, to=5-3]
	\arrow["{f(a)}", from=1-5, to=5-3]
\end{tikzcd}\]

But this follows from the fact that $\lu{F}$ is a cloven fibred functor.

Now we claim that for any topological space $X$ and any left ultrafunctor $h$, we have a natural isomorphism of left ultrafunctors between $f \circ h$ and $\lu{F}(h)$. First, notice that as functors they are equal. To show this, let $x \in X$, and let us have a look at the following diagram:

% https://q.uiver.app/#q=WzAsNCxbMCwwLCIqIl0sWzAsMywiWCJdLFszLDMsIk1fe0V9Il0sWzYsMywiTV97RV57J319Il0sWzAsMSwieCJdLFsxLDIsImciXSxbMCwyLCJnKHgpIiwyXSxbMiwzLCJmIl0sWzAsMywiXFxsdXtGfShnKHgpKSJdLFsxLDMsIlxcbHV7Rn0oZykiLDIseyJjdXJ2ZSI6NH1dXQ==
\[\begin{tikzcd}
	{*} \\
	\\
	\\
	X &&& {M_{E}} &&& {M_{E^{'}}}
	\arrow["x", from=1-1, to=4-1]
	\arrow["{g(x)}"', from=1-1, to=4-4]
	\arrow["{\lu{F}(g(x))}", from=1-1, to=4-7]
	\arrow["g", from=4-1, to=4-4]
	\arrow["{\lu{F}(g)}"', curve={height=24pt}, from=4-1, to=4-7]
	\arrow["f", from=4-4, to=4-7]
\end{tikzcd}\]

By the fact that $\lu{F}$ is a cloven fibred functor, we do get that $\lu{F}(g(x))=f(g(x))$, Now let us compare their respective ultrastructures of $\lu{F}(g)$ and $f \circ g$. Recall that the ultrastructure of $g$ is completely determined by giving a generalised element in $\la{Hom}(g(x), \int_X g(y) d\mu)$ for every ultrafilter $\mu$ on $X$ converging to $x$, such that this map is $\kappa_{g(x)}$ if $\mu$ is the principal ultrafilter at $x$). Let $\mu$ be a convergent ultrafilter on $X$ that converges to $x$, and let $a$ be the element in $\la{Hom}(g(x), \int_X g(y) d\mu)$,  Now let us consider the topological space $X_{\mu}$, there is a continuous map from $X_{\mu}$ to $X$ defined by sending each element to itself and sending $\mu$ to $x$. so we can look at the following diagram: 

\[\
% https://q.uiver.app/#q=WzAsNCxbMCwwLCJYX3tcXG11fSJdLFswLDIsIlgiXSxbMywyLCJNX0UiXSxbNiwyLCJNX3tFXnsnfX0iXSxbMCwxLCJwIl0sWzEsMiwiZyJdLFsyLDMsImYiXSxbMCwyLCJnIFxcY2lyYyBwIiwxXSxbMCwzLCJcXGx1e0Z9KGcpIiwxXV0=
\begin{tikzcd}
	{X_{\mu}} \\
	\\
	X &&& {M_E} &&& {M_{E^{'}}}
	\arrow["p", from=1-1, to=3-1]
	\arrow["{g \circ p}"{description}, from=1-1, to=3-4]
	\arrow["{\lu{F}(g)}"{description}, from=1-1, to=3-7]
	\arrow["g", from=3-1, to=3-4]
	\arrow["f", from=3-4, to=3-7]
\end{tikzcd}
\]

The argument in the diagram shows that the ultrastructures of both ultrafunctors agree. So any cloven fibred functor corresponds exactly to the composition by a left ultrafunctor from $M_E$ to $M_{E^{'}}$.

\subsection{Equivalence between $\cat{Top}//M_E$ and $ \cat{Top}//E$}\label{second equivalence}

We want to show the following theorem:

\begin{theorem}
Let $E$ be a topos with enough points, and let $M_{E}$ be its respective categories of points. There is an equivalence of $2$-categories fibred over $\cat{Top}$, between the category $\cat{Top}//E$ and $\cat{Top}//M_E$. \label{second equivalence theorem}
\end{theorem}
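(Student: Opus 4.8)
The plan is to reduce the statement to a natural equivalence on fibres and then invoke Buckley's correspondence cited in \ref{2-Fibrations}. Fix a site of definition $(C,\ca{J})$ of $E$, so that, by the previous subsection, $M_E$ is the full sub-generalised-ultracategory of $\la{Fun}(C,\cat{Set})$ spanned by the flat $\ca{J}$-continuous functors, with the pointwise ultrastructure, and this is independent of the chosen site. Both $\cat{Top}//E$ and $\cat{Top}//M_E$ are discrete $2$-opfibrations over $\cat{Top}$, classified by the pseudofunctors $\cat{Top}^{co}\to\cat{Cat}$ given respectively by $X\mapsto \la{Geom}(Sh(X),E)$ and $X\mapsto\la{Lult}(X,M_E)$ (with $2$-cells induced by specialization). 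Hence by Buckley's theorem it suffices to construct a pseudonatural equivalence between these two classifying pseudofunctors, i.e.\ an equivalence $\la{Lult}(X,M_E)\simeq\la{Geom}(Sh(X),E)$ natural in $X$ and compatible with specialization $2$-cells.

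To build this I would chain three equivalences. First, using the theorem $\la{Lult}(X,\cat{Set})\simeq Sh(X)$ established above (via étale bundles), together with the fact that the ultrastructure on $\la{Fun}(C,\cat{Set})$ is computed pointwise in $C$ — so that each evaluation $\la{ev}_c$ is a (strict, hence left) ultrafunctor — one gets a Fubini-type equivalence
\[
\la{Lult}\big(X,\la{Fun}(C,\cat{Set})\big)\;\simeq\;\la{Fun}\big(C,\la{Lult}(X,\cat{Set})\big)\;\simeq\;\la{Fun}(C,Sh(X)),
\]
under which a left ultrafunctor $f\colon X\to\la{Fun}(C,\cat{Set})$ corresponds to the functor $F\colon C\to Sh(X)$ with $F(c)$ the sheaf $x\mapsto f(x)(c)$; in particular the stalk of $F(c)$ at a point $x\in X$ is $f(x)(c)$. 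Second, restrict to $M_E$: a left ultrafunctor $f$ factors through $M_E\hookrightarrow\la{Fun}(C,\cat{Set})$ exactly when $f(x)$ is a point of $E$ for every object $x$ of the ultrapreorder $X$, i.e.\ for every point $x\in X$. Since every topological space has enough points (a morphism of étale bundles that is fibrewise bijective is an isomorphism, so the stalk functors $\{x^{*}\colon Sh(X)\to\cat{Set}\}_{x\in X}$ are jointly conservative and detect flatness and $\ca{J}$-continuity of functors into $Sh(X)$), a functor $F\colon C\to Sh(X)$ is flat and $\ca{J}$-continuous iff each $x^{*}F$ is, which by the stalk computation is precisely the condition $f(x)\in M_E$ for all $x$. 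Thus the Fubini equivalence restricts to $\la{Lult}(X,M_E)\simeq\la{Flat}_{\ca{J}}(C,Sh(X))$. Third, the universal property of the site $(C,\ca{J})$ gives $\la{Flat}_{\ca{J}}(C,Sh(X))\simeq\la{Geom}(Sh(X),E)$, since geometric morphisms $\ca{F}\to Sh(C,\ca{J})$ are the same as flat $\ca{J}$-continuous functors $C\to\ca{F}$.

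Composing the three yields the desired fibrewise equivalence $\la{Lult}(X,M_E)\simeq\la{Geom}(Sh(X),E)$. The part I expect to require the most care — although each ingredient is individually routine, which is why this is the easiest of the three equivalences in the main theorem — is the bookkeeping making everything $2$-natural in $X$ at once: one must check that the étale-bundle equivalence of the first step intertwines restriction of left ultrafunctors along a continuous map $h\colon X'\to X$ with the inverse image $h^{*}\colon Sh(X)\to Sh(X')$ (transparent from the sheaves-as-étale-bundles picture, where restriction corresponds to pullback of bundles), and likewise matches specialization $2$-cells $h\le h'$ with the induced $2$-cells between these functors, and that this compatibility is inherited through the $\la{Fun}(C,-)$ step and through the passage to flat $\ca{J}$-continuous functors. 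Once this is in place the three equivalences assemble into a pseudonatural equivalence of classifying pseudofunctors, and Buckley's theorem promotes it to the claimed equivalence of discrete $2$-opfibrations over $\cat{Top}$, completing the proof.
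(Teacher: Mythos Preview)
Your proposal is correct and follows essentially the same route as the paper: both reduce to a fibrewise equivalence $\la{Lult}(X,M_E)\simeq\la{Geom}(Sh(X),E)$ via the Fubini-type swap $\la{Lult}(X,\la{Fun}(C,\cat{Set}))\simeq\la{Fun}(C,\la{Lult}(X,\cat{Set}))\simeq\la{Fun}(C,Sh(X))$, restrict using that $Sh(X)$ has enough points to detect $\ca{J}$-continuity stalkwise, and then check naturality in $X$ (for both $1$- and $2$-cells). The only cosmetic difference is that you invoke Buckley's theorem to promote the pseudonatural equivalence of classifying pseudofunctors to an equivalence of discrete $2$-opfibrations, whereas the paper writes out the relevant naturality squares directly; the paper also assumes $C$ has finite limits so that flat $=$ lex, which slightly simplifies the stalkwise detection of flatness.
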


This allows us to deduce that, and in the same  setting as the previous theorem:

\begin{theorem}
     There is an equivalence of categories  between $ \la{ClovenCart}(\cat{Top}//M_E, \cat{Top}//M_{E^{'}})$ and $ \la{ClovenCart}(\cat{Top}//E, \cat{Top}//E^{'})$.
\end{theorem}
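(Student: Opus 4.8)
The plan is to obtain this as a purely formal corollary of Theorem \ref{second equivalence theorem}. That theorem supplies, for each topos with enough points, an equivalence $\Phi_E\colon \cat{Top}//M_E \to \cat{Top}//E$ of discrete $2$-fibrations over $\cat{Top}$; I would fix such a $\Phi_E$ together with a quasi-inverse $\Psi_E$ and vertical (Cartesian) natural isomorphisms $\eta_E\colon \la{id}\Rightarrow \Psi_E\Phi_E$ and $\varepsilon_E\colon\Phi_E\Psi_E\Rightarrow\la{id}$, and do the same for $E^{'}$, obtaining $\Phi_{E^{'}},\Psi_{E^{'}},\eta_{E^{'}},\varepsilon_{E^{'}}$. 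Then I would define the comparison functor
\[
\mathbf{C}\colon \la{ClovenCart}(\cat{Top}//M_E,\cat{Top}//M_{E^{'}}) \longrightarrow \la{ClovenCart}(\cat{Top}//E,\cat{Top}//E^{'}),\qquad \ca{G}\longmapsto \Phi_{E^{'}}\circ \ca{G}\circ\Psi_E,
\]
sending a vertical natural transformation $\theta\colon\ca{G}\Rightarrow\ca{G}'$ to $\Phi_{E^{'}}\ast\theta\ast\Psi_E$. Since $\Phi$ and $\Psi$ are fibred functors over $\cat{Top}$ and composites of Cartesian $1$- and $2$-cells are Cartesian, the composite $\Phi_{E^{'}}\circ\ca G\circ\Psi_E$ is again a fibred functor over $\cat{Top}$, and whiskering a vertical natural transformation by fibred functors keeps it vertical.

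For the inverse I would take $\mathbf{D}\colon \ca{H}\mapsto \Psi_{E^{'}}\circ\ca{H}\circ\Phi_E$. Then $\mathbf{D}\mathbf{C}(\ca{G})=\Psi_{E^{'}}\Phi_{E^{'}}\circ\ca{G}\circ\Psi_E\Phi_E$, and whiskering the unit isomorphisms $\eta_E,\eta_{E^{'}}$ appropriately on either side of $\ca{G}$ produces a natural isomorphism $\mathbf{D}\mathbf{C}\cong\la{id}$; symmetrically $\varepsilon_E,\varepsilon_{E^{'}}$ give $\mathbf{C}\mathbf{D}\cong\la{id}$. This exhibits $\mathbf{C}$ as an equivalence of categories, so nothing beyond Theorem \ref{second equivalence theorem} is needed except the bookkeeping below.

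The one point that needs care, and the place where I expect the real (though minor) work to lie, is checking that $\Phi_{E^{'}}\circ\ca{G}\circ\Psi_E$ genuinely preserves the chosen cleavages, that is, that it is cloven and not merely Cartesian. I see two ways to dispatch this. The clean conceptual route is to pass through Buckley's theorem (recalled in \ref{2-Fibrations}): under the $2$-categorical Grothendieck correspondence, $\la{ClovenCart}(\cat{Top}//M_E,\cat{Top}//M_{E^{'}})$ is equivalent to the category of pseudonatural transformations and modifications between the classifying pseudofunctors $\cat{Top}^{co}\to\cat{Cat}$ of the two discrete $2$-fibrations, and Theorem \ref{second equivalence theorem} is exactly the statement that the classifying pseudofunctor of $\cat{Top}//M_E$ is pseudonaturally equivalent to that of $\cat{Top}//E$ (and likewise for $E^{'}$); precomposition and postcomposition with a pseudonatural equivalence are equivalences of the corresponding categories of pseudonatural transformations, which is precisely the claim. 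The alternative, hands-on route is to observe that since the fibres are $1$-categories and Cartesian lifts are unique up to a unique vertical isomorphism, one may transport the cleavage of $\cat{Top}//E$ along $\Phi_E$ so that $\Phi_E$ and $\Psi_E$ become strictly cleavage-preserving, after which $\mathbf{C}$ and $\mathbf{D}$ obviously preserve cleavages. Either way the equivalence itself is formal; the mathematical content is entirely in Theorem \ref{second equivalence theorem}.
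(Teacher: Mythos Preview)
Your proposal is correct and matches the paper's approach exactly: the paper states this theorem immediately after Theorem \ref{second equivalence theorem} with only the phrase ``This allows us to deduce that'', treating it as a purely formal consequence of the fibrewise equivalence and offering no further argument. Your write-up is in fact more careful than the paper's, since you make explicit the pre/post-composition construction and flag the cleavage-preservation point (which the paper silently absorbs into its appeal to the Buckley correspondence in \ref{2-Fibrations}).
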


First, we show the equivalence of fibres:

Let $X$ be a  topological  space, then there is an equivalence of categories between the category $(\cat{Top}//{E})_X$ and the category $(\cat{Top}//{M_E})_X$. Let $C$ be a site of definition for the topos $E$. We may assume that $C \subseteq E$ and that $C$ has all finite limits, in this case, the condition of being flat is equivalent to being left exact i.e. preserving finite limits.

The proof is the following series of equivalences $$\la{Lult}(X, \ca{J}\dash \la{continuous \ lex}(C, \cat{Set})) \simeq \ca{J} \dash \la{continuous \ lex}(C , \la{Lult}(X,\cat{Set})) \simeq $$  $$\ca{J} \dash \la{continuous \ lex}(C , Sh(X)) \simeq \la{Geom}(X , E) $$ The only equivalence that needs verification is the first equivalence (the last equivalence is a classic result in topos theory, and the second follows from the fact that $Sh(X) \simeq \mathrm{Lult}(X,\cat{Set})$.

 Recall that for any topological space, the sheaf topos $Sh(X)$ always has enough points, and its category of points is the soberification of $X$. And as we stated before every category of points of a sheaf topos has a natural generalised ultrastructure, which coincides with the ultrastructure of the topological space (if it's sober, otherwise it's going to be the ultrastructure of its soberification) given by $\Hom(A , \int_i M_i d\mu)= \{*\}$ iff the ultrafilter $M\mu$ converges to $A$, and of course the underlying category of such category is the topological space with the specialisation order.

First, we show the equivalence $\la{Lult}(X, \la{Fun}(C,\cat{Set})) \simeq \la{Fun}(C, \la{Lult}(X,\cat{Set}))$. Suppose that we have a left ultrafunctor $F$ from $X$ to the category of  functors from $C$ to $\cat{Set}$.

\begin{comment}
    the next paragraph needs fixing
\end{comment}

We start by noticing that when we have a topological space $X$, a left ultrafunctor from $X$ to any ultracategory (not a generalised one) is given by defining for every ultrafilter $\mu$ on $X$ converging to $x$ a map $\sigma_{\mu}$ from $F(x)$ to $\int_X F(y) d \mu$ satisfying certain compatibility conditions, these elements $\sigma_{\mu}$ are the images by $F$ of the unique element in $\la{Hom}(x, \int_X y d\mu)$.

This is not the entire data of left ultrafunctors as we defined it,  but in the case of ultrafunctors from topological spaces, these maps are enough to uniquely determine the entire data. Let us explain further, suppose that $X$ is a topological space, and suppose that for every converging ultrafilter $\mu$ on $X$ converging to $x$ (not necessarily unique in this regard), we have a map $\sigma_{\mu}$ from $F(x)$ to $\int_X F(y)  d \mu$ satisfying the following compatibility conditions:

Let $I$ be a set and let $\mu$ be an ultrafilter on $I$ and $f$ a map of sets to $X$ such that $f\mu$ converges to $x$, we may define $\sigma_{\mu}$ from $F(x)$ to $\int_I F(y) d \mu$ by $\sigma_{\mu}= \Delta_{f, \mu} \circ \sigma_{f\mu}$.

The compatibility conditions can be expressed by the following diagrams:

    \[
% https://q.uiver.app/#q=WzAsNCxbMCwwLCJGKHgpIl0sWzQsMCwiXFxpbnRfSSBGKHpfaSkgZFxcbXUiXSxbNCwzLCJcXGludF9JIFxcaW50X3tYX2l9IEYoeSkgZCBcXGxhbWJkYV9pIGRcXG11Il0sWzAsMywiXFxpbnRfe1xcY29wcm9kX3tJfVhfaX1GKHkpIGQgXFxpbnRfe0l9IFxcaW90YV9pIFxcbGFtYmRhX2kgZCBcXG11Il0sWzAsMywiXFxzaWdtYV97XFxpbnRfSSBcXGlvdGFfaSBcXGxhbWJkYV9pIGQgXFxtdX0iLDJdLFszLDIsImEiLDJdLFswLDEsIlxcc2lnbWFfe1xcbXV9Il0sWzEsMiwiXFxpbnRfSSBcXHNpZ21hX3tcXGxhbWJkYV9pfWRcXG11Il1d
\begin{tikzcd}
	{F(x)} &&&& {\int_I F(z_i) d\mu} \\
	\\
	\\
	{\int_{\coprod_{I}X_i}F(y) d \int_{I} \iota_i \lambda_i d \mu} &&&& {\int_I \int_{X_i} F(y) d \lambda_i d\mu}
	\arrow["{\sigma_{\mu}}", from=1-1, to=1-5]
	\arrow["{\sigma_{\int_I \iota_i \lambda_i d \mu}}"', from=1-1, to=4-1]
	\arrow["{\int_I \sigma_{\lambda_i}d\mu}", from=1-5, to=4-5]
	\arrow["a"', from=4-1, to=4-5]
\end{tikzcd}
\]

 %https://q.uiver.app/#q=WzAsMixbMCwwLCJGKHgpIl0sWzIsMCwiXFxpbnRfKiBGKHgpIGQqIl0sWzAsMSwiXFxzaWdtYV8qIl0sWzEsMCwiXFxlcHNpbG9uX3sqLCp9IiwwLHsiY3VydmUiOi00fV1d
\[\begin{tikzcd}
	{F(x)} && {\int_* F(x) d*}
	\arrow["{\sigma_*}", from=1-1, to=1-3]
	\arrow["{\epsilon_{*,*}}", curve={height=-24pt}, from=1-3, to=1-1]
\end{tikzcd}\]

Here $a$ is the colax associator which is the composite of the categorical Fubini transform with the ultraproduct diagonal map, and $z_i$ are the limits of the pushforwards of the ultrafilters $\lambda_i$. This entire discussion says that left ultrafunctors from a topological space $X$ to arbitrary ultracategories are completely determined by the images of $\la{Hom}(x,\int_Xyd\mu)$ (it's enough to restrict our attention to ultrafilters on $X$).

Now suppose that $\ca{A}$ is a generalised ultracategory for which the generalised ultrastructure is given by taking $\mathrm{Hom}(A,\int_I M_i d\mu)$ to be $\mathrm{Hom}(A,\int_I M_i d\mu)$ inside another ultracategory $\ca{B}$ such that $ \ca{A} $ is a full subcategory of $\ca{B}$ (i.e. the construction \ref{not very crucial theorem}). Then it is easy to notice that given a  generalised  ultracategory $\ca{C}$, there is a one-to-one correspondence (which extends to isomorphism of categories) between left ultrafunctors from $\ca{C}$ to $\ca{A}$ and between left ultrafunctors from $\ca{C}$ to $\ca{B}$ that factor through the inclusion, so the description that we gave for left ultrafunctors from a topological spaces to a regular ultracategory, still holds when we have a left ultrafunctor from a topological space $X$ to a generalised ultracategory constructed using \ref{not very crucial theorem}.

Now we notice that the ultrastructure of the category $\la{Fun}(C,\cat{Set})$ is defined by taking pointwise ultraproducts. That means that given a left ultrafunctor $F$ from $X$ to $\la{Fun}(C,\cat{Set}$, for every  $c \in C$  the functor $F_c$ the functor given by $x \mapsto  F(x)(c)$ has a natural left ultrastructure (so it is a left ultrafunctor from $X$ to $\cat{Set}$), given by defining for every ultrafilter $\mu$ on $X$ converging to  $x$, the maps $\sigma^{'}_{\mu}={\sigma_{\mu}}_c$ (this makes sense since $\sigma_{\mu}$ is a natural transformation). It is easily verifiable that these maps $\sigma^{'}_{\mu}$ indeed satisfy the compatibility axioms necessary in the definition of left ultrafunctors (this reduces to the fact that ultrastructure in the category of functors to $\cat{Set}$ is computed pointwise). On the other hand, suppose that we have a functor $\ca{F}$ from $C$ to the category of left ultrafunctors from $X$ to set. Then we can give the functor $F$ defined from $X$ to the category of functors from $C$ to $\cat{Set}$, by $F(x)(c)=\ca{F}(c)(x)$ an ultrastructure by defining the map $\sigma_{\mu}$ from $F(x)$ to $\int_X F(y) d\mu$ by setting $(\sigma_{\mu})_{c} ={\sigma_c}_{\mu}$ here $\sigma_{c}$ is the left ultrafunctor map assigned to the functor $\ca{F}(c)$, and one can easily verify that this gives $F$ a left ultrastructure. And one can see that these two processes are part of an equivalence between $\la{Lult}(X , \la{Fun}(C, \cat{Set}))$ and $\la{Fun}(C , \la{Lult}(X, \cat{Set}))$.

Now we claim that this last equivalence restricts to an equivalence between $\ca{J}\dash\la{continuous \ lex}$ functors from $C$ to the category $\la{Lult}(X,\cat{Set})$ (this makes sense since the last category is equivalent to the category of sheaves of sets over $X$), and the category of left ultrafunctors from $X$ to $\ca{J}\dash\la{continuous \ lex}$ functors from $C$ to $\cat{Set}$. If we drop the requirement of being $\ca{J}$-continuous, then the equivalence is evident (finite limits are computed pointwise). Now we remind that the property of being $\ca{J}$-continuous, is the requirement to send covering sieves to jointly epimorphic families \cite{maclane2012sheaves}. Suppose that we have a  left ultrafunctor $F$ from $X$ to the category of $\ca{J}\dash\la{continuous \ lex}$ functors from $C$  to $\cat{Set}$, then  the corresponding  functor $\ca{F}$ from $C$ to the category $\la{Lult}(X,\cat{Set}) \simeq Sh(X)$ is $\ca{J}\dash\la{continuous \ lex}$. As we have seen, $\ca{F}$ is clearly lex. To see why it must be also $J$-continuous. Let $c$ be an object of $C$ and suppose that $S$ is a covering sieve for $C$, for every $x \in X$ $F(S)(x)$ is jointly epimorphic and hence since the sheaf  topos $Sh(X)$ has enough points, epimorphisms can be tested stalkwise, and hence we can deduce that $F(S)$  is jointly epimorphic, and hence the equivalence that we already had restricts.

Now, since we have shown that all fibres are isomorphic, we need to show that this extends to a morphism of discrete $2$-fibrations (contravariant pseudofunctor from $\cat{Top}^{co}$ to the $2$-categories of categories) the proof can be deduced by inspecting the following diagram:

% https://q.uiver.app/#q=WzAsOCxbMCwwLCJcXG1hdGhybXtMZWZ0dWx0cmFmdW5jdG9yc30oWCwgSlxcdGV4dHstfVxcbWF0aHJte2NvbnRpbnVvdXMgXFwgbGV4fShDLFxcbWF0aHNme1NldH0pICJdLFszLDAsIlxcbWF0aHJte0xlZnR1bHRyYWZ1bmN0b3JzfShZLCBKXFx0ZXh0ey19XFxtYXRocm17Y29udGludW91cyBcXCBsZXh9KEMsXFxtYXRoc2Z7U2V0fSkgIl0sWzAsMywiSiBcXHRleHR7LX0gXFxtYXRocm17Y29udGludW91cyBcXCBsZXh9KEMsIFxcbWF0aHJte0xlZnR1bHRyYWZ1bmN0b3JzfShYLFxcbWF0aHNme1NldH0pKSAiXSxbMywzLCJKIFxcdGV4dHstfSBcXG1hdGhybXtjb250aW51b3VzIFxcIGxleH0oQywgXFxtYXRocm17TGVmdHVsdHJhZnVuY3RvcnN9KFksXFxtYXRoc2Z7U2V0fSkpICJdLFswLDYsIkogXFx0ZXh0ey19IFxcbWF0aHJte2NvbnRpbnVvdXMgXFwgbGV4fShDLCBTaChYKSkgIl0sWzMsNiwiSiBcXHRleHR7LX0gXFxtYXRocm17Y29udGludW91cyBcXCBsZXh9KEMsIFNoKFkpKSAiXSxbMCw5LCJcXG1hdGhybXtHZW9tfShYLFNoKEMpKSJdLFszLDksIlxcbWF0aHJte0dlb219KFksU2goQykpIl0sWzAsMSwiLSBcXGNpcmMgZiJdLFswLDIsIlxcc2ltZXEiLDJdLFsyLDMsIiggLSBcXGNpcmMgZilfe2MgXFxpbiBDfSIsMl0sWzEsMywiXFxzaW1lcSJdLFsyLDQsIlxcc2ltZXEiXSxbMyw1LCJcXHNpbWVxICJdLFs0LDUsImZeKiBcXGNpcmMgLSIsMV0sWzQsNiwiXFxzaW1lcSIsMV0sWzUsNywiXFxzaW1lcSIsMV0sWzYsNywiLVxcY2lyYyBmIiwxXV0=
\[\begin{tikzcd}
	{\mathrm{Leftultrafunctors}(X, J\text{-}\mathrm{continuous \ lex}(C,\mathsf{Set}) } &&& {\mathrm{Leftultrafunctors}(Y, J\text{-}\mathrm{continuous \ lex}(C,\mathsf{Set}) } \\
	\\
	\\
	{J \text{-} \mathrm{continuous \ lex}(C, \mathrm{Leftultrafunctors}(X,\mathsf{Set})) } &&& {J \text{-} \mathrm{continuous \ lex}(C, \mathrm{Leftultrafunctors}(Y,\mathsf{Set})) } \\
	\\
	\\
	{J \text{-} \mathrm{continuous \ lex}(C, Sh(X)) } &&& {J \text{-} \mathrm{continuous \ lex}(C, Sh(Y)) } \\
	\\
	\\
	{\mathrm{Geom}(X,Sh(C))} &&& {\mathrm{Geom}(Y,Sh(C))}
	\arrow["{- \circ f}", from=1-1, to=1-4]
	\arrow["\simeq"', from=1-1, to=4-1]
	\arrow["\simeq", from=1-4, to=4-4]
	\arrow["{( - \circ f)_{c \in C}}"', from=4-1, to=4-4]
	\arrow["\simeq", from=4-1, to=7-1]
	\arrow["{\simeq }", from=4-4, to=7-4]
	\arrow["{f^* \circ -}"{description}, from=7-1, to=7-4]
	\arrow["\simeq"{description}, from=7-1, to=10-1]
	\arrow["\simeq"{description}, from=7-4, to=10-4]
	\arrow["{-\circ f}"{description}, from=10-1, to=10-4]
\end{tikzcd}\]

\noindent Here, by abuse of language, we are using the same name $f$ for the continuous map $f$ from $Y$ to $X$, and the corresponding geometric morphism from $Sh(Y)$ to $Sh(X)$.

Using a similar diagram as the above we show also that if we have $f \leq g$ for two continuos functions $f$  and $g$, from $Y$ to $X$, then we have the commutativity of the following diagram:

% https://q.uiver.app/#q=WzAsNCxbMCwwLCJcXG1hdGhybXtMZWZ0dWx0cmFmdW5jdG9yc30oWCwgSlxcdGV4dHstfVxcbWF0aHJte2NvbnRpbnVvdXMgXFwgbGV4fShDLFxcbWF0aHNme1NldH0pICJdLFszLDAsIlxcbWF0aHJte0xlZnR1bHRyYWZ1bmN0b3JzfShZLCBKXFx0ZXh0ey19XFxtYXRocm17Y29udGludW91cyBcXCBsZXh9KEMsXFxtYXRoc2Z7U2V0fSkgIl0sWzAsMywiXFxtYXRocm17R2VvbX0oWCxTaChDKSkiXSxbMywzLCJKIFxcdGV4dHstfSBcXG1hdGhybXtjb250aW51b3VzIFxcIGxleH0oQywgXFxtYXRocm17TGVmdHVsdHJhZnVuY3RvcnN9KFksXFxtYXRoc2Z7U2V0fSkpICJdLFswLDEsIi0gXFxjaXJjIGYiLDAseyJjdXJ2ZSI6LTN9XSxbMCwyLCJcXHNpbWVxIiwyXSxbMiwzLCItXFxjaXJjIGYiLDAseyJjdXJ2ZSI6LTN9XSxbMSwzLCJcXHNpbWVxIl0sWzAsMSwiLSBcXCBcXGNpcmMgZyIsMix7Im9mZnNldCI6NCwiY3VydmUiOjJ9XSxbMiwzLCItIFxcIFxcY2lyYyBnIiwxLHsib2Zmc2V0Ijo0LCJjdXJ2ZSI6M31dLFs0LDgsIiIsMCx7InNob3J0ZW4iOnsic291cmNlIjoyMCwidGFyZ2V0IjoyMH19XSxbNiw5LCIiLDAseyJzaG9ydGVuIjp7InNvdXJjZSI6MjAsInRhcmdldCI6MjB9fV1d
\[\begin{tikzcd}
	{\mathrm{Leftultrafunctors}(X, J\text{-}\mathrm{continuous \ lex}(C,\mathsf{Set}) } &&& {\mathrm{Leftultrafunctors}(Y, J\text{-}\mathrm{continuous \ lex}(C,\mathsf{Set}) } \\
	\\
	\\
	{\mathrm{Geom}(X,Sh(C))} &&& {J \text{-} \mathrm{continuous \ lex}(C, \mathrm{Leftultrafunctors}(Y,\mathsf{Set})) }
	\arrow[""{name=0, anchor=center, inner sep=0}, "{- \circ f}", curve={height=-18pt}, from=1-1, to=1-4]
	\arrow[""{name=1, anchor=center, inner sep=0}, "{- \ \circ g}"', shift right=4, curve={height=12pt}, from=1-1, to=1-4]
	\arrow["\simeq"', from=1-1, to=4-1]
	\arrow["\simeq", from=1-4, to=4-4]
	\arrow[""{name=2, anchor=center, inner sep=0}, "{-\circ f}", curve={height=-18pt}, from=4-1, to=4-4]
	\arrow[""{name=3, anchor=center, inner sep=0}, "{- \ \circ g}"{description}, shift right=4, curve={height=18pt}, from=4-1, to=4-4]
	\arrow[between={0.2}{0.8}, Rightarrow, from=0, to=1]
	\arrow[between={0.2}{0.8}, Rightarrow, from=2, to=3]
\end{tikzcd}\]

\subsection{Equivalence between $\mathrm{ClovenCart}(\cat{Top}//E, \cat{Top}//{E^{'}})$ and $\mathrm{Geom}(E,E^{'})$}
\label{third equivalence}
The main theorem we want to show in this subsection is:

\begin{theorem}
    Let $E$ and $E^{'}$ be two toposes with enough points, then there is an equivalence of categories between $\la{Geom}(E,E^{'})$ and $\la{ClovenCart}(\cat{Top}//E, \cat{Top}//E^{'})$.\label{third equivalence theorem}
\end{theorem}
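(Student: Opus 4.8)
We sketch the approach. The plan is to reduce the statement to a single colimit computation. Write $U_E \colon \cat{Top}//E \to \cat{Topos}$ for the forgetful $2$-functor sending $(X,g)$ to $Sh(X)$ and a morphism $(h,\alpha)\colon (X,g_1)\to (Y,g_2)$ to $Sh(h)\colon Sh(X)\to Sh(Y)$. The structure data of the lax slice assemble tautologically into a lax cocone under $U_E$ with apex $E$: the leg at $(X,g)$ is the geometric morphism $g\colon Sh(X)\to E$, and the comparison $2$-cell attached to $(h,\alpha)$ is $\alpha\colon g_1\Rightarrow g_2\circ Sh(h)$ itself. The heart of the proof is the intermediate result announced in the introduction, namely that for every topos $E$ with enough points this lax cocone exhibits $E$ as the (lax) $2$-colimit of $U_E$ in $\cat{Topos}$. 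Everything else is formal bookkeeping built on top of Buckley's correspondence \cite{buckley2014fibred} between cloven Cartesian functors over $\cat{Top}$ and pseudonatural transformations of the associated fibres $X\mapsto \mathrm{Geom}(Sh(X),E)$.

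Granting that colimit statement, the two functors of the desired equivalence are produced as follows. In one direction, a geometric morphism $f\colon E\to E^{'}$ gives $\Psi(f):= f\circ(-)\colon \cat{Top}//E\to\cat{Top}//E^{'}$, sending $(X,g)$ to $(X,f\circ g)$ and $(h,\alpha)$ to $(h,f\circ\alpha)$; this fixes the $\cat{Top}$-component, carries morphisms (and $2$-cells) with invertible natural-transformation part to ones of the same kind, hence Cartesian $1$- and $2$-cells to Cartesian ones, and it transports a chosen cleavage to a cleavage, so $\Psi(f)\in\mathrm{ClovenCart}(\cat{Top}//E,\cat{Top}//E^{'})$; a $2$-cell $f\Rightarrow f^{'}$ whiskers into a Cartesian (vertical) natural transformation. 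In the other direction, given $\lu{F}\in\mathrm{ClovenCart}(\cat{Top}//E,\cat{Top}//E^{'})$, the fact that $\lu{F}$ lies over $\cat{Top}$ forces $U_{E^{'}}\circ\lu{F}=U_E$, so the legs $Sh(X)\xrightarrow{\lu{F}(g)}E^{'}$ together with the $2$-cells $\lu{F}(\alpha)$ form a lax cocone under $U_E$; by the colimit property it is classified by an essentially unique geometric morphism $\Phi(\lu{F})\colon E\to E^{'}$, and a Cartesian natural transformation $\lu{F}\Rightarrow\lu{F}^{'}$, being a morphism of cocones, is classified by a unique $2$-cell $\Phi(\lu{F})\Rightarrow\Phi(\lu{F}^{'})$. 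That $\Phi$ and $\Psi$ are mutually inverse up to isomorphism is then a chase through the universal property: $\Psi(f)$ induces the cocone with legs $f\circ g$, which is $f$ whiskered with the universal cocone, hence classified by $f$; and writing $f=\Phi(\lu{F})$, the defining property of $f$ supplies coherent isomorphisms $f\circ g\simeq\lu{F}(g)$ natural in $(X,g)$ and compatible with the $2$-cell data, and since a cloven Cartesian functor over $\cat{Top}$ is determined up to isomorphism by its action on objects of each fibre (a morphism in the fibre over $X$ being a $2$-cell of geometric morphisms $Sh(X)\to E^{'}$, recovered from the $2$-cell part of the cocone), these assemble into a Cartesian natural isomorphism $\Psi(f)\simeq\lu{F}$.

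It remains to establish the colimit lemma, and this is where the real work lies. By the representation theorem of Butz and Moerdijk \cite{butz1997representation}, $E\simeq Sh(\bb{G})$ for a topological groupoid $\bb{G}$, and $Sh(\bb{G})$ is the $2$-colimit (the descent object) of the truncated nerve $\cdots\ Sh(\bb{G}_1\times_{\bb{G}_0}\bb{G}_1)\rightrightarrows Sh(\bb{G}_1)\rightrightarrows Sh(\bb{G}_0)$; each of these toposes carries a canonical geometric morphism to $E$, so the diagram embeds as a subdiagram $D\hookrightarrow\cat{Top}//E$ whose induced cocone is the restriction of the universal one and whose $2$-colimit is already $E$. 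The task is then to show that $D$ is cofinal in $\cat{Top}//E$, so that the $2$-colimit over all of $\cat{Top}//E$ agrees with the one over $D$: given an arbitrary $(X,g\colon Sh(X)\to E)$ one must connect it to $D$ by a zig-zag of morphisms of $\cat{Top}//E$. This uses the specific features of the Butz--Moerdijk construction, in particular that $Sh(\bb{G}_0)\to E$ is an open surjection assembled from enough points of $E$, so that $g$ can be covered by geometric morphisms that do factor (up to the required $2$-cell) through $\bb{G}_0$, with the groupoid object $\bb{G}_1$ providing the coherence between different such factorizations.

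I expect this cofinality argument to be the main obstacle, together with two pieces of adjacent care: size issues (one should replace $\cat{Top}$ by a set-sized cofinal subcategory, or work throughout with the explicit small diagram coming from $\bb{G}$, so that the colimits in question actually exist and are computed by the descent object), and pinning down the precise flavour of $2$-colimit (lax versus oplax) so that the factorization of a cocone through $E$ is unique up to a contractible space of choices — it is only this last point that upgrades the bijection on objects to the claimed equivalence of categories, and it is what makes the passage to $2$-cells in the paragraph above legitimate.
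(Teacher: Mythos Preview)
Your overall strategy matches the paper's: reduce to the colimit lemma (that $E$ is the $2$-colimit of the forgetful functor from the slice of topological spaces over $E$), then deduce the equivalence formally from the universal property. The paper proceeds exactly this way, and your identification of the colimit computation as the heart of the matter is correct.

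Two points of divergence are worth flagging. First, the paper proves the colimit over the \emph{pseudo}-slice $\cat{Top}/E$, not the lax slice, and then has to do extra work to show that the resulting isomorphisms $\alpha_f\colon \lu{F}(f)\simeq q\circ f$ are natural with respect to \emph{non-invertible} $2$-cells in $\cat{Top}//E$. This is handled by the Sierpi\'nski-space trick: a $2$-cell between $f,g\colon Sh(X)\rightrightarrows E$ is encoded as a single geometric morphism $Sh(S\times X)\to E$, which is an object of the pseudo-slice, and then the required naturality square is extracted from the colimit coherence. Your framing in terms of a lax colimit over $\cat{Top}//E$ may well be equivalent and perhaps cleaner, but you would need either this trick or a direct argument to get there.

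Second, and more seriously, your cofinality argument with a \emph{single} Butz--Moerdijk groupoid is where the paper departs most from your sketch, and the gap is real. For a fixed $\bb{G}=G(I,P)$ built from a jointly epimorphic set $P$ of points, an arbitrary $(X,g)$ need not admit a morphism to $(\bb{G}_0,p)$ in $\cat{Top}/E$: the points $g(x)$ need not lie in $P$ (even up to isomorphism), and even when they do one must produce continuous choices of enumerations. The paper's solution is to let the groupoid vary over a filtered poset $\ca{P}$ of pairs $(I,P)$, take the left Kan extension of the forgetful functor along the $2$-Yoneda embedding into $[\cat{Top}/E,\cat{Groupoids}]$, and then show that for each $(X,g)$ the Grothendieck construction $\int_{(I,P)\in\ca{P}}\Hom_{\cat{Top}/E}((X,g),G(I,P))$ is itself filtered; since the localisation of a filtered category at all arrows is contractible, this gives the needed $2$-categorical cofinality. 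The key technical ingredient, which you do not have an analogue of, is a lemma that any two lifts of $(X,g)$ to $G_0$ are connected by a continuous map to $G_1$, proved by an explicit computation with the basic open sets of the logical topology. Note also that the $1$-categorical ``zig-zag'' criterion you invoke is not sufficient for bicategorical colimits; one needs the comma categories to have contractible nerve, which is exactly what the filteredness argument provides.
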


But before this, we will be showing the following lemma:

\begin{lemma}
    
Let $E$ be a topos with enough points, take the forgetful functor $\ca{F}$ from the category $\cat{Top}/E$ (the pseudo-slice) to the category of Grothendieck toposes with enough points, then $\varinjlim \ca{F} =E$.
\end{lemma}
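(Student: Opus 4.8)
The plan is to build the canonical comparison geometric morphism $\phi\colon\varinjlim\mathcal{F}\to E$ out of the colimit and then to show that $E$, equipped with the structure maps of the objects of $\cat{Top}/E$, is \emph{itself} a colimiting cocone for $\mathcal{F}$; this forces $\phi$ to be an equivalence. For the setup: each object $(X,g)$ supplies the geometric morphism $g\colon Sh(X)\to E$, and a morphism $(h,\alpha)\colon(X,g_1)\to(Y,g_2)$ of the pseudo-slice carries the isomorphism $\alpha\colon g_1\xrightarrow{\ \sim\ }g_2\circ Sh(h)$; together these data form a pseudo-cocone under $\mathcal{F}$ with vertex $E$. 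Since $\cat{Topos}$ admits all small $2$-colimits, $\varinjlim\mathcal{F}$ exists, and this pseudo-cocone induces $\phi$ with $\phi\circ\iota_{(X,g)}\cong g$ for the colimiting cocone $\iota$. Everything then reduces to checking the universal property of $E$.

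The crucial external input is the Butz--Moerdijk representation theorem \cite{butz1997representation}: since $E$ has enough points there is an open topological groupoid $G$, whose object space $G_0$ and arrow space $G_1$ are genuine topological spaces, together with an equivalence $E\simeq BG$; moreover $BG$ is the $2$-colimit of the simplicial diagram $Sh(G_\bullet)\colon\Delta^{\mathrm{op}}\to\cat{Topos}$ associated with the nerve of $G$, and the atlas $g_0\colon Sh(G_0)\to E$ is an open surjection. I would extract two things. First, the canonical legs $g_n\colon Sh(G_n)\to E$ make $[n]\mapsto(G_n,g_n)$ into a functor $N\colon\Delta^{\mathrm{op}}\to\cat{Top}/E$ with $\mathcal{F}\circ N=Sh(G_\bullet)$, so $E$ is the colimit of the subdiagram $\mathcal{F}\circ N$. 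Second --- and this is the geometric heart of the argument --- every $G$-torsor over a space is locally trivial: for each object $(X,g)$ there is an open cover $\{U_i\}$ of $X$, continuous maps $h_i\colon U_i\to G_0$, and isomorphisms $g|_{U_i}\cong g_0\circ Sh(h_i)$, where $g|_{U_i}$ abbreviates $g\circ Sh(U_i\hookrightarrow X)$; this is the statement that the open surjection $g_0$ admits local continuous sections along spatial opens of any $Sh(X)$ over $E$, which is exactly what the construction of $G_0$ in \cite{butz1997representation} provides.

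Now I would verify the universal property. Given a topos $F$ and a pseudo-cocone $\big(u_{(X,g)}\colon Sh(X)\to F\big)$ under $\mathcal{F}$: restricting along $N$ gives a pseudo-cocone under $Sh(G_\bullet)$, and since $E\simeq\varinjlim_{\Delta^{\mathrm{op}}}Sh(G_\bullet)$ this yields a geometric morphism $v\colon E\to F$ with $v\circ g_n\cong u_{(G_n,g_n)}$ coherently. To see that $v\circ g\cong u_{(X,g)}$ for \emph{every} $(X,g)$, pick a cover $\{U_i\}$ with $h_i$ and isomorphisms $\theta_i\colon g|_{U_i}\xrightarrow{\ \sim\ }g_0\circ Sh(h_i)$. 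Because $(U_i\hookrightarrow X,\mathrm{id})\colon(U_i,g|_{U_i})\to(X,g)$ and $(h_i,\theta_i)\colon(U_i,g|_{U_i})\to(G_0,g_0)$ are morphisms of $\cat{Top}/E$, compatibility of the cocone $(u)$ along them gives
\[
u_{(X,g)}\circ Sh(U_i\hookrightarrow X)\ \cong\ u_{(U_i,g|_{U_i})}\ \cong\ v\circ g_0\circ Sh(h_i)\ \cong\ v\circ g\circ Sh(U_i\hookrightarrow X),
\]
and the same computation over the intersections $U_i\cap U_j$ and $U_i\cap U_j\cap U_k$ shows these isomorphisms form descent data. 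Since $\{Sh(U_i)\to Sh(X)\}$ is a jointly surjective family of open geometric morphisms, $\coprod_i Sh(U_i)\to Sh(X)$ is an open surjection and hence of effective descent in $\cat{Topos}$, so the local isomorphisms glue to $v\circ g\cong u_{(X,g)}$; one then checks this is a morphism of pseudo-cocones. Uniqueness of $v$, and of $2$-cells between cocone morphisms, is inherited from the corresponding uniqueness for $Sh(G_\bullet)$, because the objects $(G_n,g_n)$ already generate. Hence $\phi$ is an equivalence and $\varinjlim\mathcal{F}\simeq E$.

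I expect the main obstacle to be exactly this extension step. The naive hope --- that $N$ is cofinal in $\cat{Top}/E$ --- fails, since not every $G$-torsor is trivial and so a general $(X,g)$ need not map into the image of $N$ at all; what saves the day is that the $(G_n,g_n)$ nonetheless suffice to determine a map out of the colimit, with local triviality of torsors plus $2$-categorical descent for geometric morphisms along open covers serving as the bridge. The delicate part will be the coherence bookkeeping --- showing the glued isomorphisms really assemble into a unique morphism of pseudo-cocones --- and it is in the local-triviality input that ``enough points'' is genuinely used, beyond merely guaranteeing the existence of $BG$.
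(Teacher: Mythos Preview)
Your overall strategy --- show that $E$ with its tautological cocone is universal by comparing against a Butz--Moerdijk presentation --- is reasonable, and the paper's proof shares this starting point. But there is a genuine gap at the step you yourself flag as the ``geometric heart'': the local triviality claim for a \emph{fixed} Butz--Moerdijk groupoid $G$ is not something \cite{butz1997representation} provides, and in fact it fails in general.

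Here is the problem. The object space $G_0 = G_0(I,P)$ is built from a chosen \emph{set} $P$ of points of $E$ (with enumerations), and the atlas $g_0\colon Sh(G_0)\to E$ sends a point $(p,e)\in G_0$ to the point $p\in P$. A lift $h_i\colon U_i\to G_0$ with $g|_{U_i}\cong g_0\circ Sh(h_i)$ therefore requires that each point $g(x)$ of $E$ (for $x\in U_i$) be isomorphic to some point in $P$. But the category of points of a topos with enough points is typically not essentially small --- think of the classifying topos of groups, whose points are arbitrary groups --- so no single set $P$ contains a representative of every isomorphism class. Hence there exist objects $(X,g)$ that admit no lift to $G_0$ even over a single point $x\in X$, let alone over an open cover. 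Open surjectivity of $g_0$ gives descent for objects \emph{along} $g_0$, not continuous local sections \emph{into} $G_0$: the pullback $Sh(X)\times_E Sh(G_0)$ is an open surjection onto $Sh(X)$, but it need not be spatial, and even when it is there is no reason for continuous sections to exist.

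The paper avoids exactly this obstacle by working not with one groupoid but with a filtered poset $\mathcal{P}$ of pairs $(I,P)$, ranging over all jointly conservative sets of points $P$ and all sufficiently large cardinals $I$, each giving a Butz--Moerdijk groupoid $G(I,P)$ with colimit $E$. Given any $(X,f)$ one may enlarge $P$ to contain every point $f(x)$, and then a \emph{global} lift $X\to G_0(I,P)$ can be built directly (choose local sections of the \'etal\'e space $f^*A\to X$ as enumerations). The argument then proceeds indirectly: take the left pseudo-Kan extension $\tilde{\mathcal{F}}$ of $\mathcal{F}$ along Yoneda into $[\cat{Top}/E,\cat{Groupoids}]$, observe that $\tilde{\mathcal{F}}$ preserves $2$-colimits, and reduce to showing that the presheaf $\varinjlim_{\mathcal{P}} y(G(I,P))$ is terminal. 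This is checked pointwise by proving that for each $(X,f)$ the Grothendieck construction $\int p_X$ is filtered --- the key lemma being that any two lifts $X\rightrightarrows G_0$ over $E$ are connected by a continuous map $X\to G_1$, which is where the explicit Butz--Moerdijk topology is really used. Your descent argument, were it to work, would be more direct; but the local triviality input it rests on is unavailable for a single groupoid, and once you let the groupoid vary you are essentially forced into the paper's approach.
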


We know from \cite{butz1997representation} that there exists a simplicial topos (or more simply a topological groupoid) such that $E$ is the colimit of this functor (a functor from $\Delta^{op}$ or from $\mathbf{2}^{op}$  to the category of toposes with enough points).

\paragraph{Construction of the category $\ca{P}$}
Next thing we are going to do is to construct a nice category, with a functor to the category of topological groupoids. Let us choose an object $A$ such that the topos $E$ is generated by subobjects of powers of $A$. The choice of the object $A$ will be the same for all the groupoids in the category $\ca{P}$. Logically speaking, the choice of $A$ should be regarded as the choice of a syntax for the theory. Now, the objects of the category $\ca{P}$ are pairs $(P,I)$ where $P$ is a jointly epimorphic family of points and $I$ is a cardinal such that $I \geq |A_p| $ for every $p \in P$. Now for morphisms we want $\ca{P}$ to be a large poset, and we do this by requiring the existence of a unique morphism from $(P,I)$ to $(P^{'},I^{'})$ iff $I \leq I^{'}$ and $P \subseteq P^{'}$.

Suppose that we have a cardinal $I$ and a set $B$, we define an enumeration by $I$ of elements of $B$ to be a partial function $e$ from $I$ to $B$, which is a surjection, such that for every $b \in B$, $e^{-1}(b)$ is infinite.

 \paragraph{Construction of the functor $G$}
Now we want to define a functor $G$ from $\ca{P}$ to the category of topological groupoids as follows: For every pair $(P,I)$, we define a groupoid $G(P,I)$: the object space of the groupoid $G(P,I)$ consists of pairs $(p,e)$ where $p \in P$  equipped with an enumeration $e$ of $A_p$ by $I$, subject to the equivalence relation that identifies  $(p_1,e_1)$  and $(p_2,e_2)$ if there exists an isomorphism $\omega$ between $p_1$ and $p_2$ such that $\omega_A \circ e_p =e_{p^{'}}$. The morphism space consists  of any morphism between points (not necessarily enumeration preserving). Now the topology of the object space of $G(P,I)$ is defined by basic open sets $U_{i_1,\dots,i_n,C}= \{(p,e) \in G_0: \  (e(i_1), \dots,e(i_n)) \in C_p   \}$, here $C \subseteq A^n$. The morphism space of $G(P,I)$ has basic open sets $V_{\vec{i},B,\vec{j},C}= \{(p_1,e_1,p_2,e_2,\omega) \in G_1: e_1(\vec{i}) \in B,e_2(\vec{j}) \in C, \ \text{and} \ \omega(e_1(\vec{i}))= e_2(\vec{j}) \}$ Here $\vec{i}=(i_1,\dots,i_n)$ and $\vec{j}=(j_1,\dots,j_n)$ and $B,C \subseteq A^n$, and $e(\vec{i})$ is short for $(e(i_1),\dots,e(i_n))$. We will be calling such a topological groupoid a \textit{logical topological groupoid} since it's built out of models (points of the topos $E$). See \cite{butz1997representation, butz1999topological} for more details on this construction.

Now for the action of $G$ in morphisms, if there is a morphism from $(P,I)$ to $(P^{'},I^{'})$ (i.e. $P \subseteq P^{'}$ and $I \leq I^{'}$) then $G$ of this unique morphism is the following map of topological groupoids: we define a map from $I$ to $I^{'}$ by sending every element  of $I$ to itself and every element in $I^{'}$ which is not in $I$ to the empty set, this allows us to send an enumeration of $E_p$ by $I$ to an enumeration by $I^{'}$. On morphisms this is just sending an isomorphism between $p$ and $p^{'}$ to itself (of course with different enumerations). It is easy to show that such maps are well defined (do not depend on the equivalence class of elements of $G_0$ and $G_1$). This construction is due to a remark in Moerdijk-Butz \cite{butz1999topological}.

To show that this construction is  a functor, notice that if we have cardinals $I\geq I^{'} \geq I^{''}$, and that if we call $r_1$ the map from $I$ to $I^{'}$ that sends $x$ to itself if $x \in I^{'}$ and otherwise send it to the empty set and $r_2$ the similar map from $I^{'}$ to $I^{''}$, then their composition is exactly the map from $I$ to $I^{''}$ that sends $x$ to itself if $x \in I^{''}$ and sends it to the empty set otherwise.

It was shown in Moerdijk and Butz \cite{butz1997representation}, that the colimit of any such groupoid is the topos $E$, this isomorphism realises elements of $E$ as sheaves (étale spaces) over $G_0$, with a continuous action of $G_1$ (see \cite{moerdijk1988classifying, butz1999topological, butz1997representation}).

\begin{lemma}
The category $\ca{P}$  is filtered.   
\end{lemma}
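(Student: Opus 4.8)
The plan is to exploit that $\ca{P}$ is a (large) partial order: since between any two of its objects there is at most one morphism, checking that $\ca{P}$ is filtered reduces to checking the two conditions defining a directed poset, namely that $\ca{P}$ is nonempty and that every pair of objects admits a common upper bound. (The coequalization condition for filteredness is automatic, as all parallel pairs in a poset coincide.)

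First I would verify nonemptiness using the standing hypothesis that $E$ has enough points. Concretely, there is a \emph{set} $P_0$ of points of $E$ whose inverse-image functors are jointly conservative; this is precisely the meaning of ``$P_0$ is a jointly epimorphic family of points'' (equivalently, the induced geometric morphism $\coprod_{p \in P_0}\cat{Set} \to E$ is a surjection). Because $P_0$ is a set, the collection of cardinals $\{\, |A_p| : p \in P_0 \,\}$ is set-indexed and hence has a supremum; taking $I_0$ to be that supremum (enlarged to $\aleph_0$ if need be, so that $I_0$ is infinite) gives $I_0 \ge |A_p|$ for every $p \in P_0$, so that $(P_0, I_0)$ is an object of $\ca{P}$.

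Next I would handle the upper-bound condition. Given objects $(P_1, I_1)$ and $(P_2, I_2)$, I set $P_3 := P_1 \cup P_2$ and $I_3 := \max(I_1, I_2)$. The union $P_3$ is again a set, and since it contains the jointly epimorphic family $P_1$ it is itself jointly epimorphic (enlarging a jointly conservative family of inverse-image functors keeps it jointly conservative). Moreover $|A_p| \le I_1 \le I_3$ for $p \in P_1$ and $|A_p| \le I_2 \le I_3$ for $p \in P_2$, so $I_3 \ge |A_p|$ for every $p \in P_3$; hence $(P_3, I_3)$ is an object of $\ca{P}$. By the definition of the order on $\ca{P}$ we have $P_1, P_2 \subseteq P_3$ and $I_1, I_2 \le I_3$, so there are (unique) morphisms $(P_1, I_1) \to (P_3, I_3) \leftarrow (P_2, I_2)$ in $\ca{P}$. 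Together with nonemptiness this shows $\ca{P}$ is directed, hence filtered.

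I do not expect any genuine obstacle here: the argument is essentially a union-of-sets and supremum-of-cardinals computation. The only points deserving a moment's care are set-theoretic — that $P_0$ and $P_1 \cup P_2$ are honest sets, so that the relevant suprema of the cardinals $|A_p|$ make sense and $P_3$ is a legitimate object of $\ca{P}$ — and the observation, immediate from the characterisation of joint epimorphy as joint conservativity of inverse-image functors, that this property is preserved under enlarging the family of points.
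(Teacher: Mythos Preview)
Your proof is correct and follows essentially the same approach as the paper: both take $(P_1\cup P_2,\max(I_1,I_2))$ as the common upper bound and invoke that $\ca{P}$ is a poset to dispense with the parallel-pair condition. You are simply more careful than the paper in verifying nonemptiness and in checking that the upper bound is actually an object of $\ca{P}$ (joint epimorphy of the union and the cardinal bound), details the paper leaves implicit.
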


\begin{proof}
   Suppose that $(I_1,P_1)$ and $(I_2,P_2)$ are two elements of the category, then take the element $(max(I_1,I_2),P_1 \cup P_2)$. Then clearly there are morphisms from each of these elements, to this new element. Now by the fact that $\ca{P}$ is a poset, this is enough to show that $\ca{P}$ is filtered. 
\end{proof}

\begin{lemma} Suppose that we have two topological groupoids $G(I,P)$ and $G^{'}(I^{'},P^{'})$, and a morphism $f$ between them which is in the image of the category $\ca{P}$ by $G$, then the following diagram commutes up to natural isomorphism:

\[
% https://q.uiver.app/#q=WzAsNSxbMCwyLCJHXzAiXSxbMCwwLCJHXzEiXSxbMiw0LCJFIl0sWzQsMiwiR157J31fMCJdLFs0LDAsIkdeeyd9XzEiXSxbMSwwLCIiLDAseyJsYWJlbF9wb3NpdGlvbiI6NzAsIm9mZnNldCI6M31dLFsxLDAsIiIsMix7Im9mZnNldCI6LTN9XSxbMCwyLCJwIiwxXSxbMCwzLCJmXzAiLDFdLFs0LDMsIiIsMix7Im9mZnNldCI6LTN9XSxbNCwzLCIiLDAseyJvZmZzZXQiOjN9XSxbMSw0LCJmXzEiLDFdLFszLDIsInBeeyd9IiwxXSxbNywxMiwiXFxzaW1lcSIsMSx7InNob3J0ZW4iOnsic291cmNlIjoyMCwidGFyZ2V0IjoyMH19XV0=
\begin{tikzcd}
	{G_1} &&&& {G^{'}_1} \\
	\\
	{G_0} &&&& {G^{'}_0} \\
	\\
	&& E
	\arrow["{f_1}"{description}, from=1-1, to=1-5]
	\arrow[shift right=3, from=1-1, to=3-1]
	\arrow[shift left=3, from=1-1, to=3-1]
	\arrow[shift left=3, from=1-5, to=3-5]
	\arrow[shift right=3, from=1-5, to=3-5]
	\arrow["{f_0}"{description}, from=3-1, to=3-5]
	\arrow[""{name=0, anchor=center, inner sep=0}, "p"{description}, from=3-1, to=5-3]
	\arrow[""{name=1, anchor=center, inner sep=0}, "{p^{'}}"{description}, from=3-5, to=5-3]
	\arrow["\simeq"{description}, shorten <=13pt, shorten >=13pt, Rightarrow, from=0, to=1]
\end{tikzcd}
\]

\end{lemma}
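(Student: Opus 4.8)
The plan is to reduce everything to the explicit description of the canonical geometric morphism $p = p_{(I,P)}\colon Sh(G_0) \to E$ furnished by the Moerdijk--Butz representation theorem, and then to exploit the fact that the transition morphism $f = G(\phi)$ (where $\phi$ is the unique arrow $(I,P)\to(I',P')$ of the poset $\ca{P}$) only relabels enumerations and leaves the underlying points and point-isomorphisms intact. Recall that the inverse image $p^{*}$ carries an object $X\in E$ to the étale bundle over $G_0$ whose fibre over a class $(p,e)$ is the stalk $X_p = p^{*}X$, with bundle topology generated by the basic opens $U_{i_1,\dots,i_n,C}$, and that the descent data along $G_1$ attaches to a morphism $(p_1,e_1,p_2,e_2,\omega)$ the comparison isomorphism determined by $\omega$. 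The key point is that $f_0\colon G_0 \to G_0'$ sends $(p,e)$ to $(p,\bar e)$, where $\bar e$ is the reindexing of $e$ along the chosen embedding $I\hookrightarrow I'$; in particular it preserves the first coordinate $p$, and likewise $f_1$ preserves the isomorphism component $\omega$.

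First I would record that the fibre of $(p')^{*}X$ over $f_0(p,e)=(p,\bar e)$ is literally $X_p$, the same set as the fibre of $p^{*}X$ over $(p,e)$; this yields a fibrewise bijection $p^{*}X \cong f_0^{*}(p')^{*}X$ over $G_0$, evidently natural in $X$. Second I would verify that this bijection is a homeomorphism of bundles, i.e.\ that $f_0$ pulls the basic opens of $G_0'$ back to opens of $G_0$: this is immediate since $f_0$ merely relabels enumeration indices via $I\hookrightarrow I'$, so $f_0^{-1}(U'_{i_1,\dots,i_n,C})$ is again open (it is $U_{i_1,\dots,i_n,C}$ when all the $i_k$ lie in $I$, and empty otherwise), and the bundle topology of $p^{*}X$ is generated by exactly such opens. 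This gives $f_0^{*}(p')^{*}\cong p^{*}$ as functors $E\to Sh(G_0)$, hence the required natural isomorphism $p \cong p'\circ Sh(f_0)$, which is the commutativity of the triangle into $E$. Third I would promote this to the groupoid level: since $f_1$ sends $(p_1,e_1,p_2,e_2,\omega)$ to $(p_1,\bar e_1,p_2,\bar e_2,\omega)$ with the \emph{same} $\omega$, the equivariance isomorphism carried by $(p')^{*}X$ along the image $f_1(\gamma)$ of a morphism $\gamma$ of $G_1$ coincides with the one carried by $p^{*}X$ along $\gamma$; together with naturality in $X$ this shows that the natural isomorphism $p\cong p'\circ Sh(f_0)$ is compatible with the source and target maps $G_1\rightrightarrows G_0$ and with the $G_1$-actions, which is precisely the assertion that the full diagram commutes up to the constructed natural isomorphism.

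The main obstacle I anticipate is confined to the second step: one must check carefully that the reindexing $I\hookrightarrow I'$ interacts correctly with the partiality of enumerations, the infinite-fibre requirement, and the equivalence relation defining $G_0$, so that $f_0$ is well defined and preimages of the generating opens are genuinely open. Once the Moerdijk--Butz conventions of \cite{butz1997representation, butz1999topological} are pinned down, this is a routine unwinding of definitions; the first and third steps are then essentially formal, following from the fact that $f$ touches neither the points nor their isomorphisms.
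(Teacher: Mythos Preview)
Your proposal is correct and follows essentially the same route as the paper's own proof: both reduce to checking that for each $X \in E$ the \'etale space $p^{*}X$ over $G_0$ coincides with the pullback $f_0^{*}((p')^{*}X)$, first fibrewise (since $f_0$ fixes the underlying point $p$) and then topologically by comparing the generating basic opens. The paper records only this triangle check and is terser than you are; your third step on $G_1$-equivariance is more than the lemma as stated requires, but it is correct and harmless.
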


\begin{proof}

To see this, let $B$ be a sheaf in $E$, and let $p$ and $p^{'}$ the respective geometric morphisms from  $G(I,P)$ and $G^{'}(I^{'},P^{'})$, then $B$ can be regarded as a sheaf on $G_0$ (image by $p^*$) with an action of $G_1$, or as a sheaf on $G^{'}_0$ (image by ${p^{'}}^*$) with an action of $G^{'}_1$, now let us regard the following diagram: 

\[
% https://q.uiver.app/#q=WzAsNCxbMiwwLCJwXiooQikiXSxbMiwyLCJHXzAiXSxbMCwyLCJHXnsnfV8wIl0sWzAsMCwie3Beeyd9fV4qKEIpIl0sWzAsMSwiXFxwaV8xIl0sWzIsMSwiaF8wIiwyXSxbMywyLCJcXHBpXzIiLDJdXQ==
\begin{tikzcd}
	{{p^{'}}^*(B)} && {p^*(B)} \\
	\\
	{G^{'}_0} && {G_0}
	\arrow["{\pi_2}"', from=1-1, to=3-1]
	\arrow["{\pi_1}", from=1-3, to=3-3]
	\arrow["{h_0}"', from=3-1, to=3-3]
\end{tikzcd}
\]

Our goal is to show that the topology of the étale space ${p^{'}}^*(B)$ is the pullback of that of $p^*(B)$, first it's easy to check that as sets, the set  of ${p^{'}}^*(B)$  is the set of the pullback (up to iso). It remains to check that it has the pullback topology, but one can easily notice that the basic open sets in both coincide.

\end{proof}

\paragraph{Digression: $2$-coend calculus $2$-left Kan extension along $2$-Yoneda}

Suppose that we have  bicategories $\ca{A}$ and $\ca{B}$, furthermore suppose that we have a pseudo functor $F$ from $\ca{A}^{op} \times \ca{A} \rightarrow \ca{B}$, we define the pseudocoend of $F$ to be colimit (in the bicategorical sense) of the functor $F$ weighted by the weight  $\la{Hom}_A$ bifunctor: we write  $$\int^AF(A,A)=\varinjlim^{\la{Hom}}F$$

Unpacking this means that the pseudo coend satisfies the following property: For every object $X$ there is an equivalence of bicategories between:

$$\ca{B}(\int ^A F(A,A),X) \simeq \mathrm{pseudo}\dash\la{Nat}(A^{op} \times A ,\cat{Cat})(\mathrm{Hom},\ca{B}(F(-,-),X))$$.

which is pseudonatural in $X$.

This gives exactly the definition in terms of pseudo cowedges. We are going to give the definition since it's the classical way to do such construction, although it plays no role in our discussion:

\begin{definition}
Let $S : \ca{A}^{op} \times \ca{A} \to \ca{B}$ be a pseudofunctor between bicategories $\ca{A}, \ca{B}$. A \emph{pseudo-cowedge} $\omega$ for $S$ consists of:
\begin{itemize}
    \item a triple $(B, (\omega_{A})_{A \in \ca{A}}, (\omega_f)_{f\in \la{Hom}(A,A'),A,A'\in \ca{A}})$, where $B \in \ca{B}$ is an object (called the \emph{tip} of the cowedge);
    \item collections of 1-cells $\omega_{A} : S(A,A) \to B$, for each $A \in \ca{A}$;
    \item invertible 2-cells $\omega_f : \omega_A \circ S(f,\la{id}_A) \Rightarrow \omega_{A'} \circ S(\la{id}_{A'},f)$, diagrammatically:

 % https://q.uiver.app/#q=WzAsNCxbMCwwLCJTKEEnLEEpIl0sWzAsMiwiUyhBJyxBJykiXSxbMywyLCJCIl0sWzMsMCwiUyhBLEEpIl0sWzAsMSwiUyhcXGxhe2lkfV97QSd9LGYpIl0sWzEsMiwiXFxvbWVnYV97QSd9Il0sWzAsMywiUyhmLFxcbGF7aWR9X3tBfSkiLDJdLFszLDIsIlxcb21lZ2Ffe0F9IiwyXSxbMywxLCJcXG9tZWdhX2YiLDAseyJsZXZlbCI6Mn1dXQ==
\[\begin{tikzcd}
	{S(A',A)} &&& {S(A,A)} \\
	\\
	{S(A',A')} &&& B
	\arrow["{S(f,\la{id}_{A})}"', from=1-1, to=1-4]
	\arrow["{S(\la{id}_{A'},f)}", from=1-1, to=3-1]
	\arrow["{\omega_f}", Rightarrow, from=1-4, to=3-1]
	\arrow["{\omega_{A}}"', from=1-4, to=3-4]
	\arrow["{\omega_{A'}}", from=3-1, to=3-4]
\end{tikzcd}\]
    
\end{itemize}
These data must satisfy the following coherence axioms:
\begin{enumerate}[label=\textup{cw\arabic*)}]
    \item For any 2-cell $\lambda : f \Rightarrow f'$ in $\ca{A}$, we have
    $$\omega_{f'} * (\omega_A \circ S(\lambda,\la{id}_A)) = (\omega_{A'} \circ S(\la{id}_{A'},\lambda)) * \omega_f$$ Diagrammatically:% https://q.uiver.app/#q=WzAsOSxbMCwwLCJTKEEnLEEpIl0sWzAsMiwiUyhBJyxBJykiXSxbMywyLCJCIl0sWzMsMCwiUyhBLEEpIl0sWzksMiwiQiJdLFs5LDAsIlMoQSxBKSJdLFs2LDAsIlMoZixcXGxhe2lkfV9BKSJdLFs2LDIsIlMoQScsQScpIl0sWzQsMSwiPSJdLFswLDEsIlMoXFxsYXtpZH1fe0EnfSxmKSIsMCx7ImN1cnZlIjotMn1dLFsxLDIsIlxcb21lZ2Ffe0EnfSJdLFswLDMsIlMoZixcXGxhe2lkfV9BKSIsMl0sWzMsMiwiXFxvbWVnYV97QX0iLDJdLFszLDEsIlxcb21lZ2FfZiIsMCx7ImxldmVsIjoyfV0sWzAsMSwiUyhcXGxhe2lkfV97QSd9LGYnKSIsMix7Im9mZnNldCI6NCwiY3VydmUiOjJ9XSxbNSw0LCJcXG9tZWdhX3tBfSIsMl0sWzYsNSwiUyhmJyxcXGxhe2lkfV9BKSIsMix7ImN1cnZlIjoyfV0sWzcsNCwiXFxvbWVnYV97QSd9Il0sWzYsNywiUyhcXGxhe2lkfV97QSd9LGYnKSJdLFs2LDUsIlMoZixcXGxhe2lkfV9BKSIsMSx7ImN1cnZlIjotM31dLFs1LDcsIlxcb21lZ2Ffe2YnfSIsMCx7ImxldmVsIjoyfV0sWzksMTQsIlMoXFxsYXtpZH1fe0EnfSxcXGxhbWJkYSkiLDAseyJzaG9ydGVuIjp7InNvdXJjZSI6MjAsInRhcmdldCI6MjB9fV0sWzE5LDE2LCJTKFxcbGFtYmRhLFxcbGF7aWR9X0EpIiwxLHsic2hvcnRlbiI6eyJzb3VyY2UiOjIwLCJ0YXJnZXQiOjIwfX1dXQ==
\[\begin{tikzcd}
	{S(A',A)} &&& {S(A,A)} &&& {S(f,\la{id}_A)} &&& {S(A,A)} \\
	&&&& {=} \\
	{S(A',A')} &&& B &&& {S(A',A')} &&& B
	\arrow["{S(f,\la{id}_A)}"', from=1-1, to=1-4]
	\arrow[""{name=0, anchor=center, inner sep=0}, "{S(\la{id}_{A'},f)}", curve={height=-12pt}, from=1-1, to=3-1]
	\arrow[""{name=1, anchor=center, inner sep=0}, "{S(\la{id}_{A'},f')}"', shift right=4, curve={height=12pt}, from=1-1, to=3-1]
	\arrow["{\omega_f}", Rightarrow, from=1-4, to=3-1]
	\arrow["{\omega_{A}}"', from=1-4, to=3-4]
	\arrow[""{name=2, anchor=center, inner sep=0}, "{S(f',\la{id}_A)}"', curve={height=12pt}, from=1-7, to=1-10]
	\arrow[""{name=3, anchor=center, inner sep=0}, "{S(f,\la{id}_A)}"{description}, curve={height=-18pt}, from=1-7, to=1-10]
	\arrow["{S(\la{id}_{A'},f')}", from=1-7, to=3-7]
	\arrow["{\omega_{f'}}", Rightarrow, from=1-10, to=3-7]
	\arrow["{\omega_{A}}"', from=1-10, to=3-10]
	\arrow["{\omega_{A'}}", from=3-1, to=3-4]
	\arrow["{\omega_{A'}}", from=3-7, to=3-10]
	\arrow["{S(\la{id}_{A'},\lambda)}", between={0.2}{0.8}, Rightarrow, from=0, to=1]
	\arrow["{S(\lambda,\la{id}_A)}"{description}, between={0.2}{0.8}, Rightarrow, from=3, to=2]
\end{tikzcd}\]
    
    \item For each pair $A \xrightarrow{f} A' \xrightarrow{f'} A''$ of composable 1-cells in $\ca{A}$, the following diagram is commutative
\[
\adjustbox{max width =\textwidth}{
\begin{tikzcd}
	&& B &&&&& B && \\
	{S(A'',A'')} && {S(A',A')} && {S(A,A)} && {S(A'',A'')} && {S(A,A)} \\
	&&&& {=} & {S(A'',A')} &&&& {S(A',A)} \\
	& {S(A'',A')} && {S(A',A)} \\
	&& {S(A'',A)} &&&&& {S(A'',A)}
	\arrow[""{name=0, anchor=center, inner sep=0}, "{{\omega_{A''}}}", from=2-1, to=1-3]
	\arrow["{{\omega_{A'}}}"{description}, from=2-3, to=1-3]
	\arrow[""{name=1, anchor=center, inner sep=0}, "{{\omega_A}}"', from=2-5, to=1-3]
	\arrow["{{\omega_{A''}}}"', from=2-7, to=1-8]
	\arrow["{{\omega_{f'\circ f}}}", between={0.15}{0.85}, Rightarrow, from=2-7, to=2-9]
	\arrow["{{\omega_A}}", from=2-9, to=1-8]
	\arrow["{{S(\la{id}_{A''},f')}}"{description, pos=0.4}, shift left, from=3-6, to=2-7]
	\arrow["{{S(f'f,A)}}"{description, pos=0.3}, shift left, from=3-10, to=2-9]
	\arrow["{{S(\la{id}_{A''},f')}}", from=4-2, to=2-1]
	\arrow[""{name=2, anchor=center, inner sep=0}, "{{S(f',\la{id}_{A'})}}"{description}, from=4-2, to=2-3]
	\arrow["\simeq"', Rightarrow, from=4-2, to=4-4]
	\arrow[""{name=3, anchor=center, inner sep=0}, "{{S(\la{id}_{A'},f)}}"{description}, from=4-4, to=2-3]
	\arrow["{{S(f'',\la{id}_{A})}}"{description, pos=0.4}, from=4-4, to=2-5]
	\arrow["{{S(\la{id}_{A''},f)}}", from=5-3, to=4-2]
	\arrow["{{S(f',\la{id}_A)}}"', from=5-3, to=4-4]
	\arrow[""{name=4, anchor=center, inner sep=0}, "{S(\la{id}_{A''},f'\circ f)}"{description, pos=0.7}, from=5-8, to=2-7]
	\arrow[""{name=5, anchor=center, inner sep=0}, "{S(f'\circ f,\la{id}_{A''})}"{description, pos=0.7}, from=5-8, to=2-9]
	\arrow["{{S(\la{id}_{A''},f)}}", from=5-8, to=3-6]
	\arrow["{{S(f',A)}}"', from=5-8, to=3-10]
	\arrow["\simeq"{description}, between={0}{0.8}, Rightarrow, from=3-6, to=4]
	\arrow["{{\omega_{f'}}}", between={0.2}{0.8}, Rightarrow, from=2, to=0]
	\arrow["{{\omega_f}}"', between={0.2}{0.8}, Rightarrow, from=3, to=1]
	\arrow["\simeq"{description}, between={0.2}{0.98}, Rightarrow, from=5, to=3-10]
\end{tikzcd}}
\]
    
Here the equivalences displayed are the natural equivalences resulting from the pseudofunctor structure of $S$.
    
    \item For each object $A \in \ca{A}$, the following diagram commutes:
    
    $\omega_{\mathrm{id}_A} = \mathrm{id}_{\omega_A}$.
\end{enumerate}
\end{definition}

\begin{definition}[Modification between pseudo cowedges]
A \emph{modification} $\Theta : \omega \Rrightarrow \sigma$ between two pseudo cowedges $\omega, \sigma$ for $S$ with tip $B$ consists of a collection of 2-cells $\Theta_A : \omega_A \Rightarrow \sigma_A$ indexed by objects $A \in \ca{A}$, such that for every 1-cell $f : A \to A'$ we have
$$(\Theta_{A'} \circ S(f,\la{id}_{A'}))* \omega_f = \sigma_f * (\Theta_A \circ S(\la{id}_A,f))$$

Here $\circ$ and $*$ denote the horizontal and vertical composition of $2$-cells respectively.

Diagrammatically:
% https://q.uiver.app/#q=WzAsOSxbMCwyLCJTKEEsQSkiXSxbMywwLCJTKEEnLEEnKSJdLFszLDIsIkIiXSxbMCwwLCJTKEEnLEEpIl0sWzksMCwiUyhBJyxBJykiXSxbNiwwLCJTKEEnLEEpIl0sWzYsMiwiUyhBLEEpIl0sWzksMiwiQiJdLFs1LDEsIj0iXSxbMSwyLCJcXG9tZWdhX3tBJ30iLDIseyJjdXJ2ZSI6M31dLFswLDIsIlxcb21lZ2Ffe0F9IiwxXSxbMywwLCJTKGYsXFxsYXtpZH1fe0F9KSIsMV0sWzMsMSwiUyhcXGxhe2lkfV97QSd9LGYpIiwxXSxbMSwyLCJcXHNpZ21hX3tBJ30iLDAseyJvZmZzZXQiOi0zLCJjdXJ2ZSI6LTN9XSxbNSw0LCJTKFxcbGF7aWR9X3tBJ30sZikiLDFdLFs1LDYsIlMoZixcXGxhe2lkfV97QX0pIiwxXSxbNiw3LCJcXHNpZ21hX3tBfSIsMSx7ImN1cnZlIjotM31dLFs0LDcsIlxcc2lnbWFfe0EnfSIsMV0sWzYsNywiXFxvbWVnYV97QX0iLDEseyJvZmZzZXQiOi0zLCJjdXJ2ZSI6NH1dLFszLDIsIlxcb21lZ2FfZiIsMSx7ImxldmVsIjoyfV0sWzYsNCwiXFxzaWdtYV9mIiwxLHsibGV2ZWwiOjJ9XSxbOSwxMywiXFxUaGV0YV97QSd9IiwyLHsic2hvcnRlbiI6eyJzb3VyY2UiOjIwLCJ0YXJnZXQiOjIwfX1dLFsxOCwxNiwiXFxUaGV0YV9BIiwxLHsic2hvcnRlbiI6eyJzb3VyY2UiOjIwLCJ0YXJnZXQiOjIwfX1dXQ==
\[\begin{tikzcd}
	{S(A',A)} &&& {S(A',A')} &&& {S(A',A)} &&& {S(A',A')} \\
	&&&&& {=} \\
	{S(A,A)} &&& B &&& {S(A,A)} &&& B
	\arrow["{S(\la{id}_{A'},f)}"{description}, from=1-1, to=1-4]
	\arrow["{S(f,\la{id}_{A})}"{description}, from=1-1, to=3-1]
	\arrow["{\omega_f}"{description}, Rightarrow, from=1-1, to=3-4]
	\arrow[""{name=0, anchor=center, inner sep=0}, "{\omega_{A'}}"', curve={height=18pt}, from=1-4, to=3-4]
	\arrow[""{name=1, anchor=center, inner sep=0}, "{\sigma_{A'}}", shift left=3, curve={height=-18pt}, from=1-4, to=3-4]
	\arrow["{S(\la{id}_{A'},f)}"{description}, from=1-7, to=1-10]
	\arrow["{S(f,\la{id}_{A})}"{description}, from=1-7, to=3-7]
	\arrow["{\sigma_{A'}}"{description}, from=1-10, to=3-10]
	\arrow["{\omega_{A}}"{description}, from=3-1, to=3-4]
	\arrow["{\sigma_f}"{description}, Rightarrow, from=3-7, to=1-10]
	\arrow[""{name=2, anchor=center, inner sep=0}, "{\sigma_{A}}"{description}, curve={height=-18pt}, from=3-7, to=3-10]
	\arrow[""{name=3, anchor=center, inner sep=0}, "{\omega_{A}}"{description}, shift left=3, curve={height=24pt}, from=3-7, to=3-10]
	\arrow["{\Theta_{A'}}"', between={0.2}{0.8}, Rightarrow, from=0, to=1]
	\arrow["{\Theta_A}"{description}, between={0.2}{0.8}, Rightarrow, from=3, to=2]
\end{tikzcd}\]
\end{definition}

The collection of pseudo-cowedges with tip $B$ and modifications between them forms a category $\cat{CW}^{\mathsf{ps}}(B,S)$, more explicitly:
\begin{definition}[Category of pseudo-cowedges]
Let $S : \ca{A}^{op} \times \ca{A} \to \ca{B}$ be a pseudofunctor and let $B \in \ca{B}$ be an object. The \emph{category of pseudo-cowedges with tip $B$}, denoted $\mathrm{CW}^{\mathsf{ps}}(B, S)$, is defined as follows:
\begin{itemize}
    \item Objects are pseudo-cowedges to $S$ with tip $B$.
    \item Morphisms are modifications $\Theta : \omega \Rrightarrow \sigma$ between pseudo-cowedges with the same tip $B$.
    \item \textbf{Composition}  is defined as composition of the corresponding 2-cells: $(\Theta' \circ \Theta)_A := \Theta'_A * \Theta_A$.
\end{itemize}

\end{definition}

The reader can easily verify that the category of pseudo cowedges to $S$ with tip $B$, $\cat{CW}^{\mathsf{ps}}(B,S)$, is equivalent to the category $\la{Pseudo\dash Nat}(\mathrm{Hom_A},\ca{B}(S(-,-),B))$, so the pseudo coend exists iff the pseudo-functor $B \mapsto \cat{CW}^{\mathsf{ps}}(B,S)$ is representable.

This is similar to the way  lax ends and coends were defined in Bozapalides\cite{bozapalides1976theorie}, Loregian \cite{loregian2021co}, and \cite{hirata2022notes} however there are three main differences:

\begin{enumerate}
    \item We have accounted for the pseudofunctoriality of $S$ in the definition of pseudo-cowedges.

    \item The universal property no longer requires the uniqueness of the map coming out of the coend to any cowedge,  since for any object $X$ of $B$, we require an equivalence of categories pseudo-natural in $X$, between $\la{pseudo\dash Nat}(\cat{CW}^{\cat{ps}}(\int^A F(C,C),S),\cat{CW}^{\mathsf{ps}}(X, S))$)  and $\la{Hom}(\int^A F(C,C),X)$ (exhibited of course by the element in  the Hom category), while the definition in \cite{bozapalides1976theorie,loregian2021co,hirata2022notes} requires an isomorphism of categories , since  the authors interpreted the defining colimit (limit) of such coends (ends) in the strict $2$-categorical sense.

    \item Since we are dealing with pseudo coends rather than lax coends, we require the $2$-morphism in the definition to be invertible (actually our $2$ cells  go in the opposite direction), of course by dropping invertibility we can get oplax coends, and by changing the direction we can get lax coends.
\end{enumerate}

Similarly, we define pseudo ends.

Let $\cat{Cat}$ denote the 3-category of categories, pseudo functors, pseudonatural transformations and modifications.

Our goal is to show the following assertions:
\begin{lemma*}
    $$\mathrm{pseudo \dash Nat}(F,G)\simeq \int_{\ca{A}} B(F(C),G(C))$$ 
\end{lemma*}

\begin{proof}
    
 For any category $X$, we have the following $$\cat{Cat}(X,\int_{\ca{A}} B(F(C),G(C))) \simeq \cat{Cat}(\mathrm{Hom},\cat{Cat}(X,(B(F-,G-))))$$.

In particular this applies to the terminal category, so we get:

$$ \int_AB(F(C),G(C))\simeq\cat{Cat}(\mathrm{Hom},B(F-,G-))\simeq B(F,G)  $$

\end{proof}

{\bf Note.} If we wanted a version of the theorem above that would work for lax coends then the argument should be the following: 

 $$\cat{Cat}(X,\sqoint_A B(F(C),G(C))) \simeq \cat{Cat^{Lax}}(\mathrm{Hom},\cat{Cat}(X,(B(F-,G-))))$$, again applying this in the case where $X$ is the terminal category we get that:

  $$\sqoint_A B(F(C),G(C))) \simeq \cat{Cat^{Lax}}(\mathrm{Hom},(B(F-,G-)))) \simeq \cat{Lax} (F,G)$$

  Same idea should work for oplax coends.

The second assertion that we want to prove is the following:

\begin{lemma*}[Fubini transformation for pseudo coends]
Suppose we have a pseudo functor $F:A^{op}\times A \times B^{op} \times B \rightarrow C$ we have $\int_{A \times B}F(a,a,b,b) \simeq \int_A \int_B F(a,a,b,b) \simeq \int_B \int_A F(a,a,b,b)$. 

\end{lemma*}

\begin{proof}
    
Using classical facts about weighted colimits: We have $$\varinjlim^{\Hom A}\varinjlim^{\Hom_B}F \simeq \varinjlim^{\Hom_{A} \times \la{Hom}_B}F \simeq  \varinjlim^{\Hom_{A \times B}}F \simeq  \varinjlim^{\Hom B}\varinjlim^{\Hom_A}F$$.
\end{proof}

In the same setting as the previous subsection, we want to construct the left Kan extension along the Yoneda embedding of $\cat{Top}/E$ to $\cat{Topos}$, moreover we want to show that it preserves $2$-colimits. There is not a single version of left Kan extension in $2$-categorical settings, so we have many possible constructions, we are going to employ the notion of pseudo-Kan extension, the pseudo-Kan extension satisfy the following universal property:

Let $\ca{A},\ca{B},\ca{C}$ be bicategories, and suppose we have pseudofunctors $\ca{F}: \  \ca{A} \rightarrow \ca{B}$ and a $\ca{T}: \ca{A} \rightarrow \ca{C}$ as in the following figure:

% https://q.uiver.app/#q=WzAsMyxbMCwwLCJcXG1hdGhjYWx7QX0iXSxbMywwLCJcXG1hdGhjYWx7Qn0iXSxbMCwzLCJcXG1hdGhjYWx7Q30iXSxbMCwxLCJcXG1hdGhjYWx7Rn0iXSxbMCwyLCJcXG1hdGhjYWx7VH0iLDJdXQ==
\[\begin{tikzcd}[ampersand replacement=\&]
	{\mathcal{A}} \&\&\& {\mathcal{B}} \\
	\\
	\\
	{\mathcal{C}}
	\arrow["{\mathcal{F}}", from=1-1, to=1-4]
	\arrow["{\mathcal{T}}"', from=1-1, to=4-1]
\end{tikzcd}\]

Then a pseudofunctor $\mathrm{Lan}_T F$ from $C$ to $B$ is called the left Kan extension of $\ca{F}$ along $\ca{T}$ if for every $S: \ \ca{C} \rightarrow \ca{B}$, there is an equivalence  of categories between $\mathrm{Pseudo\text{-}Nat}(F, S \circ T)$ and $\mathrm{Pseudo \text{-}Nat}(\mathrm{Lan}_T F,S)$, given by composing with a pseudo-natural transformation $\alpha$ from $F$ to $\mathrm{Lan}_T F \circ T$, moreover this equivalence is natural in $S$ (so it's really an equivalence of $\cat{Cat}$ valued presheaves).

\begin{comment}
    \begin{note*} \normalfont
    It is also possible to work with lax Kan extensions as defined in  \cite{bozapalides1976theorie},\cite{loregian2021co} (warning: Loregian only defines Lax Kan extension for strict $2$-functors between strict $2$-categories. In general the lax and pseudo-Kan extension do not agree, Although in our case the lax and pseudo-Kan extension agree since both $\cat{Top}/E$ and $[\cat{Top}/E,\cat{Groupoids}]$ are $2$-$1$ categories.

\end{note*}
\end{comment}

Now we highlight the construction of pseudo-left Kan extension in the case where $\ca{B}$ is tensored over $\mathsf{Cat}$. In this case, we claim that $\mathrm{Lan}_T F= \int^{\ca{A}}   F(A) \otimes \ca{C}(TA,-)$, here $\int$ denotes the pseudo-coend.
\begin{proof}

Let $S$ be a pseudo-functor from $\ca{C}$ to $\ca{B}$, then we have the following sequence of equivalences
    \[ \mathrm{Pseudo\text{-}Nat}(\int^A   F(A) \otimes \ca{C}(TA,-),S-)  \]
    \[ \simeq \int_{\ca{C}} \ca{B}(\int^{\ca{A}} FA\otimes \ca{C}(TA,C),SC)
    \]
    \[ \simeq \int_{\ca{C}} \int_{\ca{A}} \ca{B}( FA \otimes \ca{C}(TA,C),SC)\]

    \[ \simeq   \int_{\ca{C}}   \int_{\ca{A}} \cat{Cat}(\ca{C}(TA,C) , \ca{B}(FA ,SC)) \]

   \[ \simeq  \int_{\ca{A}}   \int_{\ca{C}} \cat{Cat}(\ca{C}(TA,C) , \ca{B}(FA ,SC))\]

\[\simeq \int_{\ca{A}} \la{Pseudo \text{-} Nat}(\ca{C}(TA,-) , \ca{B}(FA ,S-)) \]

    \[\simeq \int_{\ca{A}}  \ca{B}(FA ,STA) \  \textbf{by\ } 2\textbf{-Yoneda for \ } \ca{C}\]

\[ \simeq \mathrm{Pseudo\text{-}Nat}(F, S \circ T)\]   
\end{proof}

Now we turn back to our case. We define $\bar{\ca{F}}= \int^{ c \in \cat{Top}/E} \Hom_{[\cat{Top}/E, \la{Groupoids}]}(y(c),-).\ca{F}(-)$. We remind the reader that the category of Grothendieck toposes is $2$-tensored and $2$-cotensored over $2$-$\cat{Cat}$ see \cite{Pitts1985} (In order to avoid size issues we have to work with a category of ``Large Grothendieck toposes'' i.e. relax the size constraint to second universe small sets in Giraud axioms, then we can notice that our construction lands inside our classic category of Grothendieck toposes, the argument for this is that every presheaf of groupoids is $\ca{U}_1$-small $2$-colimit of representables and $\ca{F}$ preserves such colimits, a fact we are going to show).

We first want to show that this diagram:

% https://q.uiver.app/#q=WzAsMyxbMCwwLCJcXGNhdHtUb3B9L0UiXSxbMCwyLCJbXFxjYXR7VG9wfS9FLCBcXGNhdHtHcm91cG9pZHN9XSJdLFs0LDAsIlxcY2F0e1RvcG9zfSJdLFswLDEsInkiXSxbMCwyLCJcXGNhe0Z9IiwyXSxbMSwyLCJcXG1hdGhybXtMYW59X3koXFxjYXtGfSkiLDFdXQ==
\[\begin{tikzcd}
	{\cat{Top}/E} &&&& {\cat{Topos}} \\
	\\
	{[\cat{Top}/E, \cat{Groupoids}]}
	\arrow["{\ca{F}}"', from=1-1, to=1-5]
	\arrow["y", from=1-1, to=3-1]
	\arrow["{\mathrm{Lan}_y(\ca{F})}"{description}, from=3-1, to=1-5]
\end{tikzcd}\]

commutes up to isomorphism, we do this also by pseudo-coend calculus we have:  \[\bar{\ca{F}} \circ y  \simeq \int^{\cat{Top}/E}[\cat{Top}/E, \la{Groupoids}](y(c),y-) \times \ca{F}(c)\]
\[  \simeq \int^{\cat{Top}/E} \mathsf{Top}/E(c,-) \times \ca{F}(c) \]\[\simeq \ca{F}\]

\begin{comment}Here  $\ca{F}^{\nparallel}$ is a strict $2$-functor satisfying the following universal property (see \cite{bozapalides1980some} for a laxified statement, or \cite{hirata2022notes} for an exact statement): for any strict $2$-functor $\ca{H}$ from $\cat{Top}/E$ to $\cat{Topos}$, the following holds: $\mathrm{Pseudo\text{-}Nat}(\ca{F} , \ca{H}) \simeq \mathrm{Strict\text{-}Nat}(\ca{F}^{\nparallel},\ca{H})$. 

Now it is easy to see that the strictification result shown by Power \cite{power1989general}, stating that any  Pseudo functor from  a strict $2$-category (in this case $\cat{Top}/E$) to $\cat{Cat}$ is equivalent (via a pseudo natural equivalence) to a strict $2$-functor extends to pseudofunctors from any strict $2$-category to $\cat{Topos}$, by either composing with direct or inverse image functor, and then noticing that the image of that strict functor lands inside $\cat{Topos}$.
    
\end{comment}

The next thing we would like to check is that $\ca{F}$ preserves colimits, Our proof is essentially the same one as in this Math overflow answer \cite{112206} modified to fit our $2$-categorical setting: We have that $\bar{\ca{F}} \simeq \int^{\cat{Top}/E}\ca{F}(A). [\cat{Top}/E,\la{Groupoids}](yA,-)$. Now tensoring  is a left adjoint hence it preserves colimits, and $\int^A$ is a $2$-colimit, hence $\bar{\ca{F}}$ preserves all colimits preserved by $[\cat{Top}/E,\la{Groupoids}](yA,-)$ but $[\cat{Top}/E,\la{Groupoids}](yA,-)$ is the evaluation functor at $A$ and hence it preserves all colimits (colimits are computed pointwise). And hence $\bar{\ca{F}}$ preserves all $2$-colimits (we did not state this before, but the category of toposes is $2$-cocomplete see \cite{moerdijk1988classifying}).

Now let $\ca{G}_{\bullet}$ be a topological groupoid whose colimit is $E$. Let us have a look at the following diagram:
 
 % https://q.uiver.app/#q=WzAsNCxbMCwwLCJcXERlbHRhXntvcH0iXSxbMywwLCJcXGNhdHtUb3B9L0UiXSxbNiwwLCJcXGNhdHtUb3Bvc30iXSxbMywyLCJbXFxjYXR7VG9wfS9FLFxcY2F0e0dyb3Vwb2lkc31dIl0sWzAsMSwiXFxjYXtHfV97XFxidWxsZXR9Il0sWzEsMiwie1xcY2F7Rn19Il0sWzEsMywieSJdLFszLDIsIntcXHRpbGRle1xcY2F7Rn19fSIsMV0sWzUsNywiXFxzaW1lcSIsMCx7InNob3J0ZW4iOnsic291cmNlIjoyMCwidGFyZ2V0IjoyMH0sInN0eWxlIjp7ImJvZHkiOnsibmFtZSI6Im5vbmUifSwiaGVhZCI6eyJuYW1lIjoibm9uZSJ9fX1dXQ==
\[\begin{tikzcd}
	{\Delta^{op}} &&& {\cat{Top}/E} &&& {\cat{Topos}} \\
	\\
	&&& {[\cat{Top}/E,\cat{Groupoids}]}
	\arrow["{\ca{G}_{\bullet}}", from=1-1, to=1-4]
	\arrow[""{name=0, anchor=center, inner sep=0}, "{{\ca{F}}}", from=1-4, to=1-7]
	\arrow["y", from=1-4, to=3-4]
	\arrow[""{name=1, anchor=center, inner sep=0}, "{{\bar{\ca{F}}}}"{description}, from=3-4, to=1-7]
	\arrow["\simeq", draw=none, from=0, to=1]
\end{tikzcd}\]

Here $\bar{\ca{F}}$ is the $2$-left Kan extension of $\ca{F}$ along the $2$-Yoneda embedding, and the equivalence in the middle is a pseudo-natural isomorphism. We want to show that  $\varinjlim \ca{F} =E$. We have that $\varinjlim \bar{\ca{F}} \circ y \circ G_{\bullet} \simeq \varinjlim (\ca{F} \circ G_{\bullet}) = E$, but on the other hand we have that $\varinjlim \bar{\ca{F}} \circ y \circ G_{\bullet}$ is $\bar{\ca{F}}(\varinjlim (y \circ G))$. Let us denote by $\hat{G}$ the colimit of $(y \circ G_{\bullet})$, this can be regarded as the"representable`` at $G_{\bullet}$, i.e. this is defined by sending a topological space $X \xrightarrow[f]{} E$, to the groupoid whose object set is $\la{Hom}_{\cat{Top}/E}(X \xrightarrow{f} E, G_0  \xrightarrow{h} E)$ and whose isomorphisms set is $\la{Hom}_{\cat{Top}/E}(X \xrightarrow{f} E, G_1  \xrightarrow{h \circ s}E) \simeq \la{Hom}_{\cat{Top}/E}(X \xrightarrow{f} E, G_1  \xrightarrow{h \circ t}E)$.

We get that $\varinjlim \bar{\ca{F}} \circ y \circ \ca{G}_{\bullet} \simeq \varinjlim(\ca{F} \circ \ca{G}_{\bullet}) \simeq E$, but we know that $\varinjlim \bar{\ca{F}} \circ y \circ \ca{G}_{\bullet} \simeq \bar{\ca{F}}(y \circ \ca{G}_{\bullet})$ (since $\bar{\ca{F}}$ is colimit preserving), this allows us to deduce that $\bar{\ca{F}}(\varinjlim y \circ \ca{G}_{\bullet})=E$.
 
Let $p$ be the functor from $\ca{P}$ to the category $[\cat{Top}/E,\cat{Groupoids}]$, which sends $(P,I)$ to the functor that sends a topological groupoid to the representable at $G(P,I)$, i.e. $p= y \circ G$. Let  us have a look at the following diagram: 

% https://q.uiver.app/#q=WzAsNSxbNiwwLCJcXGNhdHtUb3B9L0UiXSxbNiwyLCJbXFxjYXR7VG9wfS9FLFxcbWF0aHJte0dyb3Vwb2lkc31dIl0sWzksMCwiXFxtYXRocm17VG9wb3N9Il0sWzMsMiwiXFxtYXRocm17R3JvdXBvaWRzfSJdLFswLDIsIlxcY2F7UH0iXSxbMCwxLCJ5Il0sWzAsMiwiXFxjYXtGfSJdLFsxLDIsIlxcYmFye1xcY2F7Rn19IiwxXSxbMywxLCJ5Il0sWzQsMywiRyJdLFs0LDEsInAiLDAseyJvZmZzZXQiOi0yLCJjdXJ2ZSI6LTN9XSxbNiw3LCJcXHNpbWVxIiwwLHsic2hvcnRlbiI6eyJzb3VyY2UiOjIwLCJ0YXJnZXQiOjIwfSwic3R5bGUiOnsiYm9keSI6eyJuYW1lIjoibm9uZSJ9LCJoZWFkIjp7Im5hbWUiOiJub25lIn19fV1d
\[\begin{tikzcd}
	&&&&&& {\cat{Top}/E} &&& {\cat{Topos}} \\
	\\
	{\ca{P}} &&& {\mathrm{Groupoids}} &&& {[\cat{Top}/E,\mathrm{Groupoids}]}
	\arrow[""{name=0, anchor=center, inner sep=0}, "{\ca{F}}", from=1-7, to=1-10]
	\arrow["y", from=1-7, to=3-7]
	\arrow["G", from=3-1, to=3-4]
	\arrow["p", shift left=2, curve={height=-18pt}, from=3-1, to=3-7]
	\arrow["y", from=3-4, to=3-7]
	\arrow[""{name=1, anchor=center, inner sep=0}, "{\bar{\ca{F}}}"{description}, from=3-7, to=1-10]
	\arrow["\simeq", draw=none, from=0, to=1]
\end{tikzcd}\]

The above discussion allows us to deduce that the precomposition $\bar{\ca{F}} \circ p$ is equivalent to the constant functor at $E$, hence we may deduce that the colimit of $\bar{\ca{F}} \circ p$  is $E$, since the category $\ca{P}$ is filtered ($\ca{P}$ is a $1$-category, this is why this works). Now since $\bar{\ca{F}}$ is a colimit preserving functor, we may deduce that $\varinjlim \bar{\ca{F}} \circ p= \bar{\ca{F}}( \varinjlim p) =E$. So in order to show that $\varinjlim \ca{F}=E$, it is enough to show that $\varinjlim p = 1 $, Now since $p$ is a functor from $\ca{P}$ to $[\cat{Top}/E, \cat{Groupoids}]$, its $2$-colimit can be computed pointwise, this means that it's enough to show that for any object $X \rightarrow E$, we have that $\varinjlim_{G_{\bullet}} \mathrm{Hom}_{\cat{Groupoids}} (X , \ca{G}_{\bullet}) \simeq * $.

 \begin{lemma}
 Suppose that we have a functor $F : A \xrightarrow[]{} \cat{Groupoids}$, then $\varinjlim (F) = (\int_A F)^{Grp}$, where $-^{Grp}$ is the localisation of a category with respect to the class of all morphisms, and $\int$ is the Grothendieck construction of $F$ regarded as a functor to $\cat{Cat}$.
     \end{lemma}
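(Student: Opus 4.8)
The plan is to reduce the statement to the classical identification of the Grothendieck construction with a $\cat{Cat}$-valued $2$-colimit, by means of the $2$-adjunction between $\cat{Groupoids}$ and $\cat{Cat}$. Write $\iota \colon \cat{Groupoids} \hookrightarrow \cat{Cat}$ for the inclusion $2$-functor. It admits a left $2$-adjoint $(-)^{Grp} \colon \cat{Cat} \to \cat{Groupoids}$, sending a category $C$ to its localisation at the class of all of its morphisms: any functor $C \to \iota G$ into a groupoid inverts every morphism, hence factors uniquely through $C \to C^{Grp}$, and this factorisation is $2$-natural in $C$ and $G$, so that $(-)^{Grp} \dashv \iota$ as strict $2$-functors. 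Two consequences will be used: first, $(-)^{Grp}$, being a left adjoint, preserves all (conical / weighted) $2$-colimits; second, $(-)^{Grp} \circ \iota \cong \mathrm{id}_{\cat{Groupoids}}$, because localising a groupoid at its already-invertible morphisms changes nothing.

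Given $F \colon A \to \cat{Groupoids}$ (with $A$ a $1$-category, $F$ possibly a pseudofunctor, as in all our applications) I would then write the chain
\[
\varinjlim F \;\cong\; \varinjlim\!\big((-)^{Grp}\circ\iota\circ F\big) \;\cong\; \big(\varinjlim(\iota\circ F)\big)^{Grp},
\]
where the first isomorphism uses $(-)^{Grp}\circ\iota \cong \mathrm{id}$ and the second uses that the left adjoint $(-)^{Grp}$ commutes with the $2$-colimit of the diagram $\iota\circ F \colon A \to \cat{Cat}$. It then remains only to understand $\varinjlim(\iota\circ F)$, the $2$-colimit computed in $\cat{Cat}$ — and in fact only up to localisation. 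Here one invokes the classical fact that the Grothendieck construction $\int_A F$ is the (op)lax colimit of $\iota\circ F$ in $\cat{Cat}$, while the conical $2$-colimit is obtained from it by formally inverting the (co)cartesian morphisms; in either reading, $\varinjlim(\iota\circ F)$ is a localisation $(\int_A F)[W^{-1}]$ of the Grothendieck construction at some class $W$ of its morphisms.

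Applying $(-)^{Grp}$ and using that iterated localisations collapse — $C[W^{-1}]\big[(\mathrm{all})^{-1}\big]\cong C\big[(\mathrm{all})^{-1}\big]$ for any $W$ — then gives
\[
\varinjlim F \;\cong\; \big((\textstyle\int_A F)[W^{-1}]\big)^{Grp} \;\cong\; (\textstyle\int_A F)^{Grp},
\]
which is the claim. Equivalently, one may argue directly by universal property: for a groupoid $G$, $\cat{Groupoids}\!\big((\textstyle\int_A F)^{Grp},\,G\big)\cong\cat{Cat}\!\big(\textstyle\int_A F,\,\iota G\big)$ is the category of (op)lax cocones from $F$ to $G$, and since $G$ is a groupoid every such cocone is automatically a pseudo-cocone (its structure $2$-cells are morphisms of $G$, hence invertible), so this is the category of $2$-cocones from $F$ to $G$; this exhibits $(\int_A F)^{Grp}$ as $\varinjlim F$. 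In both arguments the lax-versus-pseudo distinction for the $\cat{Cat}$-colimit is immaterial, as we localise at everything in the end; this is precisely why, in the proof of the preceding lemma, it was harmless that $\ca{P}$ is only a $1$-category.

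The only genuinely non-formal ingredient is the identification of the $\cat{Cat}$-valued $2$-colimit with the (localised) Grothendieck construction; everything else is formal manipulation of adjunctions and localisations. I expect the main obstacle to be nothing conceptual but rather bookkeeping: pinning down the variance and the lax/pseudo conventions of the Grothendieck construction so that they match those used elsewhere in the paper, and citing the $2$-categorical form of this classical theorem accordingly. Once those conventions are fixed, the proof is the short chain of isomorphisms displayed above.
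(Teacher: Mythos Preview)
Your proposal is correct and essentially coincides with the paper's proof. Both hinge on the same two ingredients: that the Grothendieck construction computes the oplax colimit of $\iota\circ F$ in $\cat{Cat}$, and that $(-)^{Grp}\dashv\iota$ as $2$-functors; your direct universal-property argument (your second version, showing $\cat{Groupoids}((\int_A F)^{Grp},G)\cong\cat{Cat}(\int_A F,\iota G)$ identifies with pseudo-cocones because $G$ is a groupoid) is exactly the paper's argument, only written out more explicitly.
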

 \begin{proof}
     Let us regard the following composition $A \rightarrow \cat{Groupoid} \rightarrow \cat{Cat}$, it is known that the oplax colimit of this diagram (\cite {gepner2017lax}, or see the nlab page on Grothendieck construction) is the Grothendieck construction of this functor, but on the other hand the pseudo-functor $-^{Grp}$ that takes a category to the localisation with respect to the class of all morphisms is the $2$-left adjoint to the inclusion of groupoids inside categories, hence $(\int F)^{Grp}$ is the oplax colimit of $F$, but since we are working with groupoids then the oplax colimit coincides with the  usual $2$-colimit (since in the $2$- category of groupoids all $2$-morphisms are invertible).
     
 \end{proof}

 \begin{note*} \normalfont
     In all the arguments, we are using the fact that $2$-left adjoints preserve $2$-colimits, we give a sketch of the proof of this fact:

     Suppose that $\ca{J}: D \xrightarrow{} \cat{Cat}$ is a pseudofunctor (weight) and suppose that we have a functor $F$ from $D$ to $A$ and a functor $G$ from $A$ to $B$, such that this functor is a $2$-left adjoint with $2$-right adjoint $H$. And suppose that $L$ is the $2$-colimit of $F$. Then we have the following chain of pseudo-natural equivalence of $2$-categories $B(G(L),Y)  \simeq A(L,H(Y))\simeq [D,\cat{Cat}](\ca{J}, A(F-,H(Y)))  \simeq [D,\cat{Cat}](J, B(G \circ F-,Y))$.

     A similar chain can show that a $2$-right adjoint preserves $2$-limits.
 \end{note*}

 \begin{lemma}
    Let $A$ be a filtered category then, $A^{Grp}=*$.
 \end{lemma}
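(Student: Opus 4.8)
The plan is to identify $A^{\mathrm{Grp}}$ with the Gabriel--Zisman localization $A[W^{-1}]$ at the class $W = \mathrm{Mor}(A)$ of \emph{all} morphisms of $A$ (which is exactly what ``localization at the class of all morphisms'' means, and which is automatically a groupoid since every morphism of $A$, and hence every composite of such and their formal inverses, becomes invertible), and then to exploit that filteredness makes $W$ admit a calculus of left fractions. First I would verify the Gabriel--Zisman axioms: $W$ contains all identities and is closed under composition, which is trivial as $W$ is everything; the (left) Ore condition, that every span $Y \leftarrow X \rightarrow Z$ completes to a commuting square $Y \rightarrow D \leftarrow Z$, which is precisely the conjunction of the two defining clauses of a filtered category --- first put a cocone on the discrete pair $\{Y, Z\}$, then coequalize the resulting parallel pair out of $X$; and the cancellation axiom, that parallel arrows which agree after composition with a morphism of $W$ on one side agree after composition with a morphism of $W$ on the other, which is immediate since a filtered category coequalizes any parallel pair outright.

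Granting the calculus of left fractions, a morphism $a \to b$ in $A^{\mathrm{Grp}}$ is represented by a roof $a \xrightarrow{f} c \xleftarrow{s} b$, with two roofs identified when they admit a common refinement $(f_3, s_3)$ together with maps $t_1, t_2$ satisfying $t_i f_i = f_3$ and $t_i s_i = s_3$. The key step is to show that \emph{any} two roofs $(f,s)$ and $(f',s')$ from $a$ to $b$ are equivalent: using filteredness choose $d$ with $u\colon c \to d$ and $v\colon c' \to d$; coequalize $uf, vf'\colon a \to d$ by some $p\colon d \to e$; then coequalize $pus, pvs'\colon b \to e$ by some $q\colon e \to e'$; the roof $a \xrightarrow{qpuf} e' \xleftarrow{qpus} b$ is a common refinement of both. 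Hence every hom-set of $A^{\mathrm{Grp}}$ has at most one element. Each hom-set is also nonempty: a filtered category is nonempty, and for any objects $a, b$ a cospan $a \to c \leftarrow b$ supplies a roof. Therefore $A^{\mathrm{Grp}}$ is the indiscrete groupoid on a nonempty set of objects, and such a category is equivalent to the terminal category $*$.

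I expect the only real obstacle to be bookkeeping rather than anything conceptual: one must keep straight the variance conventions in the calculus of fractions (left versus right roofs) and use the correct equivalence relation on roofs, but no input beyond the bare definition of ``filtered'' enters. An alternative, more homotopical route would be to invoke that filtered categories have contractible nerve and that $(-)^{\mathrm{Grp}}$ computes the fundamental groupoid of the nerve, but the fraction-calculus argument above is self-contained and avoids importing any homotopy theory into the paper.
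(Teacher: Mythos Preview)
Your proof is correct and follows essentially the same route as the paper: verify that a filtered category admits a left calculus of fractions at the class of all morphisms, then use the roof description of the localization together with filteredness to show every hom-set is a singleton. Your argument is in fact a bit more explicit than the paper's (you spell out the Ore and cancellation axioms and perform the two coequalizations for the $a$-side and $b$-side separately), but the strategy is identical.
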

 \begin{proof}
     It's easy to verify that for any filtered category, the conditions to construct the localisation of this category with respect to the class of all morphisms by a left calculus of fractions are met, and hence we may construct the category $A^{Grp}$ by a left calculus of  fractions (for a reference on calculus of fractions see \cite{gabriel35homotopy}).
     
     We wish to show that for any two objects $a,b \in A$, their images by the localisation functor satisfies $\la{Hom}^{'}(a,b)\simeq *$, by the fact that the category is filtered, we may deduce that inside $A^{Grp}$, $\la{Hom}^{'}(a,b)$ is non-empty. Now suppose that we have two cospans $a\xrightarrow{f} c \xleftarrow{g} b$ and $a\xrightarrow{f^{'}} c^{'} \xleftarrow{g^{'}} b$. Then by filteredness of $A$, we may find an object $c^{''}$ such that:

% https://q.uiver.app/#q=WzAsNSxbMCwxLCJhIl0sWzIsMCwiYyJdLFsyLDIsImNeeyd9Il0sWzQsMSwiYiJdLFsyLDEsImNeeycnfSJdLFswLDEsImYiXSxbMCwyLCJmXnsnfSIsMl0sWzMsMSwiZyIsMl0sWzMsMiwiZ157J30iXSxbMSw0LCJoXnsnJ30iXSxbMiw0LCJoXnsnfSIsMl1d
\[\begin{tikzcd}
	&& c \\
	a && {c^{''}} && b \\
	&& {c^{'}}
	\arrow["{h^{''}}", from=1-3, to=2-3]
	\arrow["f", from=2-1, to=1-3]
	\arrow["{f^{'}}"', from=2-1, to=3-3]
	\arrow["g"', from=2-5, to=1-3]
	\arrow["{g^{'}}", from=2-5, to=3-3]
	\arrow["{h^{'}}"', from=3-3, to=2-3]
\end{tikzcd}\]

Now using filteredness one more time, we may find an object $c^{'''}$ and a morphism $k$ such that $k \circ h^{''} \circ g= k \circ h^{'} \circ g^{'}$, and such that $k \circ h^{''} \circ f = k \circ h^{'} \circ f^{'}$.
 \end{proof}

So now suppose that $X \xrightarrow{f} E$ is a topological space over $E$, take the pseudo-functor $p_X$  from $\ca{P}$ to $\cat{Groupoid}$ defined by $(I,P) \mapsto \la{Hom}_{\cat{Groupoid}}(X,G(I,P))$. Our goal is to show that $\int p_X$ is filtered. So we need to check three conditions of filteredness of a category.

But before that let us show the following important result:

\begin{lemma}
Suppose that we have $\ca{G}_{\bullet}=G(I,P)$ a logical topological groupoid which colimit is $E$, and suppose we have two morphism $(h_1, \alpha_1)$ (here $\alpha_1$ is a natural isomorphism), and $(h_2, \alpha_2)$ from $X \xrightarrow{f} E$ to $G_0 \xrightarrow{g} E$ in the category $\cat{Top}/E$, then there exists a map of topological spaces $\tilde{h}$ from the topological space $X$ to the topological space $G_1$ such that $s \circ \tilde{h} =h_1$ and $t \circ \tilde{h} = h_2 $,  diagrammatically:

\[
% https://q.uiver.app/#q=WzAsNCxbMCwyLCJYIl0sWzAsNCwiRSJdLFsyLDIsIkdfMCJdLFsyLDAsIkdfMSJdLFswLDEsImYiLDJdLFswLDIsImhfMSJdLFszLDIsInMiLDIseyJsYWJlbF9wb3NpdGlvbiI6NDAsIm9mZnNldCI6Mn1dLFszLDIsInQiLDAseyJvZmZzZXQiOi0yfV0sWzIsM10sWzIsMSwiZyJdLFswLDMsIlxcZXhpc3RzXFx0aWxkZXtofSIsMl0sWzAsMiwiaF8yIiwyLHsib2Zmc2V0IjoyfV0sWzQsOSwiXFxzaW1lcSIsMCx7InNob3J0ZW4iOnsic291cmNlIjoyMCwidGFyZ2V0IjoyMH19XV0=
\begin{tikzcd}
	&& {G_1} \\
	\\
	X && {G_0} \\
	\\
	E
	\arrow["s"'{pos=0.4}, shift right=2, from=1-3, to=3-3]
	\arrow["t", shift left=2, from=1-3, to=3-3]
	\arrow["{\exists\tilde{h}}"', from=3-1, to=1-3]
	\arrow["{h_1}", from=3-1, to=3-3]
	\arrow["{h_2}"', shift right=2, from=3-1, to=3-3]
	\arrow[""{name=0, anchor=center, inner sep=0}, "f"', from=3-1, to=5-1]
	\arrow[from=3-3, to=1-3]
	\arrow[""{name=1, anchor=center, inner sep=0}, "g", from=3-3, to=5-1]
	\arrow["\simeq", shorten <=6pt, shorten >=6pt, Rightarrow, from=0, to=1]
\end{tikzcd}
\]
\end{lemma}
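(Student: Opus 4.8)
The plan is to construct $\tilde h$ pointwise from the two natural isomorphisms, using the enough‑points hypothesis, and then to verify continuity against the ``logical'' basic opens $V_{\vec i,B,\vec j,C}$ of $G_1$ by an étale‑space argument.

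First I would manufacture the fibrewise isomorphisms. From $\alpha_1,\alpha_2$ form the natural isomorphism $\beta := \alpha_2\circ\alpha_1^{-1}\colon g\circ h_1 \xrightarrow{\ \sim\ } g\circ h_2$ of geometric morphisms $Sh(X)\to E$. Pulling $\beta$ back along the point $x\colon\cat{Set}\to Sh(X)$ yields a natural isomorphism between the points $g\circ h_i\circ x$ of $E$. By the construction of the representing groupoid in \cite{butz1997representation, butz1999topological}, the point $g\circ h_i\circ x$ of $E$ is canonically the model $p_{i,x}$ underlying $h_i(x)=(p_{i,x},e_{i,x})\in G_0$; hence $\beta$ at $x$ is an isomorphism of models $\omega_x\colon p_{1,x}\xrightarrow{\sim}p_{2,x}$. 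We set $\tilde h(x):=(p_{1,x},e_{1,x},p_{2,x},e_{2,x},\omega_x)\in G_1$, which by construction satisfies $s\circ\tilde h=h_1$ and $t\circ\tilde h=h_2$.

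Next I would check that $\tilde h$ is continuous. Given a basic open $V_{\vec i,B,\vec j,C}$ with $\vec i=(i_1,\dots,i_n)$, $\vec j=(j_1,\dots,j_n)$, the conditions ``$e_{1,x}(\vec i)\in B_{p_{1,x}}$'' and ``$e_{2,x}(\vec j)\in C_{p_{2,x}}$'' define the open sets $h_1^{-1}(U_{\vec i,B})$ and $h_2^{-1}(U_{\vec j,C})$ of $X$. For the remaining condition ``$\omega_x(e_{1,x}(\vec i))=e_{2,x}(\vec j)$'' the étale geometry enters: the étale space of $g^{*}A$ over $G_0$ carries the tautological partial sections $(p,e)\mapsto e(i)$ over the opens $U_{i,A}$, and $\beta$ evaluated at $A$, together with $\alpha_1,\alpha_2$, upgrades (by interchange of the two evaluations) to a homeomorphism $\Phi\colon h_1^{*}g^{*}A\xrightarrow{\sim}h_2^{*}g^{*}A$ of étale spaces over $X$ whose stalk at $x$ is the $A$‑component of $\omega_x$. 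Thus $x\mapsto\Phi_x(e_{1,x}(i_k))$ and $x\mapsto e_{2,x}(j_k)$ are two continuous local sections of the étale space $h_2^{*}g^{*}A\to X$, and the locus where two continuous sections of an étale space agree is open (the diagonal of an étale space is open, as used already in Section~\ref{examples of generalised}); intersecting over $k=1,\dots,n$ shows $\tilde h^{-1}(V_{\vec i,B,\vec j,C})$ is open, so $\tilde h$ is continuous.

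Finally I would record the commutativity up to natural isomorphism: $g\circ s\circ\tilde h=g\circ h_1\xrightarrow{\alpha_1^{-1}}f$ and likewise through $t$ via $\alpha_2$, and the comparison of these two identifications is exactly the action $2$‑cell of the groupoid presentation. The main obstacle I expect is the second step: pinning $\omega_x$ down concretely enough --- as a homomorphism of models respecting all the definable relations $B,C\subseteq A^{n}$ that present $E$ --- so that it genuinely lands in $G_1$, and matching the ``two sections agree on an open set'' argument precisely to the topology on $G_1$ generated by the $V_{\vec i,B,\vec j,C}$; the rest is bookkeeping with the Butz--Moerdijk groupoid.
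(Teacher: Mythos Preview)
Your proposal is correct and follows essentially the same approach as the paper: define $\tilde h(x)$ via the composite isomorphism $\alpha_2(x)\circ\alpha_1(x)^{-1}$ between the underlying models, then verify continuity on basic opens $V_{\vec i,B,\vec j,C}$ by an \'etale-space argument using the tautological local sections $(p,e)\mapsto e(i)$ of $g^*A$. The only cosmetic differences are that the paper compares the two transported sections inside $f^*(A^n)$ (pulling back via $\alpha_1^{-1}$ and $\alpha_2^{-1}$ separately, then intersecting images and projecting), whereas you transport one section across via $\beta$ and compare inside $h_2^*g^*A$ using the open-diagonal property directly; and the paper works with $A^n$ at once while you argue componentwise over $k=1,\dots,n$.
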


\begin{proof} For every point $x$ of $X$, we are going to denote, by abuse of language, by $f(x)$ the point of $E$, $f^* \circ x^*$, similarly by $g(h_1(x))$, the point $g^* (h_1(x))^*$ of $E$, similarly we are going to denote by $\alpha_2(x)$ the isomorphism $\alpha_2 \circ x^*$.

Hence we get an isomorphism $\alpha_1(x)$ from $f(x)$ to $g (h_1(x))$, and an isomorphism $\alpha_2(x)$ between $f(x)$ and $g(h_2(x))$, this should allow us to consider the isomorphism between $\alpha_2(x) \circ (\alpha_1(x)^{-1})$ between $g (h_1(x))$ and $g(h_2(x))$. This isomorphism (or to be rather precise its equivalence class) is an element of $G_1$, and hence we may define this map by sending $x$ to this element. Let us call this map
$\tilde{h}$.

\begin{comment}
    You should add a paragraph explaning the construction of logical groupoids
\end{comment}

Now we should show that such map is continuous, to do this let us take a basic open set of $G_1$, of the form $V_{\vec{i},B,\vec{j},C}$. Now notice that \[\tilde{h}^{-1}(V_{\vec{i},B,\vec{j},C})\] 

\[=\{ x \in X \   | \  e_{h_1(x)}(\vec{i}) \in B_{h_1(x)}, e_{h_2(x)}(\vec{j}) \in C_{h_2(x)},  \alpha_2(x)(\alpha_1(x)^{-1})(e_{h_1(x)}(\vec{i})) = e_{h_2(x)}(\vec{j})\} \] 

\[= \{ x \in X  \  |   \ e_{(h_1(x)}(\vec{i}) \in B_{h_1(x)}, e_{h_2(x)}(\vec{j}) \in C_{h_2(x)}, \alpha_1(x)^{-1} (e_{h_1(x)}(\vec{i})) = \alpha_2(x)^{-1}(e_{h_2(x)}(\vec{j}))\} \]

\[=\{x \in X \ | \ e_{h_1(x)}(\vec{i}) \in B_{h_1(x)} \} \bigcap \{x \in X \ | \ e_{h_2(x)}(\vec{j}) \in C_{h_2(x)} \} \bigcap \{x \in X \ | \ \alpha_1(x)^{-1} (e_{h_1(x)}(\vec{i})) = \alpha_2(x)^{-1}(e_{h_2(x)}(\vec{j})) \} \]  

The first two sets are open by continuity of $h_1$ and $h_2$ respectively. It remains to show that the set $\{x \in X \ | \ \alpha_1(x)^{-1} (e_{h_1(x)}(\vec{i})) = \alpha_2(x)^{-1}(e_{h_2(x)}(\vec{j})) \}$ is open. Notice that every $\vec{i^{'}} \in I^n$, defines a local section for the sheaf  $g^*(C)$ where $C \subseteq A^n$ is an element of the topos $E$ (reminder that an  element $M$ of $E$ can be regarded as a sheaf (locale homeomorphisms) over $G_0$ ($g^*(C)$) equipped with an action of $G_1$).

Let us explain this idea. Suppose that $M \in E$, then the space $g^*(M)$ over $G_0$, is defined by $g^*(M)=\{ (p,e,a) \mid a \in M_p \} $ (more precisely we should be talking about equivalence classes of these, but that's clear), equipped with the topology with basic open sets  $V_{\vec{i}, C, f} = \{ (p, e, a) \mid (p,e) \in U_{\vec{i}, C} \text{ and } a = f(e(\vec{i})) \}$ \cite{butz1999topological}, this space has a natural projection map to $G_0$ making it étale over $G_0$. Moreover, this space has a natural action by $G_1$. Now suppose that $C \subseteq A^n$ and that $\vec{i} \in I^n$, then take the local section, which we also call by abuse of language $\vec{i}$ define from $V_{\vec{i},C}= \{(p,e) \mid e(\vec{i}) \in C \}$ to $p^*(C)$ by $(p,e) \mapsto (p,e, e(\vec{i}))$, here $e(\vec{i})$ is short for $(e(i_1),\dots,e(i_n))$.

Now consider the sheaf  $g^*(A^n)$ on $G_0$,  and consider the local section $\vec{i}$ on $g^*(A^n)$, we can pull this section back by $h_1$ to get the local section for the sheaf $h_1^*((g^*)(A^n))$ defined on its domain  by $x \mapsto e_{h_1(x)}(\vec{i})$, also we may consider the local section $\vec{j}$ of $g^*(A^n)$, we can pull this section back by $h_2$ which hence gives us a local section for the sheaf $h_2^*(g^*(A^n))$.

Now we know that the images of local sections are open in their respective étale bundles, so the set $ \{ e_{h_1(x)}(\vec{i}) \}$ is open in $h_1^*(g^*)(A^n)$, similarly the set $ \{ e_{h_2(x)}(\vec{j}) \}$ is open in $h_2^*(g^*(A^n)$. This means that ${\alpha_1}_{A^{n}}^{-1}(\{ e_{h_1(x)}(\vec{i}) \})$ is open in $f^*(A^n)$, this set is equal to $\{\alpha_1(x)^{-1}e_{h_1(x)}(\vec{i}) \}$, similarly we have that $\{\alpha_2(x)^{-1}e_{h_2(x)}(\vec{j}) \} $ is open in $f^*(A^n)$, intersecting both these sets and projecting onto $X$ (recall that the projection map is open in the étale bundle) would allow us to deduce that $\{x \in X \ | \ \alpha_1(x)^{-1} (e_{h_1(x)}(\vec{i})) = \alpha_2(x)^{-1}(e_{h_2(x)}(\vec{j})) \}$ is  open in $X$.

Moreover, $\tilde{h}$ is an internal natural transformation (isomorphism), between $h_1$ and $h_2$ regarded as internal functors between topological groupoids.
\end{proof}

Now, let $X \xrightarrow{f} E$ be a topological space  over $E$, and suppose that we have two groupoids over $E$, which are elements of the image of the  $\ca{P}$ by the functor $G$, and two maps of groupoids over $E$ (in the pseudo-slice category $\cat{Top}/E$), $h_1$ and $h_2$ from $X \xrightarrow{f} E$ to these two groupoids respectively. Let us denote the first groupoid by $\ca{G}_{\bullet} \xrightarrow{g} E$ and the second groupoid by $\ca{G}^{'}_{\bullet} \xrightarrow{g^{'}} E$. Since the category $\ca{P}$ is filtered, we may find  a third  groupoid $\ca{G}^{''}_{\bullet}\xrightarrow{g^{''}} E$ (in the image of $G$) over $E$ such that:

\[
% https://q.uiver.app/#q=WzAsOCxbMywyLCJHXzAiXSxbNiwyLCJHXzBeeyd9Il0sWzksMiwiR18wXnsnJ30iXSxbMywwLCJHXzEiXSxbNiwwLCJHXnsnfV8xIl0sWzksMCwiR157Jyd9XzEiXSxbMCwyLCJYIl0sWzMsNCwiRSJdLFsxLDIsImhfNCJdLFszLDAsInMiLDIseyJvZmZzZXQiOjN9XSxbNCwxLCJzXnsnfSIsMix7Im9mZnNldCI6M31dLFs1LDIsInN7Jyd9IiwyLHsib2Zmc2V0IjozfV0sWzMsMCwidCIsMCx7Im9mZnNldCI6LTN9XSxbNCwxLCJ0XnsnfSIsMCx7Im9mZnNldCI6LTN9XSxbNSwyLCJ0XnsnJ30iLDAseyJvZmZzZXQiOi0zfV0sWzAsMiwiaF8zIiwwLHsiY3VydmUiOjN9XSxbNiwwLCJoXzEiXSxbNiw3LCJmIiwyXSxbMCw3LCJnIl0sWzEsNywiZ157J30iLDJdLFsyLDcsImdeeycnfSJdLFs2LDEsImhfMiIsMSx7ImN1cnZlIjoyfV0sWzE4LDIwLCJcXHNpbWVxIiwyLHsic2hvcnRlbiI6eyJzb3VyY2UiOjIwLCJ0YXJnZXQiOjIwfSwic3R5bGUiOnsiYm9keSI6eyJuYW1lIjoibm9uZSJ9LCJoZWFkIjp7Im5hbWUiOiJub25lIn19fV0sWzE5LDgsIlxcc2ltZXEiLDIseyJzaG9ydGVuIjp7InNvdXJjZSI6MjAsInRhcmdldCI6MjB9LCJzdHlsZSI6eyJib2R5Ijp7Im5hbWUiOiJub25lIn0sImhlYWQiOnsibmFtZSI6Im5vbmUifX19XSxbMTcsMTgsIlxcc2ltZXEiLDIseyJzaG9ydGVuIjp7InNvdXJjZSI6MjAsInRhcmdldCI6MjB9LCJzdHlsZSI6eyJib2R5Ijp7Im5hbWUiOiJub25lIn0sImhlYWQiOnsibmFtZSI6Im5vbmUifX19XSxbMTcsMjMsIlxcc2ltZXEiLDAseyJzaG9ydGVuIjp7InNvdXJjZSI6MjAsInRhcmdldCI6MjB9LCJsZXZlbCI6Miwic3R5bGUiOnsiYm9keSI6eyJuYW1lIjoibm9uZSJ9LCJoZWFkIjp7Im5hbWUiOiJub25lIn19fV1d
\begin{tikzcd}[ampersand replacement=\&]
	\&\&\& {G_1} \&\&\& {G^{'}_1} \&\&\& {G^{''}_1} \\
	\\
	X \&\&\& {G_0} \&\&\& {G_0^{'}} \&\&\& {G_0^{''}} \\
	\\
	\&\&\& E
	\arrow["s"', shift right=3, from=1-4, to=3-4]
	\arrow["t", shift left=3, from=1-4, to=3-4]
	\arrow["{s^{'}}"', shift right=3, from=1-7, to=3-7]
	\arrow["{t^{'}}", shift left=3, from=1-7, to=3-7]
	\arrow["{s{''}}"', shift right=3, from=1-10, to=3-10]
	\arrow["{t^{''}}", shift left=3, from=1-10, to=3-10]
	\arrow["{h_1}", from=3-1, to=3-4]
	\arrow["{h_2}"{description}, curve={height=12pt}, from=3-1, to=3-7]
	\arrow[""{name=0, anchor=center, inner sep=0}, "f"', from=3-1, to=5-4]
	\arrow["{h_3}", curve={height=18pt}, from=3-4, to=3-10]
	\arrow[""{name=1, anchor=center, inner sep=0}, "g", from=3-4, to=5-4]
	\arrow[""{name=2, anchor=center, inner sep=0}, "{h_4}", from=3-7, to=3-10]
	\arrow[""{name=3, anchor=center, inner sep=0}, "{g^{'}}"', from=3-7, to=5-4]
	\arrow[""{name=4, anchor=center, inner sep=0}, "{g^{''}}", from=3-10, to=5-4]
	\arrow["\simeq"', draw=none, from=0, to=1]
	\arrow["\simeq"', draw=none, from=1, to=4]
	\arrow[""{name=5, anchor=center, inner sep=0}, "\simeq"', draw=none, from=3, to=2]
	\arrow["\simeq", draw=none, from=0, to=5]
\end{tikzcd}
\]

In this diagram, the equivalences are between:

\begin{itemize}
    \item $f$ and $g \circ h_1$
    \item $f$ and $g^{'} \circ h_2$
    \item $g$ and $ g^{''} \circ h_3$
    \item $g^{'}$ and $g^{''} \circ h_4$
\end{itemize}

Now by the result we already showed, there exists a continuous function $\bar{h}$ between $X$ and $G^{''}_{1}$, such that $s^{''} \circ \bar{h}= h_4 \circ h_2 =h_3 \circ h_1 = t^{''} \circ  \bar{h}$. So we have shown the second condition of filterdness of the category $\int p_X$.

Now the third condition of filtredness of $\int p_X$, which is the condition on parallel pairs of morphisms, is automatically satisfied since in the category $\ca{P}$, any two pairs of parallel morphisms are equal, and hence in the category $\int_{(I,P) \in \ca{P}}\la{Hom}(X\xrightarrow{f}E, G(I,P) \xrightarrow{g} E)$ any two parallel pairs of morphisms are going to be given by the same map of topological spaces from $G_0$ to $G_0^{'}$ together with two natural transformations $\alpha_1$ and $\alpha_2$ between $h_1^* \circ h_2^*$ and $h_3^*$  as illustrated in the figure below:

% https://q.uiver.app/#q=WzAsNCxbMCwwLCJYIl0sWzMsMCwiR18wIl0sWzYsMCwiR18wXnsnfSJdLFszLDIsIkUiXSxbMCwxLCJoXzEiXSxbMSwyLCJoXzIiXSxbMCwyLCJoXzMiLDAseyJjdXJ2ZSI6LTV9XSxbMCwzLCJmIiwyXSxbMSwzLCJnIiwyXSxbMiwzLCJsIl0sWzEsNiwiXFxhbHBoYV8xLFxcYWxwaGFfMiIsMCx7ImxhYmVsX3Bvc2l0aW9uIjo2MCwic2hvcnRlbiI6eyJzb3VyY2UiOjIwLCJ0YXJnZXQiOjIwfX1dXQ==
\[\begin{tikzcd}
	X &&& {G_0} &&& {G_0^{'}} \\
	\\
	&&& E
	\arrow["{h_1}", from=1-1, to=1-4]
	\arrow[""{name=0, anchor=center, inner sep=0}, "{h_3}", curve={height=-50pt}, from=1-1, to=1-7]
	\arrow["f"', from=1-1, to=3-4]
	\arrow["{h_2}", from=1-4, to=1-7]
	\arrow["g"', from=1-4, to=3-4]
	\arrow["l", from=1-7, to=3-4]
	\arrow["{\alpha_1,\alpha_2}"{pos=0.6}, between={0.2}{0.8}, Rightarrow, from=1-4, to=0]
\end{tikzcd}\]

But we know that  the category $\cat{Nat}(h_1^* \circ h_2^*, h_3^*)$ is equivalent to the category $\cat{Top}(h_2\circ h_1, h_3)$, here $\cat{Top}$ is the $2$-category of topological  spaces where the $2$-structure comes from the specialisation order between continuous functions, and hence  $\cat{Top}(h_2\circ h_1, h_3)$ is a poset, which implies that $\cat{Nat}(h_1^* \circ h_2^*, h_3^*)$ is equivalent to a poset and hence $\alpha_1 =\alpha_2$. So in the category  $\int_{G_{\bullet} \in \ca{P}}\la{Hom}(X\xrightarrow{f}E, G_{\bullet} \xrightarrow{g} E) $ any two  pairs of morphisms are equal.

This shows that the category $\int p_X = \int_{G_{\bullet} \in \ca{P}}\la{Hom}(X\xrightarrow{f}E, G_{\bullet} \xrightarrow{g} E) $ is filtered, and hence we may deduce that $\varinjlim(p)= * $, and hence we may deduce that $\varinjlim( \ca{F})=E$ (recall that $\ca{F}$ is the forgetful functor from the category $\cat{Top}/E$ to the category of Grothendieck toposes with enough points).

Now we are going to use the fact that the colimit of the forgetful functor from $\cat{Top}/E$ to the category of Grothendieck toposes is $E$, to show that there is an equivalence of categories between $\la{Geom}(E,E^{'})$ and $\la{ClovenCart}(\cat{Top}//E, \cat{Top}//E^{'})$, towards this, we clearly have a functor from $\la{Geom}(E,E^{'})$  to $\la{Cart}(\cat{Top}//E, \cat{Top}//E^{'})$. First, we show it's essentially surjective.

Suppose that we have a Cloven Cartesian functor between $\cat{Top}//E$ and $\cat{Top}//E^{'}$ , we start by constructing a cocone structure on $E^{'}$ for the forgetful functor from $\cat{Top}/E$ to $\cat{Topos}$, first notice that for every object $X \xrightarrow{f} E$, we may simply consider the object   $X \xrightarrow{\ca{L}(f)} E$, Now suppose that we have a commutative diagram up to isomorphism:

\[
% https://q.uiver.app/#q=WzAsMyxbMCwwLCJYIl0sWzIsMCwiWSJdLFsxLDIsIkUiXSxbMCwyLCJmIiwyXSxbMSwyLCJnIl0sWzAsMSwiaCJdLFszLDQsIlxcc2ltZXEiLDIseyJzaG9ydGVuIjp7InNvdXJjZSI6MjAsInRhcmdldCI6MjB9LCJzdHlsZSI6eyJib2R5Ijp7Im5hbWUiOiJub25lIn0sImhlYWQiOnsibmFtZSI6Im5vbmUifX19XV0=
\begin{tikzcd}
	X && Y \\
	\\
	& E
	\arrow["h", from=1-1, to=1-3]
	\arrow[""{name=0, anchor=center, inner sep=0}, "f"', from=1-1, to=3-2]
	\arrow[""{name=1, anchor=center, inner sep=0}, "g", from=1-3, to=3-2]
	\arrow["\simeq"', draw=none, from=0, to=1]
\end{tikzcd}
\]
  
We have $\ca{L}(f) \simeq \ca{L}(g \circ h) = \ca{L}(g) \circ h$ (using the fact that $\ca{L}$ is cloven Cartesian), also by the fact that $\ca{L}$ is Cartesian we get that this family of isomorphisms is compatible with compositions and hence we got a $2$-cocone for the forgetful functor from $\cat{Top}/E$ to $\cat{Topos}$. and hence, by the fact that $E$ is the $2$-colimit, there exists a geometric morphism $q$ from $E$ to $E^{'}$ such that $\ca{L}(f) \simeq q \circ f$ via a natural transformation $\alpha_f$, such that this natural transformation satisfies the following condition: Suppose that we have a morphism $\alpha$ in the category $\cat{Top}/E $, then the following diagram commutes:

\begin{equation*}% https://q.uiver.app/#q=WzAsOSxbMCwwLCJZIl0sWzIsNCwiRSJdLFs0LDAsIlgiXSxbNiw0LCJFXnsnfSJdLFs4LDAsIlkiXSxbOCw0LCJFIl0sWzEyLDQsIkVeeyd9Il0sWzcsMiwiPSJdLFsxMiwwLCJYIl0sWzAsMSwiZiIsMl0sWzAsMiwiaCJdLFsyLDEsImciXSxbMSwzLCJxIiwyXSxbMiwzLCJcXGNhe0x9KGcpIl0sWzUsNiwicSIsMl0sWzQsNiwiXFxjYXtMfShmKSIsMl0sWzQsNSwiZiIsMl0sWzgsNiwiXFxjYXtMfShnKSJdLFs0LDgsImgiXSxbOSwxMSwiXFxhbHBoYSIsMix7InNob3J0ZW4iOnsic291cmNlIjoyMCwidGFyZ2V0IjoyMH19XSxbMTEsMTMsIlxcYWxwaGFfZyIsMix7InNob3J0ZW4iOnsic291cmNlIjoyMCwidGFyZ2V0IjoyMH19XSxbMTYsMTUsIlxcYWxwaGFfe2Z9IiwwLHsic2hvcnRlbiI6eyJzb3VyY2UiOjIwLCJ0YXJnZXQiOjIwfX1dLFsxNSwxNywiXFxjYXtMfShcXGFscGhhKSIsMCx7InNob3J0ZW4iOnsic291cmNlIjoyMCwidGFyZ2V0IjoyMH19XV0=
\begin{tikzcd}[ampersand replacement=\&]
	Y \&\&\&\& X \&\&\&\& Y \&\&\&\& X \\
	\\
	\&\&\&\&\&\&\& {=} \\
	\\
	\&\& E \&\&\&\& {E^{'}} \&\& E \&\&\&\& {E^{'}}
	\arrow["h", from=1-1, to=1-5]
	\arrow[""{name=0, anchor=center, inner sep=0}, "f"', from=1-1, to=5-3]
	\arrow[""{name=1, anchor=center, inner sep=0}, "g", from=1-5, to=5-3]
	\arrow[""{name=2, anchor=center, inner sep=0}, "{\ca{L}(g)}", from=1-5, to=5-7]
	\arrow["h", from=1-9, to=1-13]
	\arrow[""{name=3, anchor=center, inner sep=0}, "f"', from=1-9, to=5-9]
	\arrow[""{name=4, anchor=center, inner sep=0}, "{\ca{L}(f)}"', from=1-9, to=5-13]
	\arrow[""{name=5, anchor=center, inner sep=0}, "{\ca{L}(g)}", from=1-13, to=5-13]
	\arrow["q"', from=5-3, to=5-7]
	\arrow["q"', from=5-9, to=5-13]
	\arrow["\alpha"', shorten <=13pt, shorten >=13pt, Rightarrow, from=0, to=1]
	\arrow["{\alpha_g}"', shorten <=13pt, shorten >=13pt, Rightarrow, from=1, to=2]
	\arrow["{\alpha_{f}}", shorten <=13pt, shorten >=13pt, Rightarrow, from=3, to=4]
	\arrow["{\ca{L}(\alpha)}", shorten <=13pt, shorten >=13pt, Rightarrow, from=4, to=5]
\end{tikzcd}
\label{weaK naturality} \end{equation*}

This is not quite what we are looking for, in order that the family $(\alpha_f)_{f \in \cat{Top}//E}$ (or equivalently $(\alpha_f)_{f \in \cat{Top}/E}$, the two categories have the same family of objects), to define a natural transformation between the cloven fibration $\ca{L}$ and the cloven fibration $q \circ -$, we need the commutativity of the diagram above for any morphism in $\cat{Top}//E$ i.e. we want to replace $\alpha$ with any natural transformation and we want commutativity of the same diagram, the fact that $\ca{L}$ is cloven allows us to restrict to the case where $X=Y$ and $h=\la{id}$. Now, suppose we have the following diagram:

\[
% https://q.uiver.app/#q=WzAsMixbMCwwLCJYIl0sWzAsMywiRSJdLFswLDEsImYiLDIseyJjdXJ2ZSI6Mn1dLFswLDEsImciLDAseyJjdXJ2ZSI6LTJ9XSxbMiwzLCJcXGFscGhhIiwyLHsic2hvcnRlbiI6eyJzb3VyY2UiOjIwLCJ0YXJnZXQiOjIwfX1dXQ==
\begin{tikzcd}
	X \\
	\\
	\\
	E
	\arrow[""{name=0, anchor=center, inner sep=0}, "f"', curve={height=12pt}, from=1-1, to=4-1]
	\arrow[""{name=1, anchor=center, inner sep=0}, "g", curve={height=-12pt}, from=1-1, to=4-1]
	\arrow["\alpha"', shorten <=5pt, shorten >=5pt, Rightarrow, from=0, to=1]
\end{tikzcd}
\]

Here $\alpha$ is just a natural transformation. We employ the following trick, a pair of parallel geometric morphisms together with a natural transformation between them can be identified with a geometric morphism from $S \times X$ to $E$, where $S$ is the Sierpinski space (to understand this notice that open sets in $S \times X$ can be identified with pairs of open sets of $X$, with the first included inside the second). Before continuing let us make sure that $\ca{L}$ respects this identification, suppose that $\tilde{\alpha}$ is the geometric morphism from $S \times T$ to $E$, then we may recover $\alpha$ as follows:

\[
% https://q.uiver.app/#q=WzAsMyxbMiwwLCJTXFx0aW1lcyBYIl0sWzIsMywiRSJdLFswLDAsIlMiXSxbMCwxLCJcXEFscGhhIl0sWzIsMCwidCIsMix7Im9mZnNldCI6NH1dLFsyLDAsInMiLDAseyJvZmZzZXQiOi00fV0sWzUsNCwiIiwwLHsic2hvcnRlbiI6eyJzb3VyY2UiOjIwLCJ0YXJnZXQiOjIwfX1dXQ==
\begin{tikzcd}
	X && {S\times X} \\
	\\
	\\
	&& E
	\arrow[""{name=0, anchor=center, inner sep=0}, "t"', shift right=4, from=1-1, to=1-3]
	\arrow[""{name=1, anchor=center, inner sep=0}, "s", shift left=4, from=1-1, to=1-3]
	\arrow["\tilde{\alpha}", from=1-3, to=4-3]
	\arrow[shorten <=2pt, shorten >=2pt, Rightarrow, from=1, to=0]
\end{tikzcd}
\]

The first thing one needs to notice is the fact that $\ca{L}$ respects the equivalence above i.e. we have the following equality : 
\[
% https://q.uiver.app/#q=WzAsMyxbMiwwLCJTXFx0aW1lcyBYIl0sWzIsMywiRV57J30iXSxbMCwwLCJTIl0sWzAsMSwiXFxjYXtMfShcXGJhcntcXGFscGhhfSkgPVxcYmFye1xcY2F7TH0oXFxhbHBoYSl9Il0sWzIsMCwidCIsMix7Im9mZnNldCI6NH1dLFsyLDAsInMiLDAseyJvZmZzZXQiOi00fV0sWzUsNCwiIiwwLHsic2hvcnRlbiI6eyJzb3VyY2UiOjIwLCJ0YXJnZXQiOjIwfX1dXQ==
\begin{tikzcd}[ampersand replacement=\&]
	X \&\& {S\times X} \\
	\\
	\\
	\&\& {E^{'}}
	\arrow[""{name=0, anchor=center, inner sep=0}, "t"', shift right=4, from=1-1, to=1-3]
	\arrow[""{name=1, anchor=center, inner sep=0}, "s", shift left=4, from=1-1, to=1-3]
	\arrow["{\ca{L}(\bar{\alpha}) =\overline{\ca{L}(\alpha)}}", from=1-3, to=4-3]
	\arrow[shorten <=2pt, shorten >=2pt, Rightarrow, from=1, to=0]
\end{tikzcd}
\]

This equality results from the fact that $\ca{L}$ is a cloven Cartesian functor between $\cat{Top}//E$ and $\cat{Top}//E^{'}$ considered as $2$-categories $2$-fibred over the $2$-category  $\cat{Top}$.

Now we turn to our goal which is showing the following:

\[
% https://q.uiver.app/#q=WzAsOCxbMCw0LCJFIl0sWzIsMCwiWCJdLFs0LDQsIkVeeyd9Il0sWzYsMCwiWCJdLFs2LDQsIkUiXSxbMTAsNCwiRV57J30iXSxbNSwyLCI9Il0sWzEwLDBdLFsxLDAsImciLDAseyJjdXJ2ZSI6LTN9XSxbMCwyLCJxIiwyXSxbMSwyLCJcXGNhe0x9KGcpIl0sWzQsNSwicSIsMl0sWzMsNSwiXFxjYXtMfShmKSIsMl0sWzMsNCwiZiIsMl0sWzEsMCwiZiIsMix7ImN1cnZlIjo1fV0sWzMsNSwiXFxjYXtMfShnKSIsMCx7ImN1cnZlIjotNX1dLFsxMywxMiwiXFxhbHBoYV97Zn0iLDAseyJzaG9ydGVuIjp7InNvdXJjZSI6MjAsInRhcmdldCI6MjB9fV0sWzE0LDgsIlxcYWxwaGEiLDIseyJzaG9ydGVuIjp7InNvdXJjZSI6MjAsInRhcmdldCI6MjB9fV0sWzgsMiwiXFxhbHBoYV9nIiwyLHsic2hvcnRlbiI6eyJzb3VyY2UiOjIwLCJ0YXJnZXQiOjIwfX1dLFsxMiwxNSwiXFxjYXtMfShcXGFscGhhKSIsMCx7InNob3J0ZW4iOnsic291cmNlIjoyMCwidGFyZ2V0IjoyMH19XV0=
\begin{tikzcd}[ampersand replacement=\&]
	\&\& X \&\&\&\& X \&\&\&\& {} \\
	\\
	\&\&\&\&\& {=} \\
	\\
	E \&\&\&\& {E^{'}} \&\& E \&\&\&\& {E^{'}}
	\arrow[""{name=0, anchor=center, inner sep=0}, "g", curve={height=-18pt}, from=1-3, to=5-1]
	\arrow[""{name=1, anchor=center, inner sep=0}, "f"', curve={height=30pt}, from=1-3, to=5-1]
	\arrow["{\ca{L}(g)}", from=1-3, to=5-5]
	\arrow[""{name=2, anchor=center, inner sep=0}, "f"', from=1-7, to=5-7]
	\arrow[""{name=3, anchor=center, inner sep=0}, "{\ca{L}(f)}"', from=1-7, to=5-11]
	\arrow[""{name=4, anchor=center, inner sep=0}, "{\ca{L}(g)}", curve={height=-30pt}, from=1-7, to=5-11]
	\arrow["q"', from=5-1, to=5-5]
	\arrow["q"', from=5-7, to=5-11]
	\arrow["\alpha"', shorten <=9pt, shorten >=9pt, Rightarrow, from=1, to=0]
	\arrow["{\alpha_g}"', shorten <=15pt, shorten >=15pt, Rightarrow, from=0, to=5-5]
	\arrow["{\alpha_{f}}", shorten <=13pt, shorten >=13pt, Rightarrow, from=2, to=3]
	\arrow["{\ca{L}(\alpha)}", shorten <=5pt, shorten >=5pt, Rightarrow, from=3, to=4]
\end{tikzcd}
\]

In what follows let $\circ$ denote the horizontal composition of natural transformations and let $*$ denote the vertical composition, our goal is to show that $\alpha_g * (q \circ \alpha) = \ca{L}(\alpha) * \alpha_f$. Let us look at the following diagram:

\[
% https://q.uiver.app/#q=WzAsNCxbMCwwLCJYIl0sWzMsMCwiUyBcXHRpbWVzIFgiXSxbMywzLCJFIl0sWzYsMywiRV57J30iXSxbMCwxLCJzIiwwLHsib2Zmc2V0IjotM31dLFswLDEsInQiLDIseyJvZmZzZXQiOjN9XSxbMSwyLCJcXGJhcntcXGFscGhhfSJdLFsxLDMsIlxcY2F7TH0oXFxiYXJ7XFxhbHBoYX0pPVxcb3ZlcmxpbmV7XFxjYXtMfShcXGFscGhhKX0iXSxbMiwzLCJxIiwxXSxbNCw1LCIiLDAseyJzaG9ydGVuIjp7InNvdXJjZSI6MjAsInRhcmdldCI6MjB9fV0sWzYsNywiXFxhbHBoYV97XFxiYXJ7XFxhbHBoYX19IiwwLHsic2hvcnRlbiI6eyJzb3VyY2UiOjIwLCJ0YXJnZXQiOjIwfX1dXQ==
\begin{tikzcd}[ampersand replacement=\&]
	X \&\&\& {S \times X} \\
	\\
	\\
	\&\&\& E \&\&\& {E^{'}}
	\arrow[""{name=0, anchor=center, inner sep=0}, "s", shift left=3, from=1-1, to=1-4]
	\arrow[""{name=1, anchor=center, inner sep=0}, "t"', shift right=3, from=1-1, to=1-4]
	\arrow[""{name=2, anchor=center, inner sep=0}, "{\bar{\alpha}}", from=1-4, to=4-4]
	\arrow[""{name=3, anchor=center, inner sep=0}, "{\ca{L}(\bar{\alpha})=\overline{\ca{L}(\alpha)}}", from=1-4, to=4-7]
	\arrow["q"{description}, from=4-4, to=4-7]
	\arrow[shorten <=2pt, shorten >=2pt, Rightarrow, from=0, to=1]
	\arrow["{\alpha_{\bar{\alpha}}}", shorten <=10pt, shorten >=10pt, Rightarrow, from=2, to=3]
\end{tikzcd}
\]

 Our first claim is that $\alpha_{\bar{\alpha}} \circ s =\alpha_f$ and similarly $\alpha_{\bar{\alpha}} \circ t =\alpha_g$. This follows from applying \ref{weaK naturality} in the case where the natural transformation $\alpha$ is respectively $\la{id}_s$ and $\la{id}_t$. Let us call $u$ the natural transformation between  $s$ and $t$, then we can write the following:
 
 \[
 \adjustbox{max width = \textwidth}{% https://q.uiver.app/#q=WzAsMTksWzMsMCwiWCJdLFs2LDAsIlMgXFx0aW1lcyBYIl0sWzksMCwiRV57J30iXSxbMTAsMCwiPSJdLFswLDIsIlgiXSxbMywyLCJTIFxcdGltZXMgWCJdLFs2LDIsIkVeeyd9Il0sWzcsMiwiKiJdLFs4LDIsIlgiXSxbMTEsMiwiUyBcXHRpbWVzIFgiXSxbMTQsMiwiRV57J30iXSxbMTUsMiwiPSJdLFs3LDQsIioiXSxbMyw0LCJTIFxcdGltZXMgWCJdLFswLDQsIlgiXSxbNiw0LCJFXnsnfSJdLFs4LDQsIlgiXSxbMTEsNCwiUyBcXHRpbWVzIFgiXSxbMTQsNCwiRV57J30iXSxbMCwxLCJ0IiwyLHsib2Zmc2V0IjozfV0sWzAsMSwicyIsMCx7Im9mZnNldCI6LTN9XSxbMSwyLCJxIFxcY2lyYyBcXGJhcntcXGFscGhhfSIsMCx7Im9mZnNldCI6LTN9XSxbMSwyLCJcXGNhe0x9KFxcYmFye1xcYWxwaGF9KSIsMix7Im9mZnNldCI6M31dLFs0LDUsInMiLDAseyJvZmZzZXQiOi0zfV0sWzQsNSwidCIsMix7Im9mZnNldCI6M31dLFs1LDYsIiBcXGNhe0x9KFxcYmFye1xcYWxwaGF9KSJdLFs4LDksInMiXSxbOSwxMCwicSBcXGNpcmMgXFxiYXJ7XFxhbHBoYX0iLDAseyJvZmZzZXQiOi0zfV0sWzksMTAsIlxcY2F7TH0oXFxiYXJ7XFxhbHBoYX0pIiwyLHsib2Zmc2V0IjozfV0sWzE0LDEzLCJ0Il0sWzEzLDE1LCJxIFxcY2lyYyBcXGJhcntcXGFscGhhfSIsMCx7Im9mZnNldCI6LTN9XSxbMTMsMTUsIlxcY2F7TH0oXFxiYXJ7XFxhbHBoYX0pIiwyLHsib2Zmc2V0IjozfV0sWzE2LDE3LCJzIiwwLHsib2Zmc2V0IjotM31dLFsxNiwxNywidCIsMix7Im9mZnNldCI6M31dLFsxNywxOCwicSBcXGNpcmMgXFxiYXJ7XFxhbHBoYX0iXSxbMjAsMTksInUiLDIseyJzaG9ydGVuIjp7InNvdXJjZSI6MjAsInRhcmdldCI6MjB9fV0sWzIxLDIyLCJcXGFscGhhX3tcXGJhcntcXGFscGhhfX0iLDIseyJzaG9ydGVuIjp7InNvdXJjZSI6MjAsInRhcmdldCI6MjB9fV0sWzI3LDI4LCJcXGFscGhhX3tcXGJhcntcXGFscGhhfX0iLDAseyJzaG9ydGVuIjp7InNvdXJjZSI6MjAsInRhcmdldCI6MjB9fV0sWzMwLDMxLCJcXGFscGhhX3tcXGJhcntcXGFscGhhfX0iLDAseyJzaG9ydGVuIjp7InNvdXJjZSI6MjAsInRhcmdldCI6MjB9fV0sWzIzLDI0LCJ1IiwyLHsic2hvcnRlbiI6eyJzb3VyY2UiOjIwLCJ0YXJnZXQiOjIwfX1dLFszMiwzMywidSIsMCx7InNob3J0ZW4iOnsic291cmNlIjoyMCwidGFyZ2V0IjoyMH19XV0=
\begin{tikzcd}
	&&& X &&& {S \times X} &&& {E^{'}} & {=} \\
	\\
	X &&& {S \times X} &&& {E^{'}} & {*} & X &&& {S \times X} &&& {E^{'}} & {=} \\
	\\
	X &&& {S \times X} &&& {E^{'}} & {*} & X &&& {S \times X} &&& {E^{'}}
	\arrow[""{name=0, anchor=center, inner sep=0}, "t"', shift right=3, from=1-4, to=1-7]
	\arrow[""{name=1, anchor=center, inner sep=0}, "s", shift left=3, from=1-4, to=1-7]
	\arrow[""{name=2, anchor=center, inner sep=0}, "{q \circ \bar{\alpha}}", shift left=3, from=1-7, to=1-10]
	\arrow[""{name=3, anchor=center, inner sep=0}, "{\ca{L}(\bar{\alpha})}"', shift right=3, from=1-7, to=1-10]
	\arrow[""{name=4, anchor=center, inner sep=0}, "s", shift left=3, from=3-1, to=3-4]
	\arrow[""{name=5, anchor=center, inner sep=0}, "t"', shift right=3, from=3-1, to=3-4]
	\arrow["{ \ca{L}(\bar{\alpha})}", from=3-4, to=3-7]
	\arrow["s", from=3-9, to=3-12]
	\arrow[""{name=6, anchor=center, inner sep=0}, "{q \circ \bar{\alpha}}", shift left=3, from=3-12, to=3-15]
	\arrow[""{name=7, anchor=center, inner sep=0}, "{\ca{L}(\bar{\alpha})}"', shift right=3, from=3-12, to=3-15]
	\arrow["t", from=5-1, to=5-4]
	\arrow[""{name=8, anchor=center, inner sep=0}, "{q \circ \bar{\alpha}}", shift left=3, from=5-4, to=5-7]
	\arrow[""{name=9, anchor=center, inner sep=0}, "{\ca{L}(\bar{\alpha})}"', shift right=3, from=5-4, to=5-7]
	\arrow[""{name=10, anchor=center, inner sep=0}, "s", shift left=3, from=5-9, to=5-12]
	\arrow[""{name=11, anchor=center, inner sep=0}, "t"', shift right=3, from=5-9, to=5-12]
	\arrow["{q \circ \bar{\alpha}}", from=5-12, to=5-15]
	\arrow["u"', shorten <=2pt, shorten >=2pt, Rightarrow, from=1, to=0]
	\arrow["{\alpha_{\bar{\alpha}}}"', shorten <=2pt, shorten >=2pt, Rightarrow, from=2, to=3]
	\arrow["u"', shorten <=2pt, shorten >=2pt, Rightarrow, from=4, to=5]
	\arrow["{\alpha_{\bar{\alpha}}}", shorten <=2pt, shorten >=2pt, Rightarrow, from=6, to=7]
	\arrow["{\alpha_{\bar{\alpha}}}", shorten <=2pt, shorten >=2pt, Rightarrow, from=8, to=9]
	\arrow["u", shorten <=2pt, shorten >=2pt, Rightarrow, from=10, to=11]
\end{tikzcd}
 }
 \]

 This implies that $ \ca{L}(\alpha) * \alpha_f = (\alpha_g) * (q \circ \alpha) $.

This entire argument shows that the functor from $\la{ClovenCart}(\cat{Top}//E, \cat{Top}//E^{'})$ to the category $\la{Geom}(E,E^{'})$ is essentially surjective. The fact that this functor is fully faithful follows from the fact that $E$ is the colimit of the forgetful functor from $\cat{Top}/E$ to the category $\cat{Topos}$.

 \begin{note*} \normalfont
     The reader may object that our proof that $\mathrm{Geom}(E,E^{'}) \simeq \mathrm{Lult}(M_{E}, M_{E^{'}})$ is unconventional. A more ``natural way'' of thinking is to consider the functor that sends a geometric morphism $f$ in  $\mathrm{Geom}(E,E^{'})$ to the functor that  composes a point of $E$ (a geometric morphism from $\cat{Set}$ to $E$) by $f$, and show that it's an equivalence. Tracing the backward direction of  the equivalence we did show, reveals that the construction above is exactly the reverse equivalence. Stating the theorem this way is closer to the form originally stated by Makkai \cite{makkai88strong,makkai1987stone}.
 \end{note*}

 \begin{note*} \normalfont
     By replacing  the topos $E^{'}$ with the classifying topos of the theory of objects $\ca{S}[\bb{O}]$, we get that for every topos with enough points, there is an equivalence of categories between $\mathrm{Lult}(M_E,\cat{Set})$ and $E$.

     Since we have shown that the notions of left ultrafunctors agree for both generalised ultracategories and for actual ultracategories which are pseudo-algebras for the monad $T$ (since those only can be considered as generalised ultracategories in a canonical way), then our result provides an alternative proof of Lurie's result.
 \end{note*}

\bibliography{bib}

\section*{Appendix A: The underlying category of a generalised ultracategory}
\label{underlying}
\addcontentsline{toc}{section}{\nameref{underlying}}
In this appendix, we show that the underlying category of a generalised ultracategory is indeed a category:

To do so we start by verifying right unit axiom which expresses the fact that the following composition is the identity:

\[
\adjustbox{max width = \textwidth}{
% https://q.uiver.app/#q=WzAsNyxbMCwwLCJcXG1hdGhybXtIb219KEEsXFxpbnRfKkFkKikiXSxbMiwwLCJcXG1hdGhybXtIb219KEEsXFxpbnRfKkFkKilcXHRpbWVzIDEiXSxbMSwzXSxbNSwwLCJcXG1hdGhybXtIb219KEEsXFxpbnRfKkFkKilcXHRpbWVzIFxcbWF0aHJte0hvbX0oQSxcXGludF8qQSBkKikiXSxbNywwLCJcXG1hdGhybXtIb219KEEsXFxpbnRfKkFkKilcXHRpbWVzIFxcaW50XypcXG1hdGhybXtIb219KEEsXFxpbnRfKkEgZCopZCoiXSxbMTAsMCwiXFxtYXRocm17SG9tfShBLCBcXGludF97XFxjb3Byb2Rfeyp9Kn1BZCopIl0sWzEyLDAsIlxcbWF0aHJte0hvbX0oQSxcXGludF8qQWQqKSJdLFsxLDMsIlxcbWF0aHJte2lkfSBcXHRpbWVzIFxca2FwcGEiXSxbMCwxLCJcXHNpbWVxIl0sWzMsNCwiXFxtYXRocm17aWR9IFxcdGltZXMgXFxlcHNpbG9uXnstMX0iXSxbNCw1LCJcXGJldGEiXSxbNSw2LCJcXFhpX3tmXyp9Il1d
\begin{tikzcd}
	{\mathrm{Hom}(A,\int_*Ad*)} && {\mathrm{Hom}(A,\int_*Ad*)\times 1} &&& {\mathrm{Hom}(A,\int_*Ad*)\times \mathrm{Hom}(A,\int_*A d*)} && {\mathrm{Hom}(A,\int_*Ad*)\times \int_*\mathrm{Hom}(A,\int_*A d*)d*} &&& {\mathrm{Hom}(A, \int_{\coprod_{*}*}Ad*)} && {\mathrm{Hom}(A,\int_*Ad*)} \\
	\\
	\\
	& {}
	\arrow["\simeq", from=1-1, to=1-3]
	\arrow["{\mathrm{id} \times \kappa}", from=1-3, to=1-6]
	\arrow["{\mathrm{id} \times \epsilon^{-1}}", from=1-6, to=1-8]
	\arrow["\beta", from=1-8, to=1-11]
	\arrow["{\Xi_{f_*}}", from=1-11, to=1-13]
\end{tikzcd}
}
\]
\noindent To do so let us have a look at the following diagram :

\[
\adjustbox{max width = \textwidth}{
% https://q.uiver.app/#q=WzAsOCxbMCwyLCJcXG1hdGhybXtIb219KEEsXFxpbnRfKkFkKikiXSxbMiwyLCJcXG1hdGhybXtIb219KEEsXFxpbnRfKkFkKilcXHRpbWVzIDEiXSxbMSw1XSxbNSwyLCJcXG1hdGhybXtIb219KEEsXFxpbnRfKkFkKilcXHRpbWVzIFxcbWF0aHJte0hvbX0oQSxcXGludF8qQSBkKikiXSxbNywyLCJcXG1hdGhybXtIb219KEEsXFxpbnRfKkFkKilcXHRpbWVzIFxcaW50XypcXG1hdGhybXtIb219KEEsXFxpbnRfKkEgZCopZCoiXSxbMTAsMiwiXFxtYXRocm17SG9tfShBLCBcXGludF97XFxjb3Byb2Rfeyp9Kn1BZCopIl0sWzEyLDIsIlxcbWF0aHJte0hvbX0oQSxcXGludF8qQWQqKSJdLFs1LDAsIiBcXEhvbShBLFxcaW50XypBZCopIFxcdGltZXMgXFxpbnRfKjEgZCoiXSxbMSwzLCJcXG1hdGhybXtpZH0gXFx0aW1lcyBcXGthcHBhIl0sWzAsMSwiXFxzaW1lcSJdLFszLDQsIlxcbWF0aHJte2lkfSBcXHRpbWVzIFxcZXBzaWxvbl57LTF9Il0sWzQsNSwiXFxiZXRhIl0sWzUsNiwiXFxYaV97Zl8qfSJdLFsxLDcsIlxcbGF7aWR9IFxcdGltZXMgXFxlcHNpbG9uXnstMX0iXSxbNyw0LCJcXGxhe2lkfSBcXHRpbWVzIFxcaW50XypcXGthcHBhIGQqIl1d
\begin{tikzcd}
	&&&&& { \Hom(A,\int_*Ad*) \times \int_*1 d*} \\
	\\
	{\mathrm{Hom}(A,\int_*Ad*)} && {\mathrm{Hom}(A,\int_*Ad*)\times 1} &&& {\mathrm{Hom}(A,\int_*Ad*)\times \mathrm{Hom}(A,\int_*A d*)} && {\mathrm{Hom}(A,\int_*Ad*)\times \int_*\mathrm{Hom}(A,\int_*A d*)d*} &&& {\mathrm{Hom}(A, \int_{\coprod_{*}*}Ad*)} && {\mathrm{Hom}(A,\int_*Ad*)} \\
	\\
	\\
	& {}
	\arrow["{\la{id} \times \int_*\kappa d*}", from=1-6, to=3-8]
	\arrow["\simeq", from=3-1, to=3-3]
	\arrow["{\la{id} \times \epsilon^{-1}}", from=3-3, to=1-6]
	\arrow["{\mathrm{id} \times \kappa}", from=3-3, to=3-6]
	\arrow["{\mathrm{id} \times \epsilon^{-1}}", from=3-6, to=3-8]
	\arrow["\beta", from=3-8, to=3-11]
	\arrow["{\Xi_{f_*}}", from=3-11, to=3-13]
\end{tikzcd}}
\]

\noindent The upper composition is equal to the identity by the right unit axiom, while the square commutes because both compositions are equal to  $\mathrm{id} \times \kappa$.

\noindent Now to verify left unit axiom, We want the following composition to be equal to the identity :

\[
\adjustbox{max width =\textwidth}{
% https://q.uiver.app/#q=WzAsNyxbMywwLCIxIFxcdGltZXNcXG1hdGhybXtIb219KEEsXFxpbnRfKkFkKikiXSxbNiwwLCJcXG1hdGhybXtIb219KEEsXFxpbnRfKkFkKikgXFx0aW1lcyBcXG1hdGhybXtIb219KEEsXFxpbnRfKkFkKikiXSxbMTEsMF0sWzksMCwiXFxtYXRocm17SG9tfShBLFxcaW50XypBZCopIFxcdGltZXMgXFxpbnRfKlxcbWF0aHJte0hvbX0oQSxcXGludF8qQWQqKWQqIl0sWzEzLDAsIlxcSG9tKEEsXFxpbnRfe1xcY29wcm9kXyoqfUFkKikiXSxbMCwwLCJcXEhvbShBLFxcaW50XypBZCopIl0sWzE2LDAsIlxcSG9tKEEsXFxpbnRfe1xcY29wcm9kXyoqfUFkKikiXSxbMCwxLCJcXGthcHBhIFxcdGltZXMgXFxsYXtpZH0iXSxbMSwzLCJcXGxhe2lkfSBcXHRpbWVzIFxcZXBzaWxvbl57LTF9Il0sWzMsNCwiXFxiZXRhIl0sWzUsMCwiXFxzaW1lcSJdLFs0LDYsIlxcWGlfe2dfKn09XFxYaV97Zl8qfSJdXQ==
\begin{tikzcd}
	{\Hom(A,\int_*Ad*)} &&& {1 \times\mathrm{Hom}(A,\int_*Ad*)} &&& {\mathrm{Hom}(A,\int_*Ad*) \times \mathrm{Hom}(A,\int_*Ad*)} &&& {\mathrm{Hom}(A,\int_*Ad*) \times \int_*\mathrm{Hom}(A,\int_*Ad*)d*} && {} && {\Hom(A,\int_{\coprod_**}Ad*)} &&& {\Hom(A,\int_{\coprod_**}Ad*)}
	\arrow["\simeq", from=1-1, to=1-4]
	\arrow["{\kappa \times \la{id}}", from=1-4, to=1-7]
	\arrow["{\la{id} \times \epsilon^{-1}}", from=1-7, to=1-10]
	\arrow["\beta", from=1-10, to=1-14]
	\arrow["{\Xi_{g_*}=\Xi_{f_*}}", from=1-14, to=1-17]
\end{tikzcd}
}
\]

To do so let us use the following diagram: 

\[
\adjustbox{max width = \textwidth}{
% https://q.uiver.app/#q=WzAsOCxbMywyLCIxIFxcdGltZXNcXG1hdGhybXtIb219KEEsXFxpbnRfKkFkKikiXSxbNiwyLCJcXG1hdGhybXtIb219KEEsXFxpbnRfKkFkKikgXFx0aW1lcyBcXG1hdGhybXtIb219KEEsXFxpbnRfKkFkKikiXSxbOSwyLCJcXG1hdGhybXtIb219KEEsXFxpbnRfKkFkKikgXFx0aW1lcyBcXGludF8qXFxtYXRocm17SG9tfShBLFxcaW50XypBZCopZCoiXSxbMTMsMiwiXFxIb20oQSxcXGludF97XFxjb3Byb2RfKip9QWQqKSJdLFswLDIsIlxcSG9tKEEsXFxpbnRfKkFkKikiXSxbMTYsMiwiXFxIb20oQSxcXGludF97Kn1BZCopIl0sWzMsMCwiXFxpbnRfKlxcSG9tKEEsXFxpbnRfKkFkKilkKiJdLFs2LDAsIjEgXFx0aW1lc1xcaW50XypcXEhvbShBLFxcaW50XypBZCopZCoiXSxbMCwxLCJcXGthcHBhIFxcdGltZXMgXFxsYXtpZH0iXSxbMSwyLCJcXGxhe2lkfSBcXHRpbWVzIFxcZXBzaWxvbl57LTF9Il0sWzIsMywiXFxiZXRhIl0sWzQsMCwiXFxzaW1lcSJdLFszLDUsIlxcWGlfe2dfKn09XFxYaV97Zl8qfSJdLFs0LDYsIlxcZXBzaWxvbl57LTF9Il0sWzYsNywiXFxzaW1lcSJdLFs3LDIsIlxca2FwcGEgXFx0aW1lcyBcXGxhe2lkfSJdLFswLDcsIlxcbGF7aWR9IFxcdGltZXMgXFxlcHNpbG9uXnstMX0iXV0=
\begin{tikzcd}
	&&& {\int_*\Hom(A,\int_*Ad*)d*} &&& {1 \times\int_*\Hom(A,\int_*Ad*)d*} \\
	\\
	{\Hom(A,\int_*Ad*)} &&& {1 \times\mathrm{Hom}(A,\int_*Ad*)} &&& {\mathrm{Hom}(A,\int_*Ad*) \times \mathrm{Hom}(A,\int_*Ad*)} &&& {\mathrm{Hom}(A,\int_*Ad*) \times \int_*\mathrm{Hom}(A,\int_*Ad*)d*} &&&& {\Hom(A,\int_{\coprod_**}Ad*)} &&& {\Hom(A,\int_{*}Ad*)}
	\arrow["\simeq", from=1-4, to=1-7]
	\arrow["{\kappa \times \la{id}}", from=1-7, to=3-10]
	\arrow["{\epsilon^{-1}}", from=3-1, to=1-4]
	\arrow["\simeq", from=3-1, to=3-4]
	\arrow["{\la{id} \times \epsilon^{-1}}", from=3-4, to=1-7]
	\arrow["{\kappa \times \la{id}}", from=3-4, to=3-7]
	\arrow["{\la{id} \times \epsilon^{-1}}", from=3-7, to=3-10]
	\arrow["\beta", from=3-10, to=3-14]
	\arrow["{\Xi_{g_*}=\Xi_{f_*}}", from=3-14, to=3-17]
\end{tikzcd}
}
\]
The upper composition is equal to the identity by the right unit axioms, the left square commutes by naturality of the isomorphism between a  set $B$ and $B \times 1$. The right square commutes because both compose to $\kappa \times \epsilon^{-1}$.

Finally, we want to show composition which is not straightforward. Commutativity in the underlying category structure translates to showing the commutativity of the following diagram:

\[
\adjustbox{ max width = 50 em, max height =70 em}{
% https://q.uiver.app/#q=WzAsOSxbMCwwLCJcXG1hdGhybXtIb219KEEsXFxpbnQgQiAgZCopIFxcdGltZXMgXFxtYXRocm17SG9tfShCLFxcaW50XyogQyApXFx0aW1lcyBcXEhvbShDLFxcaW50XypEKSJdLFs1LDAsIihcXG1hdGhybXtIb219KEEsXFxpbnQgQmQgKilcXHRpbWVzIFxcaW50XypcXG1hdGhybXtIb219KEIsXFxpbnRfKkNkKilkKilcXHRpbWVzIFxcSG9tKEMsXFxpbnRfKkQpIl0sWzksMCwiXFxtYXRocm17SG9tfShBLFxcaW50X3tcXGNvcHJvZF8qKn1DZCopICBcXHRpbWVzIFxcSG9tKEMsXFxpbnRfKkQpIl0sWzE0LDAsIlxcbWF0aHJte0hvbX0oQSxcXGludF97Kn1DZCopICBcXHRpbWVzIFxcaW50XypcXEhvbShDLFxcaW50XypEKWQqIl0sWzAsNCwiXFxtYXRocm17SG9tfShBLFxcaW50IEJkICopXFx0aW1lcyBcXG1hdGhybXtIb219KEIsXFxpbnRfKkNkKilcXHRpbWVzIFxcaW50XypcXEhvbShDLFxcaW50XypEKWQqIl0sWzUsNCwiXFxIb20oQSxcXGludCBCIGQqKVxcdGltZXMgXFxIb20oQixcXGludF97XFxjb3Byb2RfKip9RGQqKSJdLFs5LDQsIlxcSG9tKEEsXFxpbnRfKkJkKilcXHRpbWVzIFxcSG9tKEIsXFxpbnRfKkRkKikiXSxbMTksMCwiXFxIb20oQSxcXGludF97XFxjb3Byb2RfKip9RCopIl0sWzE5LDQsIlxcSG9tKEEsXFxpbnRfKkJkKilcXHRpbWVzIFxcaW50XypcXEhvbShCLFxcaW50XypEZCopZCoiXSxbMCw0LCJcXG1hdGhybXtpZH1cXHRpbWVzIFxcbWF0aHJte2lkfSBcXHRpbWVzIFxcZXBzaWxvbl97KiwqfV57LTF9Il0sWzQsNSwiXFxtYXRocm17aWR9IFxcdGltZXMgXFxiZXRhIl0sWzUsNiwiXFxtYXRocm17aWR9IFxcdGltZXMgXFxYaV97Zl8qfSJdLFswLDEsIlxcbWF0aHJte2lkfSBcXHRpbWVzIFxcZXBzaWxvbl97KiwqfV57LTF9IFxcdGltZXMgXFxtYXRocm17aWR9IiwyXSxbMSwyLCJcXGJldGEgXFx0aW1lcyBcXG1hdGhybXtpZH0gIiwyXSxbMiwzLCJcXFhpX3tmXyp9IFxcdGltZXMgXFxlcHNpbG9uXnstMX0iLDJdLFszLDcsIlxcYmV0YSIsMl0sWzgsNywiXFxiZXRhIl0sWzYsOCwiXFxtYXRocm17aWR9IFxcdGltZXMgXFxlcHNpbG9uXnstMX0gIl1d
\begin{tikzcd}
	{\mathrm{Hom}(A,\int B  d*) \times \mathrm{Hom}(B,\int_* C )\times \Hom(C,\int_*D)} &&&&& {(\mathrm{Hom}(A,\int Bd *)\times \int_*\mathrm{Hom}(B,\int_*Cd*)d*)\times \Hom(C,\int_*D)} &&&& {\mathrm{Hom}(A,\int_{\coprod_**}Cd*)  \times \Hom(C,\int_*D)} &&&&& {\mathrm{Hom}(A,\int_{*}Cd*)  \times \int_*\Hom(C,\int_*D)d*} &&&&& {\Hom(A,\int_{\coprod_**}D*)} \\
	\\
	\\
	\\
	{\mathrm{Hom}(A,\int Bd *)\times \mathrm{Hom}(B,\int_*Cd*)\times \int_*\Hom(C,\int_*D)d*} &&&&& {\Hom(A,\int B d*)\times \Hom(B,\int_{\coprod_**}Dd*)} &&&& {\Hom(A,\int_*Bd*)\times \Hom(B,\int_*Dd*)} &&&&&&&&&& {\Hom(A,\int_*Bd*)\times \int_*\Hom(B,\int_*Dd*)d*}
	\arrow["{\mathrm{id} \times \epsilon_{*,*}^{-1} \times \mathrm{id}}"', from=1-1, to=1-6]
	\arrow["{\mathrm{id}\times \mathrm{id} \times \epsilon_{*,*}^{-1}}", from=1-1, to=5-1]
	\arrow["{\beta \times \mathrm{id} }"', from=1-6, to=1-10]
	\arrow["{\Xi_{f_*} \times \epsilon^{-1}}"', from=1-10, to=1-15]
	\arrow["\beta"', from=1-15, to=1-20]
	\arrow["{\mathrm{id} \times \beta}", from=5-1, to=5-6]
	\arrow["{\mathrm{id} \times \Xi_{f_*}}", from=5-6, to=5-10]
	\arrow["{\mathrm{id} \times \epsilon^{-1} }", from=5-10, to=5-20]
	\arrow["\beta", from=5-20, to=1-20]
\end{tikzcd}
}
\]
\noindent Now  we already know that the following diagram commutes:

\[
\adjustbox{max width =\textwidth}{
% https://q.uiver.app/#q=WzAsNCxbMCwwLCJcXG1hdGhybXtIb219KEEsXFxpbnQgQiAgZCopIFxcdGltZXMgXFxtYXRocm17SG9tfShCLFxcaW50XyogQyApXFx0aW1lcyBcXEhvbShDLFxcaW50XypEKSJdLFswLDMsIlxcbWF0aHJte0hvbX0oQSxcXGludCBCZCAqKVxcdGltZXMgXFxtYXRocm17SG9tfShCLFxcaW50XypDZCopXFx0aW1lcyBcXGludF8qXFxIb20oQyxcXGludF8qRClkKiJdLFs0LDMsIlxcbWF0aHJte0hvbX0oQSxcXGludCBCZCAqKVxcdGltZXMgXFxpbnRfKlxcbWF0aHJte0hvbX0oQixcXGludF8qQ2QqKWQqXFx0aW1lcyBcXGludF8qXFxIb20oQyxcXGludF8qRClkKiJdLFs0LDAsIihcXG1hdGhybXtIb219KEEsXFxpbnQgQmQgKilcXHRpbWVzIFxcaW50XypcXG1hdGhybXtIb219KEIsXFxpbnRfKkNkKilkKilcXHRpbWVzIFxcSG9tKEMsXFxpbnRfKkQpIl0sWzAsMSwiXFxtYXRocm17aWR9XFx0aW1lcyBcXG1hdGhybXtpZH0gXFx0aW1lcyBcXGVwc2lsb25feyosKn1eey0xfSJdLFsxLDIsIlxcbWF0aHJte2lkfSBcXHRpbWVzIFxcZXBzaWxvbl57LTF9X3sqLCp9IFxcdGltZXMgXFxtYXRocm17aWR9Il0sWzAsMywiXFxtYXRocm17aWR9IFxcdGltZXMgXFxlcHNpbG9uX3sqLCp9XnstMX0gXFx0aW1lcyBcXG1hdGhybXtpZH0iLDJdLFszLDIsIlxcbWF0aHJte2lkfSBcXHRpbWVzIFxcbWF0aHJte2lkfSBcXHRpbWVzIFxcZXBzaWxvbl57LTF9X3sqLCp9IiwyXV0=
\begin{tikzcd}
	{\mathrm{Hom}(A,\int B  d*) \times \mathrm{Hom}(B,\int_* C )\times \Hom(C,\int_*D)} &&&& {(\mathrm{Hom}(A,\int Bd *)\times \int_*\mathrm{Hom}(B,\int_*Cd*)d*)\times \Hom(C,\int_*D)} \\
	\\
	\\
	{\mathrm{Hom}(A,\int Bd *)\times \mathrm{Hom}(B,\int_*Cd*)\times \int_*\Hom(C,\int_*D)d*} &&&& {\mathrm{Hom}(A,\int Bd *)\times \int_*\mathrm{Hom}(B,\int_*Cd*)d*\times \int_*\Hom(C,\int_*D)d*}
	\arrow["{\mathrm{id}\times \mathrm{id} \times \epsilon_{*,*}^{-1}}", from=1-1, to=4-1]
	\arrow["{\mathrm{id} \times \epsilon^{-1}_{*,*} \times \mathrm{id}}", from=4-1, to=4-5]
	\arrow["{\mathrm{id} \times \epsilon_{*,*}^{-1} \times \mathrm{id}}"', from=1-1, to=1-5]
	\arrow["{\mathrm{id} \times \mathrm{id} \times \epsilon^{-1}_{*,*}}"', from=1-5, to=4-5]
\end{tikzcd}}
\]

\noindent So this allows us to simplify the task to showing that the following diagram is commutative:

\[
\adjustbox{ max width = 50 em, max height =70 em}{
% https://q.uiver.app/#q=WzAsNyxbNCwwLCJcXG1hdGhybXtIb219KEEsXFxpbnRfe1xcY29wcm9kXyoqfUNkKikgIFxcdGltZXMgXFxIb20oQyxcXGludF8qRCkiXSxbOSwwLCJcXG1hdGhybXtIb219KEEsXFxpbnRfeyp9Q2QqKSAgXFx0aW1lcyBcXGludF8qXFxIb20oQyxcXGludF8qRClkKiJdLFswLDQsIlxcSG9tKEEsXFxpbnQgQiBkKilcXHRpbWVzIFxcSG9tKEIsXFxpbnRfe1xcY29wcm9kXyoqfURkKikiXSxbNCw0LCJcXEhvbShBLFxcaW50XypCZCopXFx0aW1lcyBcXEhvbShCLFxcaW50XypEZCopIl0sWzE0LDAsIlxcSG9tKEEsXFxpbnRfe1xcY29wcm9kXyoqfUQqKSJdLFswLDAsIlxcbWF0aHJte0hvbX0oQSxcXGludCBCZCAqKVxcdGltZXMgXFxpbnRfKlxcbWF0aHJte0hvbX0oQixcXGludF8qQ2QqKWQqXFx0aW1lcyBcXGludF8qXFxIb20oQyxcXGludF8qRClkKiJdLFsxNCw0LCJcXEhvbShBLFxcaW50XypCZCopXFx0aW1lcyBcXGludF8qXFxIb20oQixcXGludF8qRGQqKWQqIl0sWzIsMywiXFxtYXRocm17aWR9IFxcdGltZXMgXFxYaV97Zl8qfSJdLFswLDEsIlxcWGlfe2ZfKn0gXFx0aW1lcyBcXGVwc2lsb25eey0xfSIsMl0sWzEsNCwiXFxiZXRhIiwyXSxbNSwwLCIgIFxcYmV0YSBcXHRpbWVzIFxcZXBzaWxvbiIsMl0sWzUsMiwiXFxtYXRocm17aWR9IFxcdGltZXMgXFxlcHNpbG9uXnstMX0iXSxbNiw0LCJcXGJldGEiXSxbMyw2LCJcXG1hdGhybXtpZH0gXFx0aW1lcyBcXGVwc2lsb25eey0xfSAiXV0=
\begin{tikzcd}
	{\mathrm{Hom}(A,\int Bd *)\times \int_*\mathrm{Hom}(B,\int_*Cd*)d*\times \int_*\Hom(C,\int_*D)d*} &&&& {\mathrm{Hom}(A,\int_{\coprod_**}Cd*)  \times \Hom(C,\int_*D)} &&&&& {\mathrm{Hom}(A,\int_{*}Cd*)  \times \int_*\Hom(C,\int_*D)d*} &&&&& {\Hom(A,\int_{\coprod_**}D*)} \\
	\\
	\\
	\\
	{\Hom(A,\int B d*)\times \Hom(B,\int_{\coprod_**}Dd*)} &&&& {\Hom(A,\int_*Bd*)\times \Hom(B,\int_*Dd*)} &&&&&&&&&& {\Hom(A,\int_*Bd*)\times \int_*\Hom(B,\int_*Dd*)d*}
	\arrow["{  \beta \times \epsilon}"', from=1-1, to=1-5]
	\arrow["{\mathrm{id} \times \epsilon^{-1}}", from=1-1, to=5-1]
	\arrow["{\Xi_{f_*} \times \epsilon^{-1}}"', from=1-5, to=1-10]
	\arrow["\beta"', from=1-10, to=1-15]
	\arrow["{\mathrm{id} \times \Xi_{f_*}}", from=5-1, to=5-5]
	\arrow["{\mathrm{id} \times \epsilon^{-1} }", from=5-5, to=5-15]
	\arrow["\beta", from=5-15, to=1-15]
\end{tikzcd}}
\]

\noindent Now to show this commutativity let us have a look at the following diagram:

\[
\adjustbox{ max width = 50 em, max height =70 em}{
\begin{tikzcd}
	&&&&&& {\mathrm{Hom}(A,\int_{\coprod_**}Cd*)  \times \Hom(C,\int_*Dd*)} && {\mathrm{Hom}(A,\int_{*}Cd*)  \times \int_*\Hom(C,\int_*D)d*} && {\mathrm{Hom}(A,\int_{\coprod_{*}*}Dd*)} &&& {} \\
	\\
	&&&&&& {\Hom(A,\int_{\coprod_**}Cd*) \times \int_{\coprod_{*}*} \Hom(C,\int_{*}Dd*)d*} && {\Hom(A,\int_{\coprod_{\coprod_{*}*}*}Dd*)} \\
	\\
	&&& {} \\
	&&&&&& {\Hom(A,\int_{\coprod_**}Cd*) \times   \int_*\int_*\Hom(C,\int_*D)d*d*} &&&& {\Hom(A, \int_{\coprod_{*}\coprod_{*}*}Dd*)} \\
	\\
	&&& {\mathrm{Hom}(A,\int Bd *)\times \int_*\mathrm{Hom}(B,\int_*Cd*)d*\times \int_*\Hom(C,\int_*D)d*} &&& {\mathrm{Hom}(A,\int Bd *)\times \int_*\mathrm{Hom}(B,\int_*Cd*)d*\times \int_*\int_*\Hom(C,\int_*D)d*d*} \\
	\\
	\\
	\\
	\\
	&&& {\Hom(A,\int B d*)\times \Hom(B,\int_{\coprod_**}Dd*)} &&& {\mathrm{Hom}(A,\int Bd *)\times \int_*\mathrm{Hom}(B,\int_*Cd*)\times \int_* \Hom(C,\int_*D)d*d*} &&&& {\Hom(A,\int_*B d*) \times \int_* \Hom(B,\int_{\coprod_**}D d*)} \\
	\\
	\\
	\\
	\\
	{} \\
	\\
	&&&&&& {} \\
	\\
	{} &&& {\Hom(A,\int_*Bd*)\times \Hom(B,\int_*Dd*)} &&&&&&& {\Hom(A,\int_*Bd*)\times \int_*\Hom(B,\int_*Dd*)d*}
	\arrow[""{name=0, anchor=center, inner sep=0}, "{\la{id} \times \epsilon^{-1}}", from=1-7, to=1-9]
	\arrow["5", draw=none, from=1-7, to=3-9]
	\arrow["\beta", from=1-9, to=1-11]
	\arrow["{\la{id} \times\epsilon}", from=3-7, to=1-7]
	\arrow["\beta", from=3-7, to=3-9]
	\arrow["{\Xi_{}}", from=3-9, to=1-11]
	\arrow["{\la{id} \times a^{-1}}", from=6-7, to=3-7]
	\arrow["\Xi", from=6-11, to=1-11]
	\arrow[""{name=1, anchor=center, inner sep=0}, "{\beta  \times \epsilon^{-1}}", from=8-4, to=1-7]
	\arrow["{\la{id} \times \la{id} \times \epsilon^{-1}}", from=8-4, to=8-7]
	\arrow["{(\mathrm{id} \times \epsilon^{-1})\circ (\mathrm{id} \times \beta)}", from=8-4, to=13-4]
	\arrow["{\la{id} \times \beta}", from=8-7, to=6-7]
	\arrow["{\mathrm{commutativity \  of \  products \ with \ directed \  colimits}}", from=8-7, to=13-7]
	\arrow[""{name=2, anchor=center, inner sep=0}, "{\la{Id} \times \epsilon^{-1}}"', curve={height=60pt}, from=13-4, to=13-11]
	\arrow["{id \times \Xi_{f_*}}"', from=13-4, to=22-4]
	\arrow[""{name=3, anchor=center, inner sep=0}, "{\la{Id} \times \beta}", from=13-7, to=13-11]
	\arrow["\beta", from=13-11, to=6-11]
	\arrow[""{name=4, anchor=center, inner sep=0}, "{\la{id} \times \epsilon^{-1}}", from=22-4, to=22-11]
	\arrow["\beta"{description, pos=0.6}, shift right=5, curve={height=80pt}, from=22-11, to=1-11]
	\arrow["4", draw=none, from=0, to=3]
	\arrow["1"', draw=none, from=1, to=8-7]
	\arrow["2"', draw=none, from=8-4, to=2]
	\arrow["3"', draw=none, from=2, to=4]
\end{tikzcd}
}
\]

\noindent First let us have a look at diagram $1$:

\[
\adjustbox{max width = \textwidth}{
% https://q.uiver.app/#q=WzAsNixbMCw2LCJcXG1hdGhybXtIb219KEEsXFxpbnQgQmQgKilcXHRpbWVzIFxcaW50XypcXG1hdGhybXtIb219KEIsXFxpbnRfKkNkKilkKlxcdGltZXMgXFxpbnRfKlxcSG9tKEMsXFxpbnRfKkRkKilkKiJdLFs1LDYsIlxcbWF0aHJte0hvbX0oQSxcXGludCBCZCAqKVxcdGltZXMgXFxpbnRfKlxcbWF0aHJte0hvbX0oQixcXGludF8qQ2QqKWQqXFx0aW1lcyBcXGludF8qXFxpbnRfKlxcSG9tKEMsXFxpbnRfKkRkKilkKmQqIl0sWzUsNCwiXFxIb20oQSxcXGludF97XFxjb3Byb2RfKip9Q2QqKSBcXHRpbWVzICBcXGludF8qXFxpbnRfKlxcSG9tKEMsXFxpbnRfKkRkKilkKmQqIl0sWzUsMiwiXFxIb20oQSxcXGludF97XFxjb3Byb2RfKip9Q2QqKSBcXHRpbWVzIFxcaW50X3tcXGNvcHJvZF97Kn0qfSBcXEhvbShDLFxcaW50XypEZCopZCoiXSxbNSwwLCJcXG1hdGhybXtIb219KEEsXFxpbnRfe1xcY29wcm9kXyoqfUNkKikgIFxcdGltZXMgXFxIb20oQyxcXGludF8qRGQqKSJdLFswLDMsIlxcbWF0aHJte0hvbX0oQSxcXGludF97XFxjb3Byb2RfKip9Q2QqKSAgXFx0aW1lcyBcXEhvbShDLFxcaW50XypEZCopIl0sWzAsMSwiXFxsYXtpZH0gXFx0aW1lcyBcXGxhe2lkfSBcXHRpbWVzIFxcZXBzaWxvbl57LTF9Il0sWzEsMiwiXFxiZXRhIFxcdGltZXMgXFxsYXtpZH0iXSxbMiwzLCJcXGxhe2lkfSBcXHRpbWVzIGFeey0xfSJdLFszLDQsIlxcbGF7aWR9IFxcdGltZXMgXFxlcHNpbG9uXnstMX0iXSxbMiw0LCJcXGxhe2lkfSBcXHRpbWVzIFxcZXBzaWxvbl57LTF9IFxcY2lyYyBcXGVwc2lsb25eey0xfSIsMSx7ImxhYmVsX3Bvc2l0aW9uIjo0MCwiY3VydmUiOjV9XSxbMCw1LCJcXGJldGEgXFx0aW1lcyBcXGxhe2lkfSIsMl0sWzUsNCwiXFxlcHNpbG9uXnstMX0gXFx0aW1lcyAgXFxsYXtpZH0gIiwyXSxbOCwxMCwiMSIsMCx7InNob3J0ZW4iOnsic291cmNlIjoyMCwidGFyZ2V0IjoyMH0sInN0eWxlIjp7ImJvZHkiOnsibmFtZSI6Im5vbmUifSwiaGVhZCI6eyJuYW1lIjoibm9uZSJ9fX1dXQ==
\begin{tikzcd}
	&&&&& {\mathrm{Hom}(A,\int_{\coprod_**}Cd*)  \times \Hom(C,\int_*Dd*)} \\
	\\
	&&&&& {\Hom(A,\int_{\coprod_**}Cd*) \times \int_{\coprod_{*}*} \Hom(C,\int_*Dd*)d*} \\
	{\mathrm{Hom}(A,\int_{\coprod_**}Cd*)  \times \Hom(C,\int_*Dd*)} \\
	&&&&& {\Hom(A,\int_{\coprod_**}Cd*) \times  \int_*\int_*\Hom(C,\int_*Dd*)d*d*} \\
	\\
	{\mathrm{Hom}(A,\int Bd *)\times \int_*\mathrm{Hom}(B,\int_*Cd*)d*\times \int_*\Hom(C,\int_*Dd*)d*} &&&&& {\mathrm{Hom}(A,\int Bd *)\times \int_*\mathrm{Hom}(B,\int_*Cd*)d*\times \int_*\int_*\Hom(C,\int_*Dd*)d*d*}
	\arrow["{\la{id} \times \epsilon^{-1}}", from=3-6, to=1-6]
	\arrow["{\epsilon^{-1} \times  \la{id} }"', from=4-1, to=1-6]
	\arrow[""{name=0, anchor=center, inner sep=0}, "{\la{id} \times \epsilon^{-1} \circ \epsilon^{-1}}"{description, pos=0.4}, curve={height=70pt}, from=5-6, to=1-6]
	\arrow[""{name=1, anchor=center, inner sep=0}, "{\la{id} \times a^{-1}}", from=5-6, to=3-6]
	\arrow["{\beta \times \la{id}}"', from=7-1, to=4-1]
	\arrow["{\la{id} \times \la{id} \times \epsilon^{-1}}", from=7-1, to=7-6]
	\arrow["{\beta \times \la{id}}", from=7-6, to=5-6]
	\arrow["1", draw=none, from=1, to=0]
\end{tikzcd}}
\]

\noindent The outer most  diagram commutes by naturality of $\epsilon$. Here one should also note that the diagram commutes, while triangle $1$ (in the diagram $1$) commutes by construction of the colax associator \cite{hamad2025ultracategories} and the first of Lurie's axioms, and hence diagram $1$ commutes.

\noindent Now let us regard the  diagram $2$:

\[
\adjustbox{max width = \textwidth}{
% https://q.uiver.app/#q=WzAsNixbMCwwLCIgXFxpbnRfKlxcbWF0aHJte0hvbX0oQixcXGludF8qQ2QqKWQqXFx0aW1lcyBcXGludF8qXFxIb20oQyxcXGludF8qRGQqKWQqIl0sWzAsNCwiIFxcSG9tKEIsXFxpbnRfe1xcY29wcm9kXyoqfURkKikiXSxbNSwwLCIgXFxpbnRfKlxcbWF0aHJte0hvbX0oQixcXGludF8qQ2QqKWQqXFx0aW1lcyBcXGludF8qXFxpbnRfKlxcSG9tKEMsXFxpbnRfKkQpZCpkKiJdLFs1LDQsIiBcXGludF8qIFxcSG9tKEIsXFxpbnRfe1xcY29wcm9kXyoqfUQgZCopIl0sWzAsMiwiXFxtYXRocm17SG9tfShCLFxcaW50XypDZCopIFxcdGltZXMgXFxpbnRfKlxcSG9tKEMsXFxpbnRfKkRkKilkKiJdLFs1LDIsIlxcaW50XyooXFxtYXRocm17SG9tfShCLFxcaW50XypDZCopXFx0aW1lcyBcXGludF8qIFxcSG9tKEMsXFxpbnRfKkRkKikpZCoiXSxbMCwyLCJcXG1hdGhybXtpZH0gXFx0aW1lcyBcXGVwc2lsb25eey0xfSAiLDJdLFs0LDEsIlxcYmV0YSJdLFswLDQsIiBcXGVwc2lsb25cXCBcXHRpbWVzIFxcbWF0aHJte2lkfSAiXSxbMiw1LCJcXHRleHR7Y29tbXV0YXRpdml0eSAgXFwgb2YgIFxcIHByb2R1Y3RzIFxcIHdpdGggXFwgZGlyZWN0ZWQgXFwgY29saW1pdHN9Il0sWzQsNSwiXFxlcHNpbG9uXnstMX0iLDFdLFs1LDMsIlxcaW50XypcXGJldGEgZCoiXSxbMSwzLCJcXGludF97Kn1cXGVwc2lsb25eey0xfSJdLFs2LDEwLCIxIiwyLHsic2hvcnRlbiI6eyJzb3VyY2UiOjIwLCJ0YXJnZXQiOjIwfSwic3R5bGUiOnsiYm9keSI6eyJuYW1lIjoibm9uZSJ9LCJoZWFkIjp7Im5hbWUiOiJub25lIn19fV0sWzEwLDEyLCIyIiwyLHsic2hvcnRlbiI6eyJzb3VyY2UiOjIwLCJ0YXJnZXQiOjIwfSwic3R5bGUiOnsiYm9keSI6eyJuYW1lIjoibm9uZSJ9LCJoZWFkIjp7Im5hbWUiOiJub25lIn19fV1d
\begin{tikzcd}
	{ \int_*\mathrm{Hom}(B,\int_*Cd*)d*\times \int_*\Hom(C,\int_*Dd*)d*} &&&&& { \int_*\mathrm{Hom}(B,\int_*Cd*)d*\times \int_*\int_*\Hom(C,\int_*D)d*d*} \\
	\\
	{\mathrm{Hom}(B,\int_*Cd*) \times \int_*\Hom(C,\int_*Dd*)d*} &&&&& {\int_*(\mathrm{Hom}(B,\int_*Cd*)\times \int_* \Hom(C,\int_*Dd*))d*} \\
	\\
	{ \Hom(B,\int_{\coprod_**}Dd*)} &&&&& { \int_* \Hom(B,\int_{\coprod_**}D d*)}
	\arrow[""{name=0, anchor=center, inner sep=0}, "{\mathrm{id} \times \epsilon^{-1} }"', from=1-1, to=1-6]
	\arrow["{ \epsilon\ \times \mathrm{id} }", from=1-1, to=3-1]
	\arrow["{\text{commutativity  \ of  \ products \ with \ directed \ colimits}}", from=1-6, to=3-6]
	\arrow[""{name=1, anchor=center, inner sep=0}, "{\epsilon^{-1}}"{description}, from=3-1, to=3-6]
	\arrow["\beta", from=3-1, to=5-1]
	\arrow["{\int_*\beta d*}", from=3-6, to=5-6]
	\arrow[""{name=2, anchor=center, inner sep=0}, "{\int_{*}\epsilon^{-1}}", from=5-1, to=5-6]
	\arrow["1"', draw=none, from=0, to=1]
	\arrow["2"', draw=none, from=1, to=2]
\end{tikzcd}
}
\]

\noindent Starting with square $2$ we can see that it's commutative by naturality of $\epsilon$. The commutativity of square $1$ can be done by diagram chasing, or categorically by noticing that the map that we called ``commutativity of products with directed colimits'' is part of the left ultrastructure of the following functor $\cat{Set} \times \cat{Set} \xrightarrow{\times} \cat{Set}$ (Here the ultraproduct in $\cat{Set} \times \cat{Set}$ is defined coordinate-wise).

Now let us regard diagram $3$:

\[
\adjustbox{max width= \textwidth}{
% https://q.uiver.app/#q=WzAsOCxbMCw0LCJcXEhvbShBLFxcaW50IEIgZCopXFx0aW1lcyBcXEhvbShCLFxcaW50X3tcXGNvcHJvZF8qKn1EZCopIl0sWzAsMCwiXFxIb20oQSxcXGludF8qQiBkKikgXFx0aW1lcyBcXGludF8qIFxcSG9tKEIsXFxpbnRfe1xcY29wcm9kXyoqfUQgZCopZCoiXSxbMCw3LCJcXEhvbShBLFxcaW50XypCZCopXFx0aW1lcyBcXEhvbShCLFxcaW50XypEZCopIl0sWzMsM10sWzksMF0sWzksNywiXFxIb20oQSxcXGludF8qQmQqKVxcdGltZXMgXFxpbnRfKlxcSG9tKEIsXFxpbnRfKkRkKilkKiJdLFs2LDAsIlxcSG9tKEEsIFxcaW50X3tcXGNvcHJvZF97Kn1cXGNvcHJvZF97Kn0qfURkKikiXSxbMTUsNywiXFxIb20oQSxcXGludF97XFxjb3Byb2RfKip9RGQqKSJdLFswLDEsIlxcbWF0aHJte2lkfSBcXHRpbWVzIFxcZXBzaWxvbl57LTF9Il0sWzAsMiwiXFxtYXRocm17aWR9IFxcdGltZXMgXFxrYXBwYSIsMl0sWzIsNSwiXFxtYXRocm17aWR9IFxcdGltZXMgXFxlcHNpbG9uXnstMX0iLDJdLFsxLDUsIlxcbGF7aWR9IFxcdGltZXMgXFxpbnRfKiBcXGthcHBhIGQqIiwxXSxbMSw2LCJcXGJldGEiXSxbNiw3LCJcXGthcHBhIl0sWzUsNywiXFxiZXRhIiwyXV0=
\begin{tikzcd}
	{\Hom(A,\int_*B d*) \times \int_* \Hom(B,\int_{\coprod_**}D d*)d*} &&&&&& {\Hom(A, \int_{\coprod_{*}\coprod_{*}*}Dd*)} &&& {} \\
	\\
	\\
	&&& {} \\
	{\Hom(A,\int B d*)\times \Hom(B,\int_{\coprod_**}Dd*)} \\
	\\
	\\
	{\Hom(A,\int_*Bd*)\times \Hom(B,\int_*Dd*)} &&&&&&&&& {\Hom(A,\int_*Bd*)\times \int_*\Hom(B,\int_*Dd*)d*} &&&&&& {\Hom(A,\int_{\coprod_**}Dd*)}
	\arrow["\beta", from=1-1, to=1-7]
	\arrow["{\la{id} \times \int_* \kappa d*}"{description}, from=1-1, to=8-10]
	\arrow["\kappa", from=1-7, to=8-16]
	\arrow["{\mathrm{id} \times \epsilon^{-1}}", from=5-1, to=1-1]
	\arrow["{\mathrm{id} \times \kappa}"', from=5-1, to=8-1]
	\arrow["{\mathrm{id} \times \epsilon^{-1}}"', from=8-1, to=8-10]
	\arrow["\beta"', from=8-10, to=8-16]
\end{tikzcd}
}
\]

\noindent The leftmost diagram commutes by  naturality of $\la{id} \times \epsilon^{-1}$, and right most diagram commutes by the first diagram of axiom  $6$, and hence diagram $3$ commutes.

\noindent Diagram $4$ commutes by the composition axiom of generalised ultracategories.

\noindent Now let us regard diagram $5$:

\[
\adjustbox{max width=\textwidth}{
% https://q.uiver.app/#q=WzAsNSxbMCwwLCJcXG1hdGhybXtIb219KEEsXFxpbnRfe1xcY29wcm9kXyoqfUNkKikgIFxcdGltZXMgXFxIb20oQyxcXGludF8qRCkiXSxbMCwyLCJcXEhvbShBLFxcaW50X3tcXGNvcHJvZF8qKn1DZCopIFxcdGltZXMgXFxpbnRfe1xcY29wcm9kX3sqfSp9IFxcSG9tKEMsXFxpbnRfKkRkKikiXSxbMywyLCJcXEhvbShBLFxcaW50X3tcXGNvcHJvZF97XFxjb3Byb2Rfeyp9Kn0qfURkKikiXSxbNiwwLCJcXEhvbShBLFxcaW50X3tcXGNvcHJvZF8qKn1EKikiXSxbMywwLCJcXG1hdGhybXtIb219KEEsXFxpbnRfeyp9Q2QqKSAgXFx0aW1lcyBcXGludF8qXFxIb20oQyxcXGludF8qRClkKiJdLFsxLDIsIlxcYmV0YSJdLFsyLDMsIlxcYXJpZXMiXSxbMCw0LCJcXG1hdGhybXtpZH0gXFx0aW1lcyBcXGVwc2lsb25eey0xfV97Kn0iXSxbNCwzLCJcXGJldGEiLDFdLFsxLDAsIlxcbWF0aHJte2lkfSBcXHRpbWVzIFxcZXBzaWxvbl97XFxjb3Byb2Rfeyp9Kn0iLDFdLFsxLDQsIlxcbWF0aHJte2lkfSBcXHRpbWVzIChcXGVwc2lsb25eey0xfV97Kn0gXFxjaXJjIFxcZXBzaWxvbl97XFxjb3Byb2Rfeyp9fSopIiwxXV0=
\begin{tikzcd}
	{\mathrm{Hom}(A,\int_{\coprod_**}Cd*)  \times \Hom(C,\int_*D)} &&& {\mathrm{Hom}(A,\int_{*}Cd*)  \times \int_*\Hom(C,\int_*D)d*} &&& {\Hom(A,\int_{\coprod_**}D*)} \\
	\\
	{\Hom(A,\int_{\coprod_**}Cd*) \times \int_{\coprod_{*}*} \Hom(C,\int_*Dd*)} &&& {\Hom(A,\int_{\coprod_{\coprod_{*}*}*}Dd*)}
	\arrow["\beta", from=3-1, to=3-4]
	\arrow["\kappa", from=3-4, to=1-7]
	\arrow["{\mathrm{id} \times \epsilon^{-1}_{*}}", from=1-1, to=1-4]
	\arrow["\beta"{description}, from=1-4, to=1-7]
	\arrow["{\mathrm{id} \times \epsilon_{\coprod_{*}*}}"{description}, from=3-1, to=1-1]
	\arrow["{\mathrm{id} \times (\epsilon^{-1}_{*} \circ \epsilon_{\coprod_{*}}*)}"{description}, from=3-1, to=1-4]
\end{tikzcd}
}
\]
\noindent Notice that the left diagram commutes by definition while the right one commutes by the second diagram of axiom $(6)$.
 
Thus, we can deduce that the underlying category of an ultracategory is really a category.

\section*{Appendix B: Proof that $T$-pseudo-algebra ultracategories are generalised ultracategories }
\label{not very crucial proof}
\addcontentsline{toc}{section}{\nameref{not very crucial proof}}

Suppose that we have a full subcategory $\ca{C}$ of an ultracategory $\ca{U}$, where $\ca{U}$ is an algebra for the monad $T$. We define the generalised ultrastructure as in \ref{not very crucial theorem}. Now we check that our definition satisfies the axioms of generalised ultracategories.

Axioms $1, 2, 3$  are properties of the ultraproduct diagonal map as defined in \cite{lurie2018ultracategories}. Now for axiom $4$:

We want to show that the following composition:

\[
\adjustbox{max width = \textwidth}{
% https://q.uiver.app/#q=WzAsNixbNCwyXSxbMSwwLCJcXGludF97Kn1cXG1hdGhybXtIb219KEEsXFxpbnRfSSBNX2kgZFxcbXUpZCoiXSxbMCwwLCJcXG1hdGhybXtIb219KEEsXFxpbnRfSSBNX2kgZFxcbXUpIl0sWzIsMCwiXFxtYXRoYmZ7MX0gXFx0aW1lcyBcXGludF97Kn1cXG1hdGhybXtIb219KEEsXFxpbnRfaSBNX2kgZFxcbXUgKWQqIl0sWzQsMCwiXFxtYXRocm17SG9tfShBLCBcXGludF9cXGFzdCBBIGQgXFxhc3QgKSBcXHRpbWVzIFxcaW50X3tcXGFzdH1cXG1hdGhybXtIb219KEEsXFxpbnRfSSBNX2kgZFxcbXUgKWQqIl0sWzcsMCwiXFxtYXRocm17SG9tfShBLFxcaW50X3tcXGNvcHJvZF8qSX0gTV9pIGRcXG11ICkiXSxbMiwxLCJcXGVwc2lsb25eey0xfV97KiwqfSJdLFsxLDMsIlxcc2ltZXEiXSxbMyw0LCJcXGthcHBhIFxcdGltZXMgXFxtYXRocm17aWR9Il0sWzQsNSwiXFxiZXRhIl1d
\begin{tikzcd}
	{\mathrm{Hom}(A,\int_I M_i d\mu)} & {\int_{*}\mathrm{Hom}(A,\int_I M_i d\mu)d*} & {\mathbf{1} \times \int_{*}\mathrm{Hom}(A,\int_i M_i d\mu )d*} && {\mathrm{Hom}(A, \int_\ast A d \ast ) \times \int_{\ast}\mathrm{Hom}(A,\int_I M_i d\mu )d*} &&& {\mathrm{Hom}(A,\int_{\coprod_*I} M_i d\mu )} \\
	\\
	&&&& {}
	\arrow["{\epsilon^{-1}_{*,*}}", from=1-1, to=1-2]
	\arrow["\simeq", from=1-2, to=1-3]
	\arrow["{\kappa \times \mathrm{id}}", from=1-3, to=1-5]
	\arrow["\beta", from=1-5, to=1-8]
\end{tikzcd}
}
\]
is the inverse of  $\Xi_{f_{I},\mu}$, where $f_I$ is the natural isomorphism between $I$ and $\coprod_{*}I$. To do so, let $f \in \mathrm{Hom}(A,\int_I M_i d\mu)$, its image by the series of equivalences is the following: $f \mapsto (f) \mapsto (1, (f)) \mapsto (\epsilon_{*,*}, (f)) \mapsto (\epsilon_{*,*},  (\tilde{f})) \mapsto \tilde{f} \circ \epsilon_{*,*}^{-1} \in \mathrm{Hom}(A, \int_{\coprod_{*}I}M_i d\mu) $
Here $\tilde{f}$ is the map from $\int_* A d*$ to $\int_* \int_I M_i d\mu d*$ corresponding to $f$ by the functor $\int_* d*$ (in other words $\tilde{f} =\int_*fd*$).

So the first axiom can be deduced from the fact that the following diagram commutes by naturality of $\epsilon_{*,*}$:

% https://q.uiver.app/#q=WzAsNCxbMCwwLCJBIl0sWzAsMywiXFxpbnRfKkFkKiJdLFs0LDMsIlxcaW50XypcXGludF9JTV9pZFxcbXUgZCoiXSxbNCwwLCJcXGludF9JTV9pIGRcXG11Il0sWzAsMSwiXFxlcHNpbG9uX3sqLCp9Il0sWzEsMiwiXFx0aWxkZXtmfSJdLFswLDMsImYiXSxbMywyLCJcXERlbHRhX3tmX0ksKn1eey0xfSA9XFxlcHNpbG9uX3sqLCp9Il1d
\[\begin{tikzcd}[ampersand replacement=\&]
	A \&\&\&\& {\int_IM_i d\mu} \\
	\\
	\\
	{\int_*Ad*} \&\&\&\& {\int_*\int_IM_id\mu d*}
	\arrow["f", from=1-1, to=1-5]
	\arrow["{\epsilon_{*,*}}", from=1-1, to=4-1]
	\arrow["{\Delta_{f_I,*}^{-1} =\epsilon_{*,*}}", from=1-5, to=4-5]
	\arrow["{\tilde{f}=\int_*fd*}", from=4-1, to=4-5]
\end{tikzcd}\]

Now for axiom $5$ we want to show that the following composition: 

\[
\adjustbox{max width=\textwidth}{
% https://q.uiver.app/#q=WzAsNSxbMywwLCJcXG1hdGhybXtIb219KEEsXFxpbnRfSSBNX2kgZFxcbXUpXFx0aW1lcyBcXGludF97SX1cXG1hdGhiZnsxfWRcXG11Il0sWzUsMCwiXFxtYXRocm17SG9tfShBLFxcaW50X0kgTV9pIGRcXG11IClcXHRpbWVzIFxcaW50X0lcXG1hdGhybXtIb219KE1fe2l9ICwgXFxpbnRfeyp9TV9pZCopZFxcbXUiXSxbMiwwLCJcXG1hdGhybXtIb219KEEsXFxpbnRfSSBNX2kgZFxcbXUpIFxcdGltZXMgMSJdLFswLDAsIlxcbWF0aHJte0hvbX0oQSxcXGludF9JIE1faSBkXFxtdSkiXSxbNywwLCJcXG1hdGhybXtIb219KEEsXFxpbnRfe1xcY29wcm9kX3tJfSp9TV9pZFxcbXUpIl0sWzIsMCwiXFxtYXRocm17aWR9IFxcdGltZXMgXFxkZWx0YV97XFxtdX0iXSxbMywyLCJcXHNpbWVxIl0sWzAsMSwiXFxtYXRocm17aWR9IFxcdGltZXMgXFxpbnRfSSBcXGthcHBhIGRcXG11Il0sWzEsNCwiXFxiZXRhIl1d
\begin{tikzcd}
	{\mathrm{Hom}(A,\int_I M_i d\mu)} && {\mathrm{Hom}(A,\int_I M_i d\mu) \times 1} & {\mathrm{Hom}(A,\int_I M_i d\mu)\times \int_{I}\mathbf{1}d\mu} && {\mathrm{Hom}(A,\int_I M_i d\mu )\times \int_I\mathrm{Hom}(M_{i} , \int_{*}M_id*)d\mu} && {\mathrm{Hom}(A,\int_{\coprod_{I}*}M_id\mu)}
	\arrow["{\mathrm{id} \times \delta_{\mu}}", from=1-3, to=1-4]
	\arrow["\simeq", from=1-1, to=1-3]
	\arrow["{\mathrm{id} \times \int_I \kappa d\mu}", from=1-4, to=1-6]
	\arrow["\beta", from=1-6, to=1-8]
\end{tikzcd}
}
\]
is the inverse of $\Xi_{g_{I},\mu}$ where $g_{I}$ is the natural isomorphism between $I$ and $\coprod_{I}*$.

To do so again, let $f \in \mathrm{Hom}(A, \int_I M_id\mu) $ then the image of $f$ by the series of morphism is $f \mapsto (f , 1) \mapsto (f, (1)) \mapsto (f, (\epsilon_{*,*})) \mapsto (f , ((\epsilon_{*,*})) \mapsto \int _I \epsilon^{-1}_{*,*}d\mu \circ f$, but on the other hand $\kappa_{g_I}$ is the composition with $\Delta_{g_I,\mu}$, so the fact that the composition above is the inverse of $\Xi_{g_{I},\mu}$ can be deduced from the commutativity of the following diagram:

% https://q.uiver.app/#q=WzAsNSxbMCwwLCJcXGludF9JIE1faSBkXFxtdSJdLFswLDMsIlxcaW50X3tcXGNvcHJvZF9JKn1NX2kgZCBcXHRpbGRle1xcbXV9Il0sWzMsNV0sWzMsMCwiXFxpbnRfe0l9XFxpbnRfKk1faSBkKiBkXFxtdSJdLFszLDMsIlxcaW50X0kgXFxpbnRfe1xcY29wcm9kX3tJfSp9TV9pIGQgXFxkZWx0YV9pIGRcXG11Il0sWzAsMSwiXFxEZWx0YV97Z157LTF9X0ksXFxtdX0iLDJdLFswLDMsIlxcaW50X0kgXFxlcHNpbG9uX3sqLCp9IGRcXG11ID1cXERlbHRhX3tcXG11LCBcXGRlbHRhX3tcXGJ1bGxldH19Il0sWzMsNCwiXFxpbnRfSSBcXERlbHRhX3tpIFxcbWFwc3RvICosIFxcZGVsdGFfaSB9IGRcXG11Il0sWzEsNCwiXFxEZWx0YV97XFxtdSxcXGRlbHRhX3tcXGJ1bGxldH19Il1d
\[\begin{tikzcd}
	{\int_I M_i d\mu} &&& {\int_{I}\int_*M_i d* d\mu} \\
	\\
	\\
	{\int_{\coprod_I*}M_i d \tilde{\mu}} &&& {\int_I \int_{\coprod_{I}*}M_i d \delta_i d\mu} \\
	\\
	&&& {}
	\arrow["{\int_I \epsilon_{*,*} d\mu =\Delta_{\mu, \delta_{\bullet}}}", from=1-1, to=1-4]
	\arrow["{\Delta_{g^{-1}_I,\mu}}"', from=1-1, to=4-1]
	\arrow["{\int_I \Delta_{i \mapsto *, \delta_i } d\mu}", from=1-4, to=4-4]
	\arrow["{\Delta_{\mu,\delta_{\bullet}}}", from=4-1, to=4-4]
\end{tikzcd}\]

First, we should note that the fact that $\int_I \epsilon_{*,*} d\mu =\Delta_{\mu, \delta_{\bullet}}$ follows from Corollary 1.3.6 in Lurie.

Now for the fact that the diagram above is commutative we refer the reader to section $6.1$ of our other paper \cite{hamad2025ultracategories}.

Now for axiom $6$, which ensures compatibility between $\beta$ and $\kappa$. Starting with the following diagram: 
\[
\adjustbox{max width = \textwidth}{
% https://q.uiver.app/#q=WzAsNixbMCwwLCJcXEhvbShBLFxcaW50X0lNX2lkXFxtdSkgXFx0aW1lcyBcXGludF9JIFxcSG9tKE1fe2l9LFxcaW50X3tYX2l9Tl97KGkseCl9ZGZfaVxcbGFtYmRhX2kpIGRcXG11Il0sWzQsMCwiXFxIb20oQSxcXGludF97XFxjb3Byb2Rfe0l9IFhfaX1OX3soaSx4KX1kXFxpbnRcXGlvdGFfaWZfaVxcbGFtYmRhX2lkXFxtdSkiXSxbMCwzLCJcXEhvbShBLFxcaW50X0lNX2kgZFxcbXUpIFxcdGltZXMgXFxpbnRfSVxcSG9tKE1fe2l9LFxcaW50X3tLX2l9Tl97KGksZl9pKHgpKX1kXFxsYW1iZGFfaSlkXFxtdSJdLFs0LDMsIlxcSG9tKEEsXFxpbnRfe1xcY29wcm9kX3tJfSBLX2l9Tl97KGksZl9pKHgpKX1kXFxpbnRcXGlvdGFfaVxcbGFtYmRhX2lkXFxtdSkiXSxbMCwxXSxbMSwxXSxbMCwxLCJcXGJldGEiXSxbMiwzLCJcXGJldGEiLDJdLFsxLDMsIlxcWGlfe1xcYmFye2Z9fSJdLFswLDIsIlxcbGF7aWR9IFxcdGltZXMgXFxpbnRcXFhpX3tmX2ksXFxsYW1iZGFfaX0iLDJdXQ==
\begin{tikzcd}
	{\Hom(A,\int_IM_id\mu) \times \int_I \Hom(M_{i},\int_{X_i}N_{(i,x)}df_i\lambda_i) d\mu} &&&& {\Hom(A,\int_{\coprod_{I} X_i}N_{(i,x)}d\int\iota_if_i\lambda_id\mu)} \\
	{} & {} \\
	\\
	{\Hom(A,\int_IM_i d\mu) \times \int_I\Hom(M_{i},\int_{K_i}N_{(i,f_i(x))}d\lambda_i)d\mu} &&&& {\Hom(A,\int_{\coprod_{I} K_i}N_{(i,f_i(x))}d\int\iota_i\lambda_id\mu)}
	\arrow["\beta", from=1-1, to=1-5]
	\arrow["{\la{id} \times \int\Xi_{f_i,\lambda_i}}"', from=1-1, to=4-1]
	\arrow["{\Xi_{\bar{f}}}", from=1-5, to=4-5]
	\arrow["\beta"', from=4-1, to=4-5]
\end{tikzcd}
}
\]

One can see easily that the task reduces to showing that the following diagram is commutative:

% https://q.uiver.app/#q=WzAsNCxbMCwwLCJcXGludF9JIFxcaW50X3tYX2l9Tl97KGkseCl9ZGZfaVxcbGFtYmRhX2kgZFxcbXUiXSxbNCwwLCJcXGludF97XFxjb3Byb2RfSSBYX2l9IE5feyhpLHgpfWQgXFxpbnRfe0l9Zl9pIFxcbGFtYmRhX2kgZFxcbXUiXSxbNCw0LCJcXGludF97XFxjb3Byb2RfSSBLX2l9IE5feyhpLHgpfWQgXFxpbnRfe0l9IFxcbGFtYmRhX2kgZFxcbXUiXSxbMCw0LCJcXGludF9JIFxcaW50X3tLX2l9Tl97KGkseCl9ZFxcbGFtYmRhX2kgZFxcbXUiXSxbMSwwLCJhIiwyXSxbMSwyLCJcXERlbHRhX3tcXGJhcntmfSwgXFxpbnRfSSBcXGxhbWJkYV9pIGRcXG11fSJdLFswLDMsIlxcaW50X0kgXFxEZWx0YV97Zl9pLFxcbGFtYmRhX2l9IGRcXG11IiwyXSxbMiwzLCJhIl1d
\[\begin{tikzcd}[ampersand replacement=\&]
	{\int_I \int_{X_i}N_{(i,x)}df_i\lambda_i d\mu} \&\&\&\& {\int_{\coprod_I X_i} N_{(i,x)}d \int_{I}f_i \lambda_i d\mu} \\
	\\
	\\
	\\
	{\int_I \int_{K_i}N_{(i,x)}d\lambda_i d\mu} \&\&\&\& {\int_{\coprod_I K_i} N_{(i,x)}d \int_{I} \lambda_i d\mu}
	\arrow["{\int_I \Delta_{f_i,\lambda_i} d\mu}"', from=1-1, to=5-1]
	\arrow["a"', from=1-5, to=1-1]
	\arrow["{\Delta_{\bar{f}, \int_I \lambda_i d\mu}}", from=1-5, to=5-5]
	\arrow["a", from=5-5, to=5-1]
\end{tikzcd}\]

To show this we refer the reader to the section on the naturality of the colax associator in \cite{hamad2025ultracategories}.

Now we wish to show the following diagram is commutative:

\[
\adjustbox{max width = \textwidth}{
% https://q.uiver.app/#q=WzAsNSxbMCwwLCJcXEhvbShBLCBcXGludF9JTV9pIGRmXFxtdSkgXFx0aW1lcyBcXGludF9JIFxcSG9tKE1fe2l9LFxcaW50X3tYX2l9Tl97KGksayl9ZFxcbGFtYmRhX3tpfSlkZlxcbXUiXSxbMCwzLCJcXEhvbShBLCBcXGludF9JTV9pIGRmXFxtdSkgXFx0aW1lcyBcXGludF9KIFxcSG9tKE1fe2Yoail9LFxcaW50X3tYX3tmKGopfX1OX3soZihqKSxrKX1kXFxsYW1iZGFfe2Yoail9ZFxcbXUiXSxbMywzLCJcXEhvbShBLCBcXGludF9KTV9pIGRcXG11KSBcXHRpbWVzIFxcaW50X0ogXFxIb20oTV97ZihqKX0sXFxpbnRfe1hfe2Yoail9fU5feyhmKGopLGspfWRcXGxhbWJkYV97ZihqKX1kXFxtdSJdLFszLDAsIlxcSG9tKEEsXFxpbnRfe1xcY29wcm9kX3tJfVhfaX1OX3soaSx4KX1kXFxpbnRfe0l9XFxpb3RhX2lcXGxhbWJkYV9pZGZcXG11KSJdLFs2LDMsIlxcSG9tKEEsXFxpbnRfe1xcY29wcm9kX3tKfVhfe2Yoail9fU5feyhmKGopLHgpfWRcXGludF97Sn1cXGlvdGFfalxcbGFtYmRhX2pkXFxtdSkiXSxbMCwxLCJcXGxhe2lkfSBcXHRpbWVzIFxcRGVsdGFfe2YsXFxtdX0iXSxbMSwyLCJcXFhpX3tmfSBcXHRpbWVzIFxcbGF7aWR9Il0sWzAsMywiXFxiZXRhIiwyXSxbMyw0LCJcXFhpX3tcXHRpbGRle2Z9fSIsMl0sWzIsNCwiXFxiZXRhIl1d
\begin{tikzcd}[ampersand replacement=\&]
	{\Hom(A, \int_IM_i df\mu) \times \int_I \Hom(M_{i},\int_{X_i}N_{(i,k)}d\lambda_{i})df\mu} \&\&\& {\Hom(A,\int_{\coprod_{I}X_i}N_{(i,x)}d\int_{I}\iota_i\lambda_idf\mu)} \\
	\\
	\\
	{\Hom(A, \int_IM_i df\mu) \times \int_J \Hom(M_{f(j)},\int_{X_{f(j)}}N_{(f(j),k)}d\lambda_{f(j)}d\mu} \&\&\& {\Hom(A, \int_JM_i d\mu) \times \int_J \Hom(M_{f(j)},\int_{X_{f(j)}}N_{(f(j),k)}d\lambda_{f(j)}d\mu} \&\&\& {\Hom(A,\int_{\coprod_{J}X_{f(j)}}N_{(f(j),x)}d\int_{J}\iota_j\lambda_jd\mu)}
	\arrow["\beta"', from=1-1, to=1-4]
	\arrow["{\la{id} \times \Delta_{f,\mu}}", from=1-1, to=4-1]
	\arrow["{\Xi_{\tilde{f}}}"', from=1-4, to=4-7]
	\arrow["{\Xi_{f} \times \la{id}}", from=4-1, to=4-4]
	\arrow["\beta", from=4-4, to=4-7]
\end{tikzcd}
}
\]
The commutativity of this diagram comes down to the commutativity of the following diagram:

% https://q.uiver.app/#q=WzAsNCxbMCwwLCJcXGludF97SX1cXGludF97WF9pfU5feyhpLHgpfSBkIFxcbGFtYmRhX2lkZlxcbXUiXSxbMCwzLCJcXGludF9KXFxpbnRfe1hfe2Yoail9fU5feyhmKGopLHgpfWQgXFxsYW1iZGFfe2Yoail9ZFxcbXUiXSxbNSwzLCJcXGludF97XFxjb3Byb2Rfe0p9WF97ZihqKX19Tl97ZihqKSx4fWRcXGludF97Sn1cXGlvdGFfaiBcXGxhbWJkYV97ZihqKX1kXFxtdSJdLFs1LDAsIlxcaW50X3tcXGNvcHJvZF97SX1YX3tpfX1OX3tpLHh9ZFxcaW50X3tJfVxcaW90YV9pIFxcbGFtYmRhX3tpfWRmXFxtdSJdLFswLDEsIlxcRGVsdGFfe2YsXFxtdX0iXSxbMiwxLCJhIiwyXSxbMywwLCJhIl0sWzMsMiwiXFxEZWx0YV97XFxiYXJ7Zn0sIFxcaW50X0kgXFxsYW1iZGFfaSBkXFxtdX0iLDJdXQ==
\[\begin{tikzcd}[ampersand replacement=\&]
	{\int_{I}\int_{X_i}N_{(i,x)} d \lambda_idf\mu} \&\&\&\&\& {\int_{\coprod_{I}X_{i}}N_{i,x}d\int_{I}\iota_i \lambda_{i}df\mu} \\
	\\
	\\
	{\int_J\int_{X_{f(j)}}N_{(f(j),x)}d \lambda_{f(j)}d\mu} \&\&\&\&\& {\int_{\coprod_{J}X_{f(j)}}N_{f(j),x}d\int_{J}\iota_j \lambda_{f(j)}d\mu}
	\arrow["{\Delta_{f,\mu}}", from=1-1, to=4-1]
	\arrow["a", from=1-6, to=1-1]
	\arrow["{\Delta_{\tilde{f}, \int_I \lambda_i d\mu}}"', from=1-6, to=4-6]
	\arrow["a"', from=4-6, to=4-1]
\end{tikzcd}\]

Which again can be deduced from the naturality of the colax associator $a$.

Finally we need to check the commutativity of the following diagram, in the context of axiom $\mathrm{7}$.

\[
\adjustbox{max width = \textwidth}{
\begin{tikzcd}
	& {} \\
	\\
	& {} \\
	{\mathrm{Hom}(A,\int_IM_id\mu) \times \int_{I}\mathrm{Hom}(M_{i},\int_{X_{i}}N_{i,x}d\lambda_{i}) \times\int_{X_i}\mathrm{Hom}(N_{i,x}, \int_{T_{i,x}}L_{i,x,t}d\omega_{i,x}) d\mu} & {} & {\mathrm{Hom}(A,\int_IM_id\mu)\times \int_I\mathrm{Hom}(M_{i}, \int_{\coprod T_{i,x}}L_{i,x,t}d \int_{X_i}\iota_x \omega_{i,x}d\lambda_i)d\mu } \\
	\\
	\\
	{\mathrm{Hom}(A,\int_IM_id\mu) \times \int_I\mathrm{Hom}(M_{i},\int_{X_i}N_{i,x}d\lambda_{i}) \times\int_I \int_{X_i}\mathrm{Hom}(N_{i,x}, \int_{T_{i,x}}L_{i,x,t}d\omega_{i,x})d\lambda_{i} d\mu} \\
	&& {\mathrm{Hom}(A, \int_{\coprod_{I}\coprod_{X_i}T_{i,x}}L_{i,x,t}d\int_{}w_{i,x}d_{}\int_I\iota_k\lambda_kd\mu)} \\
	\\
	{ \mathrm{Hom}(A,\int_{\coprod_{i}X_{i}}N_{i,x} d\int_{I}\iota_i\lambda_id\mu)\times \int_I \int_{X_i}\mathrm{Hom}(N_{i,x}, \int_{T_{i,x}}L_{i,x,t}d\omega_{i,x})d\lambda_i d\mu} \\
	\\
	{\mathrm{Hom}(A,\int_{\coprod_{I}X_{i}}N_{i,x} dh\int_{I}\iota_i\lambda_id\mu)\times\int_{\coprod_{I}X_i}\mathrm{Hom}(N_{i,x},\int_{T_{i,x}}L_{i,x,t}d\omega_{i,x})d\int_{I}\iota_i\lambda_id\mu} & {} & {\mathrm{Hom}(A, \int_{\coprod_{\coprod_{I}X_i}T_{(ix)}}L_{i,x,t}d\int_{}w_{i,x}d_{}\int_I\iota_k\lambda_kd\mu)}
	\arrow["{\mathrm{id} \times \int\beta}"', from=4-1, to=4-3]
	\arrow["\beta"', from=4-3, to=8-3]
	\arrow["{\mathrm{commutativity \ of \  products \  with \ directed \  colimits }}"', from=7-1, to=4-1]
	\arrow["{\beta \times \mathrm{id}}", from=7-1, to=10-1]
	\arrow["{\Xi_{\theta}}"', from=8-3, to=12-3]
	\arrow["{\mathrm{id} \times  a^{-1}}", from=10-1, to=12-1]
	\arrow["\beta", from=12-1, to=12-3]
\end{tikzcd}
}
\]

Suppose that we have $(f, (g_i), h_{(i,x)}) \in \mathrm{Hom}(A,\int_IM_id\mu) \times \int_{I}\mathrm{Hom}(M_{i},\int_{X_{i}}N_{i,x}d\lambda_{i}) \times\int_{X_i}\mathrm{Hom}(N_{i,x}, \int_{T_{i,x}}L_{i,x,t}d\omega_{i,x}) d\mu $, its image by the direction right down is  

$$\Delta^{-1} \circ  a^{-1} \circ \int_{I} a^{-1} \circ (\int_{X_i}(((h_{(i,x)}) d\lambda_i) \circ (g_i)) d\mu   \circ f$$

$$= \Delta^{-1} \circ  a^{-1} \circ \int_I a^{-1} d\mu \circ (\int_I \int_{X_i}((h_{(i,x)}) d\lambda_i) d\mu \circ \int_I g_i d\mu   \circ f $$
While the image of $(f, (g_i), h_{(i,x)})$ by the direction down right is 

$$a^{-1} \circ (\int_{\coprod_{I}X_i} (h_{(i,x)}) d\int_{I} \iota_i \lambda_i d\mu)  \circ  a^{-1} \circ \int_I (g_i) d\mu \circ f $$

$$ = a^{-1} \circ a^{-1} \circ (\int_I \int_{X_i}((h_{(i,x)}) d\lambda_i) d\mu \circ \int_I g_i d \mu \circ f \ \  \mathbf{by \ naturality \ of \ } a $$

Here, we are omitting to write the subscripts of $a$ and $\Delta$ to keep the notation concise.

So it's enough to show that $$\Delta^{-1} \circ  a^{-1} \circ \int_I a^{-1} d\mu= a^{-1} \circ a^{-1} $$

Setting  things up, and writing the definitions of $a$ and the ultraproduct diagonal map $\Delta$ we get the following diagram:

\[
\adjustbox{max width = \textwidth}{
\begin{tikzcd}[ampersand replacement=\&]
	 \\
	{\int_{\coprod_{\coprod_{I}X_i}T_{(i,x)}}L d \rho} \&\&\& {\int_{\coprod_{I}X_i} \int_{\coprod_{\coprod_{I}X_i}T_{(i,x)}} Ld \iota_{(i,x)} \omega_{(i,x)} d \int_I \iota_i \lambda_i d\mu} \\
	\\
	\\
	\\
	\\
	{\int_{\coprod_{I}\coprod_{X_i}T_{(i,x)}}L d \rho} \&\&\&\&\&\&\&\&\& {\int_{\coprod_I X_i} \int_{T_{(i,x)}}L  d\omega_{x}d \int_I \iota_i\lambda_i d \mu} \\
	\\
	\&\&\&\&\&\&\&\&\& {\int_{I} \int_{\coprod_{X_i}T_{(i,x)}} \int_{T_{(i,x)}} Ld \omega_x d \iota_i \lambda_i d \mu} \\
	{\int_{I} \int_{\coprod_{I}\coprod_{X_i}T_{(i,x)}}Ld \iota_i \int_{X_i} \iota_{x}\omega_{(i,x)}d \lambda_{i}d\mu} \\
	\\
	{\int_I \int_{\coprod_{X_i}T_{(i,x)}}L d \int_{X_i} \iota_x \omega_{(i,x)} d \lambda_i d\mu} \&\&\& {\int_I \int_{X_i} \int_{\coprod_{X_i}T_{(i,x)}}Ld \iota_{x} \omega_{(i,x)} d \lambda_i d \mu} \&\&\&\&\&\& {\int_I \int_{X_i} \int_{T_{(i,x)}} L d\omega_{(i,x)} d \lambda_i d\mu}
	\arrow["\Delta", from=2-1, to=2-4]
	\arrow["\Delta"', from=2-1, to=7-1]
	\arrow["{{{\int_{\coprod_{I}X_i} \Delta_{\lambda_i,\iota_{(i,x)}}}}}"{description}, from=2-4, to=7-10]
	\arrow["{{{\Delta_{\mu,\iota_{\bullet}\int_{X_{\bullet}\iota_xd\lambda_{\bullet}}}}}}", from=7-1, to=10-1]
	\arrow["{{{\Delta_{\mu,\iota\lambda_{\bullet}}}}}"', from=7-10, to=9-10]
	\arrow["{{{\int_I \Delta_{\lambda_i,\iota_i}d\mu}}}"', from=9-10, to=12-10]
	\arrow["{{{\int_I \Delta_{\int_X \iota_x d \lambda_i,\iota_i}d\mu}}}"', from=10-1, to=12-1]
	\arrow["{{{\int_I \Delta_{\lambda_i,\int_X \iota_x \omega_{\bullet} }d\mu}}}"', from=12-1, to=12-4]
	\arrow["{{{\int_I \int_{X} \Delta_{\omega_{(i,x)},\iota_x}d\lambda_id\mu}}}"', from=12-4, to=12-10]
\end{tikzcd}
}
\]

Here, $\rho$ is equal to $\int_I   \iota_i (\int_{X_i} \iota_x \omega_{(i,x)} d \lambda_i) d \mu=\int_I    (\int_{\coprod_{I} X_i}\iota_i\iota_x \omega_{(i,x)} d \iota_i \lambda_i) d\mu=\int_{\coprod_{I}X_i}\iota_i \iota_x \omega_{(i,x)}d \int_I  \iota_i \lambda_i d\mu$.

\begin{comment}
  \begin{note*}
    Here we used the same symbol $\rho$ for the ultrafilter.
\end{note*}
  
\end{comment}

Now to show that this diagram is commutative we do the following diagram:

\adjustbox{max width = \textwidth}{
\begin{tikzcd}[ampersand replacement=\&]
	{} \\
	{\int_{\coprod_{\coprod_{I}X_i}T_{(i,x)}}L d \rho} \&\&\& {\int_{\coprod_{I}X_i} \int_{\coprod_{\coprod_{I}X_i}T_{(i,x)}} Ld \iota_{(i,x)} \omega_{(i,x)} d \int_I \iota_i \lambda_i d\mu} \\
	\\
	\\
	\\
	\\
	{\int_{\coprod_{I}\coprod_{X_i}T_{(i,x)}}L d \rho} \&\&\& {\int_{\coprod_{I}X_i} \int_{\coprod_{I}\coprod_{X_i}T_{(i,x)}} Ld \iota_{(i,x)} \omega_{(i,x)} d \int_I \iota_i \lambda_i d\mu} \&\&\&\&\&\& {\int_{\coprod_I X_i} \int_{T_{(i,x)}}L  d\omega_{x}d \int_I \iota_i\lambda_i d \mu} \\
	\\
	\&\&\& {\int_I(\int_{\coprod_{I}X_i}(\int_{\coprod_{I}\coprod_{X_i}T_{(i,x)}} Ld \iota_i\iota_x\omega_{(i,x)}) d  \iota_i \lambda_i) d \mu} \&\&\&\&\&\& {\int_{I} \int_{\coprod_{X_i}T_{(i,x)}} \int_{T_{(i,x)}} Ld \omega_x d \iota_i \lambda_i d \mu} \\
	{\int_{I} \int_{\coprod_{I}\coprod_{X_i}T_{(i,x)}}Ld \iota_i \int_{X_i} \iota_{x}\omega_{(i,x)}d \lambda_{i}d\mu} \&\&\& {\int_I\int_{X_i} \int_{\coprod_{I}\coprod_{X_i}T_{(i,x)}} Ld \iota_i \iota_x\omega_{(i,x)}d\lambda_i d \mu} \\
	\\
	{\int_I \int_{\coprod_{X_i}T_{(i,x)}}L d \int_{X_i} \iota_x \omega_{(i,x)} d \lambda_i d\mu} \&\&\& {\int_I \int_{X_i} \int_{\coprod_{X_i}T_{(i,x)}}Ld \iota_{x} \omega_{(i,x)} d \lambda_i d \mu} \&\&\&\&\&\& {\int_I \int_{X_i} \int_{T_{(i,x)}} L d\omega_{(i,x)} d \lambda_i d\mu}
	\arrow["{{{\Delta_{\int_I\iota_i \lambda_i d\mu,\iota \omega_{\bullet}}}}}", from=2-1, to=2-4]
	\arrow["\Delta"', from=2-1, to=7-1]
	\arrow["7", draw=none, from=2-1, to=7-4]
	\arrow["{\int_{\coprod_{I}X_i}{\Delta_{}}d \int_I \iota_i \lambda_i d\mu}"', from=2-4, to=7-4]
	\arrow[""{name=0, anchor=center, inner sep=0}, "{{{\int_{\coprod_{I}X_i} \Delta_{\lambda_i,\iota_{(i,x)}}}}}"{description}, from=2-4, to=7-10]
	\arrow["{{{\Delta_{\int_I\iota_i \lambda_i d\mu,\iota \omega_{\bullet}}}}}", from=7-1, to=7-4]
	\arrow["1", draw=none, from=7-1, to=9-4]
	\arrow["{{{\Delta_{\mu,\iota_{\bullet}\int_{X_{\bullet}\iota_xd\lambda_{\bullet}}}}}}", from=7-1, to=10-1]
	\arrow["{{{\int_{\coprod_{I}X_i} \Delta_{\lambda_i,\iota_{(i,x)}}}}}"', from=7-4, to=7-10]
	\arrow["{{{\Delta_{\mu,\iota\lambda_{\bullet}}}}}", from=7-4, to=9-4]
	\arrow["3", draw=none, from=7-4, to=9-10]
	\arrow["{{{\Delta_{\mu,\iota\lambda_{\bullet}}}}}"', from=7-10, to=9-10]
	\arrow["{{{\int_{I}\int_{\coprod_{X_i}T_{(i,x)}}\Delta_{\omega_{(i,x)},\iota_i \iota_x} d \int_I\iota_i \lambda_i d \mu}}}"', from=9-4, to=9-10]
	\arrow["{{{\int_I \Delta_{\lambda_i,\iota_i}d \mu}}}", from=9-4, to=10-4]
	\arrow[""{name=1, anchor=center, inner sep=0}, "{{{\int_I \Delta_{\lambda_i,\iota_i}d\mu}}}"', from=9-10, to=12-10]
	\arrow["{{{\int_I\Delta_{\iota_i\lambda_i,\iota\omega_{\bullet}}}}}", from=10-1, to=9-4]
	\arrow[""{name=2, anchor=center, inner sep=0}, "{{{\int_I\Delta_{\lambda_i,\iota\omega_{\bullet}}}}}", from=10-1, to=10-4]
	\arrow["{{{\int_I \Delta_{\int_X \iota_x d \lambda_i,\iota_i}d\mu}}}"', from=10-1, to=12-1]
	\arrow["5", draw=none, from=10-1, to=12-4]
	\arrow["{{{\int_I \int_X \Delta_{\iota_x\omega_{(x,i)},\iota_i}}}}", from=10-4, to=12-4]
	\arrow["{{{\int_{I}\int_X\Delta_{\omega_{(i,x)},\iota_{(i,x)}}d\lambda_id\mu}}}", from=10-4, to=12-10]
	\arrow["{{{\int_I \Delta_{\lambda_i,\int_X \iota_x \omega_{\bullet} }d\mu}}}"', from=12-1, to=12-4]
	\arrow[""{name=3, anchor=center, inner sep=0}, "{{{\int_I \int_{X} \Delta_{\omega_{(i,x)},\iota_x}d\lambda_id\mu}}}"', from=12-4, to=12-10]
	\arrow["8"', draw=none, from=7-4, to=0]
	\arrow["2"{description}, draw=none, from=9-4, to=2]
	\arrow["4"{description}, draw=none, from=9-4, to=1]
	\arrow["6"{marking, allow upside down}, draw=none, from=10-4, to=3]
\end{tikzcd}
}

We have shown that diagrams $1$ through $6$ are commutative in \cite{hamad2025ultracategories} (see page $17$ of version $2$ of the arxiv version, this is the commutativity of  the large diagram $2$ which is shown using large diagram $3$), so the only thing remaining is to check the commutativity of diagrams $7$ and $8$. Now, it's easy to see that diagram $8$ commutes by Lurie 1.3.7.

So the only thing remaining is checking diagram $7$  commutes (before doing this, it is very natural to expect such diagram to commute it's just telling us that the categorical Fubini transform is ''invariant'' under isomorphism of sets with the ultraproduct diagonal maps being the translation maps). An easy way to see why this diagram must commute is to notice that categorical Fubini transform is a composition between the colax associator and the algebra functor, and both these things satisfy the required naturality, in other words  suppose that we have a morphism $f$ from $I$ to $I^{'}$ we wish to show that this diagram commutes:

% https://q.uiver.app/#q=WzAsNCxbMCwwLCJcXGludF9JIE1faSBkXFxpbnRfU2YgXFxudV9TIGRcXG11ID0gXFxpbnRfe0l9TV97aX1kZiAgXFxpbnRfe1N9XFxudV9zIGRcXG11Il0sWzQsMCwiXFxpbnRfU1xcaW50X3tJfU1fe2l9ZCBmXFxudV9zIGRcXG11Il0sWzAsMywiXFxpbnRfe0leeyd9fSBNX3tpXnsnfX0gZFxcaW50X1MgXFxudV9TIGRcXG11Il0sWzQsMywiXFxpbnRfU1xcaW50X3tJXnsnfX1NX3tpXnsnfX0gZFxcbnVfcyAgZCBcXG11Il0sWzAsMSwiXFxEZWx0YV97fSJdLFswLDIsIlxcRGVsdGFfe2YsIFxcaW50X1NcXG51X3NkXFxtdX0iLDJdLFsyLDMsIlxcRGVsdGEiLDJdLFsxLDMsIlxcaW50X3tTfSBcXERlbHRhX3t9IGRcXG11Il1d
\[\begin{tikzcd}
	{\int_I M_i d\int_Sf \nu_S d\mu = \int_{I}M_{i}df  \int_{S}\nu_s d\mu} &&&& {\int_S\int_{I}M_{i}d f\nu_s d\mu} \\
	\\
	\\
	{\int_{I^{'}} M_{i^{'}} d\int_S \nu_S d\mu} &&&& {\int_S\int_{I^{'}}M_{i^{'}} d\nu_s  d \mu}
	\arrow["{\Delta_{}}", from=1-1, to=1-5]
	\arrow["{\Delta_{f, \int_S\nu_sd\mu}}"', from=1-1, to=4-1]
	\arrow["{\int_{S} \Delta_{} d\mu}", from=1-5, to=4-5]
	\arrow["\Delta"', from=4-1, to=4-5]
\end{tikzcd}\]

Now we can look at the following diagram:

\[
\adjustbox{max width =\textwidth}{
% https://q.uiver.app/#q=WzAsNixbMCwwLCJcXGludF9JIE1faSBkXFxpbnRfU2YgXFxudV9TIGRcXG11ID0gXFxpbnRfe0l9TV97aX1kZiAgXFxpbnRfe1N9XFxudV9zIGRcXG11Il0sWzAsMywiXFxpbnRfe0leeyd9fSBNX3tmKGkpfSBkXFxpbnRfUyBcXG51X1MgZFxcbXUiXSxbNiwwLCJcXGludF9TXFxpbnRfe0l9TV97aX1kIGZcXG51X3MgZFxcbXUiXSxbNiwzLCJcXGludF9TXFxpbnRfe0leeyd9fU1fe2leeyd9fSBkXFxudV9zICBkIFxcbXUiXSxbMywwLCJcXGludF97UyBcXHRpbWVzIEl9TV57J31faWQgZl57J30gXFxpbnRfUyBcXG51X3NkXFxtdT0gXFxpbnRfe1MgXFx0aW1lcyBJfU1eeyd9X2kgZCBcXGludF9TIFxcaW90YV9zIFxcbnVfcyBkZlxcbXUiXSxbMywzLCJcXGludF97UyBcXHRpbWVzIEl7J319IGQgTV57J31fe2YoaSl9ZCBcXGludF9TXFxpb3RhX3MgXFxudV9zIGRcXG11Il0sWzAsMSwiXFxEZWx0YV97ZiwgXFxpbnRfU1xcbnVfc2RcXG11fSIsMl0sWzIsMywiXFxpbnRfe1N9IFxcRGVsdGFfe30gZFxcbXUiXSxbMCw0LCJtIl0sWzEsNSwibSIsMl0sWzUsMywiYSIsMl0sWzQsMiwiYSJdLFs0LDUsIlxcRGVsdGFfe2Zeeyd9LFxcaW50X1MgXFxpb3RhX3MgXFxudV9zIGRcXG11fSIsMV1d
\begin{tikzcd}
	{\int_I M_i d\int_Sf \nu_S d\mu = \int_{I}M_{i}df  \int_{S}\nu_s d\mu} &&& {\int_{S \times I}M^{'}_id f^{'} \int_S \nu_sd\mu= \int_{S \times I}M^{'}_i d \int_S \iota_s \nu_s df\mu} &&& {\int_S\int_{I}M_{i}d f\nu_s d\mu} \\
	\\
	\\
	{\int_{I^{'}} M_{f(i)} d\int_S \nu_S d\mu} &&& {\int_{S \times I{'}} d M^{'}_{f(i)}d \int_S\iota_s \nu_s d\mu} &&& {\int_S\int_{I^{'}}M_{i^{'}} d\nu_s  d \mu}
	\arrow["m", from=1-1, to=1-4]
	\arrow["{\Delta_{f, \int_S\nu_sd\mu}}"', from=1-1, to=4-1]
	\arrow["a", from=1-4, to=1-7]
	\arrow["{\Delta_{f^{'},\int_S \iota_s \nu_s d\mu}}"{description}, from=1-4, to=4-4]
	\arrow["{\int_{S} \Delta_{} d\mu}", from=1-7, to=4-7]
	\arrow["m"', from=4-1, to=4-4]
	\arrow["a"', from=4-4, to=4-7]
\end{tikzcd}
}
\]

Here $f^{'}$ is defined by $f^{'}(s,i)=(s,f(i))$, and $M^{'}_{(s,i)}=M_{i}$ for every $(s,i)$. Now the left diagram commutes by functoriality of $m$ (the algebra functor), more precisely we have that the following diagram is commutative in the category $TA$:

% https://q.uiver.app/#q=WzAsNCxbMCwwLCIoKE1faSksSSwgZlxcaW50X1MgXFxudV9zIGRcXG11KSJdLFswLDIsIigoTV97ZihpKX0pLEleeyd9LCBcXGludF9TIFxcbnVfcyBkXFxtdSkiXSxbMywwLCIoKE1eeyd9X3soaSxzKX0pLEkgXFx0aW1lcyBTLCBmXnsnfVxcaW50X1MgXFxpb3RhX1NcXG51X3MgZFxcbXUpIl0sWzMsMiwiKChNXnsnfV97ZihpKSxzKX0sSV57J30gXFx0aW1lcyBTLCBcXGludF9TIFxcaW90YV9TXFxudV9zIGRcXG11KSJdLFswLDFdLFswLDJdLFsyLDNdLFsxLDNdXQ==
\[\begin{tikzcd}
	{((M_i),I, f\int_S \nu_s d\mu)} &&& {((M^{'}_{(i,s)}),I \times S, f^{'}\int_S \iota_S\nu_s d\mu)} \\
	\\
	{((M_{f(i)}),I^{'}, \int_S \nu_s d\mu)} &&& {((M^{'}_{f(i),s)},I^{'} \times S, \int_S \iota_S\nu_s d\mu)}
	\arrow[from=1-1, to=1-4]
	\arrow[from=1-1, to=3-1]
	\arrow[from=1-4, to=3-4]
	\arrow[from=3-1, to=3-4]
\end{tikzcd}\]

So its image by the algebra functor commutes. The right diagram commutes by naturality of $a$. Hence diagram $7$ is commutative. Note that it's also possible to show commutativity of diagram $7$ just using Lurie's axiom (no need to use our results in \cite{hamad2025ultracategories}), we leave giving this alternative proof to the reader.

\section*{Appendix C: Equivalence of different notions of left ultrafunctors}

\label{Equaivalence of different notions of left ultrafunctors}

\addcontentsline{toc}{section}{\nameref{Equaivalence of different notions of left ultrafunctors}}
Suppose that we have two ultracategories $\ca{B}$ and $\ca{B}^{'}$, whose ultrastructure comes from \ref{not very crucial theorem}, We want to show that the original definition of left ultrafunctors as presented in \cite{lurie2018ultracategories} is equivalent to the new definition of left ultrafunctor that comes from the generalised ultrastructure.

We have already shown that there is an equivalence of categories between left ultrafunctors and lax morphisms of $T$-colax algebras, so it's enough to check that in the case of generalised ultracategories that inherit their generalised ultrastructures from being pseudo-algebras for the monad $T$.

    Suppose that  we have a left ultrafunctor $F$, we may define the maps $\zeta$, by $\zeta(f) = \sigma_{\mu} \circ F(f)$.

    We want to show that this data defines a left ultrafunctor between generalised ultracategories, in other  words, we want to show that the following diagrams are commutative:

\[
\adjustbox{max width =\textwidth}{
% https://q.uiver.app/#q=WzAsOCxbMCwwLCJGKEEpIl0sWzMsMCwiRihcXGludF9JTV9pZFxcbXUpIl0sWzcsMCwiXFxpbnRfSSBGKE1faSlkXFxtdSJdLFsxMCwwLCJcXGludF97SX0gRihcXGludF97WF9pfU5feyhpLHgpfWRcXGxhbWJkYV9pKWRcXG11Il0sWzE0LDAsIihcXGludF9JXFxpbnRfe1hfaX0oRihOX3soaSx4KX0pKSBkXFxsYW1iZGFfaSBkXFxtdSAiXSxbMTQsMywiXFxpbnRfe1xcY29wcm9kX3tJfVhfaSB9RihOX3soaSx4KX1kIFxcaW50X0kgXFxpb3RhX2kgXFxsYW1iZGFfaSBkXFxtdSkiXSxbMywzLCJGKCBcXGludF9JXFxpbnRfe1hfaX1OX3tpLHh9IGRcXGxhbWJkYV9pKWRcXG11KSJdLFs5LDMsIkYoXFxpbnRfe1xcY29wcm9kX3tJfVhfaX1OX3soaSx4KX1kXFxpbnRfe0l9XFxpb3RhX2lcXGxhbWJkYV9pZFxcbXUpIl0sWzAsMSwiRihmKSJdLFsxLDIsIlxcc2lnbWFfe1xcbXV9Il0sWzIsMywiXFxpbnRfSSBGKGdfaSkgZCBcXG11Il0sWzMsNCwiXFxpbnRfSSAgXFxzaWdtYV97XFxsYW1iZGFfaX1kXFxtdSJdLFsxLDYsIlxcaW50X0kgZ19pIGRcXG11IiwyXSxbNiw3LCJGKGFeey0xfSkiLDJdLFs0LDUsImFeey0xfSJdLFs3LDUsIlxcc2lnbWFfe1xcaW50X3tJfVxcaW90YV9pXFxsYW1iZGFfaWRcXG11fSIsMl1d
\begin{tikzcd}
	{F(A)} &&& {F(\int_IM_id\mu)} &&&& {\int_I F(M_i)d\mu} &&& {\int_{I} F(\int_{X_i}N_{(i,x)}d\lambda_i)d\mu} &&&& {(\int_I\int_{X_i}(F(N_{(i,x)})) d\lambda_i d\mu } \\
	\\
	\\
	&&& {F( \int_I\int_{X_i}N_{i,x} d\lambda_i)d\mu)} &&&&&& {F(\int_{\coprod_{I}X_i}N_{(i,x)}d\int_{I}\iota_i\lambda_id\mu)} &&&&& {\int_{\coprod_{I}X_i }F(N_{(i,x)}d \int_I \iota_i \lambda_i d\mu)}
	\arrow["{F(f)}", from=1-1, to=1-4]
	\arrow["{\sigma_{\mu}}", from=1-4, to=1-8]
	\arrow["{\int_I g_i d\mu}"', from=1-4, to=4-4]
	\arrow["{\int_I F(g_i) d \mu}", from=1-8, to=1-11]
	\arrow["{\int_I  \sigma_{\lambda_i}d\mu}", from=1-11, to=1-15]
	\arrow["{a^{-1}}", from=1-15, to=4-15]
	\arrow["{F(a^{-1})}"', from=4-4, to=4-10]
	\arrow["{\sigma_{\int_{I}\iota_i\lambda_id\mu}}"', from=4-10, to=4-15]
\end{tikzcd}
}
\]

    Starting with the first diagram. Let us add the morphism $\sigma_{\mu}$ to that diagram:

\[
\adjustbox{max width =\textwidth}{
    % https://q.uiver.app/#q=WzAsOCxbMCwwLCJGKEEpIl0sWzMsMCwiRihcXGludF9JTV9pZFxcbXUpIl0sWzcsMCwiXFxpbnRfSSBGKE1faSlkXFxtdSJdLFsxMCwwLCJcXGludF97SX0gRihcXGludF97WF9pfU5feyhpLHgpfWRcXGxhbWJkYV9pKWRcXG11Il0sWzE0LDAsIihcXGludF9JXFxpbnRfe1hfaX0oRihOX3soaSx4KX0pKSBkXFxsYW1iZGFfaSBkXFxtdSAiXSxbMTQsMywiXFxpbnRfe1xcY29wcm9kX3tJfVhfaSB9RihOX3soaSx4KX1kIFxcaW50X0kgXFxpb3RhX2kgXFxsYW1iZGFfaSBkXFxtdSkiXSxbMywzLCJGKCBcXGludF9JXFxpbnRfe1hfaX1OX3tpLHh9IGRcXGxhbWJkYV9pKWRcXG11KSJdLFs5LDMsIkYoXFxpbnRfe1xcY29wcm9kX3tJfVhfaX1OX3soaSx4KX1kXFxpbnRfe0l9XFxpb3RhX2lcXGxhbWJkYV9pZFxcbXUpIl0sWzAsMSwiRihmKSJdLFsxLDIsIlxcc2lnbWFfe1xcbXV9Il0sWzIsMywiXFxpbnRfSSBGKGdfaSkgZCBcXG11Il0sWzMsNCwiXFxpbnRfSSAgXFxzaWdtYV97XFxsYW1iZGFfaX1kXFxtdSJdLFsxLDYsIlxcaW50X0kgZ19pIGRcXG11IiwyXSxbNiw3LCJGKGFeey0xfSkiLDJdLFs0LDUsImFeey0xfSJdLFs3LDUsIlxcc2lnbWFfe1xcaW50X3tJfVxcaW90YV9pXFxsYW1iZGFfaWRcXG11fSIsMl0sWzYsMywiXFxzaWdtYV97XFxtdX0iLDFdXQ==
\begin{tikzcd}
	{F(A)} &&& {F(\int_IM_id\mu)} &&&& {\int_I F(M_i)d\mu} &&& {\int_{I} F(\int_{X_i}N_{(i,x)}d\lambda_i)d\mu} &&&& {(\int_I\int_{X_i}(F(N_{(i,x)})) d\lambda_i d\mu } \\
	\\
	\\
	&&& {F( \int_I\int_{X_i}N_{i,x} d\lambda_i)d\mu)} &&&&&& {F(\int_{\coprod_{I}X_i}N_{(i,x)}d\int_{I}\iota_i\lambda_id\mu)} &&&&& {\int_{\coprod_{I}X_i }F(N_{(i,x)}d \int_I \iota_i \lambda_i d\mu)}
	\arrow["{F(f)}", from=1-1, to=1-4]
	\arrow["{\sigma_{\mu}}", from=1-4, to=1-8]
	\arrow["{\int_I g_i d\mu}"', from=1-4, to=4-4]
	\arrow["{\int_I F(g_i) d \mu}", from=1-8, to=1-11]
	\arrow["{\int_I  \sigma_{\lambda_i}d\mu}", from=1-11, to=1-15]
	\arrow["{a^{-1}}", from=1-15, to=4-15]
	\arrow["{\sigma_{\mu}}"{description}, from=4-4, to=1-11]
	\arrow["{F(a^{-1})}"', from=4-4, to=4-10]
	\arrow["{\sigma_{\int_{I}\iota_i\lambda_id\mu}}"', from=4-10, to=4-15]
\end{tikzcd}
}
\]

The left diagram is a naturality diagram, while the right one is part of definition of left ultrafunctors.

Now for the second diagram and third diagram of the definition of natural transformation of left ultrafunctors, their commutativity follows directly form the fact that the families $\sigma_{\mu}$ constitute the data of lax-morphisms of $T$-colax algebras.

Now on the other hand suppose that we have a left ultrafunctor between the generalised ultrastructures we recover the usual left ultrastructure by defining $\sigma){\mu}=\zeta(\la{id}_{\int_I M_i d\mu}$. We have already showed that such ultrafunctor defines a functor between the underlying categories.

Before we continue we remind that the functor structure we obtained was between the underlying categories of the generalised ultracategories. so if we want to obtain functors between the initial category we must compose with the epsilon maps (those maps providing the isomorphism of categories). I.e. we have $F(f) = \epsilon_{*,*} \circ \mathbf{F}(f \circ \epsilon^{-1}_{*,*}) $. Here $F$ is functor structure between the initial categories and $\mathbf{F}(\epsilon_{*,*}^{-1} \circ f) = \zeta( \epsilon_{*,*}^{-1} \circ f )$ is the functor structure between the underlying ultracategories.

Now we need to verify that this indeed gives a lax morphism of  pseudo-algebras which is equivalent to show that the following diagram is commutative:

\[
\adjustbox{max width =\textwidth}{
% https://q.uiver.app/#q=WzAsNSxbMTEsMCwiKFxcaW50X0lcXGludF97WF9pfShGKE5feyhpLHgpfSkpIGRcXGxhbWJkYV9pIGRcXG11ICJdLFsxMSwzLCJcXGludF97XFxjb3Byb2Rfe0l9WF9pIH1GKE5feyhpLHgpfWQgXFxpbnRfSSBcXGlvdGFfaSBcXGxhbWJkYV9pIGRcXG11KSJdLFswLDMsIkYoIFxcaW50X0lcXGludF97WF9pfU5fe2kseH0gZFxcbGFtYmRhX2kpZFxcbXUpIl0sWzYsMywiRihcXGludF97XFxjb3Byb2Rfe0l9WF9pfU5feyhpLHgpfWRcXGludF97SX1cXGlvdGFfaVxcbGFtYmRhX2lkXFxtdSkiXSxbMCwwLCJcXGludF97SX0gRihcXGludF97WF9pfU5feyhpLHgpfWRcXGxhbWJkYV9pKWRcXG11Il0sWzIsMywiRihhXnstMX0pIiwyXSxbMCwxLCJhXnstMX0iXSxbMywxLCJcXHNpZ21hX3tcXGludF97SX1cXGlvdGFfaVxcbGFtYmRhX2lkXFxtdX0iLDJdLFsyLDQsIlxcc2lnbWFfe1xcbXV9IiwxXSxbNCwwLCJcXGludF9JICBcXHNpZ21hX3tcXGxhbWJkYV9pfWRcXG11Il1d
\begin{tikzcd}
	{\int_{I} F(\int_{X_i}N_{(i,x)}d\lambda_i)d\mu} &&&&&&&&&&& {(\int_I\int_{X_i}(F(N_{(i,x)})) d\lambda_i d\mu } \\
	\\
	\\
	{F( \int_I\int_{X_i}N_{i,x} d\lambda_i)d\mu)} &&&&&& {F(\int_{\coprod_{I}X_i}N_{(i,x)}d\int_{I}\iota_i\lambda_id\mu)} &&&&& {\int_{\coprod_{I}X_i }F(N_{(i,x)}d \int_I \iota_i \lambda_i d\mu)}
	\arrow["{\int_I  \sigma_{\lambda_i}d\mu}", from=1-1, to=1-12]
	\arrow["{a^{-1}}", from=1-12, to=4-12]
	\arrow["{\sigma_{\mu}}"{description}, from=4-1, to=1-1]
	\arrow["{F(a^{-1})}"', from=4-1, to=4-7]
	\arrow["{\sigma_{\int_{I}\iota_i\lambda_id\mu}}"', from=4-7, to=4-12]
\end{tikzcd}
}
\]

But this is exactly the diagram we obtain in the first axiom of definition of generalised left ultrafunctors when we replace the maps $(g_i)$ and $f$ by the identity maps.

Next thing, we need to check is that these two processes are in fact inverses, staring with a left ultrafunctor  between ultracategories we have that  $\sigma_{\mu} = \sigma_{\mu} \circ F(\la{id})$. On the other hand suppose that we have a left ultrafunctor between generalised ultracategories, and $f \in \la{Hom}(A,\int_I M_id\mu)$, We need to show that   $\zeta(f)= \zeta(\la{id}) \circ F(f) = \zeta(\la{id}) \circ \zeta(f \circ \epsilon_{*,*} ) \circ \epsilon_{*,*}^{-1}$, or diagrammaticality:

% https://q.uiver.app/#q=WzAsNCxbMCwwLCJGKEEpIl0sWzMsMCwiXFxpbnRfIEkgRihNX2kpIGRcXG11Il0sWzAsMywiXFxpbnRfKiBGKFxcaW50X0lNX2lkXFxtdSkgZCoiXSxbMywzLCJGKFxcaW50X0lNX2kgZFxcbXUpIl0sWzAsMSwiXFx6ZXRhKGYpIl0sWzIsMywiXFxlcHNpbG9uXnstMX0iXSxbMywxLCJcXHpldGEoXFxtYXRocm17aWR9KSIsMl0sWzAsMiwiXFx6ZXRhKFxcZXBzaWxvbiBcXGNpcmMgZikiLDJdXQ==
\[\begin{tikzcd}
	{F(A)} &&& {\int_ I F(M_i) d\mu} \\
	\\
	\\
	{\int_* F(\int_IM_id\mu) d*} &&& {F(\int_IM_i d\mu)}
	\arrow["{\zeta(f)}", from=1-1, to=1-4]
	\arrow["{\zeta(\epsilon \circ f)}"', from=1-1, to=4-1]
	\arrow["{\epsilon^{-1}}", from=4-1, to=4-4]
	\arrow["{\zeta(\mathrm{id})}"', from=4-4, to=1-4]
\end{tikzcd}\]

Now notice that using the first axiom of the definition, we get the commutativity of the folowing diagram:

% https://q.uiver.app/#q=WzAsNCxbMCwwLCJGKEEpIl0sWzMsMCwiXFxpbnRfKihGXFxpbnRfSShNX2kpKWRcXG11KWQqIl0sWzYsMCwiXFxpbnRfKihcXGludF9JRihNX2kpZFxcbXUpZCoiXSxbMCwzXSxbMCwxLCJcXHpldGEoXFxlcHNpbG9uXnstMX0gXFxjaXJjIGYpIl0sWzEsMiwiXFxpbnRfKlxcemV0YShcXG1hdGhybXtpZH0pZCoiXSxbMCwyLCJhIFxcY2lyY1xcemV0YShhXnstMX1cXGNpcmMgXFwgIFxcZXBzaWxvbl57LTF9IFxcY2lyYyBmICkiLDIseyJjdXJ2ZSI6NX1dXQ==
\[\begin{tikzcd}
	{F(A)} &&& {\int_*(F\int_I(M_i))d\mu)d*} &&& {\int_*(\int_IF(M_i)d\mu)d*} \\
	\\
	\\
	{}
	\arrow["{\zeta(\epsilon^{-1} \circ f)}", from=1-1, to=1-4]
	\arrow["{a \circ\zeta(a^{-1}\circ \  \epsilon^{-1} \circ f )}"', curve={height=30pt}, from=1-1, to=1-7]
	\arrow["{\int_*\zeta(\mathrm{id})d*}", from=1-4, to=1-7]
\end{tikzcd}\]

So inserting two  additional node to the second to last diagram, we get the following diagram:

% https://q.uiver.app/#q=WzAsNixbMCwwLCJGKEEpIl0sWzMsMCwiXFxpbnRfIEkgRihNX2kpIGRcXG11Il0sWzAsMywiXFxpbnRfKiBGKFxcaW50X0lNX2lkXFxtdSkgZCoiXSxbMywzLCJGKFxcaW50X0lNX2kgZFxcbXUpIl0sWzEsMiwiXFxpbnRfKiAoXFxpbnRfSUYoTV9pKWRcXG11KSBkKiJdLFsxLDEsIlxcaW50X3tcXGNvcHJvZF97Kn1JfUYoTV9pKSBkXFxtdSJdLFswLDEsIlxcemV0YShmKSJdLFsyLDMsIlxcZXBzaWxvbiJdLFszLDEsIlxcemV0YShcXG1hdGhybXtpZH0pIiwyXSxbMCwyLCJcXHpldGEoXFxlcHNpbG9uXnstMX0gXFxjaXJjIGYpIiwyXSxbMiw0LCJcXGludF8qIFxcemV0YShcXG1hdGhybXtpZH0pZCoiLDFdLFs0LDEsIlxcZXBzaWxvbiJdLFswLDUsIlxcemV0YShhXnstMX1cXGNpcmMgXFwgIFxcZXBzaWxvbl57LTF9IFxcY2lyYyBmICkiLDFdLFs1LDQsImEiLDFdLFs5LDQsIjIiLDIseyJzaG9ydGVuIjp7InNvdXJjZSI6MjB9LCJzdHlsZSI6eyJib2R5Ijp7Im5hbWUiOiJub25lIn0sImhlYWQiOnsibmFtZSI6Im5vbmUifX19XSxbNCw4LCIzIiwyLHsic2hvcnRlbiI6eyJ0YXJnZXQiOjIwfSwic3R5bGUiOnsiYm9keSI6eyJuYW1lIjoibm9uZSJ9LCJoZWFkIjp7Im5hbWUiOiJub25lIn19fV0sWzYsMTEsIjEiLDEseyJzaG9ydGVuIjp7InNvdXJjZSI6MjB9LCJzdHlsZSI6eyJib2R5Ijp7Im5hbWUiOiJub25lIn0sImhlYWQiOnsibmFtZSI6Im5vbmUifX19XV0=
\[\begin{tikzcd}
	{F(A)} &&& {\int_ I F(M_i) d\mu} \\
	& {\int_{\coprod_{*}I}F(M_i) d\mu} \\
	& {\int_* (\int_IF(M_i)d\mu) d*} \\
	{\int_* F(\int_IM_id\mu) d*} &&& {F(\int_IM_i d\mu)}
	\arrow[""{name=0, anchor=center, inner sep=0}, "{\zeta(f)}", from=1-1, to=1-4]
	\arrow["{\zeta(a^{-1}\circ \  \epsilon^{-1} \circ f )}"{description}, from=1-1, to=2-2]
	\arrow[""{name=1, anchor=center, inner sep=0}, "{\zeta(\epsilon^{-1} \circ f)}"', from=1-1, to=4-1]
	\arrow["a"{description}, from=2-2, to=3-2]
	\arrow[""{name=2, anchor=center, inner sep=0}, "\epsilon", from=3-2, to=1-4]
	\arrow["{\int_* \zeta(\mathrm{id})d*}"{description}, from=4-1, to=3-2]
	\arrow["\epsilon", from=4-1, to=4-4]
	\arrow[""{name=3, anchor=center, inner sep=0}, "{\zeta(\mathrm{id})}"', from=4-4, to=1-4]
	\arrow["1"{description}, draw=none, from=0, to=2]
	\arrow["2"', draw=none, from=1, to=3-2]
	\arrow["3"', draw=none, from=3-2, to=3]
\end{tikzcd}\]

Square $2$ commutes, while Square $3$ is commutative by naturality of $\epsilon$. Now Square  $1$ commutes by axiom $2$ of our definition of left ultrafunctors between generalised ultracategories. And in order to use it we must note that for any ultracategory we have that $\int_{\coprod_{*}I} B_i d\mu \xrightarrow{\epsilon \circ a} \int_I B_i \mu  $, is the inverse of the map $\Delta_{\mu, f_{I}}$, where $f_{I}$ is the isomorphism between $I$ and $\coprod_{*}I$.

So we have shown that the two notions of left ultrafunctor agree. Now we show that the two notions of natural transformation of left ultrafunctors also agree. Suppose that we have $\alpha$ a natural transformation of left ultrafunctors in the first sense, then we can define the family of  maps $(\alpha^1)$ and $(\alpha^2)$ to be just representables.

Now we need to check the commutativity of the five diagrams in the definition of natural transformation of left ultrafunctors, the commutativity of the first two diagrams can be deduce from the fact that natural transformation of left ultrafunctors are in fact natural transformations between lax morphisms of colax algebras.

Now for diagrams $3$ and $4$, since the maps $\beta$ are defined by a series of compositions, then the question of commutativity can be reduced to a usual naturality square, and hence diagrams $3,4$ commute.

Diagram $5$ corresponds exactly to the definition of natural transformation of left ultrafunctors (Lurie's definition).

Now suppose that we have $\alpha= (\alpha^1,\alpha^2)$ a natural transformation of left ultrafunctors in the generalised sense. We know that on the level of the underlying category of the generalised $\alpha$ is just an ordinary natural transformation, But it's good to have description on the level of the initial category structure. We claim that for every object $A$ there exists a map which we denote by $\alpha_A$ from $F(A)$ such that for any $f$ $\alpha_2(f) = f \circ \alpha_A$ and such that for any $g$ $\alpha_1(g) =\alpha_A \circ g$. the construction of $\alpha_A$ is easy, we define $\alpha_A =\epsilon^{-1} \circ \alpha_1(\zeta(\kappa_A)) =  \epsilon^{-1} \circ \alpha_2(\zeta^{'}(\kappa_A))$, this is just transposing the natural transformation from the functor structures on the underlying category of the generalised ultracategories associated to the ultrastructure to the functor structure between the initial category structures, so the family of maps $\alpha_A$ defines a natural transformation.
Now to check the condition we use the composition conditions (either diagram $3$ or $4$) and change of base condition (either diagram $1$ or $2$ would work).

It remains to check the defining condition of natural transformations of left ultrafunctors, which turns out to be exactly diagram $5$ of our definition in the case of $\kappa_{\int_I M_i d\mu} \in \la{Hom}(\int_I M_i d\mu, \int_I M_i d\mu)$.

\label{equivalence}

\section*{Appendix D: Definition of topology on the space $I_{\mu, (\lambda_i)}$}
\label{topology}
We want to show that the convergence relation we used in defining the topological space $I_{\mu, (\lambda_i)}$, really defines a topology towards this we show the definition satisfies the conditions $UQ1$ of $UQ4$ of \ref{topology theorem}.

In the section we are going to denote by $\forall_{\omega}$ the ultrafiltered universal quantifier, the statement $\forall_{\omega}j \  \phi(j)$, means that the set $\{j \mid \phi(j)\} \in \omega$. This quantifier has the nice property of being the negation of itself: $\neg(\forall_{\omega}j \ \phi(j)) \equiv \forall_{\omega}j \  \neg(\phi(j))$. 

Condition $UQ1$ of \ref{topology theorem} is automatically satisfied.

We want to show that our definition of topology satisfies the condition $UQ4$, in order to do this, let $J$ be a set $\omega$ be an ultrafilter on a set $J$, and suppose that we have a map of set $m$ from $J$ to the space $I_{\mu ,(\lambda_i)}$, and a family of ultrafilters $(\nu_j)$ such that $\nu_j$ converges to $m(j)$, we want to show that $\int_J \nu_j d\mu $ converges to $p_{\mu}$. Before that let us recall the definition of the space $I_{\mu,\lambda_i}$: As a set this is $I \bigsqcup {p_\mu} \bigsqcup \coprod_{i \in I} \{ p_{\lambda_i}\} $. We denoted by $\mu'$ and $\lambda_i'$ the psuhforward of $\mu$, and $\lambda_i$ by the inclusion map of $I$  and $X_i$ respectively in $I_{\mu}$.

We have defined the convergence relations as follows:

\begin{itemize}
    \item Principal ultrafilters converge (at least) to their respective points.

    \item The ultrafilter $\lambda_i^{'}$  converges to $p_{\lambda_i}$ 
    
    \item The ultrafilter $\mu^{'}$  converges to $p_{\mu}$.
    
    \item If $\mu$ is principal say $\mu = \delta_i$, we want $\lambda_i'$ not only to converge to $p_{\lambda_i}$ but also to $p_{\mu}$.
    
    \item the ultrafilter $\int_{I} \lambda^{'}_i d \mu$ converges to $p_\mu$
\end{itemize}

There are  five cases to consider:

\paragraph{ Case $1$: $m  \omega = \delta_{p_{\mu}}$}

If $m\omega = \delta_{p_{\mu}}$, then $m\omega$ converges to $p_{\mu}$ and $\forall_{\omega} m(j) \in \{p_{\mu}\}$, since every $\nu_{j}$ converges to $m(j)$, that means that we have $5$ different possibilities \begin{enumerate}
    \item $\forall_{\omega}j \ \nu_j = \delta_{p_{\mu}}$
    \item $\forallw j \  \nu_j = \mu^{'}$
    \item $\forallw j \ \nu_j = \int_I \iota_i \lambda_i d\mu$.
    \item $\mu$ is a principal ultrafilter at $i$, and $\forall_{\omega}j$ $\nu_j$ is principal at $p_{\lambda_i}$

    \item $\mu$ is principal at $i$ and $\lambda_i$ is principal at $x \in X_i$ and $\forall_{\omega}j$ $\nu_j$ is principal at $x$.

\end{enumerate}

In all these $\int_J \nu_j d\omega$ is the ultraproduct (in the sense of ultraproduct of ultrafilters) of a constant family which converges to $p_{\mu}$, which is the unique point for which the ultrafilter $\delta_{p_{\mu}}$.

\paragraph{Case $2$: $m\omega= \delta_{p_{\lambda_i}}$}
In this case the family, we have the following cases: \begin{enumerate}
    \item $\forallw j \  \nu_j =\delta_{ p_{\lambda_j}}$
    \item $\forallw j \ \nu_j = \lambda^{'}_j$
\end{enumerate}

This case is similar to the one before withy the exception that the $p\omega$ may converge to two points, but the resulting ultrafilter (which is also an ultraproduct of a constant family) will converge to the same family of points.

\paragraph{Case $3$: $m \omega =\delta_x$}

In this case the only possibility is that $\forallw j \  \nu_j =\delta_x$ and hence $\int_J \nu_j d\mu =\delta_x$, hence they converge to the same set of points.

\paragraph{Case $4$: $m \omega = \lambda^{'}_i$ and $\lambda_i$ is non-principal}

This can only happen in the case where every $\nu_j$ is principal (since all the converging ultrafilters on $X_i$ are principal), in this case we get that $\int_{\omega} \nu_j d\omega = \lambda^{'}_i$ and hence converges to the same ultrafilters
\paragraph{ Case $5$: $m \omega = \mu^{'} $}

In this case we have two possibilities:

\begin{enumerate}
    \item $\forallw j  \ \nu_j$ is principal at  some $p_{\lambda_i}$
    \item $\forallw j \  \nu_j$ is not principal at some $p_{\lambda_i}$

\end{enumerate}

In the first case $\forallw j \nu_j$ is principal at some $p_{\lambda_i}$, such that the set $\forall_{\mu^{'}}i \delta_{p_{\lambda_{i}}} = \nu_j$ for some $j$ , so we get that $\int_J \nu_j d\omega= \mu^{'}$.

In the second case $\forallw j \nu_j$ is equal to $\lambda^{'}_i$ such that  $\forall_{\mu^{'}}i \lambda_i = \nu_j$, which in turn implies that $\int_J \nu_j d\omega =\int_I \lambda_i d\mu$, which converges to $p_{\mu}$, the same point which $\mu^{'}$ converges to.

This is the most important part of this proof, since this reveals the compositional nature of the space $I_{\mu,\lambda_i}$.

\paragraph{ Case $6$: $m \omega = \int_I \lambda^{'}_I d\mu^{'} $ and $\mu$ is non-principal}

Let's first say that in the notation above, we are identifying $I$ with its image by the map $i \mapsto \lambda_i$, in this case we have the following $\forallw j$ there exists $i$ and $x \in X_i$ such that $\nu_j = \delta_{x}$, such that $\forall_{\mu} i  $ $\forall_{\lambda_i} x \exists j$ $\exists j$ $\nu_j =\delta_{x}$, this would imply that $\int_J \nu_j d\omega = \int_I \lambda^{'}_i d \mu=m\omega$, and hence they converge to the same set points.

\end{document}